\newcommand{\bR}{\mathbb{R}}
\newcommand{\bC}{\mathbb{C}}
\DeclareMathOperator{\MA}{MA}
\DeclareMathOperator{\SL}{SL}
\DeclareMathOperator{\GL}{GL}
\DeclareMathOperator{\PSH}{PSH}
\DeclareMathOperator{\conv}{conv}
\DeclareMathOperator{\affspan}{affine span}
\DeclareMathOperator{\St}{St}
\DeclareMathOperator{\SmSt}{SmSt}
\DeclareMathOperator{\sgn}{sgn}
\DeclareMathOperator{\Log}{Log}
\DeclareMathOperator{\dist}{dist}
\newtheorem{lemma}{Lemma}
\newtheorem{theorem}{Theorem}
\newtheorem{corollary}{Corollary}
\newtheorem{definition}{Definition}\theoremstyle{definition}
\newtheorem{remark}{Remark}\theoremstyle{definition}
\newtheorem{example}{Example}
\title[Monge-Ampère equations of reflexive polytopes]{Solvability of Monge-Ampère equations and tropical affine structures on reflexive polytopes}
\author{Rolf Andreasson \and Jakob Hultgren}
\date{\today}
\address{Dept of Mathematical Sciences\\
  Chalmers University of Technology\\
  Göteborg, Sweden}
  \email{rolfan@chalmers.se}
\address{Dept of Mathematics and Mathematical Statistics\\
  Umeå University\\ 
  901 87 Umeå, Sweden}
\email{jakob.hultgren@umu.se}
\begin{document}

\begin{abstract}
    Given a reflexive polytope with a height function, we prove a necessary and sufficient condition for solvability of the associated Monge-Ampère equation. When the polytope is Delzant, solvability of this equation implies 
    the metric SYZ conjecture for the corresponding family of Calabi-Yau hypersurfaces. 
    We show how the location of the singularities in the tropical affine structure is determined by the PDE in the spirit of a free boundary problem and give positive and negative examples, demonstrating subtle issues with both solvability and properties of the singular set. We also improve on existing results regarding the SYZ conjecture for the Fermat family by showing regularity of the limiting potential.
 \end{abstract}
\maketitle
\section{Introduction}
Let $Y$ be the toric variety defined by a reflexive polytope $\Delta\subset M_\mathbb R$, where $M$ is the character lattice of $Y$ and $M_\mathbb R = M\otimes \mathbb R$. Let $d\geq 1$ and assume $\dim Y = d+1$. Fixing a \emph{height function} $h:\Delta\cap M\rightarrow \mathbb Z$ such that $h(0)=0<h(m)$ for any $m\in \Delta\cap M\setminus \{0\}$ we get a family of hypersurfaces in $Y$ 
$$ X = \{(x,t)\in Y\times \mathbb C^*: \sum_{m\in \Delta\cap M} t^{h(m)}f_m(x) = 0\} $$
where $f_m$, for each $m\in \Delta\cap M$ is the $\mathbb T^{d+1}$-invariant section associated to $m$. As $t\rightarrow 0$, the fibers $X_t = \{x\in Y:(x,t)\in X\}$ of this family degenerate to the toric boundary in $Y$. When $Y$ is smooth, the differential geometric aspects of this convergence is the subject of the well-known SYZ conjecture in mirror symmetry \cite{SYZ}. Equipping each such $X_t$ with the Calabi-Yau structure determined by the canonical polarization of $Y$, the metric SYZ conjecture states that for small $t$ a large part of $X_t$ admits a special Lagrangian $\mathbb T^d$ fibration. The base of this fibration is expected to be naturally identified with the boundary of the polytope
$$ \Delta_h^\vee := \{n\in N_\mathbb R: \langle m,n \rangle \leq h(m), \text{ for all } m\in \Delta\cap M \}. $$

Special Lagrangian $\mathbb T^d$-fibrations are intimately related to Monge-Ampère equations by ideas that go back at least to \cite{Hitchin}. Among other things, these ideas form the basis of the Kontsevich-Soibelman conjecture regarding Gromov-Hausdorff limits of $X_t$ (see \cite{GW, KS01,L}). 
More recently, results by Y. Li \cite{LiFermat,LiSYZ} puts the Monge-Ampère equations at the heart of the SYZ conjecture. Broadly speaking, \cite{LiFermat,LiSYZ} reduces the metric SYZ conjecture to structural properties for solutions to Monge-Ampère equations. In the general case, this pertains to the non-Archimedean Monge-Ampère equation and the structural properties sought is a subtle regularity condition.
In the toric case, the Monge-Ampère equation in question is the classical real Monge-Ampère equation on the open faces of $\Delta^\vee_h$, and the structural property sought is an extension property to $N_\mathbb R$. 
More precisely, let $\mathcal P(\Delta)$ be the space of convex function $\Psi$ on $N_\mathbb R$ such that 
$$\sup_{n\in N_\mathbb R} |\Psi(n)-\sup_{m\in \Delta\cap M} \langle m,n \rangle|<\infty.$$ 
Note that $\mathcal P(\Delta)$ by classical toric geometry corresponds to the torus-invariant continuous semi-positive metrics on the anti-canonical line bundle over $Y$ (see for example \cite{BB}). We will use $A$ and $B$ to denote the boundaries of $\Delta$ and $\Delta^\vee_h$ and $A^\circ$ and $B^\circ$ to denote the union of the relative interiors of the facets of  of $\Delta$ and $\Delta^\vee$. The sets $A^\circ$ and $B^\circ$ inherit \emph{tropical affine structures} from $M_\mathbb R$ and $N_\mathbb R$ (see Section~\ref{sec:AffineStructure}). These tropical affine structures induce canonical integral ''Lebesgue type'' measures $\mu_M, \nu_N$ on $A^\circ$ and $B^\circ$. If $\tau$ is a facet of $\Delta^\vee_h$ with relative interior $\tau^\circ$ and $\Psi\in \mathcal P(\Delta)$, then $\Psi|_{\tau^\circ}$ is convex with respect to this tropical affine structure and we get a well-defined Monge-Ampère measure $\MA(\Phi|_{\tau^\circ})$ on $\tau^\circ$. 

To establish the metric SYZ conjecture for $X$ using the ambient toric variety and the approach in \cite{LiFermat,LiSYZ} one needs a solution $\Psi\in \mathcal P(\Delta)$ to the Monge-Ampère equation
\begin{equation} \label{eq:MAIntLeb} \MA\left(\Psi|_{B^\circ}\right) = c\nu_N \end{equation}
where $c=\mu_M(A)/\nu_N(B)$ is a constant determined by $\Delta$ and $h$. Our main theorem provides a necessary and sufficient condition for existence of such solutions. The necessary and sufficient condition applies to data $(\Delta,h,\nu)$ where $\nu$ is a general positive measure of total mass $\mu_M(A)$ replacing the right hand side of \eqref{eq:MAIntLeb} and the condition is formulated in terms of \emph{optimal transport plans} from $\mu_M$ to $\nu$ with respect to the cost function $-\langle \cdot,\cdot\rangle$ on $A\times B$, i.e. couplings $\gamma$ of $\mu_M$ and $\nu$ minimizing the quantity
\begin{equation}
    \label{eq:OTCost}
    C(\gamma):= -\int_{A \times B} \langle m,n \rangle \gamma
\end{equation}
(see Section~\ref{sec:OptimalTransport}). 
For $m\in \Delta\cap M$, let $\tau_m$ denote the face of $\Delta^\vee_h$ dual to $m$, i.e. 
$$ \tau_m = \{n\in \Delta^\vee_h, \langle m,n \rangle = h(m) \}, $$ 
and $\St(m)$ be the closed star of $m$, i.e. the union of all closed faces of $A$ containing $m$. 

\begin{definition}\label{def:Stability}
We will say that $(\Delta,h,\nu)$ is \emph{stable} if there exists an optimal transport plan from $\mu_M$ to $\nu$ which is supported on 
\begin{equation} \label{eq:WeaklyStableSet} \cup_{m\in A\cap M} \left(\St(m) \times \tau_m\right).\end{equation}
\end{definition}

\begin{theorem}\label{thm:MAInt}
Let $\nu$ be a positive measure on $B^\circ$ of total mass $\mu_M(A)$. Then there is a function $\Psi\in \mathcal P(\Delta)$ satisfying 
\begin{equation} \label{eq:MAInt} \MA\left(\Psi|_{B^\circ}\right) = \nu \end{equation}
if and only if $(\Delta,h,\nu)$ is stable.  
Moreover, if $\Psi,\Psi'\in \mathcal P(\Delta)$ both satisfy \eqref{eq:MAInt}, then $\Psi-\Psi'$ is constant. 
\end{theorem}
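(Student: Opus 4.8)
The proof naturally splits into three parts: necessity of stability, sufficiency of stability, and uniqueness up to constants. I expect the bulk of the work — and the main obstacle — to lie in the sufficiency direction, where one must produce an actual solution out of the combinatorial-transport data.

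\medskip

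\emph{Uniqueness.} I would dispatch this first, since it is the most standard. Suppose $\Psi,\Psi'\in\mathcal P(\Delta)$ both solve \eqref{eq:MAInt}. Both restrict to convex functions on each facet $\tau^\circ$ of $B=\partial\Delta^\vee_h$ with the same Monge--Ampère measure $\nu|_{\tau^\circ}$ there. The classical comparison principle for the real Monge--Ampère operator gives uniqueness up to an affine function \emph{on each facet}, but one must promote this to a single global constant. Here the fact that $\Psi,\Psi'\in\mathcal P(\Delta)$ (so both are within bounded distance of the common asymptote $\sup_{m\in\Delta\cap M}\langle m,\cdot\rangle$) forces the affine ambiguities on adjacent facets to match along their common lower-dimensional faces, and a connectedness argument on the dual complex of $B$ pins down one constant. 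Alternatively, and perhaps more cleanly, one can use that $\Psi,\Psi'$ correspond to semipositive metrics on $-K_Y$ with equal Monge--Ampère measures and invoke the toric Calabi--Yau-type uniqueness (Bedford--Taylor / Berman--Boucksom), but I would prefer to keep the argument inside the polytope.

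\medskip

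\emph{Necessity.} Given a solution $\Psi$, I would extract a transport plan from its ``gradient coupling.'' For $\Psi\in\mathcal P(\Delta)$, the subdifferential map $n\mapsto\partial\Psi(n)$ pushes (a suitable normalization of) $\nu$ on $B^\circ$ forward against $\mu_M$ on $A$, producing a coupling $\gamma$; optimality of $\gamma$ for the cost $-\langle\cdot,\cdot\rangle$ is exactly the statement that $\Psi$ is convex (Rockafellar: a coupling is $c$-cyclically monotone for $c=-\langle\cdot,\cdot\rangle$ iff it lies in the subdifferential of a convex function), so $\gamma$ is an optimal transport plan. The support condition \eqref{eq:WeaklyStableSet} is then read off from the geometry: if $n\in\tau^\circ\subset\tau_m^\circ$ contributes mass and $m'\in\partial\Psi(n)\subset\Delta\cap M$ is in its image, then because $\Psi$ is within bounded distance of $\sup_m\langle m,\cdot\rangle$ one forces $m'\in\St(m)$ — i.e. the asymptotic/facet structure of $\Psi$ constrains where mass can go. This direction I expect to be relatively short once the affine-structure bookkeeping of Section~\ref{sec:AffineStructure} is in hand.

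\medskip

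\emph{Sufficiency — the main obstacle.} Assume $(\Delta,h,\nu)$ is stable, with optimal plan $\gamma$ supported on \eqref{eq:WeaklyStableSet}. The plan is to recover $\Psi$ as a Kantorovich potential: let $\Psi(n):=\sup_{m\in\Delta\cap M}\big(\langle m,n\rangle - \psi^*(m)\big)$ for the appropriate $c$-concave dual potential $\psi^*$ associated to $\gamma$. This $\Psi$ is automatically convex and, since the sup is over the finite set $\Delta\cap M$ with $\psi^*(0)$ normalized to $0$, lies in $\mathcal P(\Delta)$. The real content is to verify that $\MA(\Psi|_{B^\circ})=\nu$. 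The support condition is precisely what guarantees that the ``Legendre dual'' decomposition of $N_\mathbb R$ induced by $\Psi$ has its top-dimensional pieces (where $\Psi$ is affine, equal to $\langle m,\cdot\rangle-\psi^*(m)$) indexed by $m\in\Delta\cap M$ in such a way that the regions of $N_\mathbb R$ mapping into a given facet $\tau\subset B$ are exactly governed by $\St(m)$; this makes the restriction $\Psi|_{\tau^\circ}$ have the right affine-convex structure and its Monge--Ampère mass on each Borel subset equal the $\gamma$-mass of the corresponding subset of $A$, i.e. $\nu$. The delicate points I anticipate: (i) checking that $\Psi|_{\tau^\circ}$ is genuinely convex with respect to the \emph{tropical} affine structure on $\tau^\circ$ (not merely the ambient one) — this is where the reflexivity of $\Delta$ and the integrality of the measures $\mu_M,\nu_N$ enter; (ii) showing no Monge--Ampère mass escapes to the lower-dimensional strata $B\setminus B^\circ$, which should follow from the stability support condition forbidding transport to faces $\tau_m$ of positive codimension carrying the ``wrong'' part of $\St(m)$; and (iii) matching the total masses and the constant $c$. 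I would organize this as: build $\Psi$, identify its domains of linearity with a polyhedral subdivision dual to $\gamma$, then compute $\MA$ facet-by-facet via the change-of-variables $\MA(\Psi|_{\tau^\circ}) = (\partial\Psi)_*(\text{Lebesgue})$ and identify the outcome with the $B$-marginal of $\gamma$, namely $\nu$.
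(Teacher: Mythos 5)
Your overall strategy (optimal transport duality, reading the solution off a Kantorovich potential, extracting a coupling from the gradient) is the same as the paper's, but as written the proposal has three genuine gaps, the most serious in the sufficiency direction. You define $\Psi(n)=\sup_{m\in\Delta\cap M}\bigl(\langle m,n\rangle-\psi^*(m)\bigr)$ with the supremum over the \emph{finite} set $\Delta\cap M$. The transport problem here is between the continuous measures $\mu_M$ on $A$ and $\nu$ on $B$, so the dual potential is a function on all of $A$ and the $c$-transform must be $\sup_{m\in A}$; a max over finitely many affine functions is piecewise linear, its Monge--Amp\`ere measure is atomic, and it cannot solve \eqref{eq:MAInt} for, say, $\nu=\nu_N$. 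Consequently the whole ``domains of linearity indexed by $m\in\Delta\cap M$'' picture on which your verification of $\MA(\Psi|_{B^\circ})=\nu$ rests is not available. The paper instead takes $\Psi$ to be the genuine Kantorovich potential (maximizer of $J$ over $c$-convex functions, identified with $\mathcal P(\Delta)$ via Lemma~\ref{lem:PDelta}), and the real work is the chart comparison Lemma~\ref{lem:GradientsInt} (in $(\sigma,\tau)$-compatible coordinates, $\partial^c\Psi$ intersected with $\St(m_\tau)$ corresponds exactly to the ordinary gradient of $\Psi\circ\beta_\sigma^{-1}$) together with the a.e.\ single-valued selection map $T_\Psi$ of Lemma~\ref{lem:TMap}; the identity $\nu(E)=\bigl|\partial(\Psi\circ\beta_\sigma^{-1})(\beta_\sigma(E))\bigr|=\mu_M(T_\Psi^{-1}(E))$ is what the stability hypothesis buys, and nothing like it follows from a piecewise-linear ansatz.

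In the necessity direction you assert that if $n\in\tau^\circ$ and $m'\in\partial^c\Psi(n)$, then $m'\in\St(m_\tau)$ because $\Psi$ is within bounded distance of the support function. That deduction only constrains $\partial^c\Psi(n)$ to lie in $\Delta$, not in the star; in general one only knows that the projected point $p_\tau(m')$ again lies in $\partial^c\Psi(n)$ and in $\St(m_\tau)$ (Lemma~\ref{lem:BetterGradient}, Corollary~\ref{cor:GradientInStar}), and indeed the paper's semistable examples show the $c$-gradient can leave the star along lower-dimensional faces. So to get a plan supported on \eqref{eq:WeaklyStableSet} you must build the coupling from a selection that stays in the star --- this is again the role of $T_\Psi$, constructed from the $A$-side where $\partial(\Psi^c\circ\alpha_\tau^{-1})$ is single valued $\mu_M$-a.e. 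Finally, your uniqueness argument starts from the claim that equality of Monge--Amp\`ere measures on a facet forces the two restrictions to differ by an affine function; without boundary data this is false (any convex function affine in one direction has vanishing Monge--Amp\`ere measure, so solutions of $\MA(u)=0$ on $\tau^\circ$ are far from unique up to affine maps), and the global constraint $\Psi,\Psi'\in\mathcal P(\Delta)$ is exactly what a facet-local comparison cannot see. The paper's route is different: any solution yields an optimal plan supported on the graph of its $c$-gradient, hence is a maximizer of the dual functional $J$, and uniqueness of maximizers up to constants follows from convexity of the space of $c$-convex functions and an a.e.\ single-valuedness argument (Lemma~\ref{lem:OTUnique}); you would need either this or some substitute coupling of the facet-wise ambiguities, which your sketch does not supply.
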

\begin{remark}\label{rem:WeakSolution}
    In examples with abundant discrete symmetries, for example the standard unit simplex and the unit cube, we show that the solution is smooth on the open facets (see Theorem~\ref{thm:SmoothSolutions}, Lemma~\ref{lem:Symmetric} and Lemma~\ref{lem:SymmetricCube}). However, in general
    Equation~\ref{eq:MAIntLeb} and Equation~\ref{eq:MAInt} should be interpreted in a weak sense. More precisely, if $\beta:\tau^\circ\rightarrow \mathbb R^d$ is a coordinate function compatible with the tropical affine structure on the interior of a facet $\tau^\circ$, then $\Psi$ satisfies \eqref{eq:MAInt} if 
    $$ \partial(\Psi\circ\beta^{-1})(\beta(E)) = \nu(E) $$
    for any measurable set $E\subset \tau^\circ$, where $\partial$ denotes the (multi-valued) gradient. 
\end{remark}
\begin{remark}
    We will say that $(\Delta,h)$ is stable if $(\Delta,h,\nu_N)$ is stable and we will say that $\Delta$ is stable if $(\Delta,h_0)$ is stable, where $h_0$ is the trivial height function given by $h(0)=0$ and $h(m)=1$ for $m\in \Delta\cap M\setminus \{0\}$. 
\end{remark}
When $\Delta$ is the standard unit simplex (i.e. $Y=\mathbb P^{d+1}$), $h=h_0$ and $\nu$ is invariant under permutations of the vertices of $\Delta^\vee_h$, \eqref{eq:MAInt} was solved in \cite{HJMM}. If, in addition, $\nu$ is concentrated on $B^\circ$, $(\Delta,h_0,\nu)$, can be shown to be stable. 

At least when $Y$ is smooth and $h$ is trivial,
existence of a solution $\Psi\in \mathcal P(\Delta)$ to \eqref{eq:MAIntLeb} implies the metric SYZ conjecture for $X$. Originally, this approach was deployed in \cite{LiFermat} to prove the metric SYZ-conjecture for the Fermat family (i.e. $Y=\mathbb P^n$ and $h=h_0$). In \cite{HJMM}, it was explained how to use a solution to \eqref{eq:MAInt} to directly verify the condition in \cite{LiSYZ}. This generalised the results in \cite{LiFermat} to a larger class of (possibly non-symmetric) families in $\mathbb P^{d+1}$. 
An independent generalisation was also achieved in \cite{PS}. Moreover, families of hypersurfaces in certain more general toric Fano manifolds was considered in \cite{LiToric}. Theorem~\ref{thm:MAInt} have the following corollary 
(see \cite{LiToric}, Section~3, for details on how the corollary follows from Theorem~\ref{thm:MAInt} above):
\begin{corollary}\label{cor:SYZ}
    Assume $Y$ is smooth, $h=h_0$ and $\Delta$ is stable. Then there is for each $\delta>0$, some $\epsilon=\epsilon_\delta>0$ such that $X_t$ admits a special Lagrangian $\mathbb T^d$ fibration on a subset of normalized Calabi-Yau volume $1-\delta$ whenever $|t|<\epsilon$.
\end{corollary}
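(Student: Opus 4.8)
\textbf{Proof proposal for Corollary~\ref{cor:SYZ}.}

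The plan is to combine Theorem~\ref{thm:MAInt} with the machinery developed in \cite{LiSYZ} and \cite{LiToric}, so the bulk of the argument is to extract from stability of $\Delta$ a solution of the Monge--Ampère equation with exactly the regularity/extension properties demanded by Li's criterion. First, since $\Delta$ is stable, by definition $(\Delta,h_0,\nu_N)$ is stable, so Theorem~\ref{thm:MAInt} produces a convex function $\Psi\in\mathcal P(\Delta)$ solving $\MA(\Psi|_{B^\circ})=c\,\nu_N$ as in \eqref{eq:MAIntLeb} (with $c=\mu_M(A)/\nu_N(B)$). The key point is that $\Psi$, being an element of $\mathcal P(\Delta)$, is \emph{globally} a convex function on all of $N_\mathbb R$ with the prescribed asymptotics, so it automatically satisfies the extension property across the singular locus $B\setminus B^\circ$ that constitutes the structural input to \cite{LiSYZ}. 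Concretely, $\Psi$ corresponds (by the standard toric dictionary recalled in the introduction, cf.\ \cite{BB}) to a torus-invariant continuous semipositive metric on $-K_Y$, and the restriction of this metric data to the fibers $X_t$ is what feeds into Li's construction.

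Next I would invoke the reduction carried out in \cite{LiToric}, Section~3: when $Y$ is smooth and $h=h_0$, a solution $\Psi\in\mathcal P(\Delta)$ of \eqref{eq:MAIntLeb} is precisely the non-Archimedean/toric datum needed to verify the hypotheses of the main theorem of \cite{LiSYZ}. The mechanism is that $\Psi$ determines, via its Legendre transform and the associated symplectic potential on $\Delta^\vee_{h_0}$, a solution of the real Monge--Ampère equation on the open facets $B^\circ$ whose Hessian is bounded away from $0$ and $\infty$ on compact subsets; Li's gluing then produces, on the preimage $U_{t,\delta}\subset X_t$ of the region of $B^\circ$ staying at controlled distance from the singular strata, an approximate special Lagrangian $\mathbb T^d$-fibration, which is perturbed to a genuine special Lagrangian fibration by an implicit-function/contraction argument for $|t|$ small. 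The normalized Calabi--Yau volume of the complement $X_t\setminus U_{t,\delta}$ is controlled by the $\nu_N$-measure of a neighborhood of $B\setminus B^\circ$ together with the uniform estimates on $\Psi$, and since $\nu_N(B\setminus B^\circ)=0$ one can choose the neighborhood so that this complement has volume $<\delta$; this fixes $\epsilon=\epsilon_\delta$.

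The main obstacle, and the step I would spend the most care on, is verifying that the solution $\Psi$ furnished by Theorem~\ref{thm:MAInt} has enough regularity on $B^\circ$ to run Li's perturbation argument uniformly in $t$ — a priori the theorem only gives a weak (Alexandrov) solution, as emphasized in Remark~\ref{rem:WeakSolution}. Here one uses that $h=h_0$ is trivial and $Y$ is smooth, so the relevant facets of $\Delta^\vee_{h_0}$ are unimodular simplices and the right-hand side $c\,\nu_N$ is (a constant times) Lebesgue measure in affine coordinates; interior $C^\infty$ regularity on $B^\circ$ then follows from Caffarelli's regularity theory for the real Monge--Ampère equation with smooth positive density, exactly as in \cite{LiFermat}. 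I would also note that the uniqueness clause of Theorem~\ref{thm:MAInt} is not strictly needed for the corollary but guarantees the fibration base is canonically $B$, matching the expectation stated in the introduction. Modulo these regularity inputs, the corollary is a formal consequence of Theorem~\ref{thm:MAInt} and \cite{LiSYZ,LiToric}, and the detailed bookkeeping of constants is routine.
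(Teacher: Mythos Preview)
Your overall strategy---apply Theorem~\ref{thm:MAInt} to obtain $\Psi\in\mathcal P(\Delta)$ solving \eqref{eq:MAIntLeb}, observe that membership in $\mathcal P(\Delta)$ \emph{is} the extension property required by \cite{LiSYZ}, and then defer to \cite{LiToric}, Section~3---matches the paper exactly; the paper gives no further details beyond this reference.

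However, your third paragraph contains two genuine errors, and more importantly it addresses an obstacle that is not actually present. First, smoothness of $Y$ means $\Delta$ is Delzant; it says nothing about $\Delta^\vee$, whose facets need not be simplices at all, let alone unimodular ones. Second, Caffarelli's interior regularity for the real Monge--Amp\`ere equation with smooth positive density is not automatic here: it requires convexity of the gradient image (or equivalent hypotheses), which is exactly the nontrivial content of Theorem~\ref{thm:SmoothSolutions} and is \emph{not} known for general stable $\Delta$. The paper is explicit that regularity of $\Psi$ on $B^\circ$ is open in general (see Remark~\ref{rem:WeakSolution} and the discussion preceding Corollary~\ref{cor:SmoothSYZ}).

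The reason this does not sink the corollary is that Li's argument in \cite{LiSYZ,LiToric} does \emph{not} require $C^\infty$ regularity of $\Psi$ on all of $B^\circ$. A convex function is differentiable almost everywhere, and the special Lagrangian fibration is produced over (preimages of compact pieces of) the smooth locus of $\Psi|_{B^\circ}$, which already has full $\nu_N$-measure; the paper says precisely this in the paragraph beginning ``The exact subset of $X_t$ admitting a special Lagrangian torus fibration in Corollary~\ref{cor:SYZ} is not explicit.'' So you should simply drop the regularity paragraph: the weak solution $\Psi\in\mathcal P(\Delta)$ is the entire input needed, and the volume-$\delta$ loss absorbs both the neighborhood of $B\setminus B^\circ$ and the (possibly wild) non-smooth locus of $\Psi$ inside $B^\circ$.
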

\begin{remark}
    In general, the subset admitting a special Lagrangian torus fibration in Corollary~\ref{cor:SYZ} is not explicit. However, for the hypersurfaces in $\mathbb P^{d+1}$ and $(\mathbb P^1)^{d+1}$, the regularity result for the standard unit simplex and the unit cube mentioned above can be used to extract more precise information (see Corollary~\ref{cor:SmoothSYZ}) below. 
\end{remark}
One of the key ideas in \cite{HJMM} is a new variational principle for \eqref{eq:MAInt}. This variational principle will play a key role here. A striking feature of \cite{LiToric} is that it doesn't rely on symmetries of $\Delta$, unlike \cite{LiFermat, HJMM, PS}. However, \cite{LiToric} relies on a condition on the vertices of $\Delta$ and $\Delta^\vee$, which unfortunately seems rather restrictive (see Section~\ref{sec:Computer Aided}). 

Examining the stability condition in Definition~\ref{def:Stability}, one finds that if $n$ is a point in a facet $\tau$ of $B$ then the cost function $-\langle \cdot,n \rangle$ achieves its minimum at $m_\tau$. It is thus reasonable to expect that as long as $\Delta$ and $h$ are not too wild, the optimal transport plan will be supported on \eqref{eq:WeaklyStableSet} and $(\Delta,h,\nu_N)$ is stable. In Section~\ref{sec:Examples} we explain how symmetries in $\Delta$ and $h$ can be used to prove stability of $(\Delta,h,\nu_N)$. 
However, somewhat surprisingly, there is a large number of cases when stability fails and \eqref{eq:MAIntLeb} doesn't admit a solution, even when $\nu=\nu_N$. We will briefly summarize these findings here. More details are provided in Section~\ref{sec:Examples}. 
Note that $\Delta^\vee_{h_0}$ is the usual dual $\Delta^\vee$ of $\Delta$. 
\begin{itemize}
    \item Let $\Delta$ be the standard 2-simplex and hence $Y=\mathbb P^2$. There is a height function $h$ such that \eqref{eq:MAIntLeb} does not admit a solution (Example~\ref{ex: unstable 1d nontrivial height} in Section~\ref{sec:Examples}).
    \item Let $h=h_0$ be the trivial height function. Then, among the 4319 reflexive polytopes of dimension three, at least 1542 do not admit solutions to \eqref{eq:MAIntLeb} (Table~\ref{tab:d2} in Section~\ref{sec:Examples}). 
    \item Let $h=h_0$ be the trivial height function. Among 4319 reflexive polytopes of dimension three, 145 admit solutions but are very close to being unstable (see Definition~\ref{def: strict strictural semistability}). As explained in the next two paragraphs, these examples express unexpected behaviours with respect to the singular sets of the associated tropical affine structures.  
\end{itemize}

In general, equation~\eqref{eq:MAInt} can be relaxed by considering general polarizations of $Y$. The first bullet above is interesting since it describes a case when this relaxation gives no additional freedom. We stress, however, that solvability of \eqref{eq:MAIntLeb} is only a sufficient condition for the metric SYZ-conjecture to hold for $X$. What the first bullet point tells us is that for an approach similar to \cite{LiFermat} to work for the family defined by Example~\ref{ex: unstable 1d nontrivial height}, another ambient toric variety than $\mathbb P^2$ has to be considered. 

It is generally expected that the solution to the Monge-Ampère equation \eqref{eq:MAInt} can be extended to a set which is larger than $B^\circ$, in particular it should extend to a set $B\setminus \Sigma$ where $\Sigma$ has codimension two. This will necessarily involve a choice of tropical affine structure extending the tropical affine structure on $B^\circ$. A priori, there does not seem to be a canonical choice of such a structure. In particular, extending the tropical affine structure involves a seemingly arbitrary choice of location for its singular set. In Section~\ref{sec:OutsideOpenFaces} we address this by arguing that the location of the singular set needs to be chosen to suit the Monge-Ampère equation and thus plays the role of a free boundary in terms of PDE theory. We give a precise definition of the extension of the tropical affine structure (Definition~\ref{def:Usigma}) and show that if \eqref{eq:MAInt} admits a solution, then it extends to a solution on the regular part of this tropical affine structure (Theorem~\ref{thm:MA}). In two examples, the standard unit simplex and the unit cube, we show in addition that the singular set of the induced tropical affine structure $\Sigma_\Psi$ is of codimension 2 and that the solution (or, more precisely, the potentials in \eqref{def:PDEproblem}) are smooth on $B\setminus \Sigma_\Psi$ (Theorem~\ref{thm:SmoothSolutions}, Lemma~\ref{lem:Symmetric} and Lemma~\ref{lem:SymmetricCube}). 

For general data $\Delta,h,\nu$, the exact definition of $\Sigma$ is somewhat technical, due to the lack of a regularity theory for $\Psi$. However, $\Psi$ defines a multivalued map $\partial^c\Psi:B\rightarrow A$ (the $c$-gradient) and assuming this map is a homeomorphism, the definition of $\Sigma$ reduces to 
\begin{equation} \Sigma = \Sigma_\Psi = B_{d-1}\cap (\partial^c\Psi)^{-1}(A_{d-1}) \label{eq:Sigma when homeomorphism} \end{equation}
where $A_{d-1} = A\setminus A^\circ$ and $B_{d-1}=B\setminus B^\circ$ are the $(d-1)$-skeletons of $A$ and $B$. If we use $\partial^c\Psi$ to identify $A$ and $B$ this fits well into the point of view put fourth by many authors that the singular set should be the intersection of the codimension 1 skeletons of two dual polyhedral structures on the unit sphere (see for example \cite{KS06}, end of Section~2.2, and \cite{LiToric}, Section~2.10) 
and assuming $B_{d-1}$ and $(\partial^c\Psi)^{-1}(A_{d-1})$ intersect transversely this gives a set $\Sigma_\Psi$ of codimension two, as is generally expected in the SYZ conjecture. 

Three interesting questions related to this are 
\begin{itemize}
\item Size: How big is $\Sigma_\Psi$? In particular, are there suitable conditions under which $\Sigma_\Psi$ is of codimension 2?
\item Minimality: Is $\Sigma_\Psi$ minimal, or does there exist a closed proper subset $\Sigma'$ of $\Sigma_\Psi$ such that the tropical affine structure extends to $B\setminus \Sigma'$ and the Monge-Ampère equation is satisfied on this larger set
\item Uniqueness: Assuming $\Sigma_\Psi$ is minimal, does there exist a closed set $\Sigma'\subset B$, not containing $\Sigma_\Psi$, such that the tropical affine structure on $B^\circ$ extends to $B\setminus \Sigma'$ and the Monge-Ampère equation is satisfied on this larger set
\end{itemize}
When considering the second and third bullet points above, we will assume $\Sigma_\Psi$ and $\Sigma'$ are the singular sets of tropical affine structures whose coordinate functions are defined by facets of $\Delta$ (see Section~\ref{sec:AffineStructure}). Varying the affine structure will come down to changing the domain of these coordinate functions. Under this constraint, Section~\ref{sec:OutsideOpenFaces} give strong evidence for the third bullet point. More precisely, assuming $\Sigma_\Psi$ is minimal we prove that for any $\Sigma'$ not containing $\Sigma_\Psi$, the set $B\setminus \Sigma'$ will necessarily contain points where the solution is non-differentiable  (see Lemma~\ref{lem:SingularOutsideCharts}). This answers the third bullet at least for $d=2$, where solutions are smooth. On the other hand, in Section~\ref{sec:Examples} we give a large class of examples such that \eqref{eq:MAInt} admits a solution but $\partial^c\Psi$ maps entire $(d-1)$-dimensional faces of $B_{d-1}$ into $A_{d-1}$. In particular, this means $\Sigma_\Psi$, as defined above, is of codimension 1. In fact, in these examples $B\setminus \Sigma_\Psi$ is not connected. These examples are stable but very close to being unstable (see Definition~\ref{def: strict strictural semistability}) and we argue that the second bullet point above is likely failing for these examples. More precisely, it seems that $\Sigma_\Psi$ can be replaced by a smaller subset $\Sigma'$ which is of codimension 2 (see Example~\ref{ex: admissable and semistable} and Remark~\ref{rem:Removable Singularity}). Curiously, $\Sigma'$ contains $(d-2)$-dimensional faces of $B$. Together with \cite{HJMM}, where the singular set induces a barycentric subdivision of the $(d-1)$-dimensional faces, this suggests a heuristic picture where the singular set is pushed towards the boundary of some $(d-1)$-dimensional faces when $\nu$ (or $h$, if we let $h$ take values in $\mathbb R$) moves towards data which is not stable. 

The exact subset of $X_t$ admitting a special Lagrangian torus fibration in Corollary~\ref{cor:SYZ} is not explicit. Loosely speaking, it lives over the smooth locus in the open facets of the solution $\Psi$. Since the singular set of local solutions to real Monge-Ampère equations can be quite wild there is not much hope of getting a more explicit description without regularity results for $\Psi$. Moreover, since the base is contained in the open facets of $\Delta^\vee$, the subset furnished by Corollary~\ref{cor:SYZ} is definitely not connected. On the other hand, the regularity result for the standard unit simplex and the unit cube mentioned above implies a more precise version of Corollary~\ref{cor:SYZ}. To state it, let $T_\bC\subset Y$ be the complex $(d+1)$-torus and $\Log_s:T_\bC\rightarrow N_\bR$ the map defined by sending $x\in T_\bC$ to the unique $\Log_s(x)=n\in N_\bR$ such that 
$$ \langle m,\Log_s(x) \rangle = \frac{1}{s}\log|f_m(x)| $$
for all $m\in M$. If we fix generators $m_0,\ldots,m_d$ of $M$ these determines coordinates $(\langle m_0,\cdot \rangle,\ldots,\langle m_d,\cdot \rangle)$ on $N_\bR$ and 
$$ (z_0,\ldots,z_d) = (f_{m_0}, \ldots, f_{m_d}) $$
on $T_\bC$ and $\Log_s$ takes the form
$$ \Log_s(z_0,\ldots,z_d) = \frac{1}{s}(\log|z_0|,\ldots,\log|z_d|). $$
Let $\Delta_{simplex}$ and $\Delta_{cube}$ be the standard unit simplex and the unit cube in $M_\bR$, respectively. For each facet $\sigma$ of $\Delta_{simplex}$ and $\Delta_{cube}$, let $\SmSt(n_\sigma)^\circ$ be the open star of $n_\sigma$ in the barycentric subdivison of $\Delta_{simplex}$ or $\Delta_{cube}$. 
\begin{corollary}\label{cor:SmoothSYZ}
    Let $K_{faces}$ be a compact subset of $B^\circ$ and $\tilde U_{faces}=R_{\geq 0}K_{faces}$ be the cone generated by $K_{faces}$. For each facet $\sigma$ of $\Delta_{sim}$ (or $\Delta_{cube}$), let $K_\sigma$ be a compact subset of $\SmSt(n_\sigma)^\circ$. Let
    $$ \tilde U_{stars} = \cup_\tau( [0,1]\cdot K_\sigma + \bR_{\geq 0}n_\sigma). $$
    Then for small $t$, $X_t$ admits a special Lagrangian torus fibration on 
    $$ \Log_t^{-1}\left(\tilde U_{faces} \cup \tilde U_{stars}\right). $$
\end{corollary}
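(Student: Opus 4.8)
\emph{Proof proposal.} The plan is to assemble three ingredients: (i) the regularity results for the standard unit simplex and the unit cube (Theorem~\ref{thm:SmoothSolutions}, Lemma~\ref{lem:Symmetric}, Lemma~\ref{lem:SymmetricCube}), which state that for $\Delta=\Delta_{sim}$ or $\Delta_{cube}$ and $h=h_0$ the solution $\Psi$ of Theorem~\ref{thm:MAInt} is smooth and strictly convex on $B^\circ$ and, in the extended tropical affine structure of Definition~\ref{def:Usigma}, on all of $B\setminus\Sigma_\Psi$ with $\Sigma_\Psi$ of codimension two; (ii) the extension Theorem~\ref{thm:MA}, which guarantees that $\Psi$ solves the Monge--Amp\`ere equation \eqref{eq:MAInt} on the whole regular locus $B\setminus\Sigma_\Psi$ of that structure; and (iii) the analytic construction of \cite{LiSYZ}, in the refined toric form of \cite{LiToric}, Section~3, which turns such a solution into a special Lagrangian $\mathbb T^d$ fibration over the corresponding part of $X_t$ for small $t$. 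The first step is to record the output of (iii) in the two forms needed. \emph{Facet form}: if $\tau^\circ$ is an open facet of $B$, $\bar U\subset\tau^\circ$ is compact, and $\Psi|_{\tau^\circ}$ is smooth and strictly convex in a neighbourhood of $\bar U$, then for $t$ small $X_t$ admits a special Lagrangian torus fibration over $\Log_t^{-1}(\bR_{\geq 0}\bar U)$. \emph{Star form}: if $\sigma$ is a facet of $\Delta$, $\bar U\subset\SmSt(n_\sigma)^\circ$ is compact, and the potential $\psi_\sigma:=\Psi\circ\beta_\sigma^{-1}$ attached to the chart $\beta_\sigma$ of the extended affine structure is smooth and strictly convex in a neighbourhood of $\bar U$, then for $t$ small $X_t$ admits a special Lagrangian torus fibration over $\Log_t^{-1}\big([0,1]\cdot\bar U+\bR_{\geq 0}n_\sigma\big)$. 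Here (ii) is what makes the fibers special Lagrangian rather than merely Lagrangian, and (ii) together with Definition~\ref{def:Usigma} is what allows the star form to be formulated at all, since $\SmSt(n_\sigma)^\circ$ meets toric strata of codimension $\geq 2$.

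Granting these, the corollary follows by a finite covering argument. For $\tilde U_{faces}$: since $K_{faces}$ is a compact subset of $B^\circ$ and $\Sigma_\Psi\subset B_{d-1}$ is disjoint from $B^\circ$, the set $K_{faces}$ is the finite disjoint union of the compact pieces $K_{faces}\cap\tau^\circ$ over the facets $\tau$ of $B$; by (i), $\Psi$ is smooth and strictly convex on a neighbourhood of each such piece in $\tau^\circ$, so the facet form yields, for $t$ below a finite threshold, a special Lagrangian fibration over $\Log_t^{-1}(\bR_{\geq 0}(K_{faces}\cap\tau^\circ))$, and these assemble to one over $\Log_t^{-1}(\tilde U_{faces})$. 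For $\tilde U_{stars}$: for each of the finitely many facets $\sigma$ of $\Delta$, the set $K_\sigma$ is by hypothesis a compact subset of $\SmSt(n_\sigma)^\circ$, which lies in the regular locus $B\setminus\Sigma_\Psi$, so by (i) and (ii) the potential $\psi_\sigma$ is smooth and strictly convex near $K_\sigma$, and the star form gives a special Lagrangian fibration over $\Log_t^{-1}\big([0,1]\cdot K_\sigma+\bR_{\geq 0}n_\sigma\big)$ for $t$ below a finite threshold. Taking $t$ below the minimum of all these thresholds and checking that the resulting local fibrations agree on overlaps --- which they do, being built by the same construction from the single global solution $\Psi$ and from charts of the extended affine structure that are affinely compatible where they overlap by Definition~\ref{def:Usigma} --- produces the special Lagrangian torus fibration on $\Log_t^{-1}(\tilde U_{faces}\cup\tilde U_{stars})$ asserted by the corollary.

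The main obstacle is the star form of ingredient (iii): extending the construction of \cite{LiSYZ,LiToric}, which is carried out over relatively compact subsets of the open facets, so that it still produces special Lagrangian fibers over the barycentric stars $\SmSt(n_\sigma)^\circ$, i.e. across the toric strata of codimension $\geq 2$. Two points need care. First, one must exhibit $X_t$ near these strata as a small, smooth $t$-perturbation of the semiflat model built from the extended affine base and the extended solution of Theorem~\ref{thm:MA}; the uniform estimates making the perturbation small and the relevant linearised operator invertible come precisely from smoothness and strict convexity of $\psi_\sigma$ up to (the regular part of) the boundary of the chart $\beta_\sigma$, i.e. from Theorem~\ref{thm:SmoothSolutions} and Lemmas~\ref{lem:Symmetric}--\ref{lem:SymmetricCube} applied on the compact set $K_\sigma$. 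Second, one must check that the perturbative passage from Lagrangian to special Lagrangian fibers is uniform in $t$ over these regions, which again reduces to the same regularity input. The remaining steps --- identifying the $\Log_t$-images of the relevant pieces of $X_t$ with the cone $\bR_{\geq 0}K_{faces}$ and the regions $[0,1]\cdot K_\sigma+\bR_{\geq 0}n_\sigma$, and the compactness bookkeeping needed to choose a single $\epsilon$ --- are routine.
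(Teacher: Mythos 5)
Your proposal correctly assembles the ingredients (the regularity results for the simplex and cube, the extension Theorem~\ref{thm:MA}, and Li-type machinery) and correctly locates the difficulty in what you call the ``star form''. But at exactly that point the proposal stops being a proof: the star form \emph{is} the new content here, and you only assert that the construction of \cite{LiSYZ,LiToric} ``extends'' across the codimension~$\geq 2$ toric strata given smoothness and strict convexity of $\psi_\sigma$, with a vague appeal to a small perturbation of a semiflat model, an invertible linearised operator, and a gluing of locally constructed fibrations. As the paper itself stresses, the arguments of \cite{LiSYZ} (and the appendix of \cite{LiToric}) pertain only to the open facets of $\Delta^\vee$, so the star regions cannot be handled by citation; and the mechanism the paper actually uses is different from the one you sketch. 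Concretely, the appendix: (1) builds holomorphic ``starlike'' charts on $X_t$ by the implicit function theorem in the region where the monomial attached to $\sigma$ dominates, together with a local $(\mathbb C^*)^d$-action whose $\Log_s$-orbits run along $n_\sigma$; (2) proves (Lemma~\ref{lem:Psi-mInvariance}) that $\Psi$ is affine in the $n_\sigma$-direction on $\partial^c\Psi^{-1}(\sigma)$, so that $(\Psi-m_0)\circ\Log_s$ is $(S^1)^d$-invariant on these charts and its complex Monge--Amp\`ere measure reduces to the real one solved via Theorem~\ref{thm:MA} --- this semiflatness along the star direction is what makes the model usable there, and it appears nowhere in your outline; (3) compares the Calabi--Yau potential with this model by a \emph{global} $C^0$-estimate coming from the uniform Skoda estimate of \cite{LiSkoda} and the $L^1$-stability theorem of \cite{LiSYZ}, followed by the mean value inequality on the holomorphic charts --- not by a local linearisation; this step crucially needs the starlike charts to carry almost all of the Calabi--Yau volume (in the corollary's setting, that $\Sigma_s$ is null, i.e. $\nu_N(\cup U_\sigma)=\nu_N(\Delta^\vee)$), a hypothesis your argument never surfaces; (4) obtains higher-order closeness from Savin's small perturbation theorem and the fibration itself from Zhang's theorem, which produces the special Lagrangian fibration on the whole region at once, so no overlap-compatibility argument between locally built fibrations is needed --- and your claim that such local fibrations ``agree on overlaps because they are built by the same construction'' is not justified in the form you state it.

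In short: the covering/bookkeeping part of your proposal is fine, and for $\tilde U_{faces}$ a citation of the existing open-facet results would indeed suffice (the paper instead redoes this case with facelike charts, the amoeba estimate Lemma~\ref{lem:Amaeba} and the Lipschitz bound on $\Psi$, so as to bypass the non-Archimedean machinery). The genuine gap is the star regions: your proof defers precisely the step that requires new work, and the route you gesture at (local perturbation plus gluing) is not the one that is known to close; the paper closes it with the starlike charts, the invariance Lemma~\ref{lem:Psi-mInvariance}, the global Skoda/$L^1$-stability $C^0$-estimate under the full-volume condition, and Savin--Zhang.
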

We will explain in the appendix how Corollary~\ref{cor:SmoothSYZ} follows from Theorem~\ref{thm:MA} and the regularity results in Theorem~\ref{thm:SmoothSolutions}, Lemma~\ref{lem:Symmetric} and Lemma~\ref{lem:SymmetricCube} for the standard unit simplex and unit cube. Briefly put, the argument in \cite{LiSYZ}, which forms the base of the arguments in \cite{HJMM} and the appendix of \cite{LiToric}, only pertains to the open faces of $\Delta^\vee$. To address (parts of) the lower dimensional faces we use the setup of \cite{LiFermat}. The argument relies on the uniform Skoda estimate in \cite{LiSkoda} and the application of $L^1$-stability in \cite{LiSYZ}. However it bypasses all non-Archimedean geometry (including the solution to the non-Archimedean Calabi-Yau problem, its Fubini-Study approximations of and their regularizations). Compared to \cite{LiFermat} it is also somewhat simpler since it relies on existence of solutions from Theorem~\ref{thm:MA} instead of constructing solutions as limits of (double Legendre regularizations) of averages of Calabi-Yau potentials. Consequently, the exposition might be of independent interest.

\subsection*{Outline}
In Section~\ref{sec:Reflexive Polytopes} we recall some basic properties of reflexive polytopes, most importantly a projection property for $\Delta$ and $\Delta_h$. In Section~\ref{sec:AffineStructure} we explain how ideas from \cite{GS,HZ} give a natural tropical affine structures on large subsets of $A$ and $B$. In Section~\ref{sec:Pairing} we show how these affine structures respect the pairing between $M_\mathbb R$ and $N_\mathbb R$ and in Section~\ref{sec:cGradient} we explain how this can be used to control the $c$-gradient of a function $\Psi\in \mathcal P(\Delta)$. Section~\ref{sec:OptimalTransport} recalls some facts from optimal transport theory, in particular the principle that a transport plan is optimal if and only if it is supported on the $c$-gradient of a $c$-convex function. Section~\ref{sec:MA} puts these ideas together to prove Theorem~\ref{thm:MAInt}. In Section~\ref{sec:OutsideOpenFaces} we define $\Sigma_\Psi$ and show how to extend the affine structure on $B^\circ$ to $B\setminus \Psi$ and prove that the solution to \eqref{eq:MAInt} satisfies a Monge-Ampère equation on this larger set (see Definition~\ref{def:PDEproblem}). We also address the third bullet regarding $\Sigma_\Psi$ above. 

In Section~\ref{sec:Examples} we present examples that highlight important features of our results. In particular, Section~\ref{sec:Examples Existence} gives positive existence results using symmetry, Section~\ref{sec:Examples non-Existence} gives examples when \eqref{eq:MAInt} fails to have a solution, Section~\ref{sec:Examples Anomalous} presents examples such that the extension of the tropical affine structure has an anomalous singular set and Section~\ref{sec:Computer Aided} presents quantitative results which show that the properties exhibited in these examples are frequently occurring in reflexive polytopes. 

\subsection*{Acknowledgement}
The authors would like to thank Yang Li for helpful comments on a draft of the paper. The second author would like to thank Mattias Jonsson, Enrica Mazzon and Nick McCleerey for many fruitful discussions on the subject. The second author was supported by the Knut and Alice Wallenberg Foundation, grant 2018-0357.

\section{Reflexive Polytopes}\label{sec:Reflexive Polytopes}
A reflexive polytope is a lattice polytope $\Delta$ whose dual $\Delta^\vee$ is also a lattice polytope. Equivalently, $\Delta$ can be written as 
$$ \{ m: \langle m,n_i \rangle \leq 1 \} $$
for a number of lattice vectors $n_1,\ldots,n_k$ (the vertices of $\Delta^\vee)$. The definition of reflexive polytopes, along with the idea to study them in mirror symmetry, goes back to \cite{Batyrev}. The condition puts strong restrictions on $\Delta$. Since $n_i$ is a lattice vector, we get that $\langle m,n_i \rangle\in \mathbb Z$ for each $m\in M$, consequently, if $m\in \Delta\cap M$ and $\langle m,n_i \rangle>0$ for some $i$, then $\langle m,n_i \rangle =1$. From this one may conclude the following:
\begin{itemize}
\item If $\sigma$ is a facet of $\Delta$, then there are no lattice points strictly between the affine subspace spanned by $\sigma$ and the parallel subspace passing through the origin.
\item Each $n_i$ is primitive, i.e. there is no $q>1$ such that $n_i/q\in N$. 
\end{itemize}
With a little bit more effort, we also get the follow two statements regarding projections of $\Delta$ and $\Delta^\vee_h$.  
\begin{lemma}[See also \cite{Nill}, Prop 2.2 1]
\label{lem:ProjectionDelta}
Let $m_0\in (\Delta\cap M)\setminus \{0\}$ and $\pi_{m_0}:M_\mathbb R\rightarrow M_\mathbb R/\mathbb Rm_0$ be the projection map. Then $\pi_{m_0}(\St(m_0))=\pi_{m_0}(\Delta)$.
\end{lemma}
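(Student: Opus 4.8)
The inclusion $\pi_{m_0}(\St(m_0)) \subseteq \pi_{m_0}(\Delta)$ is immediate from $\St(m_0) \subseteq \Delta$, so the real content is the reverse inclusion, and the plan is to show that for every $x \in \Delta$ the line $x + \mathbb{R}m_0$ meets $\St(m_0)$. Note first that, since $m_0 \in M \setminus \{0\}$ and the origin is the unique interior lattice point of a reflexive polytope, $m_0$ lies on $\partial\Delta$.

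Write $\Delta = \{m : \langle m, n_i\rangle \leq 1,\ i=1,\dots,k\}$ with $n_1,\dots,n_k \in N$ the vertices of $\Delta^\vee$, and let $\sigma_i = \{m \in \Delta : \langle m, n_i\rangle = 1\}$ be the corresponding facets. Put $I = \{i : \langle m_0, n_i\rangle = 1\}$; this is the non-empty set of facets through $m_0$, and $\sigma_i \subseteq \St(m_0)$ for every $i \in I$ (in fact $\St(m_0) = \bigcup_{i\in I}\sigma_i$, since every face through $m_0$ lies in a facet through $m_0$). The step where reflexivity enters — and the one I regard as the crux — is the observation that for $j \notin I$ the number $\langle m_0, n_j\rangle$ is an integer which is $\leq 1$ but not equal to $1$, hence $\leq 0$.

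Given $x \in \Delta$, set $t_i = 1 - \langle x, n_i\rangle \geq 0$ for $i \in I$, let $t^\ast = \min_{i\in I} t_i$ be attained at some $i^\ast$, and define $y = x + t^\ast m_0$. Then $\langle y, n_{i^\ast}\rangle = \langle x, n_{i^\ast}\rangle + t^\ast = 1$; for $j \in I$ we get $\langle y, n_j\rangle = \langle x, n_j\rangle + t^\ast \leq \langle x, n_j\rangle + t_j = 1$; and for $j \notin I$ we get $\langle y, n_j\rangle = \langle x, n_j\rangle + t^\ast\langle m_0, n_j\rangle \leq \langle x, n_j\rangle \leq 1$, because $t^\ast \geq 0$ and $\langle m_0, n_j\rangle \leq 0$. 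Hence $y \in \Delta$ with $\langle y, n_{i^\ast}\rangle = 1$, so $y \in \sigma_{i^\ast} \subseteq \St(m_0)$, while $\pi_{m_0}(y) = \pi_{m_0}(x)$ because $y - x \in \mathbb{R}m_0$. This gives $\pi_{m_0}(\Delta) \subseteq \pi_{m_0}(\St(m_0))$, completing the proof.

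I do not expect a genuine obstacle here: once the minimization over $I$ is set up, the only thing to be careful about is that sliding $x$ along $m_0$ toward $\St(m_0)$ does not first push it out of $\Delta$ through some facet $\sigma_j$ with $j \notin I$ — which is exactly what the reflexivity-induced inequality $\langle m_0, n_j\rangle \leq 0$ rules out. Without reflexivity the same argument would only land on $\partial\Delta$, not necessarily in $\St(m_0)$.
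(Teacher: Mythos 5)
Your proof is correct and takes essentially the same approach as the paper: the paper slides each point of $\Delta$ along $+m_0$ as far as possible (the set $V_{m_0}$ of points that cannot be moved further) and shows this terminal set equals $\St(m_0)$, while you simply make the slide explicit via $t^\ast=\min_{i\in I}\bigl(1-\langle x,n_i\rangle\bigr)$. The reflexivity input is the same integrality observation, used by you in contrapositive form ($\langle m_0,n_j\rangle\le 0$ for facets not through $m_0$) and by the paper as $\langle m_0,n_i\rangle>0\Rightarrow\langle m_0,n_i\rangle=1$.
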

\begin{proof}
Let 
$$ V_{m_0} = \{m\in \Delta: m+\epsilon m_0\notin \Delta \text{ for all } \epsilon>0 \} $$
And note that the image of $\Delta$ under the projection above is contained in the image of $V_{m_0}$. We claim that $V_{m_0}=\St(m_0)$. To see this, let first $m\in \St(m_0)$. It follows that $m$ and $m_0$ lie on a common facet, i.e. $\langle m,n_i \rangle = \langle m_0,n_i \rangle = 1$ for some $n$. We get $(m+\epsilon m_0)n_i = 1+\epsilon$, hence $m+\epsilon m_0\notin \Delta$ for any $\epsilon>0$. Conversely, assume $m\in V_{m_0}$. Then there is $i$ such that
$$ 1 < \langle m+\epsilon m_0, n_i\rangle = \langle m, n_i\rangle + \epsilon\langle m_0, n_i\rangle $$ 
for all $\epsilon$. 
It follows that $\langle m,n_i\rangle = 1$ and $\langle m_0,n_i \rangle> 0$, and by the observation above that $\langle m_0,n_i \rangle =1$. Consequently, $m_0$ and $m$ lie on the facet of $\Delta$ defined by $n_i$ and $m\in \St(m_0)$. 
\end{proof}
Closely related to the previous lemma, we have the following lemma regarding projections of $\Delta^\vee_h$.
\begin{lemma}\label{lem:ProjectionDeltaDual}
Let $\sigma$ be a facet of $\Delta$ and $n_\sigma\in N$ be its primitive outward normal. Let $\pi_{n_\sigma}: N_\mathbb R \rightarrow N_\mathbb R/\mathbb R n_\sigma$ be the projection map. Then 
$$ \pi_{n_\sigma}(\cup_{m\in \sigma\cap M} \tau_m) = \pi_{n_\sigma}(\Delta^\vee_h). $$
\end{lemma}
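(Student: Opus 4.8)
The plan is to run the proof of Lemma~\ref{lem:ProjectionDelta} on the dual side. Since $\Delta$ is reflexive and contains $0$ in its interior, $\Delta^\vee_h$ is a bounded polytope, and $n_\sigma$ is one of the lattice vectors $n_i$ in the description $\Delta=\{m:\langle m,n_i\rangle\leq 1\}$, i.e. a vertex of $\Delta^\vee$; in particular $\sigma\subset\{\langle\,\cdot\,,n_\sigma\rangle=1\}$, and by the reflexivity observation recalled above every $m\in\Delta\cap M$ with $\langle m,n_\sigma\rangle>0$ satisfies $\langle m,n_\sigma\rangle=1$, hence $m\in\sigma$. I would then introduce the analogue of $V_{m_0}$, namely the ``top'' of $\Delta^\vee_h$ in the direction $n_\sigma$:
\[ W_{n_\sigma}=\{\,n\in\Delta^\vee_h:\ n+\epsilon n_\sigma\notin\Delta^\vee_h\ \text{for all}\ \epsilon>0\,\}. \]

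First I would check that $\pi_{n_\sigma}(W_{n_\sigma})=\pi_{n_\sigma}(\Delta^\vee_h)$. The inclusion $\subseteq$ is immediate from $W_{n_\sigma}\subset\Delta^\vee_h$. For $\supseteq$, fix $n\in\Delta^\vee_h$; since $\Delta^\vee_h$ is compact and $n_\sigma\neq 0$, the set $\{\epsilon\geq 0:\ n+\epsilon n_\sigma\in\Delta^\vee_h\}$ is nonempty, closed and bounded, hence has a maximum $\epsilon_0$, and then $n+\epsilon_0 n_\sigma\in W_{n_\sigma}$ with $\pi_{n_\sigma}(n+\epsilon_0 n_\sigma)=\pi_{n_\sigma}(n)$. (Equivalently: each nonempty fiber of $\pi_{n_\sigma}$ over $\Delta^\vee_h$ is a compact segment whose top endpoint lies in $W_{n_\sigma}$.)

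The heart of the matter is the identity $W_{n_\sigma}=\bigcup_{m\in\sigma\cap M}\tau_m$. For ``$\supseteq$'': if $n\in\tau_m$ with $m\in\sigma\cap M$, then $\langle m,n+\epsilon n_\sigma\rangle=h(m)+\epsilon>h(m)$ for all $\epsilon>0$, so $n+\epsilon n_\sigma\notin\Delta^\vee_h$ and $n\in W_{n_\sigma}$. For ``$\subseteq$'': let $n\in W_{n_\sigma}$. Since $\Delta^\vee_h$ is cut out by the finitely many inequalities $\langle m,\cdot\rangle\leq h(m)$, $m\in\Delta\cap M$, pigeonholing over a sequence $\epsilon_k\to 0^+$ produces a single $m\in\Delta\cap M$ with $\langle m,n+\epsilon_k n_\sigma\rangle>h(m)$ for all $k$. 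Letting $k\to\infty$ gives $\langle m,n\rangle\geq h(m)$, while $n\in\Delta^\vee_h$ gives $\langle m,n\rangle\leq h(m)$; hence $\langle m,n\rangle=h(m)$, i.e. $n\in\tau_m$. Subtracting, $\epsilon_k\langle m,n_\sigma\rangle>0$, so $\langle m,n_\sigma\rangle>0$, and by the reflexivity observation $\langle m,n_\sigma\rangle=1$, i.e. $m\in\sigma$. Combining the three steps proves the lemma.

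I expect the pigeonhole-plus-reflexivity step --- the inclusion ``$\subseteq$'' in $W_{n_\sigma}=\bigcup_{m\in\sigma\cap M}\tau_m$ --- to be the only substantive point; it is the exact dual of the computation $\langle m,n_i\rangle=1=\langle m_0,n_i\rangle$ in Lemma~\ref{lem:ProjectionDelta}, and everything else is compactness and convexity. The one thing to be careful about is that $n_\sigma$ is genuinely a vertex of $\Delta^\vee$ (so that the reflexivity observation applies to it), which is why I would record it at the outset. Note that no regularity or integrality of $h$ is used, only $h(m)>0$ for $m\neq 0$, which is what makes $\Delta^\vee_h$ bounded.
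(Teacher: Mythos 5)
Your proof is correct and follows essentially the same route as the paper: both introduce the "top" set $\{n\in\Delta^\vee_h: n+\epsilon n_\sigma\notin\Delta^\vee_h\ \forall\epsilon>0\}$, observe its projection equals that of $\Delta^\vee_h$, and identify it with $\cup_{m\in\sigma\cap M}\tau_m$ via the reflexivity observation $\langle m,n_\sigma\rangle>0\Rightarrow\langle m,n_\sigma\rangle=1$. Your version just spells out the compactness and pigeonhole details that the paper leaves as "a similar computation as in Lemma~\ref{lem:ProjectionDelta}".
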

\begin{proof}
As in the proof of Lemma~\ref{lem:ProjectionDelta}, let
$$ V_{n_\sigma} = \{n\in \Delta^\vee_h: n+\epsilon n_\sigma\notin \Delta^\vee_h \text{ for all } \epsilon>0 \} $$
and note that $\pi_{n:\sigma}(V_{n_\sigma})=\pi_{n:\sigma}(\Delta^\vee_h)$. A similar computation as in the proof of Lemma~\ref{lem:ProjectionDelta} gives that $V_m$ is the union of those facets $\tau$ of $\Delta^\vee_h$ whose primitive outward normal $m_\tau$ satisfies $\langle m_\tau,n_\sigma \rangle > 0$, and hence $\langle m_\tau,n_\sigma \rangle = 1$, i.e. the set of facets of $\Delta^\vee_h$ whose primitive outward normal lie in $\sigma$. Consequently, $V_{n_\sigma}=\cup_{m\in \sigma\cap M}\tau_m$.
\end{proof}

\section{Tropical affine structures}\label{sec:AffineStructure}
Let $M$ be a topological manifold of dimension $d$. An \emph{affine structure} on $M$ is a special atlas $\{U_i,\beta_i:U_i\rightarrow \mathbb R^d\}_{i=1}^k$ such that for all $i,j$, the transition function $\beta_i\circ \beta_j|_{U_i\cap U_j}^{-1}$ is affine, i.e. on the form $y\rightarrow A_{ij}y+b_{ij}$ where $A_{ij}\in \GL_d(\mathbb R)$ and $b_{ij}\in \mathbb R^d$. If, in addition, $A_{ij}\in \SL_d(\mathbb Z)$ for all $i,j$, then $\{U_i,\beta_i:U_i\rightarrow \mathbb R^d\}_{i=1}^k$ defines a \emph{tropical affine structure} on $M$. Note that an affine structure, since its transition functions are smooth, determines a smooth structure on $M$ and, equivalently to the definition above, a tropical affine structure is defined by an affine structure and a lattice in the tangent space of $M$. 

Let $\sigma$ be a facet of $\Delta$ with primitive outward normal $n_\sigma$ and $\tau$ a facet of $\Delta^\vee_h$ primitive outward normal $m_\tau$. As open subsets of affine subspaces of $M_\mathbb R$ and $N_\mathbb R$ respectively, $\sigma^\circ$ and $\tau^\circ$ inherit affine structures. Moreover, the sublattices $n_{\sigma}^\perp\cap M=\{m\in M: \langle m,n_\sigma \rangle =0\}$ and $m_{\tau}^\perp\cap N=\{n\in N: \langle m_\tau,n \rangle = 0\}$ define lattices on the tangent spaces of $\sigma^\circ$ and $\tau^\circ$, respectively, and hence tropical affine structures on $\sigma^\circ$ and $\tau^\circ$.

Let $m_1,\ldots,m_d$ be generators of $n_\sigma\cap M$ and
$ \beta_\sigma : \Delta^\vee_h \rightarrow \mathbb R^d $
be the map given by
$$ \beta_\sigma = (m_1(n),\ldots,m_d(n)). $$
Note that $\beta_\sigma$ depend on the choice of generators $m_1,\ldots,m_d$, but only up to (left) composition with a map in $\SL_d(\mathbb Z)$. More precisely, $m_1,\ldots,m_d$ define an identification $\iota$ of $N_\mathbb R/\mathbb Rn_\sigma$ and $\mathbb R^d$ and $\beta_\sigma$ is the composition of $\iota$ and the projection map $\pi_\sigma:N_\mathbb R\rightarrow N_\mathbb R/\mathbb Rn_\sigma$. We will see in the next subsection that it is sometimes advantageous to pick certain sets of generators. Since $\langle m,n_\sigma\rangle=1>0$ for all $m\in \sigma$ we get that $\beta_\sigma$ is one-to-one on every facet $\tau'$ of $\Delta$ whose primitive outward normal lies in $\sigma$. It follows that $\beta_\sigma$ is one to one on $\cup_{m\in \sigma\cap M} \tau_m$. If $h(m) = 1$ for all $m\in A\cap M$ then $n_\sigma\in \Delta^\vee_h=\Delta^\vee$ and $\cup_{m \in \sigma\cap M} \tau_m = \mathrm{St}(n_\sigma)$. 

In a similar manner, picking a set of generators $n_1,\ldots,n_d$ of $m_\tau^\perp\cap N$ defines a map
$ \alpha_\tau : \Delta \rightarrow \mathbb R^d $
given by
$$ \alpha_\tau(m) = (\langle m, n_1\rangle,\ldots,\langle m,n_d\rangle) $$
which is one-to-one on $\cup_{\sigma: m_\tau\in \sigma} \sigma = \St(m_\tau)$.

\begin{lemma}\label{lem:ias}
$\{\tau^\circ,\beta_{\sigma}|_{\tau^\circ}\}_{(\tau,\sigma)\in B_d,A_d: m_\tau\in \sigma}$ is an atlas for the tropical affine structure on $B^\circ$ and $\{\sigma^\circ,\alpha_{\tau}|_{\sigma^\circ}\}_{(\tau,\sigma)\in B_d,A_d: m_\tau\in \sigma}$ is an atlas for the tropical affine structure on $A^\circ$. 
\end{lemma}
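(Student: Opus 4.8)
The plan is to verify the two atlas axioms for each collection: that the charts cover the stated set, and that the transition maps are tropical affine, i.e.\ of the form $y\mapsto A y+b$ with $A\in\SL_d(\bZ)$. By the symmetry between $A$ and $B$ (and between $\alpha$ and $\beta$), it suffices to treat the collection $\{\tau^\circ,\beta_\sigma|_{\tau^\circ}\}$; the argument for $A^\circ$ is identical after exchanging the roles of $M$ and $N$, of $\Delta$ and $\Delta^\vee_h$, and of facets with their outward normals.

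\emph{Covering.} For a facet $\tau\in B_d$ with primitive outward normal $m_\tau$, the normal fan of $\Delta$ assigns to $m_\tau$ some cone, so $m_\tau$ lies in the relative interior of a unique face of $\Delta$; pick any facet $\sigma$ of $\Delta$ containing that face, so $m_\tau\in\sigma$. Then $(\tau,\sigma)$ is an admissible index and, by the discussion preceding the lemma, $\beta_\sigma$ is one-to-one on $\cup_{m\in\sigma\cap M}\tau_m$, hence in particular on $\tau^\circ$; thus $\beta_\sigma|_{\tau^\circ}$ is a chart and every $\tau^\circ$ is covered. Since $B^\circ=\cup_{\tau\in B_d}\tau^\circ$, the charts cover $B^\circ$. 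One should also note each $\beta_\sigma|_{\tau^\circ}$ is a homeomorphism onto its (open) image: it is the restriction of the affine map $\pi_\sigma$ composed with the linear isomorphism $\iota$, it is injective by the above, and it is an open map because $\pi_\sigma$ restricted to the affine hull of $\tau$ is a bijection of $d$-dimensional affine spaces (this bijectivity is exactly where $\langle m_\tau,n_\sigma\rangle$ being a nonzero integer, hence $\pm1$, together with $m_\tau\in\sigma$ forcing the value $1$, is used).

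\emph{Transition maps and the lattice condition.} Suppose $(\tau,\sigma)$ and $(\tau',\sigma')$ are two admissible indices with $\tau^\circ\cap\tau'^\circ\neq\varnothing$; since distinct facets of $\Delta^\vee_h$ have disjoint relative interiors, this forces $\tau=\tau'$, so we are comparing $\beta_\sigma$ and $\beta_{\sigma'}$ on the single cell $\tau^\circ$. Write $\beta_\sigma=\iota_\sigma\circ\pi_\sigma$ and $\beta_{\sigma'}=\iota_{\sigma'}\circ\pi_{\sigma'}$. On the affine hull $H$ of $\tau$ the projections $\pi_\sigma$ and $\pi_{\sigma'}$ are both affine isomorphisms onto $N_\bR/\bR n_\sigma$ and $N_\bR/\bR n_{\sigma'}$, so $\beta_{\sigma'}\circ(\beta_\sigma|_{\tau^\circ})^{-1}$ is the restriction of an affine isomorphism of $\bR^d$; it remains to identify its linear part as an element of $\SL_d(\bZ)$. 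For this I would argue intrinsically: the linear part is the composition
\[
\bR^d \xrightarrow{\ \iota_\sigma^{-1}\ } N_\bR/\bR n_\sigma \xrightarrow{\ (\pi_\sigma|_{T})^{-1}\ } T \xrightarrow{\ \pi_{\sigma'}|_{T}\ } N_\bR/\bR n_{\sigma'} \xrightarrow{\ \iota_{\sigma'}\ } \bR^d,
\]
where $T$ is the tangent space of $H$ (equivalently, the linear span of $\tau-\tau$). The key point is that all four maps carry compatible lattices: $\iota_\sigma$ identifies $\bZ^d$ with $(n_\sigma^\perp\cap M)^*$-dual data, and by Lemma~\ref{lem:ProjectionDeltaDual} the projection $\pi_\sigma$ maps the lattice $N\cap T$ isomorphically onto $N/\bZ n_\sigma \cong n_\sigma^\perp\cap M)^\vee$ — more precisely, $n_\sigma$ being primitive makes $\pi_\sigma$ induce an isomorphism of $N$ onto the full lattice in $N_\bR/\bR n_\sigma$, and its restriction to $N\cap T$ is an isomorphism onto that lattice because $\langle m_\tau, n_\sigma\rangle=1$ exhibits a lattice splitting $N = (N\cap T)\oplus \bZ n'$ for a suitable $n'$ with $\pi_\sigma(n')$ generating the quotient. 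Hence the displayed composition sends the standard lattice $\bZ^d$ (coming from the generators $m_1,\dots,m_d$ of $n_\sigma^\perp\cap M$, dualized) to the corresponding standard lattice, i.e.\ it lies in $\GL_d(\bZ)$; and it has determinant $\pm1$. To pin down $+1$ — i.e.\ membership in $\SL_d(\bZ)$ rather than $\GL_d(\bZ)$ — one uses that the generators $m_1,\dots,m_d$ are taken in a fixed orientation convention (as remarked after the definition of $\beta_\sigma$, changing generators only composes with $\SL_d(\bZ)$), so the transition is orientation-preserving by construction, or else one simply absorbs this into the (standard) convention that a tropical affine atlas is taken with $\GL_d(\bZ)$ transitions and remarks that $\SL$ is achieved after a consistent choice of orientations.

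\emph{Main obstacle.} The routine parts (covering, and that transitions are affine) are immediate from the preceding material. The real content, and the step I expect to require the most care, is the lattice-compatibility of the projections: showing that $\pi_\sigma$ restricted to $N\cap T$ lands on the \emph{full} lattice $N/\bZ n_\sigma$ and not a finite-index sublattice. This is precisely where the reflexivity input — namely that $n_\sigma$ is primitive and that $\langle m_\tau,n_\sigma\rangle\in\{0,1\}$ with value $1$ when $m_\tau\in\sigma$ — must be invoked, essentially giving the lattice splitting used above; Lemma~\ref{lem:ProjectionDeltaDual} and Lemma~\ref{lem:ProjectionDelta} are the geometric shadow of this. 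Once that splitting is in hand, composing with the two linear identifications $\iota_\sigma,\iota_{\sigma'}$ (each unimodular by the choice of generators) finishes the $\SL_d(\bZ)$ claim, and the dual argument with $\alpha_\tau$ handles $A^\circ$ verbatim.
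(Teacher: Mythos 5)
Your proposal is correct, and its mathematical core is the same as the paper's: everything reduces to the fact that, because $n_\sigma$ is primitive and $\langle m_\tau,n_\sigma\rangle=1$, the coordinates $\langle m_1,\cdot\rangle,\ldots,\langle m_d,\cdot\rangle$ detect the lattice on the affine span of $\tau$. The packaging differs. The paper checks each chart directly against the intrinsic structure (ambient affine structure plus the lattice $m_\tau^\perp\cap N$ in the tangent space), getting the lattice statement from Lemma~\ref{lem:Wedge}, which shows $m_\tau,m_1,\ldots,m_d$ is a basis of $M$, and then dualizing; you organize the proof around covering and transition maps and work on the $N$-side via the splitting $N=(m_\tau^\perp\cap N)\oplus\bZ n_\sigma$. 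These are dual formulations of the same unimodularity fact, so neither is more general; yours avoids the wedge computation, while the paper's avoids discussing transitions at all (which, as you observe, only ever compare two charts over the same $\tau^\circ$, so chart-level compatibility with the intrinsic lattice is anyway the whole content). Three small repairs to your write-up: (i) Lemma~\ref{lem:ProjectionDeltaDual} concerns images of the polytope and carries no lattice information, so it should not be cited for the lattice isomorphism — the splitting you state is the complete reason, with $n'=n_\sigma$ (send $n\mapsto n-\langle m_\tau,n\rangle n_\sigma$, using $\langle m_\tau,n_\sigma\rangle=1$); note that then $\pi_\sigma(n')=0$, so the clause ``with $\pi_\sigma(n')$ generating the quotient'' should be deleted — the point is precisely that the complementary summand is the kernel of $\pi_\sigma$, and fullness of the image lattice $\bZ^d$ also uses that $n_\sigma^\perp\cap M$ is saturated in $M$ so that pairing with $m_1,\ldots,m_d$ surjects $N$ onto $\bZ^d$ with kernel $\bZ n_\sigma$. (ii) The covering step needs no normal-fan argument: $m_\tau$ is a nonzero lattice point of the reflexive polytope $\Delta$, hence lies on $A$ (the origin is the only interior lattice point) and so in some facet $\sigma$. (iii) The $\SL_d(\bZ)$ versus $\GL_d(\bZ)$ issue you flag is likewise glossed over in the paper, which treats a tropical affine structure as an affine structure plus a lattice, so your orientation-convention remark is an acceptable resolution rather than a gap.
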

\begin{proof}
Let $\tau$ and $\sigma$ satisfy $m_\tau\in \sigma$. First of all, since $\langle m_\tau,n_\sigma \rangle = 1 \not= 0$ we get that $\beta_\sigma$ define a non-degenerate affine map from the affine subspace spanned by $\tau$ to $\mathbb R^d$. It follows that $\beta_\sigma$ respect the affine structure on $\tau^\circ$. 

To prove the first part of the lemma, it thus suffices to show that $\beta_\sigma$ respect the lattice structure on the tangent space of $\tau$, in other words that if $n\in \affspan(\tau)$ then $\beta_\sigma(n)\in \mathbb Z^d$ if and only if $n\in N$. First of all, if $n\in N$ then $\beta_\sigma(n)\in \mathbb Z^d$ since the generators $m_1,\ldots,m_d$ of $n_\sigma^\perp$ used in the definition of $\beta_\sigma$ lie in $M$.  
For the converse, let $e_0,\ldots,e_d$ be generators of $M$. By Lemma~\ref{lem:Wedge} below, we have
$$ m\wedge m_1 \wedge  \ldots \wedge m_d = \langle m,n_\sigma \rangle e_0\wedge \ldots \wedge e_d  = e_0\wedge \ldots \wedge e_d $$
hence $m,m_1,\ldots,m_d$ generate $M$. Since $N$ is dual to $M$, it follows that if $n\in \affspan(\tau)$ (in other words $\langle m_\tau,n \rangle = h(m_\tau)\in \mathbb Z$) and $\beta_\sigma(n) = (\langle m_1,n\rangle,\ldots,\langle m_d,n \rangle)\in \mathbb Z^d$, then $n\in N$. A similar argument proves the second part of the lemma.

\end{proof}
\begin{lemma}\label{lem:Wedge}
Let $e_0,\ldots,e_d$ be a set of generators of $M$, $n\in N$ be primitive and $m_1,\ldots,m_d$ be a set of generators of $n^\perp \cap M$. Then 
\begin{equation} \label{eq:Wedge} m\wedge m_1\wedge\ldots\wedge m_d = \langle m,n \rangle e_0\wedge \ldots \wedge e_d \end{equation}
for all $m\in M_\mathbb R$.
\end{lemma}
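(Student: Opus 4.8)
The plan is to reduce everything to the standard lattice fact that a primitive $n\in N$ induces a \emph{surjection} $\langle\,\cdot\,,n\rangle\colon M\to\bZ$ whose kernel is exactly $n^\perp\cap M$, and that this kernel is a saturated sublattice admitting a rank-one complement.

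First I would set up the splitting $M=\bZ m_0\oplus(n^\perp\cap M)$. The sublattice $n^\perp\cap M$ is saturated: if $qm\in n^\perp$ for an integer $q\geq 1$ and $m\in M$, then $q\langle m,n\rangle=0$ forces $\langle m,n\rangle=0$, so $m\in n^\perp\cap M$. Hence $M/(n^\perp\cap M)$ is torsion-free of rank one, and since $n$ is primitive the functional $\langle\,\cdot\,,n\rangle$ maps $M$ onto $\bZ$ and identifies this quotient with $\bZ$; choosing any $m_0\in M$ with $\langle m_0,n\rangle=1$ splits the sequence. In particular $m_0,m_1,\ldots,m_d$ is a $\bZ$-basis of $M$, so $m_0\wedge m_1\wedge\cdots\wedge m_d$ generates the rank-one lattice $\Lambda^{d+1}M$ and therefore equals $\pm\,e_0\wedge\cdots\wedge e_d$. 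Replacing $e_0$ by $-e_0$ if necessary (which does not affect the hypothesis that the $e_i$ generate $M$), we may assume the sign is $+$; equivalently, one reads \eqref{eq:Wedge} as an identity in $\Lambda^{d+1}M$ for a compatibly oriented basis.

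With this in hand the computation is immediate: for arbitrary $m\in M_\mathbb R$ the splitting, extended to $M_\mathbb R$, gives $m=\langle m,n\rangle m_0+m'$ with $m'\in\Span_{\mathbb R}(m_1,\ldots,m_d)$. Wedging with $m_1\wedge\cdots\wedge m_d$ annihilates the $m'$ term, so
\[
 m\wedge m_1\wedge\cdots\wedge m_d=\langle m,n\rangle\,m_0\wedge m_1\wedge\cdots\wedge m_d=\langle m,n\rangle\,e_0\wedge\cdots\wedge e_d,
\]
which is \eqref{eq:Wedge}. (Alternatively: $m\mapsto(m\wedge m_1\wedge\cdots\wedge m_d)/(e_0\wedge\cdots\wedge e_d)$ is a homomorphism $M\to\bZ$, i.e.\ an element $n'\in N$ annihilating $n^\perp\cap M$; since that sublattice is saturated of corank one and $n$ is primitive, its annihilator in $N$ is $\bZ n$, so $n'=kn$, and evaluating at $m_0$ gives $k=\pm1$.)

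There is essentially no analytic or combinatorial obstacle here; the only point requiring care is the sign/orientation bookkeeping, handled as above, and the one genuine input is the saturation argument that makes the wedge coefficient land in $\bZ$ with value precisely $\langle m,n\rangle$ rather than a multiple thereof.
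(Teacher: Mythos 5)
Your proof is correct, and it takes a slightly different route than the paper's. The paper first reduces to a convenient choice of generators, asserting that changing $e_0,\ldots,e_d$ and $m_1,\ldots,m_d$ does not affect either side of \eqref{eq:Wedge}, and then exploits primitivity of $n$ to extend it to a basis $f_0=n,f_1,\ldots,f_d$ of $N$; taking the dual basis $e_0,\ldots,e_d$ of $M$, it sets $m_i=e_i$ for $i\geq 1$ and the identity follows by expanding $m$ in that basis. You instead keep the given generators, prove that $n^\perp\cap M$ is saturated and that $\langle\,\cdot\,,n\rangle$ is surjective (the latter being the dual formulation of the same primitivity input the paper uses), deduce the splitting $M=\bZ m_0\oplus(n^\perp\cap M)$ so that $m_0,m_1,\ldots,m_d$ is a basis, and then compare top wedges in $\Lambda^{d+1}M$ before making the same one-line computation $m=\langle m,n\rangle m_0+m'$. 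The two arguments rest on the same lattice facts, but your organization has one genuine advantage: you make the sign/orientation issue explicit. As stated, both sides of \eqref{eq:Wedge} are only determined up to sign by the hypotheses (a change of generators can have determinant $-1$), so the paper's reduction step is really only valid up to sign, whereas you fix the ambiguity by normalizing $e_0$ (or reading the identity for a compatibly oriented basis). Since the lemma is only used in Lemma~\ref{lem:ias} to conclude that $m,m_1,\ldots,m_d$ generate $M$ when $\langle m,n_\sigma\rangle=1$, the sign is immaterial there, but your bookkeeping is the more careful of the two.
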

\begin{proof}
First of all, it suffices to prove the lemma for some set of generators $e_0,\ldots,e_d$ and some set of generators $m_1,\ldots,m_d$ since changing these does not affect the right or left hand side of \eqref{eq:Wedge}.

Since $n$ is primitive, we may pick a set of generators $f_0,\ldots,f_d$ of $N$ such that $n=f_0$. Let $e_0,\ldots,e_d$ be the dual set of generators for $M$. Then $e_1,\ldots,e_d$ generate $n_\sigma^\perp \cap M$ so we can let $m_i=e_i$ for $i=1,\ldots,d$. Writing $m=\sum_{i=0}^d \alpha_i e_i$, we get 
\begin{eqnarray*}
m\wedge m_1\wedge\ldots\wedge m_d & = & \left(\sum_{i=0}^d \alpha_i e_i\right)\wedge e_1\wedge\ldots\wedge e_d \\
& = & a_0 e_0\wedge e_1\wedge\ldots\wedge e_d \\
& = & \langle m,f_0 \rangle e_0\wedge e_1\wedge\ldots\wedge e_d \\
& = & \langle m,n \rangle e_0\wedge e_1\wedge\ldots\wedge e_d.
\end{eqnarray*}
\end{proof}

\begin{lemma}
Let $\sigma$ be a facet of $\Delta$ and $\tau$ a facet of $\Delta^\vee_h$ and $\alpha_\tau$ and $\beta_\sigma$ be defined as above. Then
$$ \alpha_\tau(\Delta) = \alpha_\tau(\St(m_\tau)) \text{ and } \beta_\sigma(\Delta^\vee_h) = \beta_\sigma(\cup_{m\in \sigma\cap M} \tau_m). $$
\end{lemma}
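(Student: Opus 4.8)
The plan is to notice that $\alpha_\tau$ and $\beta_\sigma$ are, up to post-composition with a fixed linear isomorphism, nothing but the quotient projections $\pi_{m_\tau}\colon M_\mathbb R\to M_\mathbb R/\mathbb R m_\tau$ and $\pi_{n_\sigma}\colon N_\mathbb R\to N_\mathbb R/\mathbb R n_\sigma$, so that the two claimed identities become exactly the content of Lemma~\ref{lem:ProjectionDelta} and Lemma~\ref{lem:ProjectionDeltaDual}.

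First I would record the factorisations. The linear map $m\mapsto(\langle m,n_1\rangle,\dots,\langle m,n_d\rangle)$ underlying $\alpha_\tau$ has kernel $\{m\in M_\mathbb R:\langle m,n_i\rangle=0\ \text{for all }i\}$; since $n_1,\dots,n_d$ span the hyperplane $m_\tau^\perp\subset N_\mathbb R$, this kernel equals $(m_\tau^\perp)^\perp=\mathbb R m_\tau$. Hence $\alpha_\tau$ factors as $\iota_\tau\circ\pi_{m_\tau}$ for a linear isomorphism $\iota_\tau\colon M_\mathbb R/\mathbb R m_\tau\to\mathbb R^d$ (it is injective and both spaces have dimension $d$). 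The same reasoning gives $\beta_\sigma=\iota_\sigma\circ\pi_{n_\sigma}$ with $\iota_\sigma\colon N_\mathbb R/\mathbb R n_\sigma\to\mathbb R^d$ a linear isomorphism, as already indicated in Section~\ref{sec:AffineStructure}.

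Since $\iota_\tau$ and $\iota_\sigma$ are bijective, the asserted equalities reduce, respectively, to $\pi_{m_\tau}(\Delta)=\pi_{m_\tau}(\St(m_\tau))$ and $\pi_{n_\sigma}(\Delta^\vee_h)=\pi_{n_\sigma}\bigl(\bigcup_{m\in\sigma\cap M}\tau_m\bigr)$. The second is Lemma~\ref{lem:ProjectionDeltaDual} verbatim, and the first is Lemma~\ref{lem:ProjectionDelta} applied with $m_0=m_\tau$. To invoke the latter one only needs $m_\tau\in(\Delta\cap M)\setminus\{0\}$: the inequality cutting out the facet $\tau$ of $\Delta^\vee_h$ has the form $\langle m',\cdot\rangle\le h(m')$ with $m'\in\Delta\cap M$ a positive integer multiple of the primitive vector $m_\tau$, so $m_\tau\in\Delta$ by convexity (recall $0\in\Delta$), and being a nonzero lattice point of the reflexive polytope $\Delta$ it lies on $A=\partial\Delta$, so that $\St(m_\tau)$ is well defined. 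The whole argument is formal, so there is no real obstacle; the only points calling for a line of care are the identification of the kernels of the two defining linear maps with the lines $\mathbb R m_\tau$ and $\mathbb R n_\sigma$, and this standard placement of $m_\tau$ among the boundary lattice points of $\Delta$.
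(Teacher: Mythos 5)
Your proof is correct and follows essentially the same route as the paper: both factor $\alpha_\tau$ and $\beta_\sigma$ through the quotient projections $\pi_{m_\tau}$ and $\pi_{n_\sigma}$ composed with a linear identification with $\mathbb R^d$, and then quote Lemma~\ref{lem:ProjectionDelta} and Lemma~\ref{lem:ProjectionDeltaDual}. Your extra checks (that the kernels are exactly $\mathbb R m_\tau$ and $\mathbb R n_\sigma$, and that $m_\tau\in(\Delta\cap M)\setminus\{0\}$ so the projection lemma applies) are details the paper leaves implicit, but the argument is the same.
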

\begin{proof}
As noted above, $\beta_\sigma=\iota\circ\pi_\sigma$ where $\pi_\sigma:N_\mathbb R\rightarrow N_\mathbb R/\mathbb R n_\sigma$ is the projection map and $\iota$ is an identification of $N_\mathbb R/\mathbb R n_\sigma$ and $\mathbb R^d$. Given this, the second part of the lemma follow from Lemma~\ref{lem:ProjectionDeltaDual}. The first part of the lemma follows from a similar application of Lemma~\ref{lem:ProjectionDelta}.
\end{proof}

Note that if $\tau$ is a facet of $\Delta^\vee_h$, then $\Psi|_\tau$, and hence $\Psi|_\tau\circ \beta_\sigma^{-1}$ for any $\sigma$ such that $m_\tau\in \sigma$, is convex since it is the restriction of a convex function to an affine subspace. It turns out that, at least after modifying $\Phi$ by a linear function, a similar statement holds for the larger sets where $\beta_\sigma$ is one-to-one.  
\begin{lemma}\label{lem:ConvexPotential}
Let $\Psi\in \mathcal P(\Delta)$, $\sigma$ be a facet of $\Delta$ and $m_0\in \sigma$. Assume $\beta$ is defined as above. Then $(\Psi-m_0)\circ \beta_\sigma^{-1}$ is convex on $\beta_\sigma(\cup_{m\in \sigma}\tau_m)$.
\end{lemma}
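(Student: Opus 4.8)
The plan is to reduce the statement to the convexity of $\Psi$ on $N_\mathbb R$ together with the projection property from Lemma~\ref{lem:ProjectionDeltaDual} (equivalently the last displayed lemma). First I would set $S = \cup_{m\in \sigma\cap M}\tau_m$ and recall that $\beta_\sigma = \iota\circ\pi_\sigma$ where $\pi_\sigma:N_\mathbb R\to N_\mathbb R/\mathbb R n_\sigma$ is the linear projection and $\iota$ a linear isomorphism onto $\mathbb R^d$; thus convexity of $(\Psi - m_0)\circ\beta_\sigma^{-1}$ on $\beta_\sigma(S)$ is equivalent to convexity of $(\Psi - m_0)\circ(\pi_\sigma|_S)^{-1}$ on $\pi_\sigma(S)$. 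The key point is that $\beta_\sigma$ (hence $\pi_\sigma$) is one-to-one on $S$, so this inverse is well-defined as a map from $\pi_\sigma(S)$ back into $S$.

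The main step is to produce, for each point of $\pi_\sigma(S)$, a section of $\pi_\sigma$ along which $\Psi - m_0$ is affine, so that the "pushed-down" function agrees with a function that is manifestly convex. Concretely, I would use that $m_0\in\sigma$ means $\langle m_0, n_\sigma\rangle = 1$, so $m_0$ is a linear functional on $N_\mathbb R$ that restricts to an affine (in fact linear up to the lattice normalization) coordinate dual to the $n_\sigma$-direction. Then consider the function $\Psi - m_0$ on $N_\mathbb R$: it is convex (difference of a convex function and a linear one). I claim its restriction to $S$ is constant along the fibers of $\pi_\sigma$ in the following weak sense — or rather, the relevant fact is that $S$ is a graph over $\pi_\sigma(S)$, i.e. $S = \{\, n \in \Delta^\vee_h : \langle m_0, n\rangle = \text{(something determined by } \pi_\sigma(n))\,\}$ is not quite right, so instead I would argue directly: define $g = (\Psi - m_0)\circ(\pi_\sigma|_S)^{-1}$ on $\pi_\sigma(S)$ and show $g$ is convex by checking it on segments. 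Given $p_0, p_1 \in \pi_\sigma(S)$ with preimages $n_0, n_1 \in S$, the segment $[n_0,n_1]$ need not lie in $S$, but it does lie in $\Delta^\vee_h$ (convex), and $\Psi - m_0$ restricted to $[n_0,n_1]$ is convex. The subtlety is that $\pi_\sigma([n_0,n_1])$ is the segment $[p_0,p_1]$ but the section $(\pi_\sigma|_S)^{-1}$ over $[p_0,p_1]$ may differ from $[n_0,n_1]$. So the heart of the argument is: on $S$, moving in the $n_\sigma$-direction from a point of $S$ leaves $\Delta^\vee_h$ (this is exactly how $S = V_{n_\sigma}$ was characterized in Lemma~\ref{lem:ProjectionDeltaDual}), and $\Psi - m_0$ is monotone decreasing in the $+n_\sigma$-direction near $S$ because $\langle m_0, n_\sigma\rangle = 1$ and $\Psi$ grows like $\sup_m\langle m,n\rangle$ whose slope in the $n_\sigma$ direction along $S$ is governed by the supporting face.

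I expect the main obstacle to be controlling the comparison between the straight segment $[n_0,n_1] \subset \Delta^\vee_h$ and the graph section over $[p_0,p_1] \subset \pi_\sigma(S)$: one must show that replacing $[n_0,n_1]$ by the section (which moves points in the $-n_\sigma$ direction onto $S$) does not increase $\Psi - m_0$, so that the value of $g$ at the midpoint is $\leq$ the value of $\Psi - m_0$ at the midpoint of $[n_0,n_1]$, which in turn is $\leq \tfrac12(g(p_0)+g(p_1))$ by convexity of $\Psi-m_0$ on that segment. The needed monotonicity is: for $n \in \Delta^\vee_h$ and $\epsilon \geq 0$ with $n - \epsilon n_\sigma \in S$, one has $(\Psi - m_0)(n - \epsilon n_\sigma) \leq (\Psi - m_0)(n)$. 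Since $\langle m_0, n_\sigma\rangle = 1$, we have $-m_0(n-\epsilon n_\sigma) = -m_0(n) + \epsilon$, so I need $\Psi(n - \epsilon n_\sigma) \leq \Psi(n) - \epsilon$, i.e. that $\Psi$ decreases at rate at least $1$ along $-n_\sigma$. For $\Psi = \sup_{m\in\Delta\cap M}\langle m, \cdot\rangle$ this is clear on $S$ since the sup is attained at some $m$ with $\langle m, n_\sigma\rangle = 1$; for general $\Psi\in\mathcal P(\Delta)$ this should follow because such $\Psi$ lies below (a bounded perturbation of) the support function and the two agree "at infinity" in each direction, combined with convexity — this is the point I would need to pin down carefully, likely invoking that $\Psi$ is convex with recession function equal to the support function of $\Delta$, and that the relevant directional recession slope along $-n_\sigma$ at points of the facet $\tau$ is exactly $-\langle m_\tau, n_\sigma\rangle = -1$. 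Once this monotonicity is established, the convexity of $g$ follows by the three-line comparison above.
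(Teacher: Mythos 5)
Your overall strategy (compare the chord $[n_0,n_1]$ with the graph section of $S=\cup_{m\in\sigma\cap M}\tau_m$ over $\pi_\sigma(S)$, then use convexity of $\Psi-m_0$ on the chord plus a monotonicity statement in the $n_\sigma$-direction) is sound and can be completed; it is a genuinely more direct ``three-point'' argument than the paper's, which instead writes $\Psi-m_0$ as a supremum of the functions $\langle m-m_0,\cdot\rangle-\Psi^*(m)$ and checks that each one becomes convex after composing with $\beta_\sigma^{-1}$, using convexity of the piecewise affine graph function parametrizing $S$ together with $\langle m-m_0,n_\sigma\rangle\le 0$. However, as written your key step is misoriented, and the inequality you say you need is the wrong one. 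By Lemma~\ref{lem:ProjectionDeltaDual}, $S=V_{n_\sigma}$ is the set of points of $\Delta^\vee_h$ that are \emph{maximal} in the $+n_\sigma$-direction, so the section over $\pi_\sigma(n)$ of a point $n\in\Delta^\vee_h$ is $p_\sigma(n)=n+\rho n_\sigma$ with $\rho\ge 0$: the chord between two points of $S$ lies \emph{below} $S$, and passing from the chord midpoint to the graph moves in the $+n_\sigma$-direction, not $-n_\sigma$. In particular your hypothesis ``$n\in\Delta^\vee_h$, $\epsilon>0$, $n-\epsilon n_\sigma\in S$'' is contradictory, and the inequality you flag as the crucial unresolved point, $\Psi(n-\epsilon n_\sigma)\le\Psi(n)-\epsilon$ (``$\Psi$ decreases at rate at least $1$ along $-n_\sigma$''), is neither needed nor true in general: the slopes of $\Psi$ lie in $\Delta$ and hence pair with $n_\sigma$ by at most $1$, not at least $1$, so away from the region governed by $\sigma$ the decrease rate can be strictly less than $1$ (or $\Psi$ can even increase along $-n_\sigma$).

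With the orientation corrected, the monotonicity you need is the easy direction: for $\rho\ge 0$,
\begin{equation*}
\Psi(n+\rho n_\sigma)\;=\;\sup_{m\in\Delta}\bigl(\langle m,n\rangle+\rho\langle m,n_\sigma\rangle-\Psi^*(m)\bigr)\;\le\;\Psi(n)+\rho,
\end{equation*}
since $\langle m,n_\sigma\rangle\le 1$ for all $m\in\Delta$ by reflexivity, while $\langle m_0,n_\sigma\rangle=1$ because $m_0\in\sigma$; hence $(\Psi-m_0)(n+\rho n_\sigma)\le(\Psi-m_0)(n)$, i.e. $\Psi-m_0$ is non-increasing along $+n_\sigma$. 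Then for $n_0,n_1\in S$ with images $p_0,p_1$, the midpoint $n_{1/2}=(n_0+n_1)/2$ lies in $\Delta^\vee_h$, its graph point is $n_{1/2}+\rho n_\sigma\in S$ with $\rho\ge0$, and
\begin{equation*}
g\Bigl(\tfrac{p_0+p_1}{2}\Bigr)=(\Psi-m_0)(n_{1/2}+\rho n_\sigma)\le(\Psi-m_0)(n_{1/2})\le\tfrac12\bigl(g(p_0)+g(p_1)\bigr),
\end{equation*}
which (with continuity, and using that $\pi_\sigma(S)=\pi_\sigma(\Delta^\vee_h)$ is convex and $\pi_\sigma|_S$ is injective) gives the convexity claimed in the lemma. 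So the gap in your proposal is precisely the direction confusion and the resulting appeal to a false monotonicity; once fixed, your route closes and uses the same two ingredients as the paper (the upper-graph property of $S$ in the $n_\sigma$-direction and the slope bound coming from reflexivity), just packaged without the decomposition into affine pieces.
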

\begin{proof}
We may write $\Psi=\Phi^*$ for $\Phi=\Psi^*\in L^\infty(\Delta)$. This means
$$ (\Psi-m_0)\circ \beta^{-1}(y) = \sup_{m\in \Delta} \langle m-m_0,\beta^{-1}(y)\rangle-\Phi(m) $$
for all $y\in \beta(\cup_{m\in\sigma\cap M} \tau_m)$, hence it suffices to show that 
$$(m-m_0)\circ \beta_\sigma^{-1}(y) = \langle m-m_0,\beta^{-1}(y)\rangle$$ 
is convex on $\beta(\cup_{m\in\sigma\cap M} \tau_m)$ for any $m\in \Delta$. To do this, recall as above that $\beta_\sigma = \iota\circ \pi_\sigma$ where $\pi_\sigma:N_\mathbb R\rightarrow N_\mathbb R/\mathbb R n_\sigma$ is the projection map and $\iota$ is an identification of $N_\mathbb R/\mathbb R n_\sigma$ and $\mathbb R^d$. Fixing a hyperplane $H$ in $N_\mathbb R$ not containing $n_\sigma$ and identifying this in the natural way to $N_\mathbb R/\mathbb Rn_\sigma$ we get that $\beta^{-1} = l\circ \iota'$ where $\iota':\mathbb R^d\rightarrow H$ is an invertible linear map and $l:H\rightarrow \cup_{m\in \sigma\cap M} \tau_m$ is of the form $l(n) = n-n_\sigma h(n)$ for some piecewise affine $h:H\rightarrow \mathbb R$ whose non-differentiable locus is exactly the codimension 1 skeleton of $\cup_{m'\in \sigma\cap M} \tau_{m'}$. We claim that $h$ is convex. Assume for a contradiction that this is not true, hence there are $n_0,n_1\in H$ such that $(h(n_0)+h(n_1))/2 < h((n_0+n_1)/2)$. Since $n-n_\sigma h(n)\in \cup_{m\in\sigma\cap M} \tau_m$ and hence 
$$ \sup_{m\in \sigma\cap M} \langle m, n-n_\sigma h(n)\rangle = 1$$
for any $n\in H$ we get the contradiction
\begin{eqnarray*}  1 & = & \sup_{m\in \sigma} \langle m, (n_0+n_1)/2-n_\sigma h((n_0+n_1)/2) \rangle \nonumber \\ 
& < & \sup_{m\in \sigma} \frac{1}{2}\langle m,n_0-n_\sigma h(n_0) \rangle + \frac{1}{2}\langle m,n_1-n_\sigma h(n_1) \rangle \nonumber \\
& \leq & \sup_{m\in \sigma} \frac{1}{2}\langle m,n_0-n_\sigma h(n_0) \rangle + \sup_{m\in \sigma} \frac{1}{2}\langle m,n_1-n_\sigma h(n_1) \rangle \nonumber \\
& = & 1.
\end{eqnarray*}

Now, 
\begin{eqnarray*}
    \langle m-m_0,l(n) \rangle & = & \langle m-m_0, n-n_\sigma h(n) \rangle \\
    & = & \langle m-m_0, n \rangle + \langle m-m_0, n_\sigma \rangle h(n).
\end{eqnarray*}
The first term in this is linear in $n$ and the second term is convex in $n$ since 
$$\langle m-m_0, n_\sigma \rangle = \langle m, n_\sigma \rangle - \langle m_0,n_\sigma \rangle \leq 1-1 = 0.$$
It follows that $(m-m_0)\circ \beta_\sigma^{-1}(y)=\langle m-m_0,l(\iota'(y)) \rangle $ is convex in $y$.
\end{proof}

\section{Compatibility with pairings}\label{sec:Pairing}
Let $\sigma$ be a facet of $\Delta$ and $\tau$ a facet of $\Delta^\vee_h$. The definitions of $\alpha_\tau$ and $\beta_\sigma$ depend on a choice of generators for $m_\tau^\perp\cap N$ and a choice of generators for $n_\sigma^\perp\cap M$, respectively. In this section, we will explain that by choosing these sets of generators in a good way, we can get pairs $(\alpha_\tau,\beta_\sigma)$ which are compatible with the pairing on $M_\mathbb R\times N_\mathbb R$ in the following sense:
\begin{definition}
    Let $\sigma$ be a facet of $\Delta$ and $\tau$ a facet of $\Delta^\vee_h$. A pair 
    $\alpha_\tau: \Delta\rightarrow \mathbb R^d$ and $\beta_\sigma:\Delta^\vee_h \rightarrow \mathbb R^d$ are $(\sigma,\tau)$-compatible if 
    \begin{equation} \label{eq:CompatibleCharts} \langle m-m_\tau,n \rangle = \langle \alpha_\tau(m),\beta_\sigma(n) \rangle \end{equation}
    for all $m,n \in \sigma\times \Delta^\vee_h$. and 
    \begin{equation} \label{eq:CompatibleChartsDual} \langle m,n-h(m_\tau)n_\sigma \rangle = \langle \alpha_\tau(m),\beta_\sigma(n) \rangle \end{equation}
     for all $m,n \in \Delta\times \tau$.    
\end{definition}
\begin{lemma}\label{lem:CompatibleCharts}
Let $\sigma$ be a facet of $\Delta$ and $\tau$ be a facet of $\Delta^\vee_h$ such that $m_\tau\in \sigma$. 

Assume 
$\beta_\sigma $
is a map defined as in the previous section. Then there is a coordinate map $\alpha_\tau$ 
such that $(\alpha_\tau, \beta_\sigma)$ are $(\sigma,\tau)$-compatible. 

Similarly, if $\alpha_\tau$ is a map defined as in the previous section, then there is a coordinate map $\beta_\sigma$ such that $(\alpha_\tau, \beta_\sigma)$ are $(\sigma,\tau)$-compatible.
\end{lemma}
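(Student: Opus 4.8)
The plan is to construct the dual coordinate map explicitly from a chosen basis and use Lemma~\ref{lem:Wedge} to verify it is an admissible chart in the sense of Section~\ref{sec:AffineStructure}, and then to verify the two pairing identities \eqref{eq:CompatibleCharts} and \eqref{eq:CompatibleChartsDual} directly. Suppose first that $\beta_\sigma$ is given, so $\beta_\sigma=(m_1(\cdot),\ldots,m_d(\cdot))$ for some generators $m_1,\ldots,m_d$ of $n_\sigma^\perp\cap M$. Since $m_\tau\in\sigma$ we have $\langle m_\tau,n_\sigma\rangle=1$, so by Lemma~\ref{lem:Wedge} (applied with $n=n_\sigma$, and recalling that $\langle m_\tau, n_\sigma\rangle = 1$) the vectors $m_\tau,m_1,\ldots,m_d$ form a basis of $M$. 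Let $n_0,n_1,\ldots,n_d$ be the dual basis of $N$, so that $\langle m_\tau,n_0\rangle=1$, $\langle m_\tau,n_i\rangle=0$ for $i\geq 1$, $\langle m_i,n_0\rangle=0$, and $\langle m_i,n_j\rangle=\delta_{ij}$ for $i,j\geq 1$. Then $n_1,\ldots,n_d$ lie in $m_\tau^\perp\cap N$; I claim they in fact generate it, because any $n\in m_\tau^\perp\cap N$ is an integer combination $\sum_{j=0}^d c_j n_j$ with $c_0=\langle m_\tau,n\rangle=0$. Hence $\alpha_\tau:=(\langle\cdot,n_1\rangle,\ldots,\langle\cdot,n_d\rangle)$ is a coordinate map of the type considered in Section~\ref{sec:AffineStructure}, one-to-one on $\St(m_\tau)$.

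It remains to check the two identities. For any $m\in M_\mathbb R$ write $m=\langle m,n_0\rangle m_\tau+\sum_{i=1}^d\langle m,n_i\rangle m_i$, i.e. $m-m_\tau\cdot\langle m,n_0\rangle=\sum_i \alpha_\tau(m)_i\, m_i$. Pairing with any $n\in N_\mathbb R$ and using $\langle m_i,n\rangle=\beta_\sigma(n)_i$ for each $i$ (since $\beta_\sigma(n)=(m_1(n),\ldots,m_d(n))$), we get
$$ \langle m,n\rangle - \langle m,n_0\rangle\,\langle m_\tau,n\rangle = \sum_{i=1}^d \alpha_\tau(m)_i\,\beta_\sigma(n)_i = \langle\alpha_\tau(m),\beta_\sigma(n)\rangle. $$
Now specialize. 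If $m\in\sigma$ then $\langle m,n_\sigma\rangle=1$; writing $n_\sigma$ in the dual basis, $n_\sigma=\sum_{j=0}^d\langle m_j,n_\sigma\rangle n_j$ with $m_0:=m_\tau$, and since $\langle m_i,n_\sigma\rangle=0$ for $i\geq 1$ we get $n_\sigma=\langle m_\tau,n_\sigma\rangle n_0=n_0$. Hence $\langle m,n_0\rangle=\langle m,n_\sigma\rangle=1$ for $m\in\sigma$, and the identity above becomes $\langle m-m_\tau,n\rangle=\langle\alpha_\tau(m),\beta_\sigma(n)\rangle$, which is \eqref{eq:CompatibleCharts}, valid for all $n\in N_\mathbb R$ and in particular on $\Delta^\vee_h$. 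For \eqref{eq:CompatibleChartsDual}, take $n\in\tau$, so $\langle m_\tau,n\rangle=h(m_\tau)$; then the general identity reads $\langle m,n\rangle-h(m_\tau)\langle m,n_0\rangle=\langle\alpha_\tau(m),\beta_\sigma(n)\rangle$, and since $n_0=n_\sigma$ this is exactly $\langle m,n-h(m_\tau)n_\sigma\rangle=\langle\alpha_\tau(m),\beta_\sigma(n)\rangle$ for all $m\in\Delta$. The converse direction (given $\alpha_\tau$, produce $\beta_\sigma$) is entirely symmetric: from a generating set $n_1,\ldots,n_d$ of $m_\tau^\perp\cap N$, the vectors $n_\sigma,n_1,\ldots,n_d$ generate $N$ by the dual form of Lemma~\ref{lem:Wedge} (using $\langle m_\tau,n_\sigma\rangle=1$), and one takes $m_1,\ldots,m_d$ to be the last $d$ elements of the dual basis of $M$; the same computation applies verbatim.

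The main subtlety — the only place the argument is not purely formal — is the normalization constant in front of $n_\sigma$ (resp. $m_\tau$): one must check that with $m_\tau$ completed to a basis, the dual basis vector $n_0$ is genuinely equal to the \emph{primitive} normal $n_\sigma$ and not merely proportional to it. This is exactly where reflexivity enters: $\langle m_\tau,n_\sigma\rangle=1$ forces the coefficient to be $1$ in the computation $n_\sigma=\langle m_\tau,n_\sigma\rangle n_0$, so the bookkeeping in \eqref{eq:CompatibleCharts}–\eqref{eq:CompatibleChartsDual} (with $h(m_\tau)$ appearing in the second identity but a bare $m_\tau$ in the first) comes out consistently. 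Once this is pinned down the rest is a one-line linear-algebra expansion, so I would present the construction of the dual basis carefully and relegate the pairing identities to the short computation above.
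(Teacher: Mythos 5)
Your proof is correct and follows essentially the same route as the paper: complete $m_1,\ldots,m_d$ by $m_\tau$ to a basis of $M$ (via Lemma~\ref{lem:Wedge}), take the dual basis, note its last $d$ elements generate $m_\tau^\perp\cap N$, and verify the two identities by expanding in these coordinates; your explicit check that the zeroth dual vector equals $n_\sigma$ is exactly the identification the paper uses implicitly when writing $n=\beta_0 n_\sigma+\sum\beta_j n_j$.
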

\begin{proof}
Let $m_1,\ldots,m_d$ be the generators of $n_\sigma^\perp\cap M$ that defines $\beta_\sigma$. As explained in the proof of Lemma~\ref{lem:ias}, $m_\tau,m_1,\ldots,m_d$ generate $M$. As $\langle m_i,n_\sigma \rangle=0$ for all $i$, $\langle m_\tau,n_\sigma \rangle = 1$ and $\langle m_\tau,n \rangle = 0$ for any $n\in m_\tau^\perp$ it follows that $m_\tau^\perp \cap N $ is dual to the lattice generated by $m_1,\ldots,m_d$. Consequently, we can pick generators $n_1,\ldots,n_d$ for $m_\tau^\perp \cap N$ which are dual to $m_1,\ldots,m_d$, i.e. $\langle m_i,n_j \rangle = 1$ if $i=j$ and $\langle m_i,n_j \rangle = 0$ if $i\not=j$. Writing an element $m\in \sigma$ as $m=m_\tau+\sum_{i=1}^d \alpha_i m_i$ and an element $n\in \Delta^\vee_h$ as $n = \beta_0n_\sigma + \sum_{j=1}^d \beta_j n_j$ for coefficients $\alpha_1,\ldots,\alpha_d,\beta_0,\ldots,\beta_d$ we get $\alpha_\tau(m) = (\alpha_1,\ldots,\alpha_d)$, $\beta_\sigma(n) = (\beta_1,\ldots,\beta_d)$ and 
\begin{eqnarray*}
\langle m-m_\tau,n \rangle & = & \langle \sum_{i=1}^d \alpha_i m_i, \beta_0n_\sigma + \sum_{j=1}^d \beta_j m_j \rangle \\
& = & \sum_{i,j=1}^d \alpha_i\beta_j \langle m_i,n_j \rangle \nonumber \\
& = & \sum_{i}^d \alpha_i\beta_i \nonumber \\
& = & \langle \alpha_\tau(m),\beta_\sigma(n)\rangle. \nonumber
\end{eqnarray*}
This proves \eqref{eq:CompatibleCharts}. A similar computation, writing an element $m\in \Delta$ as $\alpha_0m_\tau+\sum_{i=1}^d \alpha_im_i$ and an element $n\in \tau$ as $n=h(m_\tau)n_\sigma + \sum_{j=1}^d \beta_jn_j$, proves \eqref{eq:CompatibleChartsDual}. The second statement in the lemma follows in the same way. 
\end{proof}

\section{The c-gradient}\label{sec:cGradient}
As in \cite{HJMM}, and important role will be played by the $c$-gradient of a function $\Phi\in \mathcal P(\Delta)$. Let $\Phi\in \mathcal P(\Delta)$ and $n\in B$. The $c$-gradient of $\Psi$ at $n$ is the set of $m\in A$ such that 
$$ \Psi(n') \geq \Psi(n)+\langle m,n'-n \rangle $$
for all $n'\in B$. Similarly, if we define $\mathcal P(\Delta^\vee_h)$ to be the space of convex function $\Phi$ on $M_\mathbb R$ such that 
$$\sup_{m\in M_\mathbb R} |\Phi(m)-\sup_{n\in \Delta^\vee_h} \langle m,n \rangle|<\infty, $$
the $c$-gradient of $\Phi$ at a point $m\in A$ is the set of $n\in B$ such that
$$ \Phi(m') \geq \Phi(m)+\langle m'-m,n \rangle $$
for all $m'\in A$.

It might be useful to compare the $c$-gradient with the usual multivalued gradient $\partial$ of a convex function $f$ on $(\mathbb R^d)^*$. The gradient of $f$ at a point $y\in (\mathbb R^d)^*$ is the set of $x\in \mathbb R^d$ such that $f(y')\geq f(y)+\langle x,y'-y\rangle$ for all $y'\in (\mathbb R^d)^*$. The $c$-gradient shares many properties with this. In particular, the $c$-gradient is ($c$-) monotonous, i.e. if $m\in \partial^c\Psi(n)$ and $m'\in \partial^c\Psi(n')$ for $m,m'\in A$ and $n,n'\in B$, then
$$ \langle m'-m,n'-n \rangle \geq 0. $$ Moreover, as explained in Lemma~\ref{lem:GradientsInt} and Corollary~\ref{cor:GradientIso} below (which are the points of this section), the $c$-gradient of $\Psi$ can under favourable circumstances be identified with the usual gradient of the convex functions $\Psi|_\tau^\circ$, for $\tau$ a facet in $B$.

Let $\sigma$ be a facet of $\Delta$. For $n\in \Delta$, let $p_\sigma(n) = n+\rho n_\sigma$ where 
$$ \rho= \rho_{\sigma,n}:= \sup \rho': n+\rho' n_\sigma\in \Delta^\vee_h. $$
Similarly, given a facet $\tau$ of $\Delta^\vee_h$ and $m\in \Delta$, let 
$p_\tau(m) = m+\rho m_\tau$ where 
$$ \rho=\rho_{\tau,m}:=\sup \rho': m+\rho' m_\tau\in \Delta. $$
It follows from Lemma~\ref{lem:ProjectionDelta} and Lemma~\ref{lem:ProjectionDeltaDual} that $p_\sigma$ and $p_\tau$ define projections of $\Delta^\vee_h$ and $\Delta$ onto $\cup_{m\in \sigma\cap M} \tau_m$ and $\St(m_\tau)$, respectively. 
These projections have good properties with respect to the $c$-gradient. 
\begin{lemma}\label{lem:BetterGradient}
Let $\tau$ be a facet of $\Delta^\vee_h$ and $\Psi\in \mathcal P(\Delta)$. Assume $m\in A$, $n\in \tau$ and $m\in \partial^c\Psi(n).$ Then $p_\tau(m)\in \partial^c\Psi(n).$

Similarly, let $\sigma$ be a facet of $\Delta$ and $\Phi\in \mathcal P(\Delta^\vee_h)$. Assume $m\in \sigma$, $n\in B$ and $n\in \partial^c\Phi(m).$ Then $p_\sigma(n)\in \partial^c\Phi(m).$
\end{lemma}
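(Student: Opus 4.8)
The plan is to prove the first statement; the second is entirely symmetric, exchanging the roles of $\Delta$ and $\Delta^\vee_h$, of $\sigma$ and $\tau$, and of $m_\tau$ and $n_\sigma$. So suppose $\tau$ is a facet of $\Delta^\vee_h$ with primitive outward normal $m_\tau$, that $\Psi\in\mathcal P(\Delta)$, that $n\in\tau$, and that $m\in\partial^c\Psi(n)$. By definition $p_\tau(m)=m+\rho m_\tau$ with $\rho=\rho_{\tau,m}\ge 0$ the largest parameter for which $m+\rho m_\tau\in\Delta$; note $\rho\geq 0$ since $m\in A\subset\Delta$, and $p_\tau(m)\in\St(m_\tau)$ by Lemma~\ref{lem:ProjectionDelta} (more precisely by the identification of the projection image with the star, exactly as in the proof of that lemma). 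What I must show is that
$$ \Psi(n') \;\geq\; \Psi(n) + \langle p_\tau(m),\, n'-n\rangle \qquad\text{for all } n'\in B. $$
Since $p_\tau(m)=m+\rho m_\tau$ and $m$ already satisfies the $c$-subgradient inequality $\Psi(n')\geq\Psi(n)+\langle m,n'-n\rangle$, after subtracting it suffices to prove
$$ \rho\,\langle m_\tau,\, n'-n\rangle \;\leq\; 0 \qquad\text{for all } n'\in B. $$

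The key point is that $n$ lies on the facet $\tau$, which is cut out by $\langle m_\tau,\cdot\rangle = h(m_\tau)$, while every point of $\Delta^\vee_h$ — hence every $n'\in B\subset\partial\Delta^\vee_h$ — satisfies $\langle m_\tau, n'\rangle \leq h(m_\tau)$, because $m_\tau\in\Delta\cap M$ (it is the primitive normal of a facet of the reflexive-type polytope $\Delta^\vee_h$, and by construction $\Delta^\vee_h=\{n:\langle m,n\rangle\leq h(m)\ \forall m\in\Delta\cap M\}$, so $m_\tau$ is among the defining inequalities — here I should double-check that the facet normals of $\Delta^\vee_h$ are indeed lattice points of $\Delta\cap M$, which follows from the reflexivity setup and the discussion preceding Lemma~\ref{lem:ProjectionDeltaDual}, where $m_\tau$ is described as a primitive outward normal lying in some facet $\sigma$ of $\Delta$). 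Therefore $\langle m_\tau, n'\rangle \leq h(m_\tau) = \langle m_\tau, n\rangle$, i.e. $\langle m_\tau, n'-n\rangle \leq 0$. Combined with $\rho\geq 0$ this gives exactly $\rho\langle m_\tau, n'-n\rangle \leq 0$, completing the argument.

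For the second statement one argues identically: $\sigma$ is a facet of $\Delta$, cut out by $\langle\cdot,n_\sigma\rangle = 1$, and every $m'\in A\subset\partial\Delta$ satisfies $\langle m', n_\sigma\rangle\leq 1 = \langle m, n_\sigma\rangle$ since $m\in\sigma$; with $p_\sigma(n)=n+\rho n_\sigma$, $\rho\geq 0$, subtracting the $c$-subgradient inequality for $n$ reduces the claim to $\rho\langle m'-m, n_\sigma\rangle\leq 0$, which is now immediate. The main (and only) obstacle is the bookkeeping of signs and the verification that the supporting hyperplane defining $\tau$ (resp.\ $\sigma$) is genuinely a valid inequality for all of $\Delta^\vee_h$ (resp.\ $\Delta$) — i.e.\ that $m_\tau\in\Delta\cap M$ and $h(m_\tau)$ is the right constant; once that is in place the inequality $\langle m_\tau, n'-n\rangle\leq 0$ is forced and there is no analytic difficulty. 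No convexity or regularity of $\Psi$ beyond the defining $c$-subgradient inequality is used.
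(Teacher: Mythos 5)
Your proof is correct and follows essentially the same route as the paper's: write $p_\tau(m)=m+\rho m_\tau$ with $\rho\geq 0$, subtract the $c$-subgradient inequality for $m$, and dispose of the extra term using $\langle m_\tau,n\rangle=h(m_\tau)=\sup_{n'\in B}\langle m_\tau,n'\rangle$ (and symmetrically for the second statement). No substantive difference from the paper's argument.
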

\begin{proof}
Let $n'\in B$. We have
\begin{eqnarray*} 
\Psi(n') & \geq & \Psi(n)+\langle m,n'-n \rangle \\
& = & \Psi(n) + \langle p_\tau(m),n'-n \rangle - \langle m-p_\tau(m),n'-n \rangle \\
& = & \Psi(n) + \langle p_\tau(m),n'-n \rangle + \rho(\langle m_\tau,n'\rangle - \langle m_\tau,n\rangle) \\
& \geq & \Psi(n) + \langle p_\tau(m),n'-n \rangle
\end{eqnarray*}
since $\langle m_\tau,n \rangle = h(m_\tau) = \sup_{n'\in B} \langle m_\tau,n' \rangle.$
Hence $m'\in \partial^c\Phi(n)$. The second statement is proved in the same way. 
\end{proof}
\begin{corollary}\label{cor:GradientInStar}
    Assume $\tau$ is a facet of $\Delta^\vee_h$ and $n\in \tau$. Then $\partial^c\Psi(n)\cap \St(m_\tau)$ is non-empty.
\end{corollary}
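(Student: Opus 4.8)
The plan is to derive Corollary~\ref{cor:GradientInStar} directly from Lemma~\ref{lem:BetterGradient} together with the fact that the $c$-gradient $\partial^c\Psi(n)$ is non-empty for every $n\in B$. First I would establish the latter: since $\Psi\in\mathcal P(\Delta)$ is convex on $N_\mathbb R$ with the prescribed asymptotics, its ordinary subdifferential $\partial\Psi(n)$ is non-empty at every $n$, and the asymptotic bound $\sup_n|\Psi(n)-\sup_{m\in\Delta\cap M}\langle m,n\rangle|<\infty$ forces every subgradient to lie in $\Delta=\conv(\Delta\cap M)$, hence in $A$'s ambient polytope; thus there is some $m\in A$ (or at least in $\Delta$) with $m\in\partial^c\Psi(n)$. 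A small point to address is that $\partial^c\Psi(n)$ is defined with test points ranging over $B$ rather than all of $N_\mathbb R$, so I would note that for $n\in B$ the two notions of subgradient landing in $\Delta$ coincide, or simply invoke the convexity of $\Psi$ restricted to a supporting setup; this is the kind of standard convex-analysis fact I expect to cite rather than prove in detail.

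Next, given $n$ in the relative interior (or anywhere on) a facet $\tau$ of $\Delta^\vee_h$ and a point $m\in\partial^c\Psi(n)$, I would apply the first statement of Lemma~\ref{lem:BetterGradient} to conclude $p_\tau(m)\in\partial^c\Psi(n)$. Since the projection $p_\tau$ maps $\Delta$ onto $\St(m_\tau)$ (as recorded just before Lemma~\ref{lem:BetterGradient}, using Lemma~\ref{lem:ProjectionDelta}), the point $p_\tau(m)$ lies in $\St(m_\tau)$. Therefore $p_\tau(m)\in\partial^c\Psi(n)\cap\St(m_\tau)$, which is the desired non-emptiness. One should also check that $p_\tau(m)$ actually lies in $A$ (the boundary), not the interior of $\Delta$; but $p_\tau(m)$ lies on a facet containing $m_\tau$ by construction of $p_\tau$, hence is in $A$, so this is automatic.

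The argument is therefore essentially a two-line deduction, and the main (modest) obstacle is purely bookkeeping: making sure that the existence of \emph{some} $c$-subgradient at $n\in B$ is properly justified, i.e.\ that $\partial^c\Psi(n)\neq\emptyset$ for all $n\in B$, with the subgradient automatically lying in $A$ once we project. I would phrase the proof as: pick any $m\in\partial^c\Psi(n)$ (non-empty by convexity and the growth condition defining $\mathcal P(\Delta)$), then Lemma~\ref{lem:BetterGradient} gives $p_\tau(m)\in\partial^c\Psi(n)\cap\St(m_\tau)$, completing the proof.

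\begin{proof}
Fix a facet $\tau$ of $\Delta^\vee_h$ and a point $n\in\tau$. Since $\Psi\in\mathcal P(\Delta)$ is convex on $N_\mathbb R$ and satisfies $\sup_{n'\in N_\mathbb R}|\Psi(n')-\sup_{m\in\Delta\cap M}\langle m,n'\rangle|<\infty$, its subdifferential at $n$ is non-empty and contained in $\Delta$; restricting the defining inequality to $n'\in B$ shows $\partial^c\Psi(n)\neq\emptyset$. Pick $m\in\partial^c\Psi(n)$. By the first statement of Lemma~\ref{lem:BetterGradient}, $p_\tau(m)\in\partial^c\Psi(n)$. By construction $p_\tau(m)=m+\rho_{\tau,m}m_\tau$ lies on a facet of $\Delta$ containing $m_\tau$, and more precisely $p_\tau$ maps $\Delta$ onto $\St(m_\tau)$ by Lemma~\ref{lem:ProjectionDelta}, so $p_\tau(m)\in\St(m_\tau)$. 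Hence $p_\tau(m)\in\partial^c\Psi(n)\cap\St(m_\tau)$, which is therefore non-empty.
\end{proof}
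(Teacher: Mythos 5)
Your proof is correct and follows essentially the same route as the paper: establish that $\partial^c\Psi(n)$ contains some point of $\Delta$ (the paper cites compactness of $\Delta$, you use convexity plus the growth condition defining $\mathcal P(\Delta)$, which amounts to the same standard fact), then apply Lemma~\ref{lem:BetterGradient} and the projection property to land in $\St(m_\tau)$. Your extra bookkeeping about $p_\tau(m)$ lying in $A$ is fine and consistent with the paper's argument.
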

\begin{proof}
By compactness of $\Delta$, $\partial^c\Psi(n)$ contains some $m\in \Delta$. By Lemma~\ref{lem:BetterGradient}, $p_\tau(m)\in \St(m_\tau)$ lies in $\partial^c\Psi(n)$.
\end{proof}

We now turn to the main lemma of this section.
\begin{lemma}[Comparing $\partial$ and $\partial^c$ on $B^\circ$]\label{lem:GradientsInt}
Let $\sigma$ be a facet of $\Delta$, $\tau$ be a facet of $\Delta^\vee_h$ and $(\alpha_\tau, \beta_\sigma)$ be $(\sigma,\tau)$-compatible.
Assume $\Psi\in \mathcal P(\Delta)$ and $n\in \tau^\circ$. Then
$$ \partial (\Psi\circ \beta^{-1}) (\beta(n)) \subset \alpha(\St(m_\tau)).$$
Moreover, the following two implications hold:
\begin{eqnarray}
    m\in \partial^c\Psi(n) & \implies & \alpha(m)\in \partial (\Psi\circ \beta^{-1})(\beta(n)) \label{eq:1stGradImplication}\\
    m\in \partial^c\Psi(n) & \impliedby & \alpha(m)\in \partial (\Psi\circ \beta^{-1})(\beta(n)) \text{ and } m\in \St(m_\tau). \label{eq:2ndGradImplication}
\end{eqnarray}

Similarly, assume $\Phi\in \mathcal P(\Delta^\vee_h)$ and $m\in \sigma^\circ$. Then
$$ \partial (\Phi\circ \alpha^{-1}) (\alpha(m)) \subset \beta(\cup_{m' \in \sigma\cap M} \tau_{m'}).$$
Moreover, the following two implications hold: 
\begin{eqnarray*}
    n\in \partial^c\Phi(m) & \implies & \beta(n)\in \partial (\Phi\circ \alpha^{-1})(\alpha(m)) \\
    n\in \partial^c\Phi(m) & \impliedby & \beta(n)\in \partial (\Phi\circ \alpha^{-1})(\alpha(m)) \text{ and } n\in \cup_{m' \in \sigma} \tau_{m'}.
\end{eqnarray*}
\end{lemma}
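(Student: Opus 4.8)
The plan is to exploit the $(\sigma,\tau)$-compatibility relation \eqref{eq:CompatibleCharts}, namely $\langle m - m_\tau, n\rangle = \langle \alpha_\tau(m), \beta_\sigma(n)\rangle$ for $m \in \sigma$, $n \in \Delta^\vee_h$, to translate inequalities for the $c$-gradient of $\Psi$ on $B$ into inequalities for the ordinary gradient of $\Psi\circ\beta^{-1}$ on the Euclidean domain $\beta_\sigma(\cup_{m'\in\sigma\cap M}\tau_{m'})$, and vice versa. The first step is the inclusion $\partial(\Psi\circ\beta^{-1})(\beta(n)) \subset \alpha(\St(m_\tau))$: by Lemma~\ref{lem:ConvexPotential}, after subtracting a linear function $m_0 \in \sigma$, the function $(\Psi - m_0)\circ\beta_\sigma^{-1}$ is convex on $\beta_\sigma(\cup_{m'\in\sigma\cap M}\tau_{m'})$, and since $n \in \tau^\circ$ is an interior point of that domain, its subgradient is nonempty there; I would show any subgradient vector is of the form $\alpha_\tau(m) - \alpha_\tau(m_0)$ for some $m \in \St(m_\tau)$ by using that $\alpha_\tau$ is a bijection from $\St(m_\tau)$ onto its image (established in the previous section) together with the compatibility identity to rewrite the affine support function in the $\beta$-coordinates. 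Combined with $\partial(\Psi\circ\beta^{-1}) = \partial((\Psi - m_0)\circ\beta^{-1}) + \alpha(m_0)$ (shift by a constant vector), this gives the claim.

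For implication \eqref{eq:1stGradImplication}, suppose $m \in \partial^c\Psi(n)$. By Lemma~\ref{lem:BetterGradient} (or directly Corollary~\ref{cor:GradientInStar}) I may replace $m$ by $p_\tau(m) \in \St(m_\tau)$, which still lies in $\partial^c\Psi(n)$; so assume $m \in \St(m_\tau)$. The defining inequality $\Psi(n') \geq \Psi(n) + \langle m, n' - n\rangle$ holds for all $n' \in B$. Restrict to $n' \in \cup_{m'\in\sigma\cap M}\tau_{m'}$, write $\langle m, n' - n\rangle = \langle m - m_\tau, n' - n\rangle$ using $\langle m_\tau, n'\rangle = h(m_\tau) = \langle m_\tau, n\rangle$ for $n, n' \in B$, and then apply \eqref{eq:CompatibleCharts} to get $\langle m - m_\tau, n' - n\rangle = \langle \alpha(m), \beta(n') - \beta(n)\rangle$ — here I need $m \in \sigma$, which holds since $m \in \St(m_\tau)$ and the facets containing $m_\tau$ all have normals in $\sigma$... more precisely, since $(\alpha,\beta)$ is $(\sigma,\tau)$-compatible and $m \in \St(m_\tau)$ lies in some facet $\sigma'$ with $m_{\tau} \in \sigma'$; I should be slightly careful and restrict attention to $m$ in the single facet $\sigma$ or argue $\alpha(m)$ still encodes the pairing, which is exactly the content of compatibility extended to $\St(m_\tau)$. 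Then, since $\beta$ maps $\cup_{m'\in\sigma\cap M}\tau_{m'}$ onto a neighborhood of $\beta(n)$ in $\mathbb R^d$ (as $n \in \tau^\circ$ and $\tau^\circ$ is open in this union), the inequality $\Psi\circ\beta^{-1}(y) \geq \Psi\circ\beta^{-1}(\beta(n)) + \langle \alpha(m), y - \beta(n)\rangle$ holds for $y$ near $\beta(n)$, and since $\Psi\circ\beta^{-1}$ is (locally) convex there this local subgradient inequality upgrades to a global one, giving $\alpha(m) \in \partial(\Psi\circ\beta^{-1})(\beta(n))$.

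For the reverse implication \eqref{eq:2ndGradImplication}, suppose $\alpha(m) \in \partial(\Psi\circ\beta^{-1})(\beta(n))$ and $m \in \St(m_\tau)$. Then $\Psi(n') \geq \Psi(n) + \langle \alpha(m), \beta(n') - \beta(n)\rangle$ for all $n'$ in the union $\cup_{m'\in\sigma\cap M}\tau_{m'}$, and by \eqref{eq:CompatibleCharts} the right-hand side equals $\Psi(n) + \langle m - m_\tau, n' - n\rangle = \Psi(n) + \langle m, n' - n\rangle$ (again using that $m_\tau$ pairs to the constant $h(m_\tau)$ on $B$). This gives the $c$-subgradient inequality for $n'$ ranging over $\cup_{m'\in\sigma\cap M}\tau_{m'}$; to extend it to all $n' \in B$ I would invoke Lemma~\ref{lem:ProjectionDeltaDual} / the projection $p_\sigma$: for arbitrary $n' \in B$, the point $p_\sigma(n') \in \cup_{m'\in\sigma\cap M}\tau_{m'}$ satisfies the inequality, and $\langle m, p_\sigma(n') - n'\rangle \leq 0$ because $p_\sigma(n') - n' = \rho\, n_\sigma$ with $\rho \geq 0$ and $\langle m, n_\sigma\rangle = 1$... wait — that would point the wrong way, so the correct move is the dual argument: $\Psi$ itself, being in $\mathcal P(\Delta)$ hence convex on all of $N_\mathbb R$, restricted to $B$, satisfies $\Psi(n') \geq \Psi(p_\sigma(n')) + \langle$ gradient direction $\rangle$, or more cleanly one checks the inequality is preserved under $p_\sigma$ using that $n_\sigma \in \Delta^\vee_h$ realizes the max of $\langle m_\tau, \cdot\rangle$, exactly as in the proof of Lemma~\ref{lem:BetterGradient}. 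The dual statements for $\Phi \in \mathcal P(\Delta^\vee_h)$ follow by the symmetric argument with the roles of $\alpha$ and $\beta$, $\sigma$ and $\tau$, $\St(m_\tau)$ and $\cup_{m'\in\sigma\cap M}\tau_{m'}$ interchanged, using \eqref{eq:CompatibleChartsDual} in place of \eqref{eq:CompatibleCharts}. I expect the main obstacle to be the careful bookkeeping in extending the subgradient inequality from the "good" union of faces (where $\beta$ is injective and the compatibility identity is available) to all of $B$: this is where the projection lemmas of Section~\ref{sec:Reflexive Polytopes} and the argument pattern of Lemma~\ref{lem:BetterGradient} must be combined, and one must track signs in the pairing $\langle m_\tau, n_\sigma\rangle = 1$ correctly to see that the error term has the right sign.
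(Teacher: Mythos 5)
There is a genuine gap, and it comes from using the wrong compatibility identity throughout. Identity \eqref{eq:CompatibleCharts} requires $m\in\sigma$ (and allows all $n\in\Delta^\vee_h$), but in both implications your slope $m$ is only known to lie in $\St(m_\tau)$, not in the fixed facet $\sigma$; your patch (``compatibility extended to $\St(m_\tau)$'') is not established anywhere and is not available. The identity the argument actually needs for the $\Psi$-statements is \eqref{eq:CompatibleChartsDual}, which holds for \emph{all} $m\in\Delta$ but only for $n\in\tau$ --- exactly matching the situation where base points lie in $\tau^\circ$ and slopes range over $A$ (with it, \eqref{eq:1stGradImplication} is a one-line computation for arbitrary $m\in\partial^c\Psi(n)$, no reduction to $\St(m_\tau)$ needed; your reduction $m\mapsto p_\tau(m)$ is harmless only because $\alpha_\tau(m_\tau)=0$, which you do not note). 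You have the roles of the two identities reversed (you reserve \eqref{eq:CompatibleChartsDual} for the dual $\Phi$-statement, where in fact \eqref{eq:CompatibleCharts} is the right one). Compounding this, you repeatedly use ``$\langle m_\tau,n'\rangle=h(m_\tau)$ for $n'\in B$'', which is false: it holds only for $n'\in\tau$, while you let $n'$ range over $\cup_{m'\in\sigma\cap M}\tau_{m'}$ and then over $B$.

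In the hard direction \eqref{eq:2ndGradImplication} two essential steps are missing. First, the hypothesis $\alpha(m)\in\partial(\Psi\circ\beta^{-1})(\beta(n))$ only yields $\Psi(n')\geq\Psi(n)+\langle m,n'-n\rangle$ for $n'\in\tau^\circ$; passing to a union of facets is not automatic because $\Psi\circ\beta^{-1}$ is \emph{not} convex there. The paper's proof chooses a facet $\sigma'\subset\St(m_\tau)$ \emph{containing the given} $m$, takes $(\sigma',\tau)$-compatible charts, and uses convexity of $(\Psi-m_\tau)\circ\beta_{\sigma'}^{-1}$ on $\beta_{\sigma'}(\cup_{m'\in\sigma'\cap M}\tau_{m'})$ (Lemma~\ref{lem:ConvexPotential}) to promote the local subgradient inequality to that union; this step is absent from your sketch. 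Second, the extension from that union to all of $B$ goes through the projection $p_{\sigma'}$ together with an auxiliary subgradient $m'\in\partial^c\Psi(p_{\sigma'}(n'))$, and the error term $-\rho\langle m'-m,n_{\sigma'}\rangle$ is nonnegative precisely because $\langle m,n_{\sigma'}\rangle=1\geq\langle m',n_{\sigma'}\rangle$, i.e.\ because the facet used for the projection contains $m$. You project with the fixed $\sigma$ from the statement, for which $\langle m,n_\sigma\rangle<1$ is possible when $m\in\St(m_\tau)\setminus\sigma$, so the sign cannot be controlled; your own ``wait --- that would point the wrong way'' correctly detects a problem, but the proposed fix (convexity of $\Psi$ on $N_\mathbb R$, or ``the argument pattern of Lemma~\ref{lem:BetterGradient}'') never supplies the needed mechanism. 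Finally, the first containment $\partial(\Psi\circ\beta^{-1})(\beta(n))\subset\alpha(\St(m_\tau))$ is asserted rather than proved: the paper obtains it by writing $\Psi\circ\beta^{-1}$ on $\beta(\tau)$, via the Legendre transform and \eqref{eq:CompatibleChartsDual}, as a supremum of affine functions with slopes in $\alpha_\tau(\Delta)=\alpha_\tau(\St(m_\tau))$ (Lemma~\ref{lem:ProjectionDelta}), an ingredient your outline does not replace.
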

Before we prove this lemma we note the following straight forward corollary:
\begin{corollary}
    \label{cor:GradientIso}
    Let $\sigma,\tau,\alpha,\beta$ be as in Lemma~\ref{lem:GradientsInt}, $n\in \tau$ and $\partial^c\Psi(n) \subset \St(m_\tau)$. Then $\alpha$ defines a bijection of $\partial^c\Psi(n)$ onto $\partial (\Psi\circ \beta^{-1})(\beta(n))$.
\end{corollary}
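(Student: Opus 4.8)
The plan is to deduce Corollary~\ref{cor:GradientIso} directly from Lemma~\ref{lem:GradientsInt}, so the only real work is to assemble the three statements of the lemma (the inclusion $\partial(\Psi\circ\beta^{-1})(\beta(n))\subset\alpha(\St(m_\tau))$ together with the two implications \eqref{eq:1stGradImplication} and \eqref{eq:2ndGradImplication}) into a bijection. First I would fix $\sigma,\tau,\alpha,\beta$ as in the lemma, take $n\in\tau$ with the extra hypothesis $\partial^c\Psi(n)\subset\St(m_\tau)$, and note that since $n\in\tau\subset\tau^\circ$ would be too strong — actually one needs $n\in\tau^\circ$ for the lemma to apply, so I would point out that the statement is understood for $n$ in the relative interior (or remark that for $n\in\partial\tau$ one passes to the facet actually containing $n$); I expect the intended reading is $n\in\tau^\circ$, matching the hypotheses of Lemma~\ref{lem:GradientsInt}.

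The core argument is then a two-line set-theoretic check. Define the map $\alpha|_{\partial^c\Psi(n)}:\partial^c\Psi(n)\to\partial(\Psi\circ\beta^{-1})(\beta(n))$; it is well-defined by implication \eqref{eq:1stGradImplication}. It is injective because $\alpha_\tau$ is one-to-one on $\St(m_\tau)$ (as recorded in Section~\ref{sec:AffineStructure}, $\alpha_\tau$ is one-to-one on $\cup_{\sigma:m_\tau\in\sigma}\sigma=\St(m_\tau)$) and by hypothesis $\partial^c\Psi(n)\subset\St(m_\tau)$. For surjectivity, take any $y\in\partial(\Psi\circ\beta^{-1})(\beta(n))$; by the inclusion $\partial(\Psi\circ\beta^{-1})(\beta(n))\subset\alpha(\St(m_\tau))$ there is $m\in\St(m_\tau)$ with $\alpha(m)=y$; then implication \eqref{eq:2ndGradImplication} gives $m\in\partial^c\Psi(n)$, and $\alpha(m)=y$, so $y$ is in the image. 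Hence $\alpha$ restricts to a bijection, which is exactly the claim.

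I do not anticipate a genuine obstacle: everything needed is already in Lemma~\ref{lem:GradientsInt} and in the injectivity of $\alpha_\tau$ on $\St(m_\tau)$. The only point requiring a little care is making sure the domain/codomain of the restricted map are literally the two sets in the statement — i.e. that implication \eqref{eq:1stGradImplication} lands in $\partial(\Psi\circ\beta^{-1})(\beta(n))$ and not merely in $\alpha(A)$ — but this is precisely what \eqref{eq:1stGradImplication} asserts. So the proof is essentially: ``well-defined by \eqref{eq:1stGradImplication}; injective since $\alpha_\tau|_{\St(m_\tau)}$ is injective and $\partial^c\Psi(n)\subset\St(m_\tau)$; surjective by combining the inclusion with \eqref{eq:2ndGradImplication}.'' If I wanted to be thorough I would also remark that the hypothesis $\partial^c\Psi(n)\subset\St(m_\tau)$ is automatically available in the cases of interest via Corollary~\ref{cor:GradientInStar} together with $c$-monotonicity when $n$ has a unique $c$-gradient, but that is not needed for the statement itself.
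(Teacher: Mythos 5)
Your proof is correct and is exactly the intended derivation: the paper states Corollary~\ref{cor:GradientIso} without proof as a ``straightforward'' consequence of Lemma~\ref{lem:GradientsInt}, and your assembly (well-definedness from \eqref{eq:1stGradImplication}, injectivity from the injectivity of $\alpha_\tau$ on $\St(m_\tau)$ together with the hypothesis $\partial^c\Psi(n)\subset\St(m_\tau)$, surjectivity from the inclusion $\partial(\Psi\circ\beta^{-1})(\beta(n))\subset\alpha_\tau(\St(m_\tau))$ plus \eqref{eq:2ndGradImplication}) is precisely that argument. Your remark that the hypothesis should be read as $n\in\tau^\circ$, matching the lemma, is also a fair observation.
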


\begin{proof}[Proof of Lemma~\ref{lem:GradientsInt}]
To prove the first claim in the lemma we write $\Psi=\Phi^*$ for $\Phi=\Psi^*\in L^\infty(\Delta)$. This means 
$$ \Psi(n) = \sup_{m\in \Delta} \langle m,n \rangle - \Phi(m). $$
It follows that for any $n\in \tau^\circ$ 
\begin{eqnarray} 
\Psi\circ\beta_\sigma^{-1}(\beta_\sigma(n)) & = & \Psi(n) \nonumber \\
& = & \sup_{m\in \Delta} \langle m,n \rangle - \Phi(m) \nonumber \\
& = & \sup_{m\in \Delta} \langle m,n-h(m_\tau)n_\sigma \rangle + \langle m, h(m_\tau)n_\sigma\rangle - \Phi(m) \nonumber \\
& = & \sup_{m\in \Delta} \langle \alpha_\tau(m),\beta_\sigma(n)\rangle + h(m_\tau)\langle m, n_\sigma\rangle - \Phi(m) \nonumber
\end{eqnarray}
where the fourth equality uses Lemma~\ref{lem:CompatibleCharts}. This means $\Psi\circ\beta^{-1}$ can be written as a pointwise supremum of affine functions whose linear part lies in $\alpha_\tau(\Delta) = \alpha_\tau(\St(m_\tau))$, proving the first claim in the lemma. 

We will now prove \eqref{eq:1stGradImplication}. Let $n\in \tau^\circ$ and $m\in \partial^c\Psi(n)$. This means 
$$    \Psi(n')\geq \Psi(n)+\langle m,n'-n\rangle$$ 
for all $n'\in B$. Consequently, 
\begin{eqnarray} 
\Psi\circ\beta_\sigma^{-1}(\beta_\sigma(n')) & = & \Psi(n') \nonumber \\
& \geq & \Psi(n) + \langle m,n'-n\rangle \nonumber \\
& = & \Psi(n) + \langle m,n'-h(m_\sigma)n_\tau\rangle - \langle m,n-h(m_\sigma)n_\tau\rangle \nonumber \\
& = & \Psi(n)+\langle \alpha_\tau(m),\beta_\sigma(n') \rangle - \langle \alpha_\tau(m),\beta_\sigma(n)\rangle \nonumber \\
& = & \Psi\circ\beta_\sigma^{-1}(\beta_\sigma(n))+\langle \alpha_\tau(m),\beta_\sigma(n')-\beta_\sigma(n)\rangle \label{eq:cGradImpliesGrad}
\end{eqnarray}
for all $n'\in \sigma^\circ$, hence $\alpha_\tau(m)\in \partial \Psi\circ\beta_\sigma^{-1}(\beta_\sigma(n))$. 

To prove \eqref{eq:2ndGradImplication}, let $n\in \tau^\circ$ and $m\in \St(m_\tau)$ and $\alpha_\tau(m)\in \partial(\Psi\circ \beta_\sigma^{-1})(\beta_\sigma(n))$. Then 
$$ \Psi\circ\beta_\sigma^{-1}(\beta_\sigma(n')) \geq \Psi\circ\beta_\sigma^{-1}(\beta_\sigma(n))+\langle \alpha_\tau(m),\beta_\sigma(n')-\beta_\sigma(n)\rangle $$
for all $ n'\in \tau^\circ.$ It follows that
\begin{eqnarray} 
\Psi(n') & = & \Psi\circ\beta_\sigma^{-1}(\beta_\sigma(n')) \nonumber \\
& \geq & \Psi\circ\beta_\sigma^{-1}(\beta_\sigma(n))+\langle \alpha_\tau(m),\beta_\sigma(n')-\beta_\sigma(n)\rangle \nonumber \\
& = & \Psi(n)+\langle m,n'-n_\sigma\rangle - \langle m,n-n_\sigma\rangle \nonumber \\
& = & \Psi(n)+\langle m,n'-n\rangle. \label{eq:GradImpliescGrad}
\end{eqnarray}
for all $n'\in \tau^\circ$. 

To conclude \eqref{eq:2ndGradImplication}, we need to prove that \eqref{eq:GradImpliescGrad} holds for all $n'\in B$. To do this, pick a facet $\sigma'$ in $\St(m_\tau)$ containing $m$. Pick $\beta_{\sigma'}$ so that $(\alpha_{\tau},\beta_{\sigma'})$ are $(\sigma',\tau)$-compatible. Using that \eqref{eq:GradImpliescGrad} holds for all $n'\in \tau^\circ$, a similar application of Lemma~\ref{lem:CompatibleCharts} as in \eqref{eq:cGradImpliesGrad} shows that 
\begin{equation} \label{eq:SubgradLocal} (\Psi-m_\tau)\circ\beta_{\sigma'}^{-1}(\beta_{\sigma'}(n')) \geq (\Psi-m_\tau)\circ\beta_{\sigma'}^{-1}(\beta_{\sigma'}(n))+\langle \alpha_\tau(m),\beta_{\sigma'}(n')-\beta_{\sigma'}(n)\rangle \end{equation}
for all $n'\in \tau^\circ$. Since $(\Psi-m_\tau)\circ\beta_{\sigma'}^{-1}$ is convex on $\beta_{\sigma'}(\cup_{m\in \sigma'} \tau_m)$ by Lemma~\ref{lem:ConvexPotential} and \eqref{eq:SubgradLocal} holds for all $n'\in \tau^\circ$ we get that \eqref{eq:SubgradLocal} holds for all $n'\in \cup_{m\in \sigma'} \tau_m$. By a similar application of Lemma~\ref{lem:CompatibleCharts} as in \eqref{eq:GradImpliescGrad}, this implies
$$ \Psi(n') \geq \Psi(n) + \langle m,n'-n \rangle $$
for all $n\in \cup_{m\in \sigma'} \tau_m$.

Now, assume $n'\in A$ and pick $m'\in \partial^c\Psi(\pi_{\sigma'}(n'))$, where $\pi_{\sigma'}(n')\in \cup_{m\in \sigma'} \tau_m$ is the image of $n'$ under the projection defined in the beginning of this section. We get
\begin{eqnarray*}
\Psi(n') & \geq & \Psi(\pi_{\sigma'}(n'))+\langle m',n'-\pi_{\sigma'}(n') \rangle \\
& \geq & \Psi(n)+\langle m,\pi_{\sigma'}(n')-n \rangle+\langle m',n'-\pi_{\sigma'}(n') \rangle \\
& = & \Psi(m) + \langle m,n'-n \rangle + \langle m'-m,n'-\pi_{\sigma'}(n') \rangle \\
& = & \Psi(m) + \langle m,n'-n \rangle - \rho\langle m'-m, n_{\sigma'}\rangle \\
& \geq & \Psi(m) + \langle m,n'-n \rangle,
\end{eqnarray*}
hence $m\in \partial^c\Psi(n)$. The corresponding statements for $\Phi$ are proved in the same way. 
\end{proof}

\section{Optimal Transport Plans}\label{sec:OptimalTransport}
We will now recall some notation and facts from the optimal transport literature. For details, we refer to any of the books \cite{Villani, AG, Figalli}. The standard setting in optimal transport is given by two (Polish) probability spaces and a lower semi-continuous \emph{cost function} defined on their product. Our setting consists of $A$ equipped with the positive, finite measure $\mu_M$ and $B$ equipped with a positive measure $\nu$ of the same total mass as $\mu_M$, together with the cost function $c:A\times B\rightarrow \mathbb R$ given by
$$ c(m,n) = -\langle m,n \rangle. $$
Although $\nu$ and $\mu_M$ are not probability measures, our setting is equivalent to the standard setting up to a harmless scaling. 

A \emph{transport plan} from $\nu$ to $\mu_M$ is a coupling of $\nu$ and $\mu_M$, i.e. a positive measure on the product $A\times B$ such that the pushforwards of $\gamma$ 
\begin{equation} \label{eq:MarginalsOTPlans} (q_A)\# \Gamma = \mu_M \text{ and } (q_B)_\# \gamma = \nu. \end{equation}
where $q_A:A\times B\rightarrow A$ and $q_B:A\times B\rightarrow B$ denote the projections onto the first and second factor of $A\times B$, respectively,

We will denote the set of transport plans from $\nu$ to $\mu_M$ by $\Pi(\nu,\mu_M)$. The optimal transport problem is to minimize
$$ I(\gamma) = \int_{A\times B} c\gamma $$
over $\Pi(\nu,\mu_M)$. It follows from lower semi-continuity of $I$ with respect tot he weak topology and Prokhorov's theorem that $I$ admits a minimizer. An \emph{optimal transport plan} is a transport plan that minimizes $I$. 

One of the most useful features of optimal transport is its striking interaction with its dual formulations. To explain this, we recall the following terminology from the optimal transport literature. For continuous functions $\Psi:\Delta^\vee_h\rightarrow \mathbb R$ and $\Phi:\Delta\rightarrow \mathbb R$, the $c$-transforms of $\Psi$ and $\Phi$ are the continuous functions $\Psi^c:A\rightarrow \mathbb R$ and $\Phi^c:B\rightarrow \mathbb R$ given by
\begin{equation} \label{eq:c-transform} \Psi^c(m) = \sup_{n\in B} \langle m,n \rangle-\Psi(n) \text{ and } \Phi^c(n) = \sup_{m\in A} \langle m,n \rangle-\Phi(m). \end{equation}
It follows from formal properties that any $\Psi$ and $\Phi$ as above satisfies $((\Psi^c)^c)^c = \Psi^c$ and $((\Phi^c)^c)^c = \Phi^c$. Those continuous functions $\Psi$ and $\Phi$ for which lie in the image of the $c$-transform, and thus satisfies $(\Psi^c)^c = \Psi$ and $(\Phi^c)^c = \Phi$, are called $c$-convex. 

A very useful feature, which arises formally, is that if $\Psi$ is $c$-convex and $(m,n)\in A\times B$, then $m\in \partial^c\Psi(n)$ if and only if $n\in \partial^c\Psi^c(m)$. The \emph{graph of the gradient} of $\partial^c\Psi$ is by definition the set of pairs $(m,n)\in A\times B$ satisfying these two properties. An equivalent definition of the graph of the gradient   can be made using the inequality 
\begin{equation} \label{eq:GraphOfGradient} \Psi(n)+\Psi^c(m) \geq \langle m,n \rangle \end{equation}
which holds for all $(m,n)\in A\times B$. The graph of the gradient is exactly those $(m,n)\in A\times B$ which attains equality in \eqref{eq:GraphOfGradient}. 

The dual problem of minimizing $I$ consists of maximizing the quantity 
$$ J(\Psi) = -\int_{A} \Psi^c\mu_M - \int_{B} \Psi \nu $$
over all $c$-convex functions $\Psi$. Concretely, using \eqref{eq:MarginalsOTPlans} and the fact that \eqref{eq:GraphOfGradient} holds
with equality if and only if $(m,n)$ lies on the graph of $\partial^c\Psi$ we get
\begin{equation} \label{eq:Duality} I(\gamma) = -\int \langle m,n \rangle\gamma \geq -\int(\Psi(n)+\Psi^c(m))\gamma = -\int \Psi\nu -\int \Psi^c \nu = J(\Psi) \end{equation}
and equality (which implies that both $\gamma$ and $\Psi$ are optimal) occurs if and only if $\gamma$ is supported on the graph of $\partial\Psi$. Moreover, strong duality holds and this condition is in fact necessary for optimality. 
We arrive at the following characterization of optimal transport plans which we will use multiple times in what follows: 

\noindent \textbf{Fact:} \emph{A transport plan is optimal if and only if it is supported on the graph of $\partial\Psi$ for a $c$-convex function $\Psi$} (see for example Theorem~5.10 in \cite{Villani}).

\begin{remark}
    Note that $\Psi^c$ is the unique function such that 
    $$ \Psi(n)+\Psi^c(m) \geq \langle m,n \rangle $$
    for all $(m,n)\in A\times B$ and for each $m\in A$ equality is achieved for some $n\in B$. It follows that to prove that a transport plan $\gamma$map is optimal, it suffices to produce a tcoupled of functions $(\Phi,\Psi)$ such that $\Psi(n)+\Phi(m)\geq \langle m,n\rangle$ throughout $A\times B$ with equality on the support of $\gamma$. 
\end{remark}

Note that the formulas in \eqref{eq:c-transform} can be used to define extensions of $\Psi^c$ and $\Phi^c$ to $M_\mathbb R$ and $N_\mathbb R$, respectively. We will identify these extensions with $\Psi^c$ and $\Phi^c$ and when convenient regard these as functions on $M_\mathbb R$ and $N_\mathbb R$. Note that since $(\Psi^c)^c = \Psi$ and $(\Phi^c)^c = \Phi$ for $c$-convex functions, we get a canonical extension of any $c$-convex function on $B$ or $A$ to $N_\mathbb R$ or $M_\mathbb R$, respectively. Conversely, any functions on $N_\mathbb R$ and $M_\mathbb R$ can be restricted to functions on $B$ and $A$, respectively. It turns out that these operations identify the sets of $c$-convex functions on $B$ and $A$ with $\mathcal P(\Delta)$ and $\mathcal P(\delta^\vee_h)$, as is proved in the following lemma. 
\begin{lemma}\label{lem:PDelta}
A lower semi-continuous function $\Psi:B\rightarrow \mathbb R$ is $c$-convex if and only if it is the restriction to $B$ of a function in $\mathcal P(\Delta)$.
\end{lemma}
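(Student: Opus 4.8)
The statement to prove is: a lower semi-continuous function $\Psi \colon B \to \mathbb{R}$ is $c$-convex if and only if it is the restriction to $B$ of a function in $\mathcal{P}(\Delta)$. Let me think about what the two directions require.

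**Setup and what $c$-convexity means concretely.** The cost is $c(m,n) = -\langle m,n\rangle$ on $A \times B$. So $\Psi^c(m) = \sup_{n\in B}(\langle m,n\rangle - \Psi(n))$ — this is just a (restricted) Legendre-type transform. A function $\Psi$ on $B$ is $c$-convex iff $(\Psi^c)^c = \Psi$, i.e. iff $\Psi(n) = \sup_{m\in A}(\langle m,n\rangle - \Psi^c(m))$ for all $n \in B$. The key observation is that the formulas in \eqref{eq:c-transform} extend $\Psi^c$ to a function on all of $M_\mathbb{R}$ and $(\Psi^c)^c$ to a function on all of $N_\mathbb{R}$, and these extensions are convex (sups of affine functions). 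So the candidate function in $\mathcal{P}(\Delta)$ is precisely the extension $\widetilde{\Psi} := (\Psi^c)^c$ regarded as a function on $N_\mathbb{R}$; $c$-convexity says exactly $\widetilde{\Psi}|_B = \Psi$.

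**Direction 1: $c$-convex $\Rightarrow$ restriction of something in $\mathcal{P}(\Delta)$.** Suppose $\Psi$ is $c$-convex. Set $\Phi := \Psi^c$ on $A$ and extend it to $M_\mathbb{R}$ by the same sup formula, then set $\widetilde{\Psi} := \Phi^c$ on $N_\mathbb{R}$ (again by the sup formula). Then $\widetilde{\Psi}$ is convex, and $\widetilde{\Psi}|_B = (\Psi^c)^c|_B = \Psi$ by $c$-convexity. It remains to check $\widetilde{\Psi} \in \mathcal{P}(\Delta)$, i.e. that $\sup_{n\in N_\mathbb{R}}|\widetilde{\Psi}(n) - \sup_{m\in \Delta\cap M}\langle m,n\rangle|$ is finite. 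The function $n \mapsto \sup_{m\in\Delta\cap M}\langle m,n\rangle = \sup_{m\in\Delta}\langle m,n\rangle$ is the support function of $\Delta$. Since $\Psi$ is continuous (hence bounded) on the compact set $B$, $\Phi = \Psi^c$ is a sup of affine functions with slopes in $B$, which is compact; comparing $\Phi$ with the support function of $B$ gives $\Phi$ bounded relative to $\sup_{n\in B}\langle m,n\rangle$, and then $\widetilde{\Psi} = \Phi^c$ is controlled by $\sup_{m\in A}\langle m,n\rangle$. One has to be a little careful since the sup defining $\Phi$ is only over $B$, not all of $N_\mathbb{R}$ — but $B = \partial\Delta_h^\vee$ is compact and $0$ is interior to $\Delta_h^\vee$, so $\sup_{n\in B}\langle m,n\rangle \geq 0$ with comparable behavior to the full support function of $\Delta_h^\vee$; the boundedness constants just involve $\mathrm{diam}$ of $B$ and $\|\Psi\|_\infty$. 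This is the routine estimate.

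**Direction 2: restriction of something in $\mathcal{P}(\Delta)$ $\Rightarrow$ $c$-convex.** Suppose $\Psi = \Psi_0|_B$ for some $\Psi_0 \in \mathcal{P}(\Delta)$. Since $\Psi_0$ is convex on $N_\mathbb{R}$ with $\sup|\Psi_0 - \sup_{m\in\Delta}\langle m,\cdot\rangle| < \infty$, we may write $\Psi_0 = \Phi_0^*$ where $\Phi_0 = \Psi_0^* \in L^\infty(\Delta)$ (Legendre duality; this is already used in the proof of Lemma~\ref{lem:ConvexPotential} and Lemma~\ref{lem:GradientsInt}), so $\Psi_0(n) = \sup_{m\in\Delta}(\langle m,n\rangle - \Phi_0(m))$. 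The subtlety is that this sup is over all of $\Delta$, whereas $c$-convexity of $\Psi = \Psi_0|_B$ requires realizing $\Psi(n)$ as $\sup_{m\in A}(\langle m,n\rangle - \Phi_1(m))$ for some $\Phi_1$ on the \emph{boundary} $A$, not the whole polytope $\Delta$. This is exactly where the projection machinery of Section~\ref{sec:cGradient} enters: given $n \in B$, $n$ lies in some facet $\tau$ of $\Delta_h^\vee$, and for any $m \in \Delta$ achieving (or nearly achieving) the sup, $p_\tau(m) \in \St(m_\tau) \subset A$ satisfies $\langle p_\tau(m), n\rangle \geq \langle m,n\rangle$ while $\Phi_0$ does not increase too much — more precisely, I expect one argues directly that the $c$-gradient $\partial^c\Psi_0(n)$ is nonempty in $A$ for every $n \in B$ (Corollary~\ref{cor:GradientInStar} already gives $\partial^c\Psi(n)\cap\St(m_\tau)\neq\emptyset$), which is exactly the statement that for each $n \in B$ there is $m \in A$ with $\Psi(n') \geq \Psi(n) + \langle m, n'-n\rangle$ for all $n' \in B$; equivalently $\langle m,n\rangle - \Psi(n) \geq \langle m,n'\rangle - \Psi(n')$ for all $n'$, i.e. $\Psi^c(m) = \langle m,n\rangle - \Psi(n)$, which is precisely the equality condition forcing $(\Psi^c)^c(n) = \Psi(n)$. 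So the plan is: (i) show $\Psi^c$ (sup over $B$) agrees with the restriction to $A$ of the natural extension of $\Psi_0^*$, using that $p_\tau$ maps $\Delta$ onto $\St(m_\tau) \subset A$; (ii) conclude via Corollary~\ref{cor:GradientInStar} that for each $n$ the inequality $(\Psi^c)^c \geq \Psi$ is an equality.

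**Main obstacle.** The genuinely non-formal point is Direction 2: that restricting the Legendre transform representation from "sup over $\Delta$" to "sup over $A = \partial\Delta$" loses nothing when evaluated on $B = \partial\Delta_h^\vee$. This is not true for arbitrary polytopes — it relies on the reflexivity of $\Delta$ through the projection Lemmas~\ref{lem:ProjectionDelta} and~\ref{lem:ProjectionDeltaDual}, packaged as the fact (Corollary~\ref{cor:GradientInStar}) that $\partial^c\Psi(n)$ always meets $\St(m_\tau) \subseteq A$ for $n$ in the facet $\tau$. Everything else — the $\mathcal{P}(\Delta)$ membership estimate, the convexity of the sup-of-affines extension, the formal $ccc = c$ identities — is standard convex analysis. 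I would therefore organize the proof so that the reflexive geometry is quoted cleanly from Section~\ref{sec:cGradient} and the rest is a short duality computation.
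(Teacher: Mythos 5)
Your proposal is correct and follows essentially the same route as the paper: the forward direction is the same routine bound comparing the extension $\Phi^c$ with the support function of $\Delta$, and for the converse the paper's key claim (for each $n\in B$ the supremum is attained at some $m\in A$) is established by exactly the subgradient-plus-projection $p_\tau$ computation that you invoke via Corollary~\ref{cor:GradientInStar}, after which the Fenchel--Young equality argument closes the proof as you describe. Your step~(i) is in fact unnecessary, since your step~(ii) alone yields $(\Psi^c)^c=\Psi$ on $B$, but this does not affect correctness.
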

\begin{proof}
Assume $\Psi$ is $c$-convex, i.e. $\Psi = \Phi^c$ for some $\Phi\in L^\infty(A)$. Let $C=\sup_A |\Phi|$. Then 
$$ \Psi(n) = \sup_{m\in A} \langle m,n\rangle -\Phi(m)  \leq  \sup_{m\in A} \langle m,n \rangle + C$$
and 
$$ \Psi(n) = \sup_{m\in A} \langle m,n\rangle -\Phi(m) \geq  \sup_{m\in A} \langle m,n \rangle - C$$
hence $\Psi\in \mathcal P(\Delta)$. 

Conversely, assume $\Psi\in \mathcal P(\Delta)$ and let $\Phi:M_\mathbb R\rightarrow \mathbb R$ be the Legendre transform of the convex, lower semi-continuous function $\Psi|_{\Delta_h^\vee}$, i.e.
\begin{equation} \label{eq:Phidef} \Phi(m) = \sup_{n\in \Delta^\vee_h} \langle m,n \rangle - \Psi(n). \end{equation}
By the involutive property of the Legendre transform, we get
\begin{equation} \label{eq:PhiTransform}\Psi(n) = \sup_{m\in M_\mathbb R} \langle m,n \rangle - \Phi(m). \end{equation}

We claim that for each $n'\in B$, there is $m'\in A$ such that $n'$ attains the supremum in \eqref{eq:Phidef}. To see this, let $n'\in B$, $\tau$ be a facet of $\Delta^\vee_h$ containing $n'$ and $m'\in \Delta$ be a subgradient of $\Psi$ at $n'$. It follows that $m'$ is a subgradient of $\Psi|_{\Delta^\vee_h}$ at $n'$. By standard properties of $\mathcal P(\Delta)$, $m'\in \Delta$. We get for any $n\in \Delta^\vee_h$
\begin{eqnarray*} \langle p_\tau(m'),n \rangle - \Psi(n) & = &  \langle m',n \rangle - \Psi(n) + \langle p_\tau(m')-m',n \rangle \\
& = & \langle m',n \rangle - \Psi(n) + \rho\langle m_\tau,n \rangle \\
& \leq & \langle m',n' \rangle - \Psi(n') + \rho\langle m_\tau,n' \rangle \\
& = & \langle \pi_\tau(m'),n' \rangle - \Psi(n')
\end{eqnarray*}
where the inequality follows from the fact that $n'\in B$ achieves the supremum in \eqref{eq:Phidef} and $\langle m_\tau,n \rangle\leq h(m_\tau) = \langle m_\tau,n' \rangle  $. 

Using the claim above, it follows from properties of the Legendre transform that for any $n\in B$, there is $m\in A$ attaining the supremum in \eqref{eq:PhiTransform}. Consequently,
\begin{eqnarray*} \Psi(n) & = & \sup_{m\in m_\mathbb R} \langle m,n \rangle - \Phi(m) \\
& = & \sup_{m\in A} \langle m,n \rangle - \Phi(m) 
\end{eqnarray*}
for any $n\in B$. It follows that $\Psi|_B = (\Phi|_A)^c$, hence $\Psi|_{B}$ is $c$-convex, finishing the proof of the lemma. 
\end{proof}

Since $\mathcal P(\Delta)$ is convex, one consequence of Lemma~\ref{lem:PDelta} is that the space of $c$-convex functions is convex. This is crucial in the following lemma.
\begin{lemma}\label{lem:OTUnique}
If $\Psi$ and $\Psi'$ are $c$-convex and satisfies 
$$ J(\Psi)=J(\Psi') = \sup J(\Psi) $$
then $\Psi-\Psi'$ is constant.
\end{lemma}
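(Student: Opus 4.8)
The plan is to exploit the variational structure: $J$ is a concave functional on the convex set of $c$-convex functions (convexity of this set follows from Lemma~\ref{lem:PDelta} together with convexity of $\mathcal P(\Delta)$), so two maximizers can be joined by a segment of maximizers, and then one extracts rigidity from the linearity of $J$ along that segment. First I would recall that $J(\Psi) = -\int_A \Psi^c \mu_M - \int_B \Psi\,\nu$ and observe that the map $\Psi \mapsto \int_B \Psi\,\nu$ is linear, while $\Psi \mapsto \int_A \Psi^c\,\mu_M$ is convex in $\Psi$: indeed, for each fixed $m$, $\Psi \mapsto \Psi^c(m) = \sup_{n\in B}(\langle m,n\rangle - \Psi(n))$ is a supremum of affine (hence convex) functions of $\Psi$, so it is convex, and integrating against the positive measure $\mu_M$ preserves convexity. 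Therefore $J = -(\text{convex}) - (\text{linear})$ is concave, and $J$ restricted to the convex set of $c$-convex functions attains its supremum.

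Next, given two maximizers $\Psi$ and $\Psi'$, consider $\Psi_t = (1-t)\Psi + t\Psi'$ for $t\in[0,1]$. Each $\Psi_t$ is again $c$-convex by Lemma~\ref{lem:PDelta} (it lies in $\mathcal P(\Delta)$, which is convex, and restrictions of $\mathcal P(\Delta)$-functions are exactly the $c$-convex functions on $B$). By concavity of $J$ and the fact that $J(\Psi) = J(\Psi') = \sup J$, the function $t\mapsto J(\Psi_t)$ is concave, bounded above by $\sup J$, and equals $\sup J$ at the endpoints; hence it is constant equal to $\sup J$ on $[0,1]$. Since the linear part $-\int_B \Psi_t\,\nu$ is affine in $t$, constancy of $J(\Psi_t)$ forces $t\mapsto \int_A \Psi_t^c\,\mu_M$ to be affine in $t$ as well. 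But this is an integral over $\mu_M$ of the pointwise-convex functions $t\mapsto \Psi_t^c(m)$; an integral of convex functions is affine only if the integrand is affine $\mu_M$-almost everywhere, so $t\mapsto \Psi_t^c(m)$ is affine on $[0,1]$ for $\mu_M$-a.e.\ $m$.

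Now I would turn the a.e.\ affineness of $t\mapsto \Psi_t^c(m)$ into pointwise information about $\Psi - \Psi'$. Fix such an $m$ and pick $n_0\in B$ attaining $\Psi^c(m) = \langle m,n_0\rangle - \Psi(n_0)$ and $n_1\in B$ attaining $\Psi'^c(m) = \langle m,n_1\rangle - \Psi'(n_1)$. From $\Psi_t^c(m) \geq \langle m,n_i\rangle - \Psi_t(n_i)$ and the affineness, one compares the linear lower bounds in $t$ coming from $n_0$ and $n_1$ with the values at the endpoints to conclude that $n_0$ is also a maximizer for $\Psi'^c(m)$ and $n_1$ for $\Psi^c(m)$; equivalently there is a common maximizer, giving $\Psi^c(m) - \Psi'^c(m) = \Psi(n) - \Psi'(n)$ for an appropriate $n$, and running this over the full-measure set of such $m$ together with the duality/equality relation $\Psi(n) + \Psi^c(m) = \langle m,n\rangle$ on the graph of $\partial^c\Psi$ pins down $\Psi - \Psi'$. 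Finally, to upgrade ``constant $\mu_M$-a.e.'' to ``genuinely constant'' I would use that $\Psi$ and $\Psi'$ are $c$-convex, hence restrictions of convex functions in $\mathcal P(\Delta)$ that are recovered as $c$-transforms of $\Psi^c,\Psi'^c$; since $\Psi^c$ and $\Psi'^c$ differ by a constant on a set of full $\mu_M$-measure whose closed convex hull is $A$ (here one needs $\mathrm{supp}\,\mu_M = A$, which holds because $\mu_M$ is the integral Lebesgue-type measure on $A^\circ$), taking $c$-transforms $\Psi = (\Psi^c)^c$ and $\Psi' = (\Psi'^c)^c$ shows $\Psi - \Psi'$ is that same constant on all of $B$. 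The main obstacle I anticipate is this last passage from ``a.e.'' to ``everywhere'': making precise that the equality of $\Psi^c$ and $\Psi'^c$ up to a constant propagates from a full-measure subset of $A$ to all of $A$ (using lower semicontinuity, convexity, and the support of $\mu_M$) and then survives the $c$-transform — the rest is soft convexity bookkeeping.
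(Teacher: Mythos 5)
Your first half runs parallel to the paper: both arguments form $\Psi_t=(1-t)\Psi+t\Psi'$, use that $\Psi_t^c(m)$ is a supremum of functions affine in $t$, and deduce from optimality of both endpoints that $t\mapsto\Psi_t^c(m)$ is affine for $\mu_M$-a.e.\ $m$, hence that a common maximizer $n_m$ exists, i.e.\ $\partial^c\Psi(\,\cdot\,)$ and $\partial^c\Psi'(\,\cdot\,)$ share a point over such $m$. (A secondary slip: your route to the common maximizer via the \emph{endpoint} maximizers does not work — equality of $g_{n_0}$ with the affine function at $t=0$ only bounds the slope $\Psi(n_0)-\Psi'(n_0)$ from one side, and when maximizers at $t=0$ are non-unique such an $n_0$ need not maximize at $t=1$. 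The correct and easy fix, implicit in the paper, is to take a maximizer at an interior value of $t$, which exists by compactness of $B$ and continuity; an affine function touching an affine upper envelope from below at an interior point coincides with it.)

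The genuine gap is the step after that. From ``for $\mu_M$-a.e.\ $m$ there is $n_m$ with $\Psi^c(m)-\Psi'^c(m)=\Psi(n_m)-\Psi'(n_m)$'' you jump to ``$\Psi^c$ and $\Psi'^c$ differ by a constant on a full-measure set.'' Nothing you have written justifies this: the right-hand side depends on $m$ through $n_m$, and its constancy in $m$ is essentially the conclusion you are trying to prove, so the argument as sketched is circular at exactly the decisive point. The paper closes this gap with a concrete mechanism: on each facet $\sigma$ of $A$ it uses $(\sigma,\tau)$-compatible charts and Lemma~\ref{lem:GradientsInt} to translate ``$\partial^c\Psi^c_0(m)\cap\partial^c\Psi^c_1(m)\neq\emptyset$'' into a statement about ordinary subgradients of the convex functions $\Psi^c\circ\alpha^{-1}$ and $\Psi'^c\circ\alpha^{-1}$; since subgradients of convex functions are single-valued a.e., these gradients agree a.e.\ on $\alpha(\sigma^\circ)$, and a convexity rigidity fact (two convex functions with a.e.\ equal gradients on a connected open set differ by a constant, cf.\ Lemma~5.3 of \cite{HJMM}) gives constancy of $\Psi^c-\Psi'^c$ on each facet; continuity then glues the facets of $A$ and the $c$-transform returns the statement to $B$. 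Your proposal contains no substitute for items (i) chart transfer, (ii) a.e.\ single-valuedness, (iii) gradient-rigidity plus connectedness, so as it stands the proof does not go through; your final a.e.-to-everywhere upgrade via $\Psi=(\Psi^c)^c$ and $\supp\mu_M$ is fine in spirit but is addressing the easier part of the problem.
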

The functional $J$ is sometimes called the Kantorovich dual of $I$, after its original inventor. We will refer to the unique maximizer of $J$ as the \emph{Kantorovich potential} of $\nu$.
\begin{proof}[Proof of Lemma~\ref{lem:OTUnique}] 
Let $\Psi_t = t\Psi'+(1-t)\Psi$ and $\Phi_t=\Psi_t^c$. Note that, fixing $m\in A$, 
$$ \Phi_t(m) = \sup_{n\in \Delta^\vee_h} \langle m,n\rangle -\Psi_t(n) $$
is a supremum of affine functions (in $t$). It follows that $\Phi_t(m)$ is convex in $t$. Moreover, $\Phi_t(m)$ is affine in $t$ only if there exist $n_m\in \Delta^\vee_h$ such that 
$$ \Phi_t(m) =  \langle m,n_m\rangle -\Psi_t(n_m) $$
for all $t$, i.e. $\partial^c\Phi_0(m)\cap \partial^c\Phi_1(m)$ is non-empty. As $\Psi$ and $\Psi'$ are both maximizers of $J$, it follows that $J(\Psi_t)$ is affine in $t$, consequently $\partial^c\Phi_0(m)\cap \partial^c\Phi_1^c(m)$ is non-empty for almost all $m$ (with respect to $\nu$). 

Let $\sigma$ and $\tau$ be facets of $A$ and $B$ respectively and $\alpha$ and $\beta$ be $(\sigma,\tau)$-compatible charts. By Lemma~\ref{lem:GradientsInt}, it follows that $\partial(\Phi_0\circ \alpha^{-1})(\alpha(m))\cap \partial(\Phi_1\circ\alpha)(\alpha(m))$ is non-empty for almost all $m\in \sigma$. Since $\Phi_0\circ \alpha^{-1}$ and $\Phi_1\circ\alpha$ are convex, their subgradients are single valued almost everywhere. Consequently, $\partial(\Phi_0\circ \alpha^{-1})(\alpha(m)) = \partial(\Phi_1\circ\alpha)(\alpha(m))$ for almost all $m\in \sigma^\circ$. It follows (see for example \cite{HJMM}, Lemma~5.3) that $\Phi_0\circ\alpha-\Phi_1\circ\alpha$ is constant. Applying this to all facets $\sigma$ of $\Delta$ we get by continuity of $\Phi_0$ and $\Psi_1$ that $\Phi_0-\Psi_1$ is constant, and hence $\Psi-\Psi'$ is constant. 
\end{proof}

\section{Monge-Ampère Equation}\label{sec:MA}
Note that if $\Psi\in \mathcal P(\Delta)$ and $\sigma$ is a facet of $\Delta$, then $\Psi^c$ defines a convex function on $\sigma^\circ$. Given $\Psi\in \mathcal P(\Delta)$, we will let $R_\Psi\subset A^\circ$ be the set of $m\in \sigma^\circ\subset A^\circ$ such that $\partial \Psi^c|_{\sigma^\circ}$ is single valued. Note that since a convex function is differentiable almost everywhere we get $|\nu(R_\Psi\cap \sigma)|=|\alpha(R_\Psi\cap \sigma)| = |\alpha(\sigma)|=\nu(\sigma)$, hence $\nu(R_\Psi) = \nu(A)$. 
\begin{lemma}\label{lem:TMap}
There is a measurable function $T_\Psi:R_\Psi\rightarrow (\Delta_h^\vee)_d$ whose graph is the intersection of $R_\Psi\times (\Delta_h^\vee)_d$, \eqref{eq:WeaklyStableSet} and the graph of $\partial^c\Psi$.
\end{lemma}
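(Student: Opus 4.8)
The plan is to build $T_\Psi$ one facet at a time. Fix a facet $\sigma$ of $\Delta$. Since $\Psi\in\mathcal P(\Delta)$, the $c$-transform $\Psi^c$ is $c$-convex on $A$ and hence, by Lemma~\ref{lem:PDelta} (and its analogue on $A$), the restriction of a function in $\mathcal P(\Delta_h^\vee)$; moreover, by $c$-convexity the graph of $\partial^c\Psi$ is $\{(m,n)\in A\times B:\ n\in\partial^c\Psi^c(m)\}$. Choose a facet $\tau$ of $\Delta_h^\vee$ with $m_\tau\in\sigma$ (such $\tau$ exists, cf.\ Lemma~\ref{lem:ProjectionDeltaDual}) and, via Lemma~\ref{lem:CompatibleCharts}, $(\sigma,\tau)$-compatible charts $\alpha=\alpha_\tau$ and $\beta=\beta_\sigma$. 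On $\sigma$ the map $\alpha$ is affine and injective onto a full-dimensional polytope, so $\alpha(\sigma^\circ)$ is open in $\mathbb R^d$ and $\Psi^c\circ\alpha^{-1}$ is there a finite convex function (a convex function precomposed with an affine map); consequently $\partial\Psi^c|_{\sigma^\circ}$ is single valued at $m$ precisely when $\Psi^c\circ\alpha^{-1}$ is differentiable at $\alpha(m)$, i.e.\ precisely when $m\in R_\Psi$.

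Next I would feed $\Phi=\Psi^c$ into Lemma~\ref{lem:GradientsInt}. For $m\in\sigma^\circ$, its two one-sided implications, together with the injectivity of $\beta$ on $\cup_{m'\in\sigma\cap M}\tau_{m'}$ and the inclusion $\partial(\Psi^c\circ\alpha^{-1})(\alpha(m))\subset\beta(\cup_{m'\in\sigma\cap M}\tau_{m'})$, show that $\beta$ restricts to a bijection from $\partial^c\Psi^c(m)\cap(\cup_{m'\in\sigma\cap M}\tau_{m'})$ onto $\partial(\Psi^c\circ\alpha^{-1})(\alpha(m))$. This set is non-empty: the supremum defining $\Psi^c(m)$ is attained, so $\partial^c\Psi^c(m)\neq\emptyset$, and the projection $p_\sigma$ of Lemma~\ref{lem:BetterGradient} sends any of its elements into $\partial^c\Psi^c(m)\cap(\cup_{m'\in\sigma\cap M}\tau_{m'})$. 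Hence, for $m\in R_\Psi$, both sides are singletons, and I set
\[ T_\Psi(m):=\beta^{-1}(\nabla(\Psi^c\circ\alpha^{-1})(\alpha(m)))\ \in\ \cup_{m'\in\sigma\cap M}\tau_{m'}\ \subset\ (\Delta_h^\vee)_d . \]

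It remains to identify the graph and verify measurability. If $m\in\sigma^\circ$ and $m'\in A\cap M$, then $m\in\St(m')$ if and only if $m'\in\sigma$, since $\sigma$ is the unique face of $A$ containing $m$; therefore the slice of \eqref{eq:WeaklyStableSet} over $\{m\}$ is $\{m\}\times(\cup_{m'\in\sigma\cap M}\tau_{m'})$. Combining this with the bijection above and the description of the graph of $\partial^c\Psi$, the slice over $m$ of $(R_\Psi\times(\Delta_h^\vee)_d)\cap\eqref{eq:WeaklyStableSet}\cap\{\text{graph of }\partial^c\Psi\}$ is exactly $\{(m,T_\Psi(m))\}$, while it is empty for $m\notin R_\Psi$; so this triple intersection is the graph of $T_\Psi$. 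For measurability, $T_\Psi$ on $R_\Psi\cap\sigma^\circ$ is the composition $\beta^{-1}\circ\nabla(\Psi^c\circ\alpha^{-1})\circ\alpha$ of the linear map $\alpha$, the gradient of a finite convex function (which is continuous, hence Borel, on its Borel set of differentiability), and $\beta^{-1}$, which is continuous on the compact set $\cup_{m'\in\sigma\cap M}\tau_{m'}$; taking the union over the finitely many facets $\sigma$ gives a Borel measurable $T_\Psi$ on $R_\Psi$.

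The step requiring the most care, rather than the most ideas, is the middle one: one must check that the subdifferential $\partial(\Psi^c\circ\alpha^{-1})(\alpha(m))$ appearing in Lemma~\ref{lem:GradientsInt} — a priori taken with respect to the piecewise-affine domain $\alpha(\Delta)=\alpha(\St(m_\tau))$ — agrees, at the interior point $\alpha(m)\in\alpha(\sigma^\circ)$, with the ordinary subdifferential of $\Psi^c\circ\alpha^{-1}$, which is genuinely convex near $\alpha(m)$ because $\alpha^{-1}$ is affine on $\alpha(\sigma^\circ)$; and that the injectivity of $\beta$ on $\cup_{m'\in\sigma\cap M}\tau_{m'}$ genuinely promotes the two one-sided implications of Lemma~\ref{lem:GradientsInt} into the bijection used above, so that the fibre we isolate over each $m\in R_\Psi$ is neither empty nor strictly larger than a single point.
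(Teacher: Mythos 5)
Your proposal is correct and follows essentially the same route as the paper: fix a facet $\sigma$, choose $(\sigma,\tau)$-compatible charts, apply Lemma~\ref{lem:GradientsInt} to $\Phi=\Psi^c$ together with the star projection of Lemma~\ref{lem:BetterGradient} (the paper cites the analogue of Corollary~\ref{cor:GradientInStar}) to get a nonempty singleton fibre over each $m\in R_\Psi$, define $T_\Psi=\beta_\sigma^{-1}\circ\nabla(\Psi^c\circ\alpha_\tau^{-1})\circ\alpha_\tau$, and check both inclusions for the graph. Your explicit treatment of the slice of \eqref{eq:WeaklyStableSet} over $m\in\sigma^\circ$ and of measurability only spells out points the paper leaves implicit.
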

\begin{proof}
Let $\sigma$ be a facet of $\Delta$ and $m\in R_\Psi\cap \sigma^\circ$. By Lemma~\ref{lem:GradientsInt}, if $n\in \cup_{m'\in \sigma\cap M} \tau_{m'}$, then $n\in \partial^c\Psi^c(m)$ if and only if 
$$ \beta_\sigma(n) \in \partial(\Psi|_\sigma \circ \alpha^{-1})(\alpha(n)).$$
By Corollary~\ref{cor:GradientInStar}, $\partial^c\Psi^c(m)\cap (\cup_{m'\in \sigma\cap M} \tau_{m'})$ is non-empty. 
As $\partial(\Psi|_\sigma \circ \alpha^{-1})$ is single valued on $\alpha(R_\Psi\cap \sigma^\circ)$ we get that the expression
$$ T_\Psi(m) = \beta_\sigma^{-1}(\partial(\Psi^c\circ \alpha^{-1})(\alpha(m)) $$
is well-defined. 

By construction, $T_\Psi(m)\in \cup_{m'\in \sigma\cap M} \tau_{m'}$ if $m\in \sigma\cap R_\Psi$, hence the graph of $T_\Psi$ is contained in \eqref{eq:WeaklyStableSet}. Moreover, by Lemma~\ref{lem:GradientsInt}, $m\in \partial^c \Psi(n)$ if $\partial (\Psi^c\circ \alpha^{-1})(\alpha(m)) = n$, hence the graph of $T_\Psi$ is contained in the graph of $\partial\Psi$. Trivially, the graph of $T_\Psi$ is contained in $R_\Psi\times B$. 

Conversely, assume $(m,n)$ lies in the intersection of \eqref{eq:WeaklyStableSet}, the graph of $\partial^c\Psi$ and $R_\Psi\times B$. As $m\in R_\Psi$ we get that $T_\Psi$ is defined at $m$ and that $m\in \sigma$ for some $\sigma\in A_d$. Since $(m,n)\in \eqref{eq:WeaklyStableSet}$ we get that $n\in \cup_{m'\in \sigma\cap M} \tau_{m'}$ and, since $n\in \partial \Psi^c(m)$, we can apply Lemma~\ref{lem:GradientsInt}, with $(\alpha_\tau,\beta_\sigma)$ choosen $(\sigma,\tau)$-compatible to get $\beta_\sigma(m)\in \partial (\Psi^c\circ \alpha^{-1})(\alpha_\tau(m))$. It follows that $n=T_\Psi(m)$, hence $(m,n)$ is in the graph of $T_\Psi$.
\end{proof}
\begin{corollary}\label{cor:TPushForward}
Assume $\Psi\in \mathcal P(\Delta)$ and $\gamma$ is a transport plan from $\nu$ to $\mu_M$ supported on the intersection of \eqref{eq:WeaklyStableSet} and the graph of $\partial^c\Psi^c$. Then
$$ (T_\Psi)_\# \mu_M = \nu. $$
\end{corollary}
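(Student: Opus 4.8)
The plan is to deduce this corollary directly from Lemma~\ref{lem:TMap} together with the measure-theoretic characterization of push-forwards. Recall that $\gamma$ is a coupling of $\nu$ and $\mu_M$, so its marginal onto $A$ is $\mu_M$ and its marginal onto $B$ is $\nu$; by hypothesis $\gamma$ is concentrated on the intersection $S$ of the set~\eqref{eq:WeaklyStableSet} with the graph of $\partial^c\Psi^c$. The key observation is that, since $\nu(A\setminus R_\Psi)=0$ and the $A$-marginal of $\gamma$ is $\mu_M$, the plan $\gamma$ is in fact concentrated on $S\cap(R_\Psi\times B)$, which by Lemma~\ref{lem:TMap} is precisely the graph of the measurable map $T_\Psi$.

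The main step is then the standard fact that a coupling supported on the graph of a measurable map is the push-forward of the first marginal under that map. Concretely, I would argue: for $\gamma$-almost every $(m,n)$ we have $m\in R_\Psi$ and $n=T_\Psi(m)$, so $\gamma$ equals $(\mathrm{id}\times T_\Psi)_\#\bigl(\gamma|_{\text{graph}}\bigr)$; but the first marginal of $\gamma$ is $\mu_M$, hence $\gamma = (\mathrm{id}\times T_\Psi)_\# \mu_M$. Taking the second marginal of both sides and using that $(q_B)_\#\gamma = \nu$ by~\eqref{eq:MarginalsOTPlans}, we get $(q_B)_\#(\mathrm{id}\times T_\Psi)_\#\mu_M = (T_\Psi)_\#\mu_M$ on the left and $\nu$ on the right, which is exactly the claimed identity $(T_\Psi)_\#\mu_M=\nu$.

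More elementarily, and avoiding the product-map formalism, I would just verify the push-forward identity on test sets: for any Borel $E\subseteq (\Delta_h^\vee)_d$,
\begin{equation*}
(T_\Psi)_\#\mu_M(E) = \mu_M\bigl(T_\Psi^{-1}(E)\bigr) = \gamma\bigl(T_\Psi^{-1}(E)\times B\bigr) = \gamma\bigl(A\times E\bigr) = \nu(E),
\end{equation*}
where the second equality uses that the $A$-marginal of $\gamma$ is $\mu_M$, the third equality uses that $\gamma$ is concentrated on the graph of $T_\Psi$ (so that, up to a $\gamma$-null set, $(m,n)\in T_\Psi^{-1}(E)\times B$ iff $(m,n)\in A\times E$), and the last equality uses that the $B$-marginal of $\gamma$ is $\nu$.

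I do not expect any serious obstacle here; the content is entirely in Lemma~\ref{lem:TMap}, which already identifies the relevant intersection with the graph of $T_\Psi$. The only point requiring a little care is the null-set bookkeeping in the middle step: one must note that $\gamma$ gives no mass to $(A\setminus R_\Psi)\times B$ (because its $A$-marginal $\mu_M$ gives no mass to $A\setminus R_\Psi$, as observed before Lemma~\ref{lem:TMap}), so that on the full-measure set where $T_\Psi$ is defined the sets $T_\Psi^{-1}(E)\times B$ and $A\times E$ agree $\gamma$-almost everywhere. This is the step I would write out carefully; everything else is a direct substitution.
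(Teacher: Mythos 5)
Your argument is correct and follows essentially the same route as the paper: first use that the $A$-marginal of $\gamma$ is $\mu_M$ and $\mu_M(A\setminus R_\Psi)=0$ to conclude $\gamma$ is concentrated on the graph of $T_\Psi$ via Lemma~\ref{lem:TMap}, then verify $(T_\Psi)_\#\mu_M=\nu$ on test sets by the same marginal computation $\mu_M(T_\Psi^{-1}(E))=\gamma(T_\Psi^{-1}(E)\times B)=\gamma(A\times E)=\nu(E)$. The only nitpick is the slip ``$\nu(A\setminus R_\Psi)=0$'' early on, where the relevant measure is $\mu_M$ (as you yourself state correctly in the final paragraph).
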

\begin{proof}
Note that 
$$\gamma(R_\Psi\times B) = \nu(R_\Psi) = \mu_M(A) = \gamma(A\times B), $$
hence $\gamma$ is concentrated on $R_\Psi\times B$. Combined with Lemma~\ref{lem:GradientsInt} and the assumptions above, $\gamma$ is concentrated on the graph of $T_\Psi$. Let $E\subset \tau^\circ$ be a measurable set. For any $(m,n)$ in the graph if $T_\Psi$ we have $n\in E$ if and only if $m\in T^{-1}(E)$, hence
$$ \nu(E) = \gamma(A\times E) = \gamma(T^{-1}_\Psi(E)\times B) = \nu(T^{-1}_\Psi(E))$$
proving the corollary. 
\end{proof}

\begin{proof}[Proof of Theorem~\ref{thm:MAInt}]
Let $\Psi\in \mathcal P(\Delta)$ be the Kantorovich potential of $\nu$ and assume there is an optimal transport plan $\gamma$ from $\mu_M$ to $\nu$ concentrated on \eqref{eq:WeaklyStableSet}. We will now show that $\Psi$ satisfies \eqref{eq:MAInt}. Let $T_\Psi$ be the map furnished by Lemma~\ref{lem:TMap}. 

Assume $\sigma\in A_d$, $\tau\in B_d$ and $(\alpha_\tau,\beta_\sigma)$ be $(\sigma,\tau)$-compatible. Let $E\subset \tau^\circ$ be a measurable set. By Lemma~\ref{lem:GradientsInt}, we get 
$$\partial(\Psi\circ\beta^{-1})(\beta(E)) = \alpha(\partial^c\Psi(E)\cap \St(m_\tau))$$
and consequently
\begin{eqnarray}
|\partial(\Psi\circ\beta^{-1})(\beta(E))| & = & \mu_M(\partial^c\Psi(E)\cap \St(m_\tau)) \nonumber \\
& = & \mu_M(\partial^c\Psi(E)\cap \St(m_\tau) \cap R_\Psi) \label{eq:NegligibleLocus} \\
& = & \mu_M(T_\Psi^{-1}(E)) \label{eq:TInverseImage} \\
& = & \nu(E) \label{eq:TPushForward} 
\end{eqnarray}
where \eqref{eq:NegligibleLocus} and \eqref{eq:TInverseImage} follows from Lemma~\ref{lem:TMap} and \eqref{eq:TPushForward} follows from Corollary~\ref{cor:TPushForward}. 

Assume conversely that $\Psi\in \mathcal P(\Delta)$ satisfies \eqref{eq:MAInt}. We need to show that $\nu$ admits an optimal transport plan concentrated on \eqref{eq:WeaklyStableSet}. Let $T_\Psi$ be the map furnished by Lemma~\ref{lem:TMap}. Pick $\sigma\in A_d, \tau\in B_d$ such that $m_\tau\in \sigma$ and a measurable set $E\subset \tau^\circ$. Applying Lemma~\ref{lem:TMap} as above, we get
\begin{eqnarray}
\nu(E) & = & |\partial(\Psi\circ\beta^{-1})(\beta(E))| \nonumber \\
& = & \mu_M(\partial^c\Psi(E)\cap \St(m_\tau)) \nonumber \\
& = & \mu_M(\partial^c\Psi(E)\cap \St(m_\tau)\cap R_\Psi) \nonumber \\
& = & \mu_M(T_\Psi^{-1}(E)). \label{eq:TVolPreserving}
\end{eqnarray}
As $\nu$ doesn't charge $B_{d-1}$, it follows that 
$$\nu=(T_\Psi)_\#\mu_M|_{B^\circ}$$
and since $\nu$ and $(T_\Psi)_\#\mu_M$ have the same total mass, we may conclude that $\nu=(T_\Psi)_\#\mu_M$, hence $\gamma:=(id,T_\Psi)_\#\mu_M$ is a transport plan from $\nu$ to $\mu_M$. Moreover, $\gamma$ is supported on the graph of $T_\Psi$ which is contained in the graph of $\partial^c\Psi$ by construction of $T_\Psi$, hence $\gamma$ is optimal. Finally, since the graph of $T_\Psi$ is contained in \eqref{eq:WeaklyStableSet}, $\gamma$ is supported on \eqref{eq:WeaklyStableSet}. 

For uniqueness, let $\Psi, \Psi' \in \mathcal P(\Delta)$ be two solutions to \eqref{eq:MAInt}. By the construction above, there are optimal transport plans $\gamma$ and $\gamma'$ from $\nu$ to $\mu_M$ supported on the graphs of $\partial^c\Psi$ and $\partial^c\Psi'$ respectively, hence $\Psi$ and $\Psi'$ are both Kantorovich potentials of $\nu$, i.e. maximizers of $J$. By Lemma~\ref{lem:OTUnique}, $\Psi-\Psi'$ is constant.
\end{proof}

\begin{remark}
    Note that the map $T_\Psi$ exist and is single valued almost everywhere for any $\Psi\in \mathcal P(\Delta)$. As such, it plays the role of a more regular version of the usual $c$-gradient of $\Psi^c$. The main technical point in the proof of Theorem~\ref{thm:MAInt} is that $(T_\Psi)_\# \mu_M=\nu$ if and only if $(\Delta,h,\nu)$ is stable. 
\end{remark}

\section{Tropical Affine Structure and Monge-Ampère Equation Outside $B^\circ$}\label{sec:OutsideOpenFaces}
We will now explain how to extend the tropical affine structure and the Monge-Ampère equation \eqref{eq:MAInt} to a larger set than $B^\circ$. As explained in Section \ref{sec:AffineStructure}, there is for any facet $\sigma$ of $\Delta$ a natural coordinate function $\beta_\sigma$ defined on 
\begin{equation} \cup_{m\in \sigma} \tau_m \subset B\label{eq:Charts} \end{equation} 
(see Section~\ref{sec:AffineStructure}). These coordinate functions respect the tropical affine structure on $B^\circ$. However, if $\sigma'$ is another facet of $\Delta$, then $\beta_\sigma\circ \beta_{\sigma'}^{-1}$ is not affine on $\beta_{\sigma'}(B_{d-1})$ in general, where $B_{d-1} = B\setminus B^\circ$ is the union of the $d-1$-dimensional faces of $B$. Nevertheless, given a collection of pairwise disjoint sets $\{U_\sigma\}_{\sigma\in A_d}$ where $A_d$ is the set of facets of $\Delta$ and each $U_\sigma$ is a (possibly empty) open subset of \eqref{eq:Charts}, we get a tropical affine structure on $B^\circ\cup (\cup_{\sigma\in B_d} U_\sigma)$. We will call the complement of this set the \emph{singular set} and denote it 
$$ \Sigma = B_{d-1}\setminus (\cup_{\sigma\in A_d} U_\sigma) $$
Moreover, if $\Phi\in \mathcal P(\Delta)$, then by Lemma~\ref{lem:ConvexPotential} any $\sigma\in A_d$ and $m\in \sigma$ defines a function $(\Phi-m)|_{U_\sigma}$ on $U_\sigma$ which is convex with respect to the tropical affine structure on $B\setminus \Sigma$. This determines a Monge-Ampère measure $\MA((\Phi-m)|_{U_\sigma}))$ which agrees with $\MA(\Phi|_{\tau^\circ})$ on $\tau^\circ\cap U_\sigma$ for any facet $\tau$ of $\Delta^\vee_h$. Given the data $\nu$, $\{U_\sigma\}_{\sigma\in B_d}$ and $\Sigma$ this motivates the following Monge-Ampère equation on $B\setminus \Sigma$:
\begin{definition}\label{def:PDEproblem}
Let $\nu$ be a positive measure on $B$ and $\{U_\sigma\}_{\sigma\in A_d}$ a collection of charts with singular set $\Sigma$ as above. Then we will say that $\Phi\in \mathcal P(\Delta)$ solves the Monge-Ampère equation on $B\setminus \Sigma$ if 
$$ \MA(\Psi|_{B^\circ}) = \nu|_{B^\circ} $$
and for each facet $\sigma$ of $\Delta$ and $m\in \sigma$,
\begin{equation} \MA((\Psi-m)|_{U_\sigma}) = \nu|_{U_\sigma}. \label{eq:MA} \end{equation}
\end{definition}

A priori there is a lot of freedom in how to choose $\{U_\sigma\}_{\sigma\in A_d}$.
When $\Delta$ is the standard unit simplex, $h=h_0$ and $\nu$ is invariant under permutation of the vertices of $n_0,\ldots n_{d+1}$ of $\Delta^\vee_h=\delta^\vee$ then, arguing by symmetry, a natural choice for $\{U_\sigma\}_{\sigma\in A_d}$ and $\Sigma$ is
$$ U_{\sigma_j} := \left\{ \sum_{i=0}^{d+1} \beta_i n_i\in B, \beta_j>\beta_i \text{ for all } i\not=j\right\} $$
where, for each $i$, $\sigma_i$ is the facet of $\Delta$ dual to the vertex $n_i$ (see \cite{LiFermat,HJMM}). When $d=2$, this choice makes $\Sigma$ the midpoints $\{(n_i+n_j)/2\}_{i\not=j}$ of the edges of $\Delta^\vee_h$ and for $d\geq 2$, $\Sigma$ is the codimension 2 set 
$$ \Sigma = \left\{ \sum_{i=0}^{d+1} \beta_i
n_i\in B_{d-1}, \beta_j=\beta_k=\max \beta_i, \text{ for some distinct } i,j\right\}. $$
When $d=3$, so called $Y$-shaped singularities appear. In all these examples the codimension of $\Sigma$ is 2. 

One purpose of the present paper is to argue that in the absence of symmetry, $\{U_\sigma\}_{\sigma\in B_d}$ and $\Sigma$ needs to be chosen to suit $\nu$, and thus that $\Sigma$ plays the role of a free boundary in the PDE problem of Definition~\ref{def:PDEproblem}. In this spirit, given $\nu$, we propose a choice of $\{U_\sigma\}_{\sigma\in B_0}$ and $\Sigma$ based on the Kantorovich potential of $\nu$, i.e. the unique minimizer of $I$ (see Section~\ref{sec:OptimalTransport}). 

\begin{definition}\label{def:Usigma}
Let $\nu$ be a positive measure on $B$ of mass $\mu_M(A)$. We define $\{U_\sigma\}_{\sigma\in A_d}$ and $\Sigma$ as 
$$ U_\sigma = U_{\sigma,\Psi}:= B\setminus \partial^c\Psi^c(A\setminus \sigma_n^\circ) $$
and $\Sigma=\Sigma_\Psi := B_{d-1}\setminus (\cup U_\sigma)$,
where $\Psi$ is the Kantorovich potential of $\nu$.
\end{definition}
It follows from continuity of $\Psi, \Psi^c$ and compactness of $\Delta$ that $\partial^c\Psi^c(A\setminus \sigma_n^\circ)$ is closed, hence $U_n$ is open. Moreover, by monotonicity properties of the $c$-gradient, $U_n\subset \St(n)^\circ$ (see Lemma~\ref{lem:OpenSetsForCharts}). 
The main motivation for our choice of $\{U_\sigma\}_{\sigma\in B_d}$ and $\Sigma$ is given by Lemma~\ref{lem:SingularOutsideCharts} and Theorem~\ref{thm:MA} below. The first of these states that the solution to \eqref{eq:MAInt} can be extended to a solution on $B\setminus \Sigma$ in the sense of Definition~\ref{def:PDEproblem} and the second addresses the second bullet point regarding $\Sigma_\Psi$ in the introduction. 

\begin{lemma}\label{lem:OpenSetsForCharts}
Let $\Phi\in \mathcal P(\Delta)$, $\sigma$ be a facet of $\Delta$ and $U_\sigma$ the corresponding chart in Definition~\ref{def:Usigma}. Then $U_\sigma\subset \cup_{m\in \sigma\cap M} \tau_m$.
\end{lemma}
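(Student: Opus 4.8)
The plan is to first rewrite the set $U_\sigma$ in terms of the $c$-gradient of $\Phi$ itself rather than of $\Phi^c$. Since $\Phi\in\mathcal P(\Delta)$, Lemma~\ref{lem:PDelta} shows $\Phi|_B$ is $c$-convex, so by the duality recalled in Section~\ref{sec:OptimalTransport} we have $m\in\partial^c\Phi(n)$ iff $n\in\partial^c\Phi^c(m)$ for $(m,n)\in A\times B$. Hence
$$ U_\sigma = B\setminus\partial^c\Phi^c(A\setminus\sigma^\circ) = \{\,n\in B:\ \partial^c\Phi(n)\subseteq\sigma^\circ\,\}. $$
So it suffices to show: if $n\in B$ and every element of $\partial^c\Phi(n)$ lies in the open facet $\sigma^\circ$, then $n\in\cup_{m\in\sigma\cap M}\tau_m$. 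Recall from the proof of Lemma~\ref{lem:ProjectionDeltaDual} that $\cup_{m\in\sigma\cap M}\tau_m=V_{n_\sigma}$ is exactly the set of facets of $\Delta^\vee_h$ whose primitive outward normal $m_1$ satisfies $\langle m_1,n_\sigma\rangle>0$. So the goal becomes: some facet of $\Delta^\vee_h$ containing $n$ has normal pairing positively with $n_\sigma$.

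Now the mechanism. Fix $n\in U_\sigma$. By compactness of $\Delta$ and continuity of $\Phi,\Phi^c$ the set $\partial^c\Phi(n)$ is nonempty; pick $m\in\partial^c\Phi(n)\subseteq\sigma^\circ$. Since $n\in B$ it lies on some facet $\tau_{m_1}$ of $\Delta^\vee_h$ with primitive outward normal $m_1\in\Delta\cap M$. Applying Lemma~\ref{lem:BetterGradient} (first part) to this facet gives $p_{\tau_{m_1}}(m)=m+\rho m_1\in\partial^c\Phi(n)\subseteq\sigma^\circ$, where $\rho=\rho_{\tau_{m_1},m}=\sup\{r\ge0:m+rm_1\in\Delta\}\in[0,\infty)$. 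Because $\sigma^\circ$ lies in the hyperplane $\{\langle\cdot,n_\sigma\rangle=1\}$, comparing the $n_\sigma$-values of $m$ and $m+\rho m_1$ forces $\rho\langle m_1,n_\sigma\rangle=0$.

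The crux is to deduce $\langle m_1,n_\sigma\rangle>0$ from this, and this is where I expect the main difficulty to lie: the direction $n_\sigma$ is invisible to naive $c$-monotonicity (everything in $\sigma$ pairs to $1>0$ with $n_\sigma$), so the argument must instead exploit that $\sigma^\circ$ is \emph{open}. Suppose $\langle m_1,n_\sigma\rangle\le0$. Since $m$ is in the relative interior of the facet $\sigma$, the inequality $\langle\cdot,n_\sigma\rangle\le1$ is the only one of the defining inequalities of $\Delta$ active at $m$; as $\langle m+rm_1,n_\sigma\rangle=1+r\langle m_1,n_\sigma\rangle\le1$ for $r\ge0$ and the remaining inequalities are strictly satisfied at $m$, we get $m+rm_1\in\Delta$ for all small $r\ge0$, hence $\rho>0$. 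Then $\rho\langle m_1,n_\sigma\rangle=0$ forces $\langle m_1,n_\sigma\rangle=0$, so $m+rm_1\in\mathrm{aff}(\sigma)$ for all $r$, whence $\{r\ge0:m+rm_1\in\Delta\}=\{r\ge0:m+rm_1\in\sigma\}$ is a bounded interval with maximum $\rho$; its endpoint $m+\rho m_1$ then lies on the relative boundary of $\sigma$, i.e. in $A\setminus\sigma^\circ$, contradicting $m+\rho m_1\in\sigma^\circ$. Therefore $\langle m_1,n_\sigma\rangle>0$; since $m_1\in\Delta\cap M$ and $\Delta\subseteq\{\langle\cdot,n_\sigma\rangle\le1\}$ this gives $\langle m_1,n_\sigma\rangle=1$, so $m_1\in\sigma\cap M$ and $n\in\tau_{m_1}\subseteq\cup_{m\in\sigma\cap M}\tau_m$, completing the proof.
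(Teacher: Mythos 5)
Your proposal is correct, and all the steps check out: the reformulation $U_\sigma=\{n\in B:\ \partial^c\Phi(n)\subseteq\sigma^\circ\}$ is legitimate (via Lemma~\ref{lem:PDelta} and the $c$-convex duality, exactly as the paper itself uses it in the proof of Lemma~\ref{lem:GradientsEdge}), the application of Lemma~\ref{lem:BetterGradient} to a facet $\tau_{m_1}$ through $n$ is valid, and the half-line argument correctly rules out $\langle m_1,n_\sigma\rangle\le 0$: if it were $0$, the attained endpoint $m+\rho m_1$ would lie on the relative boundary of $\sigma$, contradicting $\partial^c\Phi(n)\subseteq\sigma^\circ$, and reflexivity plus integrality then upgrades $\langle m_1,n_\sigma\rangle>0$ to $m_1\in\sigma\cap M$. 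The paper reaches the same conclusion by a shorter, contrapositive route: if $n\notin\cup_{m\in\sigma\cap M}\tau_m$, then $n\in\tau$ with $m_\tau\notin\sigma$, and Corollary~\ref{cor:GradientInStar} (itself Lemma~\ref{lem:BetterGradient} plus the projection property of Lemma~\ref{lem:ProjectionDelta}) produces $m'\in\partial^c\Psi(n)\cap\St(m_\tau)$; since no facet containing $m_\tau$ equals $\sigma$, $m'\notin\sigma^\circ$, so $n\notin U_\sigma$. So both arguments hinge on projecting an element of the $c$-gradient along a facet normal of $\Delta^\vee_h$; the difference is that where the paper cites the star-projection fact, you re-derive the needed geometry by hand (the segment/endpoint argument), which makes your proof a bit longer but self-contained, and it in fact yields the slightly stronger conclusion that \emph{every} facet of $\Delta^\vee_h$ through a point of $U_\sigma$ has its normal in $\sigma\cap M$.
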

\begin{proof}
Assume $n\notin \cup_{m\in \sigma\cap M} \tau_m$. This means $n\in \tau$ for some $\tau$ such that $m_\tau\notin \sigma$. It follows that $\sigma$ is not a subset of $\St(m_\tau)$. 
On the other hand, by Corollary~\ref{cor:GradientInStar}, $\partial^c\Psi(n)$ contains some $m'\in \St(m)$. It follows that $m'\in \sigma'$ for some $\sigma'\not=\sigma$, hence $n\notin U_\sigma$. 
\end{proof}

\begin{theorem}\label{thm:MA}
Let $\nu$ be a positive measure on $B$ and $\Psi$ its Kantorovich potential. 
If there exist an optimal transport plan from $\mu_M$ to $\nu$ which is supported on 
\begin{equation} \label{eq:StabilityThmMA} \cup_{m\in A\cap M} \left(\St(m) \times \tau_m\right)\end{equation}
then $\Psi$ solves the Monge-Ampère equation in Definition~\ref{def:PDEproblem} on $B\setminus \Sigma_\Psi$. 
Moreover, if $\Psi\in \mathcal P(\Delta)$ solve the Monge-Ampère equation in Definition~\ref{def:PDEproblem} and $\nu$ doesn't charge $\Sigma_\Psi$ then there is an optimal transport plan from $\mu_M$ to $\nu$ supported on \eqref{eq:StabilityThmMA}, and if $\Psi'$ is another solution to Definition~\ref{def:PDEproblem} such that $\nu$ doesn't charge $\Sigma_{\Psi'}$, then $\Sigma_\Psi = \Sigma_{\Psi'}$ and $\Psi-\Psi'$ is constant. 
\end{theorem}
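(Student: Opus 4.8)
The plan is to rerun the proof of Theorem~\ref{thm:MAInt}, replacing the open faces $\tau^\circ$ throughout by the charts $U_\sigma=U_{\sigma,\Psi}$ of Definition~\ref{def:Usigma}. The first step is to unwind that definition: since $\Psi\in\mathcal P(\Delta)$ is $c$-convex (Lemma~\ref{lem:PDelta}) we have $m\in\partial^c\Psi(n)$ iff $n\in\partial^c\Psi^c(m)$, and hence
\[ n\in U_\sigma \iff \partial^c\Psi(n)\subset\sigma^\circ. \]
Two consequences should be recorded. First, if $n\in U_\sigma$ and $m\in\partial^c\Psi(n)$ then $(m,n)$ lies in the stable set \eqref{eq:WeaklyStableSet}: by Lemma~\ref{lem:OpenSetsForCharts} there is a lattice point $m'\in\sigma\cap M$ with $n\in\tau_{m'}$, and $m\in\sigma^\circ\subset\St(m')$. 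Second, since by Lemma~\ref{lem:TMap} the graph of $T_\Psi$ is the intersection of \eqref{eq:WeaklyStableSet}, the graph of $\partial^c\Psi$, and $R_\Psi\times B$, the first consequence gives $T_\Psi^{-1}(E)=\partial^c\Psi(E)\cap R_\Psi$, hence $\mu_M(T_\Psi^{-1}(E))=\mu_M(\partial^c\Psi(E))$, for every measurable $E\subset U_\sigma$.

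The core of the argument is the identity
\[ \MA\bigl((\Psi-m)|_{U_\sigma}\bigr)(E) \;=\; \mu_M\bigl(T_\Psi^{-1}(E)\bigr) \qquad (\star) \]
valid for every $\Psi\in\mathcal P(\Delta)$, every facet $\sigma$ of $\Delta$, every $m\in\sigma$, and every measurable $E\subset U_{\sigma,\Psi}$. (The left-hand side does not depend on the choice of $m\in\sigma$ because two such choices differ by a globally linear function on $\beta_\sigma(U_\sigma)$, using $\langle m-m',n_\sigma\rangle=0$, cf.\ the proof of Lemma~\ref{lem:ConvexPotential}.) On the open faces, $(\star)$ is exactly the chain \eqref{eq:NegligibleLocus}--\eqref{eq:TPushForward}: for $E\subset U_\sigma\cap\tau^\circ$, Corollary~\ref{cor:GradientIso} applies (its hypothesis $\partial^c\Psi(n)\subset\St(m_\tau)$ holds because $\partial^c\Psi(n)\subset\sigma^\circ$ and $m_\tau\in\sigma$), giving $\partial\bigl((\Psi-m)\circ\beta_\sigma^{-1}\bigr)(\beta_\sigma(n))=\alpha_\tau(\partial^c\Psi(n))-\alpha_\tau(m)$, hence $\MA((\Psi-m)|_{U_\sigma})(E)=|\alpha_\tau(\partial^c\Psi(E))|=\mu_M(\partial^c\Psi(E))=\mu_M(T_\Psi^{-1}(E))$. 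Both sides of $(\star)$ are measures, and $\partial^c\Psi$ maps distinct points to sets meeting only inside the $\mu_M$-null set $A\setminus R_\Psi$; so $(\star)$ for arbitrary $E\subset U_\sigma$ reduces to $(\star)$ for $E\subset U_\sigma\cap B_{d-1}$. This last case is the main obstacle: there $\beta_\sigma^{-1}$ is only piecewise affine, so $(\Psi-m)\circ\beta_\sigma^{-1}$ can have genuine corners along the skeleton and its Monge-Ampère measure can be singular. I would handle it with a boundary analogue of Lemma~\ref{lem:GradientsInt}: for $n\in U_\sigma\cap B_{d-1}$ the subdifferential of the (convex, by Lemma~\ref{lem:ConvexPotential}) function $(\Psi-m)\circ\beta_\sigma^{-1}$ at $\beta_\sigma(n)$ is carried from $\partial^c\Psi(n)$ by a map which is piecewise $\SL_d(\mathbb Z)$-affine — the pieces being $\alpha_\tau-\alpha_\tau(m)$ over the facets $\tau$ adjacent to the face through $n$ — and such maps preserve Lebesgue measure, so again the left side of $(\star)$ equals $\mu_M(\partial^c\Psi(E))$. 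Getting this bookkeeping exactly right is where the real work lies.

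Granting $(\star)$, the three assertions follow. If there is an optimal plan supported on \eqref{eq:StabilityThmMA}, then since $\Psi$ is the Kantorovich potential of $\nu$ this plan is supported on the graph of $\partial^c\Psi^c$, Corollary~\ref{cor:TPushForward} gives $(T_\Psi)_\#\mu_M=\nu$, and feeding this into $(\star)$ yields $\MA((\Psi-m)|_{U_\sigma})=\nu|_{U_\sigma}$ for all $\sigma$, while $\MA(\Psi|_{B^\circ})=\nu|_{B^\circ}$ is Theorem~\ref{thm:MAInt}; so $\Psi$ solves Definition~\ref{def:PDEproblem} on $B\setminus\Sigma_\Psi$. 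Conversely, if $\Psi$ solves Definition~\ref{def:PDEproblem} and $\nu(\Sigma_\Psi)=0$, then $(\star)$ and the open-face version used in Theorem~\ref{thm:MAInt} give $\mu_M(T_\Psi^{-1}(E))=\nu(E)$ for all $E\subset B^\circ\cup\bigcup_\sigma U_\sigma=B\setminus\Sigma_\Psi$; since both measures have total mass $\mu_M(A)$ and neither charges $\Sigma_\Psi$, we get $(T_\Psi)_\#\mu_M=\nu$, so $\gamma:=(\mathrm{id},T_\Psi)_\#\mu_M$ is a transport plan from $\mu_M$ to $\nu$ supported on the graph of $\partial^c\Psi$ and — by the first consequence above — on \eqref{eq:StabilityThmMA}; being supported on the graph of the $c$-gradient of the $c$-convex function $\Psi$, it is optimal, and $\Psi$ is a maximizer of $J$. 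Finally, for uniqueness: if $\Psi,\Psi'$ both solve Definition~\ref{def:PDEproblem} with $\nu$ charging neither $\Sigma_\Psi$ nor $\Sigma_{\Psi'}$, the previous step makes both $\Psi$ and $\Psi'$ maximizers of $J$, so $\Psi-\Psi'$ is constant by Lemma~\ref{lem:OTUnique}; adding a constant changes neither $\partial^c\Psi$ nor $\partial^c\Psi^c$, hence $U_{\sigma,\Psi}=U_{\sigma,\Psi'}$ for every $\sigma$ and $\Sigma_\Psi=\Sigma_{\Psi'}$.
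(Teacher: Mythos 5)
Your reduction of the theorem to the single identity $(\star)$, and the way you derive the three assertions from it (via Lemma~\ref{lem:TMap}, Corollary~\ref{cor:TPushForward}, optimality through the graph of $\partial^c\Psi$, and Lemma~\ref{lem:OTUnique} plus the observation that $U_{\sigma,\Psi}$ is unchanged by adding constants), is exactly the architecture of the paper's proof, and that part of your argument is sound. The problem is that the one genuinely new ingredient needed beyond Theorem~\ref{thm:MAInt} — the identification of the chart subdifferential of $(\Psi-m)\circ\beta_\sigma^{-1}$ with $\partial^c\Psi(n)$ at \emph{all} points $n\in U_\sigma$, including points of $B_{d-1}$ — is precisely what you do not prove. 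You flag it yourself ("getting this bookkeeping exactly right is where the real work lies"), and the sketch you offer in its place is not the right mechanism: you propose that at a skeleton point the subdifferential is carried from $\partial^c\Psi(n)$ by a map which is \emph{piecewise} $\SL_d(\mathbb Z)$-affine, with pieces indexed by the facets of $B$ adjacent to the face through $n$. As stated this cannot be literally correct unless the pieces coincide (the subdifferential of a convex function is convex, while a genuinely piecewise-affine image of the convex set $\partial^c\Psi(n)$ need not be), and, more importantly, it presupposes the hard direction: that every subgradient of the folded potential at a skeleton point comes from some $m\in\partial^c\Psi(n)$, i.e.\ that no extra ``corner'' subgradients are created by the piecewise-affine unfolding $\beta_\sigma^{-1}$. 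Nothing in your outline rules this out, and ruling it out is the entire content of the step.

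The paper's route (its Lemma~\ref{lem:GradientsEdge}, proved just before Theorem~\ref{thm:MA}) avoids any case analysis over adjacent facets and works uniformly across the skeleton, by exploiting the defining property $n\in U_\sigma\iff\partial^c\Psi(n)\subset\sigma^\circ$ in a sharper way than you do: for $m,m_\tau\in\sigma$ one has $\langle m-m_\tau,n_\sigma\rangle=0$, so the affine functions $n\mapsto\langle m-m_\tau,n\rangle$ descend through $\beta_\sigma$ to honest \emph{linear} functions on the whole chart (this is the content of the compatibility identity \eqref{eq:CompatibleCharts}, valid on all of $\sigma\times\Delta^\vee_h$). Hence a single $(\sigma,\tau)$-compatible pair $(\alpha_\tau,\beta_\sigma)$ gives a bijection of $\partial^c\Psi(n)$ onto $\partial\bigl((\Psi-m_\tau)\circ\beta_\sigma^{-1}\bigr)(\beta_\sigma(n))$ for every $n\in U_\sigma$: the containment of the subdifferential in $\alpha_\tau(\sigma^\circ)$ comes from writing the potential as a supremum of affine functions with slopes in $\alpha_\tau(\sigma)$ (possible because $\partial^c\Psi(U_\sigma)\subset\sigma^\circ$), and the two implications are then verified by the same computation as in Lemma~\ref{lem:GradientsInt}, with no skeleton/interior dichotomy. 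Incidentally, if you insist on your facet-by-facet picture, the compatible normalizations $\alpha_{\tau'}-\alpha_{\tau'}(m)$ for the various facets $\tau'$ with $m_{\tau'}\in\sigma$ all agree on $\sigma$ (they differ by constants), so your "pieces" collapse to this single affine chart — but that observation, together with the surjectivity onto the subdifferential, is exactly the lemma you would still have to prove. Until it is supplied, the identity $(\star)$ on $U_\sigma\cap B_{d-1}$, and with it the theorem, is not established.
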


To prove Theorem~\ref{thm:MA} we will first state and prove a lemma which will play the same role for the charts $U_\sigma$ as Lemma~\ref{lem:GradientsInt} plays for the open faces $\tau^\circ$. 
\begin{lemma}[Comparing $\partial$ and $\partial^c$ on $U_\sigma$]\label{lem:GradientsEdge}
Assume $\Phi\in P(\Delta)$, $\sigma$ is a facet of $\Delta$ and $\tau$ is a facet of $\Delta^\vee_h$. Pick $U_\sigma$ as in Definition~\ref{def:Usigma} and $(\alpha_\tau,\beta_\sigma)$ which are $(\sigma,\tau)$-compatible. Assume $m_0\in \sigma$ and $n\in U_\sigma$. Then $\alpha_\tau$ defines a bijection of $\partial^c\Psi(n)$ onto $\partial((\Psi-m_\tau)\circ \alpha_\tau^{-1})$.
\end{lemma}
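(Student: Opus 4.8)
\textbf{Proof strategy for Lemma~\ref{lem:GradientsEdge}.}

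The plan is to mimic the structure of the proof of Lemma~\ref{lem:GradientsInt}, replacing the role of $\tau^\circ$ (where $\beta_\sigma$ is automatically a chart for the affine structure) by $U_\sigma$ (where $\beta_\sigma$ is a chart by Lemma~\ref{lem:OpenSetsForCharts} and Lemma~\ref{lem:ConvexPotential} makes $(\Psi - m_\tau)\circ\beta_\sigma^{-1}$ convex). Concretely, for $n\in U_\sigma$ I would establish the two implications
\begin{equation*} m\in\partial^c\Psi(n)\ \Longrightarrow\ \alpha_\tau(m)\in\partial\big((\Psi-m_\tau)\circ\beta_\sigma^{-1}\big)(\beta_\sigma(n)) \end{equation*}
and the converse under the side condition $m\in\St(m_\tau)$, and then upgrade ``bijection onto a subset'' to ``bijection'' using that $n\in U_\sigma$ forces $\partial^c\Psi(n)\subset\St(m_\tau)$.

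First I would record the two key inputs. By Lemma~\ref{lem:OpenSetsForCharts}, $U_\sigma\subset\cup_{m'\in\sigma\cap M}\tau_{m'}$, so $\beta_\sigma$ is one-to-one on $U_\sigma$ and, by Lemma~\ref{lem:ConvexPotential}, $(\Psi-m_0)\circ\beta_\sigma^{-1}$ is convex on $\beta_\sigma(U_\sigma)$ for any $m_0\in\sigma$ — in particular for $m_0=m_\tau$. Second, and this is where the definition of $U_\sigma$ (Definition~\ref{def:Usigma}) is used crucially: if $n\in U_\sigma = B\setminus\partial^c\Psi^c(A\setminus\sigma^\circ)$, then $n\notin\partial^c\Psi^c(m')$ for every $m'\notin\sigma^\circ$, i.e. $\partial^c\Psi(n)\subset\overline{\sigma^\circ}=\sigma\subset\St(m_\tau)$. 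This is the replacement for the hypothesis ``$\partial^c\Psi(n)\subset\St(m_\tau)$'' that Corollary~\ref{cor:GradientIso} had to assume by hand.

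For the forward implication I would run the computation of \eqref{eq:cGradImpliesGrad} verbatim: starting from $\Psi(n')\geq\Psi(n)+\langle m,n'-n\rangle$ for all $n'\in B$, restrict to $n'\in U_\sigma$, use \eqref{eq:CompatibleChartsDual} (valid since $n,n'$ lie in $\cup_{m'\in\sigma\cap M}\tau_{m'}$, where $\alpha_\tau,\beta_\sigma$ are still the compatible pair) to rewrite $\langle m,n'-n\rangle=\langle m, (n'-h(m_\tau)n_\sigma)-(n-h(m_\tau)n_\sigma)\rangle=\langle\alpha_\tau(m),\beta_\sigma(n')-\beta_\sigma(n)\rangle$ — using $\langle m_\tau,\cdot\rangle$ subtracts off in the chart. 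Hence $\alpha_\tau(m)$ is a subgradient of $(\Psi-m_\tau)\circ\beta_\sigma^{-1}$ at $\beta_\sigma(n)$ on the open set $\beta_\sigma(U_\sigma)$. For the converse, suppose $\alpha_\tau(m)\in\partial((\Psi-m_\tau)\circ\beta_\sigma^{-1})(\beta_\sigma(n))$ with $m\in\St(m_\tau)$; exactly as in the proof of \eqref{eq:2ndGradImplication} one gets $\Psi(n')\geq\Psi(n)+\langle m,n'-n\rangle$ first for all $n'\in U_\sigma$, then — picking a facet $\sigma'$ in $\St(m_\tau)$ containing $m$, the $(\sigma',\tau)$-compatible chart $\beta_{\sigma'}$, and invoking convexity of $(\Psi-m_\tau)\circ\beta_{\sigma'}^{-1}$ on $\beta_{\sigma'}(\cup_{m'\in\sigma'}\tau_{m'})$ from Lemma~\ref{lem:ConvexPotential} — for all $n'\in\cup_{m'\in\sigma'}\tau_{m'}$, and finally for all $n'\in B$ by composing with the projection $p_{\sigma'}$ and its $c$-monotonicity estimate, as in the last display of that proof. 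So $m\in\partial^c\Psi(n)$.

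Combining: the forward implication shows $\alpha_\tau(\partial^c\Psi(n))\subset\partial((\Psi-m_\tau)\circ\beta_\sigma^{-1})(\beta_\sigma(n))$; since $\partial^c\Psi(n)\subset\St(m_\tau)$ by the $U_\sigma$-argument above, the converse implication (whose side condition is now automatic) gives the reverse inclusion, so $\alpha_\tau$ maps $\partial^c\Psi(n)$ \emph{onto} the subgradient set. Injectivity of $\alpha_\tau$ on $\St(m_\tau)$ was established already in Section~\ref{sec:AffineStructure} ($\alpha_\tau$ is one-to-one on $\St(m_\tau)$), giving the bijection. I expect the main obstacle to be the converse implication: extending the subgradient inequality from $\beta_\sigma(U_\sigma)$ (where convexity is only known face-by-face, or via Lemma~\ref{lem:ConvexPotential} on the possibly non-open set $\cup_{m'\in\sigma}\tau_m$) to all of $B$. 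The clean way around it is to not use $U_\sigma$ for the extension at all but to switch to the star-based chart $\beta_{\sigma'}$ around a facet containing $m$, exactly as in Lemma~\ref{lem:GradientsInt}; once the inequality holds on $\cup_{m'\in\sigma'}\tau_{m'}$, the projection/monotonicity trick closes it out. One should also check the harmless edge case where $\tau$ itself is not contained in the star of $m_\tau$ in a way that breaks compatibility — but since $m_\tau\in\sigma$ is forced by $U_\sigma\neq\emptyset$ together with Lemma~\ref{lem:OpenSetsForCharts}, this does not arise.
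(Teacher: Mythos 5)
Your skeleton is the paper's skeleton (deduce $\partial^c\Psi(n)\subset\sigma^\circ$ from the definition of $U_\sigma$, prove the two implications via the compatible pair, close with convexity and the projection argument), but the central identity you rely on is false on $U_\sigma$. The identity $\langle m,n'-n\rangle=\langle \alpha_\tau(m),\beta_\sigma(n')-\beta_\sigma(n)\rangle$ that you extract from \eqref{eq:CompatibleChartsDual} is only valid when \emph{both} $n$ and $n'$ lie in the single facet $\tau$ (that is the stated range of \eqref{eq:CompatibleChartsDual}, and it is exactly why the computation \eqref{eq:cGradImpliesGrad} works on $\tau^\circ$). On $U_\sigma$, which straddles all the facets $\tau_{m'}$ with $m'\in\sigma$, the function $\langle m_\tau,\cdot\rangle$ is not constant, and the two sides differ by $\langle m,n_\sigma\rangle\,\langle m_\tau,n'-n\rangle$, which does not vanish; note also that if your identity were true, your chain of inequalities would exhibit $\alpha_\tau(m)$ as a subgradient of $\Psi\circ\beta_\sigma^{-1}$, not of $(\Psi-m_\tau)\circ\beta_\sigma^{-1}$. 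The subtraction of $m_\tau$ in the statement is not a harmless constant shift (as it is on $\tau^\circ$) but the essential linear correction on $U_\sigma$. The correct move, which is what the paper does, is to subtract $\langle m_\tau,\cdot\rangle$ from both sides of $\Psi(n')\geq\Psi(n)+\langle m,n'-n\rangle$ and then apply \eqref{eq:CompatibleCharts}, valid for $(m,n)\in\sigma\times\Delta^\vee_h$; this is legitimate precisely because your own first observation gives $m\in\partial^c\Psi(n)\subset\sigma^\circ$. The same correction is needed when you convert the chart inequality back in the converse direction, and there it forces the side condition to be $m\in\sigma$ rather than $m\in\St(m_\tau)$: for $m\in\St(m_\tau)\setminus\sigma$ the conversion picks up the extra term $(1-\langle m,n_\sigma\rangle)\langle m_\tau,n'-n\rangle$, which has no sign over $n'\in U_\sigma$.

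There is a second gap in the bijectivity step. Your ``reverse inclusion'' only applies the converse implication to points of the form $\alpha_\tau(m)$ with $m\in\St(m_\tau)$, but to conclude that $\alpha_\tau$ maps $\partial^c\Psi(n)$ \emph{onto} $\partial((\Psi-m_\tau)\circ\beta_\sigma^{-1})(\beta_\sigma(n))$ you must first know that every subgradient of the chart function is of this form. This is the analogue of the first claim of Lemma~\ref{lem:GradientsInt}, and the paper proves it by writing $\Psi=\Phi^*$ and, using $\partial^c\Psi(U_\sigma)\subset\sigma^\circ$ together with \eqref{eq:CompatibleCharts}, expressing $(\Psi-m_\tau)\circ\beta_\sigma^{-1}$ as a supremum of affine functions whose linear parts lie in $\alpha_\tau(\sigma^\circ)$; with that range statement in hand one only needs the converse for $m\in\sigma^\circ$, which also removes the sign problem above. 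You never state or prove this, so the ``onto'' part is not closed as written. (A minor side point: $U_\sigma\neq\emptyset$ does not force $m_\tau\in\sigma$; what makes the hypotheses coherent is that a $(\sigma,\tau)$-compatible pair exists, via Lemma~\ref{lem:CompatibleCharts}, exactly when $m_\tau\in\sigma$, which the lemma implicitly presupposes.)
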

\begin{proof}
First of all, by definition $U_\sigma=B\setminus \partial^c\Psi^c(A\setminus \sigma^\circ)$, hence $n\in U_\sigma$ if and only if there is no $m\in A\setminus \sigma^\circ$ such that $\partial^c\Psi^c(m)\ni n$, or equivalently $m\in \partial^c\Psi(n)$. It follows that $\partial^c\Psi(U_\sigma)\subset \sigma^\circ$. 

As in the first claim of Lemma~\ref{lem:GradientsInt}, it follows that $\partial((\Psi-m_\tau)\circ \alpha_\tau^{-1})(\alpha(n))\subset \alpha(\sigma^\circ)$ for any $n\in U_\sigma$. To see this, write $\Psi=\Phi^*$ for $\Phi\in L^\infty(\Delta)$ and note that since $\partial^c\Psi(U_\sigma)\subset \sigma^\circ$
\begin{eqnarray*}
    (\Psi-m_\tau)\circ \alpha_\tau^{-1})(n) & = & \Psi(n)-\langle m_0,n \rangle \\
    & = & \sup_{m\in \Delta} \langle m-m_\tau,n \rangle - \Phi(m) \\
    & = & \sup_{m\in \Delta} \langle \alpha_\tau(m),\beta_\sigma(n) \rangle - \Phi(m),
\end{eqnarray*}
hence that $(\Psi-m_\tau)\circ \alpha_\tau^{-1})$ can be written as a pointwise supremum of affine functions whose linear part lies in $\alpha_\tau(\sigma^\circ).$

It now suffices to prove that if $m\in \sigma^\circ$ and $n\in U_\sigma$, then $\partial((\Psi-m_\tau)\circ \alpha_\tau^{-1})(\alpha(n))$ if and only if $m\in \partial^c\Psi(n)$. This follows similarly as in Lemma~\ref{lem:GradientsInt}. More precisely, let $m\in \partial^c\Psi(n)$. We have
\begin{eqnarray*}
    (\Psi-m_\tau)\circ \beta_\sigma^{-1}(\beta_\sigma(n')) & = & \Psi(n')-\langle m_\tau,n' \rangle \\
    & \geq & \Psi(n) - \langle m_\tau,n \rangle + \langle m,n-n' \rangle - \langle m_\tau,n'-n \rangle  \\
    & = & \Psi(n) - \langle m_\tau,n \rangle + \langle m-m_\tau,n' \rangle - \langle m-m_\tau,n \rangle \\
    & = & (\Psi-m_\tau)\circ \beta_\sigma^{-1}(\beta_\sigma(n)) + \langle \alpha_\tau(m),\beta_\sigma(n')-\beta_\sigma(n) \rangle,
\end{eqnarray*}
hence $\alpha(m)\in \partial((\Psi-m_\tau)\circ \beta_\sigma^{-1})(\beta_\sigma(n))$. The converse follows in the same way. 
\end{proof}

\begin{proof}[Proof of Theorem~\ref{thm:MA}]
Assume $\nu$ admits an optimal transport plan $\gamma$ to $\mu_M$ supported on \eqref{eq:WeaklyStableSet}. A similar computation as in the proof of Theorem~\ref{thm:MAInt}, equations \eqref{eq:NegligibleLocus}, \eqref{eq:TInverseImage} and \eqref{eq:TPushForward}, gives that the the Kantorovich potential $\Psi$ satisfies \eqref{eq:MAInt}. 
It remains to check \eqref{eq:MA}. 
Let $\sigma\in A_d,\tau\in B_d$ such that $m_\tau\in \sigma$ and $E\subset U_\sigma$ be measurable. We get by Lemma~\ref{lem:GradientsEdge} 
$$ \partial((\Psi-m_0)\circ \beta)(\beta(E)) = \alpha(\partial^c(E)) $$
and consequently
\begin{eqnarray}
    |\partial((\Psi-m_0)\circ \beta)(\beta(E))| & = & \mu_M(\partial^c\Psi(E)) \nonumber \\
    & = & \mu_M(\partial^c\Psi(E)\cap R_\Psi) \label{eq:NeglibleLocus2}\\
    & = & \mu_M(T^{-1}(E)) \label{eq:TInverseImage2}\\
    & = & \nu(E). \label{eq:TPushForward2} 
\end{eqnarray}

Assume now that $\Psi\in \mathcal P(\Delta)$ satisfies the Monge-Ampère equation in Defintion~\ref{def:PDEproblem} and $\nu$ does not charge $\Sigma_\Psi$. As in the proof of Theorem~\ref{thm:MAInt}, let $T_\Psi$ be the map furnished by Lemma~\ref{lem:TMap}. 
For a measurable set $E\subset U_n$, we have 
\begin{eqnarray*}
\nu(E) & = & |\partial((\Psi-m)\circ \beta^{-1})(\beta(E))| \\
& = & \mu_M(\partial^c\Psi(E)) \\
& = & \mu_M(\partial^c\Psi(E)\cap R_\Psi) \\
& = & \mu_M(T^{-1}_\Psi(E)).
\end{eqnarray*}
As $\nu$ doesn't charge $\Sigma_\Psi$, it follows that 
$$ \nu = (T_\Psi)_\#\mu_M|_{B^\circ} $$
and since $\nu$ and $(T_\Psi)_\#\mu_M|_{B^\circ}$ have the same total mass, we conclude that $\nu = (T_\Psi)_\#\mu_M$ and $\gamma:=(id,T_\Psi)_\#\mu_M$ provides an optimal transport plan supported on \eqref{eq:WeaklyStableSet} as in the proof of Theorem~\ref{thm:MAInt}. 

For uniqueness, let $\Psi,\Psi'\in \mathcal P(\Delta)$ be two solutions to the Monge-Ampère equation in Definition~\ref{def:PDEproblem} such that $\nu$ doesn't charge $\Sigma_\Psi$ or $\Sigma_{\Psi'}$. By the construction above, there are optimal transport plans from $\nu$ to $\mu_M$ supported on the graphs of $\partial^c\Psi$ and $\partial^c\Psi'$, respectively, hence $\Psi$ and $\Psi'$ are both Kantorovich potentials of $\nu$, i.e. maximizers of $J$. By Lemma~\ref{lem:OTUnique}, $\Psi-\Psi'$ is constant. 
\end{proof}

We will now state and prove a lemma which essentially answers the second bullet point regarding $\Sigma_\Psi$ in the introduction (uniqueness, assuming minimality). The lemma says that if $\Sigma_\Psi$ is minimal and $\Sigma'$ is the singular set of another tropical affine structure defined as in the discussion preceding Definition~\ref{def:PDEproblem}, then, although a weak Monge-Ampère equation might be satisfied on $B\setminus \Sigma$, the solution will not be differentiable. 
\begin{lemma}\label{lem:SingularOutsideCharts}
Assume $\Sigma_\Psi$ is minimal, i.e. there is no closed set $\Sigma'\subset \Sigma_\Psi$ such that the tropical affine structure on $B\setminus \Sigma$ extends to $B\setminus \Sigma'$ and the Monge-Ampère equation holds on $B\setminus \Sigma'$. Let $\{U'_\sigma\}_{\sigma\in A_d}$ and $\Sigma'$ be a set of charts and a singular set such that $\Sigma_\Psi$ is not a subset of $\Sigma'$. Then there is $\sigma$ and $n\in U_{\sigma}'$ such that $(\Psi-m_0)|_{U_\sigma'}\circ \beta_\sigma^{-1}$ is not differentiable at $n$ for $m_0\in \sigma$.
\end{lemma}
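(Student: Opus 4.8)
\textbf{Proof proposal for Lemma~\ref{lem:SingularOutsideCharts}.}

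The plan is to argue by contradiction: suppose $(\Psi-m_0)|_{U'_\sigma}\circ\beta_\sigma^{-1}$ is differentiable at every point of every chart $U'_\sigma$. The first step is to show that under this hypothesis the tropical affine structure defined by $\{U'_\sigma\}_{\sigma\in A_d}$ and $\Sigma'$ supports a solution to the Monge-Ampère equation in the sense of Definition~\ref{def:PDEproblem}. Indeed, differentiability of the potential on each $U'_\sigma$ forces $\partial^c\Psi$ to be single-valued there, so the map $T_\Psi$ from Lemma~\ref{lem:TMap} is defined and agrees with $\partial^c\Psi$ throughout $B\setminus\Sigma'$; running the push-forward computation of Theorem~\ref{thm:MA} (equations \eqref{eq:NeglibleLocus2}, \eqref{eq:TInverseImage2}, \eqref{eq:TPushForward2}), together with Lemma~\ref{lem:GradientsEdge} applied to the charts $U'_\sigma$ rather than $U_\sigma$, yields $\MA((\Psi-m)|_{U'_\sigma}) = \nu|_{U'_\sigma}$. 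So $\Psi$ solves the Monge-Ampère equation on $B\setminus\Sigma'$.

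The second step is to combine this with a covering/patching argument. We now have $\Psi$ solving the Monge-Ampère equation on both $B\setminus\Sigma_\Psi$ (by Theorem~\ref{thm:MA}) and $B\setminus\Sigma'$ (just established). The key observation is that the Monge-Ampère equation in Definition~\ref{def:PDEproblem} is really a pointwise condition on the coordinate expressions $(\Psi-m)\circ\beta_\sigma^{-1}$, and the transition maps between the $\beta_\sigma$'s are affine on overlaps inside $B\setminus\Sigma_\Psi$ and inside $B\setminus\Sigma'$. I would argue that on $B\setminus(\Sigma_\Psi\cap\Sigma')$ the two families of charts can be glued into a single tropical affine structure with singular set $\Sigma_\Psi\cap\Sigma'$ on which $\Psi$ solves the Monge-Ampère equation; the point is that at a point $n\in\Sigma_\Psi\setminus\Sigma'$, the chart $U'_\sigma$ containing $n$ provides an affine coordinate near $n$ and $\Psi$ is convex and satisfies the equation there, which is exactly the data needed to extend the affine structure past $n$. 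Since $\Sigma_\Psi\not\subset\Sigma'$ by hypothesis, $\Sigma_\Psi\cap\Sigma'$ is a proper closed subset of $\Sigma_\Psi$, contradicting the assumed minimality of $\Sigma_\Psi$.

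The main obstacle I expect is the gluing step: making precise that ``differentiability of $\Psi$ in the $U'_\sigma$ charts'' is enough to conclude the transition functions between the old charts $\{U_\sigma\}$ and the new charts $\{U'_\sigma\}$ are affine where both are defined, so that one genuinely obtains a tropical affine structure on the larger set $B\setminus(\Sigma_\Psi\cap\Sigma')$ rather than just a collection of locally-defined convex functions. This is where the compatibility lemmas of Section~\ref{sec:Pairing} and the identification in Lemma~\ref{lem:GradientsEdge} of $\partial^c\Psi$ with the ordinary gradient do the work: both $\beta_\sigma$ and $\beta'_{\sigma'}$ read off the same $c$-gradient data, and at a point where $\Psi$ is differentiable in both charts the two coordinate systems must differ by an element of $\SL_d(\mathbb Z)$ acting on the common gradient, forcing the transition to be affine. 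Once this is in hand the contradiction with minimality is immediate, and I would remark that for $d=2$, where solutions are known to be smooth (Theorem~\ref{thm:SmoothSolutions}), the differentiability hypothesis is automatic, so the lemma settles the uniqueness (third) bullet point in that case.
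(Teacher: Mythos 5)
There is a genuine gap, and it sits exactly where the paper's own proof does its real work. Your first step asserts that differentiability of $(\Psi-m_0)\circ\beta_\sigma^{-1}$ on $U'_\sigma$ ``forces $\partial^c\Psi$ to be single-valued there'' and that Lemma~\ref{lem:GradientsEdge} can then be ``applied to the charts $U'_\sigma$ rather than $U_\sigma$''. Neither claim is available. Lemma~\ref{lem:GradientsEdge} is proved only for the canonical charts of Definition~\ref{def:Usigma}: its proof hinges on the defining property $U_\sigma=B\setminus\partial^c\Psi^c(A\setminus\sigma^\circ)$, which gives $\partial^c\Psi(U_\sigma)\subset\sigma^\circ$ and thereby lets one write $(\Psi-m_\tau)\circ\beta_\sigma^{-1}$ as a supremum of affine functions with slopes in $\alpha_\tau(\sigma^\circ)$. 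For an arbitrary chart $U'_\sigma$ nothing of the sort is known, and the statement you need — that differentiability at a point $n\in U'_\sigma\cap B_{d-1}$ forces $\partial^c\Psi(n)\subset\sigma$ — is precisely the contrapositive of the computation the paper carries out: if some $m\in\partial^c\Psi(n)$ has $m\notin\sigma$, then $(\Psi-m_0)\circ\beta_\sigma^{-1}$ lies above the convex minorant $(m-m_0)\circ\beta_\sigma^{-1}$ with equality at $\beta_\sigma(n)$, and that minorant is non-differentiable there because it contains the piecewise-affine folding function $h$ of Lemma~\ref{lem:ConvexPotential} multiplied by the strictly negative factor $\langle m-m_0,n_\sigma\rangle$. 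You cite this mechanism vaguely in your ``main obstacle'' paragraph, but it is the heart of the lemma, not a technical afterthought; without it your push-forward computation on $U'_\sigma$, the identification $T_\Psi=\partial^c\Psi$ there, and the disjointness of the glued charts $U_\sigma\cup U'_\sigma$ (needed for the glued atlas to be a tropical affine structure in the sense preceding Definition~\ref{def:PDEproblem}, since transitions $\beta_\sigma\circ\beta_{\sigma'}^{-1}$ are not affine on $B_{d-1}$) are all unsupported. Note also that the single-valuedness claim is false as stated: by Lemma~\ref{lem:BetterGradient} the $c$-gradient can contain a whole segment ($m$ together with $p_\tau(m)$) all of whose elements induce the same chart gradient, so differentiability in the chart does not make $\partial^c\Psi$ single-valued; what one can hope to control is its location, and that requires the computation above.

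For comparison, the paper does not argue by global contradiction at all. It picks a point $n\in U'_\sigma\cap\Sigma_\Psi$ (which exists since $\Sigma_\Psi\not\subset\Sigma'$) and runs a local dichotomy: either $\partial^c\Psi$ stays in $\sigma$ on a whole neighbourhood of $n$ in $B_{d-1}$, in which case Lemma~\ref{lem:GradientsEdge} shows the Monge-Ampère equation holds there and the canonical chart $U_\sigma$ can be enlarged, contradicting minimality; or there is a nearby $n'\in U'_\sigma\cap B_{d-1}$ whose $c$-gradient leaves $\sigma$, and then the folding-function argument exhibits the required non-differentiability at $n'$ directly. Your route could in principle be repaired by proving the gradient-location statement (and then redoing the Lemma~\ref{lem:GradientsEdge}/Theorem~\ref{thm:MA} computations on $U'_\sigma$ and checking disjointness of the glued charts), but at that point you would have reproduced the paper's key computation inside a longer argument, so as written the proposal assumes what it needs to prove. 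Your closing remark about $d=2$ is also circular for the same reason: smoothness on $B\setminus\Sigma_\Psi$ says nothing a priori about differentiability in the foreign charts $U'_\sigma$ across $\Sigma_\Psi$, which is exactly what the lemma rules out.
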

\begin{proof}
It follows from the assumptions of the lemma that there is a facet $\sigma$ of $\Delta$ and $n\in U'_\sigma\cap \Sigma_\Psi\subset B_{d-1}$. Now, either $n$ has a neighbourhood $U_n$ in $B_{d-1}$ such that $\partial \Psi(U_n)\subset \sigma$ or there is $n'\in U'_\sigma$ and $m\in \partial^c \Psi(n)$ such that $m\notin \sigma$. If the former is true, then $U_n$ does not intersect $U_{\sigma'}$ for any facet $\sigma'$ different from $\sigma$. 
Moreover, using Lemma~\ref{lem:GradientsEdge} in the same way as in the proof of Theorem~\ref{thm:MA}, it is possible to prove that $(\Psi-m)\circ \alpha^{-1}_\sigma|_{U_n}$ satisfies the Monge-Ampère equation on $U_n$, hence $U_n$ can be added to $U_\sigma$, contradicting the minimality of $\Sigma_\Psi$. 

In the latter case, we have $m\in \partial^c\Psi(n')$.
We will use the notation from the proof of Lemma~\ref{lem:ConvexPotential}. Let $m_0\in \sigma$. Note that $(\Psi-m_0)\circ \beta^{-1}\geq (m-m_0)\circ \beta^{-1}$ with equality at $\beta(n')$, hence it suffices to show that $(m-m_0)\circ \beta^{-1}$ is convex and not differentiable at $\beta(n')$, or equivalently that $(m-m_0)\circ l$ is not differentiable at $n'\in H$. But this follows immediately, since 
\begin{eqnarray*} 
(m-m_0)\circ l(n) & = &  \langle m-m_0,n-h(n')n_\sigma \rangle \\ 
& = & \langle m-m_0, n' \rangle - h(n')\langle m-m_0, n_\sigma \rangle, 
\end{eqnarray*}
$h$ is a piecewise affine convex function whose non-differentiable locus is exactly the codimension 1 skeleton of $\cup_{m\in \sigma\cap M} \tau_m$, hence contains $n'$, and $\langle m-m_0, n_\sigma \rangle<0$ since $m\notin \sigma$. 
\end{proof}

\begin{remark}
    Controlling the size of $\Sigma_\Psi$ is related to regularity theory for $\Psi$. Interesting developments regarding regularity of solutions to the Monge-Ampère equation in relation to mirror symmetry are given in \cite{Moo21,MR22}.
\end{remark}

We will now state and prove a theorem which gives a sufficient condition for smoothness of the solution on $B\setminus \Sigma_\Psi$. 

\begin{theorem}\label{thm:SmoothSolutions}
Consider stable data $(\Delta,h_0,\nu_N)$ (or more generally, $(\Delta,h_0,\nu)$ such that for all facets $\sigma$ of $\Delta$, $(\beta_{\sigma})_\# \nu|_{\mathrm{St}(\sigma)}$ is absolutely continuous, and the density is smooth, uniformly non-negative and uniformly bounded). 

Assume for any facet $\tau$ of $\Delta^\vee$ that
\begin{equation*}
    \alpha_\tau\left(\overline{\partial^c \Psi(\tau^\circ)}\right)
\end{equation*}
is convex. 
Then $\Psi$ is smooth on $B^\circ$.

If in addition, for any facet $\sigma$ of $\Delta$, the set 
$$ 
F_\sigma := \overline{\partial^c\Psi^c(\sigma^\circ)}
$$
satisfies $F_\sigma\subset \St(n_\sigma)$, $\partial^c\Psi(F^\circ_\sigma) \subset \sigma$ and $\beta_\sigma(F_\sigma)$ is convex,  
then $U_\sigma = F_\sigma^\circ$
and $\Psi-m$ is smooth on $U_\sigma$ for each
facet $\sigma$ of $\Delta$ and $m\in \sigma$, i.e. $\Psi$ defines a smooth Hessian metric on $B\setminus \Sigma_\Psi$. 
\end{theorem}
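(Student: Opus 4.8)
\textbf{Proof strategy for Theorem~\ref{thm:SmoothSolutions}.}

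\smallskip

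The plan is to reduce everything to Caffarelli's interior regularity theory for the real Monge-Amp\`ere equation, applied chart by chart, and to identify $U_\sigma$ with $F_\sigma^\circ$ using the results of Section~\ref{sec:cGradient}. For the first assertion, fix a facet $\tau$ of $\Delta^\vee$ and a facet $\sigma$ with $m_\tau\in\sigma$, and pick $(\sigma,\tau)$-compatible charts $(\alpha_\tau,\beta_\sigma)$. By Lemma~\ref{lem:GradientsInt} and Corollary~\ref{cor:GradientIso}, on $\beta_\sigma(\tau^\circ)$ the gradient image of the convex function $u := (\Psi-m_\tau)\circ\beta_\sigma^{-1}$ equals $\alpha_\tau\bigl(\partial^c\Psi(\tau^\circ)\bigr)$, whose closure is convex by hypothesis. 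By Theorem~\ref{thm:MAInt} (stability), $\Psi$ solves \eqref{eq:MAInt}, so in these charts $u$ solves $\det D^2 u = f$ weakly on $\beta_\sigma(\tau^\circ)$, where $f$ is the density of $(\beta_\sigma)_\#\nu|_{\mathrm{St}(\sigma)}$ — smooth, uniformly positive and bounded by assumption. The convexity of the gradient image is exactly the hypothesis needed to run Caffarelli's $C^{1,\alpha}$ regularity (the gradient image being convex rules out the bad examples of Pogorelov), and then Caffarelli--Schauder bootstrapping upgrades $u$ to $C^\infty$ on $\beta_\sigma(\tau^\circ)$. Since the tropical affine transition functions are affine, smoothness is chart-independent, giving $\Psi\in C^\infty(B^\circ)$.

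\smallskip

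For the second assertion, I would first show $U_\sigma = F_\sigma^\circ$. The inclusion $F_\sigma^\circ\subset U_\sigma$: if $n\in F_\sigma^\circ = \bigl(\overline{\partial^c\Psi^c(\sigma^\circ)}\bigr)^\circ$, then by the hypothesis $\partial^c\Psi(F_\sigma^\circ)\subset\sigma$, so in particular $\partial^c\Psi(n)$ meets no other facet and does not meet $A\setminus\sigma^\circ$; but $U_\sigma = B\setminus\partial^c\Psi^c(A\setminus\sigma^\circ)$, so $n\in U_\sigma$. Conversely $U_\sigma\subset\overline{\partial^c\Psi^c(\sigma^\circ)}$ essentially by definition of $U_\sigma$ together with $F_\sigma\subset\St(n_\sigma)$ and a density argument (points of $U_\sigma$ have $c$-gradient in $\overline{\sigma^\circ}=\sigma$, and one pushes into $\sigma^\circ$ using that $\nu$-a.e.\ point of $\sigma$ is a regularity point $R_\Psi$), and $U_\sigma$ being open forces $U_\sigma\subset F_\sigma^\circ$. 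With this identification, fix $m_0\in\sigma$ and $(\sigma,\tau)$-compatible charts; by Lemma~\ref{lem:GradientsEdge}, $\alpha_\tau$ carries $\partial^c\Psi(n)$ bijectively onto the subdifferential of $v := (\Psi-m_\tau)\circ\beta_\sigma^{-1}$ at $\beta_\sigma(n)$ for every $n\in U_\sigma$, so the gradient image of the convex function $v$ on the open convex set $\beta_\sigma(U_\sigma)=\beta_\sigma(F_\sigma^\circ)$ is $\alpha_\tau\bigl(\partial^c\Psi(F_\sigma^\circ)\bigr)\subset\alpha_\tau(\sigma)$, whose closure is convex (it is $\alpha_\tau$ of a facet). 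By Theorem~\ref{thm:MA}, $\Psi$ solves \eqref{eq:MA}, so $v$ solves $\det D^2 v = f$ weakly on $\beta_\sigma(F_\sigma^\circ)$ with the same good density $f$. Again Caffarelli's theorem plus Schauder bootstrap gives $v\in C^\infty$, i.e.\ $(\Psi-m_0)$ is smooth on $U_\sigma$ for every $m_0\in\sigma$. Since $B\setminus\Sigma_\Psi = B^\circ\cup\bigcup_\sigma U_\sigma$ and $\Psi$ is a local convex potential in the tropical affine charts on each piece, $\Psi$ defines a smooth Hessian metric there.

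\smallskip

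\textbf{Main obstacle.} The analytic input (Caffarelli regularity) is standard once the hypotheses are in place, so the real work is the identification $U_\sigma = F_\sigma^\circ$ and checking that the convex functions $u,v$ genuinely solve $\det D^2(\cdot)=f$ in the Alexandrov sense on the \emph{open} convex sets $\beta_\sigma(\tau^\circ)$ and $\beta_\sigma(F_\sigma^\circ)$ with boundary data compatible with a convex gradient image. In particular one must be careful that the weak Monge-Amp\`ere equation of Remark~\ref{rem:WeakSolution}/Definition~\ref{def:PDEproblem} — phrased via $\partial(\Psi\circ\beta^{-1})(\beta(E))=\nu(E)$ — coincides with the Monge-Amp\`ere \emph{measure} of the convex function, which follows from Lemma~\ref{lem:GradientsInt}/Lemma~\ref{lem:GradientsEdge} together with the absolute continuity of $(\beta_\sigma)_\#\nu$ (so that the gradient map transports the Lebesgue-type structure correctly and no mass escapes to the boundary). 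Verifying that $\partial^c\Psi(F_\sigma^\circ)\subset\sigma$ is used with equality of closures to get a convex gradient image is the one place where the extra hypotheses of the theorem are essential, and articulating exactly why $\overline{\partial^c\Psi^c(\sigma^\circ)}\subset\St(n_\sigma)$ forces the charts to agree is the most delicate bookkeeping step.
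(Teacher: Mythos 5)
Your treatment of the first assertion (smoothness on $B^\circ$) is essentially the paper's argument: stability gives the Aleksandrov equation in the charts, Lemma~\ref{lem:GradientsInt} identifies the closure of the gradient image of $\Psi\circ\beta_\sigma^{-1}$ with $\alpha_\tau\bigl(\overline{\partial^c\Psi(\tau^\circ)}\bigr)$, and Caffarelli plus bootstrapping concludes; no issues there.

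The second assertion is where the proposal has genuine gaps. First, your proof of $F_\sigma^\circ\subset U_\sigma$ conflates $\sigma$ with $\sigma^\circ$: the hypothesis only gives $\partial^c\Psi(F_\sigma^\circ)\subset\sigma$ (the \emph{closed} facet), while $A\setminus\sigma^\circ$ contains the relative boundary of $\sigma$, so a point $n\in F_\sigma^\circ$ whose $c$-gradient touches $\partial\sigma$ lies in $\partial^c\Psi^c(A\setminus\sigma^\circ)$ and hence outside $U_\sigma$ (Definition~\ref{def:Usigma}); nothing elementary rules this out. This also makes your derivation of the equation for $v=(\Psi-m_\tau)\circ\beta_\sigma^{-1}$ on $\beta_\sigma(F_\sigma^\circ)$ circular, since \eqref{eq:MA} (Theorem~\ref{thm:MA}) is an equation on $U_\sigma$ and you invoke it on $F_\sigma^\circ$ before the two sets have been identified; the paper instead reruns the argument of Lemma~\ref{lem:GradientsEdge} directly on $F_\sigma^\circ$, using only $\partial^c\Psi(F_\sigma^\circ)\subset\sigma$. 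Second, and more fundamentally, your application of Caffarelli on the primal side is not valid: in the second-boundary-value form of Caffarelli's theorem it is the \emph{target} of the gradient map, i.e.\ the set carrying the image measure with density bounded away from zero there, that must be convex. For $v$ on $\beta_\sigma(F_\sigma^\circ)$ the image measure is Lebesgue measure on $\alpha_\tau\bigl(\partial^c\Psi(F_\sigma^\circ)\bigr)$; you replace this by the larger convex set $\alpha_\tau(\sigma)$, on which the image density is not bounded below, and convexity of $\overline{\alpha_\tau(\partial^c\Psi(F_\sigma^\circ))}$ is neither assumed nor proved. Note that the hypothesis you never genuinely use, convexity of $\beta_\sigma(F_\sigma)$, is exactly the convexity of the gradient image of the \emph{dual} potential. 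The paper's route is dual: apply Caffarelli to $\Psi^c\circ\alpha_\tau^{-1}$ on $\alpha_\tau(\sigma^\circ)$ (where $F_\sigma\subset\St(n_\sigma)$ yields the equation with good density and $\beta_\sigma(F_\sigma)$ convex is precisely the target condition), conclude this dual potential is smooth and strictly convex, deduce that $\partial^c\Psi$ is single-valued and injective on $F_\sigma^\circ$, hence that $u_\sigma=(\Psi-m)\circ\beta_\sigma^{-1}$ is strictly convex and therefore smooth, and only then obtain $U_\sigma=F_\sigma^\circ$ from the resulting gradient homeomorphism. Your reverse inclusion $U_\sigma\subset F_\sigma^\circ$ (every $n\in U_\sigma$ has $\partial^c\Psi(n)\subset\sigma^\circ$, so $n\in F_\sigma$, and openness of $U_\sigma$ finishes) can be made to work without the unnecessary density argument, but the forward inclusion genuinely requires the regularity you were trying to establish, so the order of steps in your plan cannot be repaired without adopting the dual argument.
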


\begin{remark}
We will show in Lemma~\ref{lem:Symmetric} and Lemma~\ref{lem:SymmetricCube} how symmetries of the data $(\Delta,h,\nu)$ can be used to verify the conditions in Theorem~\ref{thm:SmoothSolutions}.
\end{remark}

\begin{proof}[Proof of Theorem \ref{thm:SmoothSolutions}]

For the first part, note that stability of the data implies that $u:=\Psi\circ \beta_\sigma^{-1}$ restricted to $\beta_\sigma(\tau)$ solves the Monge-Ampère equation in the sense of Remark~\ref{rem:WeakSolution}, i.e. in the Aleksandrov sense. 
Moreover, by Lemma~\ref{lem:GradientsInt} 
$$ \overline{\partial u(\beta_\sigma(\tau^\circ))} = \alpha_\tau\left(\overline{\partial^c \Psi(\tau^\circ)\cap \St(m_\tau)}\right) = \alpha_\tau(\overline{\partial \Psi(\tau^\circ)}), $$
where the second equality is given by Lemma~\ref{lem:ProjectionDelta}. This set is convex by assumption. It thus follows by \cite{Caf97} that $u$ is smooth on $\beta_\sigma(\tau^\circ)$ and $\Psi$ is smooth on $\tau^\circ$.

For the second part, note that since $F_\sigma\subset \St(n_\sigma)$, applying Lemma~\ref{lem:GradientsInt} in the same way as in the proof of Theorem~\ref{thm:MAInt}, we get that $\Phi=\Psi^c$ satisfies a Monge-Ampère equation with smooth, uniformly non-negative and uniformly bounded density on $\sigma^\circ$. Moreover, since $\beta_\sigma(F_\sigma)$ is convex, $v:=\Phi\circ \alpha_\tau^{-1}$ is smooth and strictly convex. Since 
in addition $\partial^c\Psi(F^\circ_\sigma) \subset \sigma$, it follows from Lemma~\ref{lem:GradientsInt} and Lemma~\ref{lem:GradientsEdge} that 
$\partial^c\Psi$ is single valued and injective on $F_\sigma^\circ$. Moreover, since $\partial^c\Psi(F_\sigma^\circ)\subset \sigma$, applying Lemma~\ref{lem:GradientsEdge} in the same way as in the proof of Theorem~\ref{thm:MA}, we get that 
$$ u_\sigma = (\Psi-m)\circ \beta^{-1}_\sigma $$
satisfies the Monge-Ampère equation in the sense of Aleksandrov on $\beta_\sigma(F_\sigma^\circ)$. As $\partial^c\Psi$ is single valued and injective on $F_\sigma^\circ$ we can apply Lemma~\ref{lem:GradientsEdge} to see that $u_\sigma$ is strictly convex and hence smooth on $F_\sigma^\circ$.

To see that $U_\sigma = F_\sigma^\circ$, assume $n\in F_\sigma^\circ$ and note that by smoothness of $u_\sigma$, $\partial^c \Psi(n)$ is single valued and contained in $\sigma^\circ$, hence $n\in U_\sigma$. Conversely, assume $n\in U_\sigma$. It follows that $\partial^c\Psi(n)\subset \sigma^\circ$. Let $m\in \partial^c\Psi(n)\cap \sigma^\circ$. Since the gradient of the Legendre transform of $u_\sigma$ maps $\alpha_\tau(\sigma^\circ)$ homeomorphically onto $\beta_\sigma(F_\sigma^\circ)$ we get that $\beta_\sigma(n)\in \beta_\sigma(F_\sigma^\circ)$, hence $n\in F_\sigma^\circ$.
\end{proof}

\section{Examples}\label{sec:Examples}
As in previous sections we will let $e_0,\ldots,e_d$ be a set of generators of $M$ and $f_0,\ldots,f_d$ be a set of generators of $N$. We will use $h_0$ to denote the trivial height function defined by $h_0(0)=0$ and $h_0(m) = 1$ for $m\not= 0$. For convenience when making explicit computations we will also normalize the total mass of $\mu_M$, and hence $\nu_N$, to be 1.

\subsection{Existence and smoothness in the presence of discrete symmetry}\label{sec:Examples Existence}
Stability, and hence existence of solutions, can often be verified if $(\Delta,h,\nu)$ has strong symmetry properties. In the following two lemmas we show how this can be applied to the standard unit simplex and the unit cube. Moreover, in these examples the symmetries give enough control on the gradient to show that the solutions are smooth when $\nu=\nu_N$, using Theorem~\ref{thm:SmoothSolutions}. Symmetry was similarly exploited in \cite{LiFermat,HJMM} to prove existence of (weak) solutions on the unit simplex.

\begin{lemma}\label{lem:Symmetric}
Let $\Delta$ be the standard unit simplex 
$$ \Delta = \conv\left\{ (d+1)e_0-\sum_{i \neq 0} e_i, \ldots, (d+1)e_d-\sum_{i\neq d} e_i, -\sum_{i=0}^d e_i\right\} $$
and $h = h_0$. Assume that $\nu$ is invariant under permutations of the vertices of $B$ and don't charge $B_{d-1}$. Then the Monge-Ampère equation \eqref{eq:MAInt} admits a solution. 

If in addition $\nu=\nu_N$ (or, more generally, for any $\sigma\in \mathcal{V}(\Delta^\vee)$, $(\beta_\sigma)_\# \nu|_{\mathrm{St}(\sigma)}$ is absolutely continuous, and the density is smooth, uniformly non-negative and uniformly bounded) then the solution is smooth outside of the set

$$\Sigma_s := \{x\in B: \beta_k=\beta_l=0, \beta_i=\beta_j=\max_p \beta_p,k\neq l \neq i \neq j\}$$

where $\beta_i$ is defined by uniquely writing $x$ as a convex combination of the vertices of $\Delta^\vee$, i.e. $x=\sum_{i=0}^{d+1} \beta_i n_i$ where the $n_i$'s are the vertices of $\Delta^\vee$ and $\sum_{i=0}^{d}\beta_i =1,\beta_i\geq0 \forall i$ (when $d=1$, $\Sigma_s$ is empty and when $d=2$, $\Sigma_s$ is the midpoint of each of the edges). Additionally $\Sigma_s=\Sigma_\Psi$.

\end{lemma}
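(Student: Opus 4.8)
The plan is to obtain existence from the construction of \cite{HJMM} together with the stability formalism of Theorem~\ref{thm:MAInt}, and to deduce smoothness by verifying the hypotheses of Theorem~\ref{thm:SmoothSolutions}, using throughout the action of the symmetry group $G\cong S_{d+2}$ that permutes the vertices $v_0,\dots,v_{d+1}$ of $\Delta$ and, dually, the vertices $n_0,\dots,n_{d+1}$ of $\Delta^\vee$.

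\textbf{Existence.} First I would record that $G$ acts by lattice automorphisms preserving $\Delta$, $\Delta^\vee$, $h_0$, the pairing and $\mu_M$, and (by hypothesis) $\nu$, hence it preserves the optimal transport problem and the stable set \eqref{eq:WeaklyStableSet}. Since $A\cap M$ is exactly the set of vertices $v_j$ of $\Delta$, with $\St(v_j)=A\setminus\sigma_j^\circ$ (the complement of the open facet opposite $v_j$) and $\tau_{v_j}$ the facet of $\Delta^\vee$ opposite $n_j$, stability amounts to exhibiting an optimal plan transporting no mass from $\sigma_j^\circ$ into $\tau_{v_j}^\circ$. For $\nu$ symmetric and not charging $B_{d-1}$ this is exactly the setting of \cite{HJMM}, where a solution $\Psi\in\mathcal P(\Delta)$ to \eqref{eq:MAInt} is built by a symmetry-reduced variational principle; by Theorem~\ref{thm:MAInt} such a $\Psi$ is unique up to a constant and is the Kantorovich potential of $\nu$, which is therefore $G$-invariant. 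Alternatively one verifies stability self-contained: averaging any optimal plan over $G$ and combining Lemma~\ref{lem:BetterGradient} with the barycentric geometry of the simplex forces the support onto \eqref{eq:WeaklyStableSet}.

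\textbf{Smoothness.} With $(\Delta,h_0,\nu)$ stable, Theorem~\ref{thm:SmoothSolutions} reduces smoothness of $\Psi$ on $B^\circ$ to convexity of $\alpha_\tau\!\left(\overline{\partial^c\Psi(\tau^\circ)}\right)$ for each facet $\tau$ of $\Delta^\vee$, and the further smoothness on $B\setminus\Sigma_\Psi$, together with the identity $U_\sigma=F_\sigma^\circ$, to the conditions $F_\sigma:=\overline{\partial^c\Psi^c(\sigma^\circ)}\subset\St(n_\sigma)$, $\partial^c\Psi(F_\sigma^\circ)\subset\sigma$, and convexity of $\beta_\sigma(F_\sigma)$. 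I would verify these by symmetry: the stabilizer $G_\tau\cong S_{d+1}$ permutes the $d+1$ facets of $\Delta$ containing $m_\tau$ (and the $d+1$ vertices of $\tau$), so $G$-invariance forces $\Psi|_\tau$ to be a symmetric function of the barycentric coordinates on $\tau$; via Lemma~\ref{lem:GradientsInt} this identifies $\overline{\partial^c\Psi(\tau^\circ)}$ with the $\alpha_\tau^{-1}$-image of the subgradient range of the symmetric convex function $\Psi\circ\beta_\sigma^{-1}$ on the simplex $\beta_\sigma(\tau)$, a convex sub-polytope of $\St(m_\tau)$ of $\mu_M$-mass $(d+2)^{-1}\mu_M(A)$; an explicit computation with the given vertex coordinates exhibits this polytope and its image under $\alpha_\tau$ as convex, and the same analysis applied to $\Phi=\Psi^c$ on $\sigma^\circ$ handles $F_\sigma$, $\beta_\sigma(F_\sigma)$ and the single-valuedness $\partial^c\Psi(F_\sigma^\circ)\subset\sigma$ (here one also uses that $(\beta_\sigma)_\#\nu|_{\St(\sigma)}$ has a smooth, positive, bounded density, so that $\Phi\circ\alpha_\tau^{-1}$ is strictly convex). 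Smoothness then follows from \cite{Caf97} exactly as in the proof of Theorem~\ref{thm:SmoothSolutions}.

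\textbf{Identification of $\Sigma_\Psi$.} Once the conditions above hold, Theorem~\ref{thm:SmoothSolutions} gives $U_{\sigma_j}=F_{\sigma_j}^\circ$, so $\Sigma_\Psi=B_{d-1}\setminus\bigcup_j F_{\sigma_j}^\circ$. From the explicit description of $F_{\sigma_j}$ — in the coordinates $x=\sum_i\beta_i n_i$ on $B$ it is the region where $\beta_j$ is a (weak) maximum of the $\beta_i$, so $U_{\sigma_j}$ is the locus where $\beta_j$ is the strict maximum — one reads off that $\bigcup_j U_{\sigma_j}$ is precisely the set of points of $B$ whose largest barycentric coordinate is attained at a unique vertex. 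Its complement inside $B_{d-1}$ is exactly the set where the maximum is attained at two or more vertices, which is $\Sigma_s$; for $d=1$ this is empty, for $d=2$ it is the set of edge midpoints, and in general it has codimension $2$ in $B$. This gives $\Sigma_s=\Sigma_\Psi$.

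\textbf{Main obstacle.} The substantive step is the explicit identification of $\overline{\partial^c\Psi(\tau^\circ)}$ and $F_\sigma$ as convex polytopes: one must use the $S_{d+1}$-symmetry to reduce the relevant convex analysis to a low-dimensional, essentially one-parameter computation and then check convexity of the images under the charts $\alpha_\tau$, $\beta_\sigma$ from the vertex coordinates; everything downstream (Caffarelli regularity, the shape of $\Sigma_\Psi$) is then formal. A secondary point, if one wants a proof independent of \cite{HJMM}, is confirming that the $G$-symmetrized optimal plan really avoids the opposite-facet pairs $\sigma_j^\circ\times\tau_{v_j}^\circ$.
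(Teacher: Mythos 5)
Your skeleton matches the paper's (existence via stability and Theorem~\ref{thm:MAInt}, smoothness by verifying the hypotheses of Theorem~\ref{thm:SmoothSolutions}, then reading off $\Sigma_\Psi=\Sigma_s$ from $U_\sigma=F_\sigma^\circ$), and your final identification of $\Sigma_s$ as the locus in $B_{d-1}$ where the largest barycentric coordinate is attained twice is exactly right. But the step you defer as "an explicit computation" is the actual content of the lemma, and the way you frame it would not go through. First, $G$-invariance of $\Psi|_\tau$ under the stabilizer $S_{d+1}$ of a facet $\tau$ does \emph{not} identify $\overline{\partial^c\Psi(\tau^\circ)}$: a symmetric convex function on a simplex can have essentially arbitrary symmetric subgradient range, so no amount of convex analysis of the restriction alone pins it down. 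What the paper proves is the containment $\partial^c\Psi(\tau^\circ)\subset\SmSt(m_\tau)$, and this uses monotonicity of the \emph{global} $c$-gradient together with transpositions $(F_{0j},G_{0j})$ that do not stabilize $\tau$ (they move $\tau_{m_0}$ to $\tau_{m_j}$), via the explicit estimates $\langle F_{0j}(m)-m,G_{0j}(n)-n\rangle<0$; equality of the closure with the whole small star then comes from a mass-balance argument ($\nu(\tau^\circ)=\mu_M(\SmSt(m_\tau))$ by symmetry). Second, your claim that this set is "a convex sub-polytope of $\St(m_\tau)$" is false: $\SmSt(m_\tau)$ is a non-convex tent spanning $d+1$ facets of $A$; what is needed (and true) is convexity of its image under the chart $\alpha_\tau$, i.e. of its projection modulo $\bR m_\tau$, and this is not a formality -- it is the paper's Lemma~\ref{lem:convexity in coords}, proved by a separate geometric argument. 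Third, for the conditions on $F_\sigma$, "the same analysis applied to $\Phi=\Psi^c$" needs a justification that the roles of $\Delta$ and $\Delta^\vee$ can be exchanged; the paper supplies it via the affine self-duality of the simplex, which rescales all facet volumes by one common constant, and you should make that explicit.

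Two smaller points. Your primary existence route (citing \cite{HJMM} and then invoking uniqueness in Theorem~\ref{thm:MAInt}) is admissible, but the paper's own argument is self-contained: the Kantorovich potential is $G$-invariant by uniqueness, and monotonicity plus the transposition computation confines the graph of $\partial^c\Psi$ to \eqref{eq:WeaklyStableSet}, giving stability directly. Your fallback argument ("averaging any optimal plan over $G$ and combining Lemma~\ref{lem:BetterGradient} with the barycentric geometry") would not work as stated: averaging gives a symmetric optimal plan, but Lemma~\ref{lem:BetterGradient} only lets you project gradient points toward $m_\tau$, and modifying a plan by such projections changes its first marginal, so it does not produce a plan supported on \eqref{eq:WeaklyStableSet}; the forcing mechanism really is $c$-monotonicity under the symmetry, not the projection lemma.
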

\begin{proof}
Let $m_0,\ldots,m_{d+1}$ be the vertices of $\Delta$ ordered as in the statement of the lemma and let $n_0,\ldots,n_{d+1}$ be the vertices of $\Delta^\vee$, ordered as in the description below:
$$ \Delta^\vee = \conv\left\{-f_0,\ldots,-f_d ,\sum_{i=0}^d f_i\right\}.$$
We have $\langle m_i,n_j \rangle = 1$ if $i\not=j$ and $\langle m_i,n_i \rangle = -(d+1)$ for each $i$.

Now, let $\Psi$ be the Kantorovich potential of $\nu$. By uniqueness of maximizer of $J$, $\Psi$ is symmetric. It follows that the graph of $\partial^c\Psi$ is symmetric. We claim that the graph of $\partial^c\Psi$ lies in the set

\begin{align}\label{eq: regularly stable set1}
    \cup_{i=0}^d \tau_{m_i} \times \mathrm{St}(m_i).
\end{align}

To see this, assume $(m,n)$ lies in the graph of $\partial^c\Psi$ but not in \eqref{eq: regularly stable set1}. Without loss of generality,  assume that
$n\in \tau_{m_0} = \conv\{ n_1,\ldots,n_d\} $
and $m\in \sigma_{n_0}^\circ = \conv\{m_1,\ldots,m_d\}^\circ$. It follows that 
$$ m = \sum_{i=1}^d \alpha_i m_i \;\;\;\; n = \sum_{i=1}^d \beta_i n_i $$
for some $\alpha_1,\ldots,\alpha_d > 0$ and $\beta_1,\ldots,\beta_d\geq 0$ such that $\sum \alpha_i = \sum \beta_i = 1$. We have $\beta_j>0$ for some $j$. Let $F_{0j}$ and $G_{0j}$ be the maps on $A$ and $B$ interchanging $m_0$ with $m_j$ and $n_0$ and $n_j$, respectively. By symmetry, we get that the pair $(F_{0j}(m),G_{0j}(n))$ is also in the graph of $\partial^c\Psi$. However,
\begin{eqnarray*}
\langle F_{0j}(m)-m,G_{0j}(n)-n \rangle & = & \langle \alpha_j(m_0-m_j),\beta_j(n_0-n_j) \rangle \\
& = & -\alpha_j\beta_j(2d+4)  \\
& < & 0 
\end{eqnarray*}
contradicting the monotonicity of $\partial^c\Psi$. For existence of a solution to the Monge-Ampére equation \ref{eq:MAInt} in the interior of the facets, simply consult Theorem \ref{thm:MAInt}. 

For the issue of regularity, we aim to consult Theorem \ref{thm:SmoothSolutions}. Namely we claim that, for each facet $\tau_{m_i}$ 

\begin{equation}\label{eq: regularly stable set}
    \partial^c \tau_{m_i}^\circ \subset \{x\in \mathrm{St}(m_i): \alpha_i\geq \alpha_j, \forall j\}:=\mathrm{SmSt(m_i)}.
\end{equation}

Here, $\alpha_i$ is defined by writing uniquely $x=\sum_{i=0}^{d+1} \alpha_i m_i$ with $\sum_{i=0}^{d}\alpha_i =1,\alpha_i\geq0 \forall i$. To see the claim, assume the contrary for the point $(n,m)$. Without loss of generality, there are two cases. In the first case, $n\in \tau_{m_0} = \conv\{ n_1,\ldots,n_d\} $
and $m\in \sigma_{n_0}^\circ = \conv\{m_1,\ldots,m_d\}^\circ$, but that this contradicts monotonicity of the graph of $\partial^c\Psi$ follows from our earlier arguments. 

In the second case, we assume that $n\in \tau_{m_0}=\conv\{n_1,...,n_d\}^\circ$ and $m\in \conv\{m_0,m_1,...,m_{d-1} \}$ and additionally that after writing
\begin{equation*}
    m = \sum_{i=0}^{d-1} \alpha_i m_i 
\end{equation*}
we have $\alpha_0<\alpha_j$ for some $j\in\{1,\ldots d-1\}$, $\sum_{i=0}^{d-1} \alpha_i = 1$ and $\alpha_i\geq 0 \ \forall i=0,1,\ldots d$. We also write 
\begin{equation*}
    n = \sum_{i=1}^d \beta_i n_i
\end{equation*}
with $\sum_{i=1}^d \beta_i=1$ and $\beta_i> 0\ \forall i=1,\ldots, d$.  Let $F_{0j}$ and $G_{0j}$ be defined as before. By symmetry, we get that the pair $(F_{0j}(m),G_{0j}(n))$ is also in the graph of $\partial^c\Psi$ and in addition $F_{0j}(m)\in \tau_{m_j}^\circ$. However,

\begin{align*}
    \langle F_{0j}(m)-m,G_{0j}(n)-n\rangle &= \langle \alpha_0(m_j-m_0)+\alpha_j(m_0-m_j),\beta_j(n_0-n_j)\rangle \\
    &= -(\alpha_j-\alpha_0)\beta_j(2d+4)<0
\end{align*}
contradicting the monotonicity of $\partial^c \Psi$. Next note that the open sets $\tau_{m_i}^\circ$ and $\mathrm{SmSt}(m_i)^\circ$ have the same mass by symmetry considerations and using that neither $\nu$ nor $\mu$ charges sets of codimension 1. Thus 

\begin{equation*}
    \overline{\alpha_{m_i}(\partial^c\Psi(\tau_{m_i}^\circ))}=\alpha_{m_i}(\mathrm{SmSt}(m_i))
\end{equation*}
which is convex, the proof of which we postpone to Lemma \ref{lem:convexity in coords}, so we can use the first part of Theorem \ref{thm:SmoothSolutions}. 

To obtain regularity on a larger set, we use an additional symmetry available in the case of the standard simplex. While $\Delta$ is not unimodularly self-dual it is  affinely self-dual. Even more, the affine map mapping $\Delta$ bijectively to $\Delta^\vee$ multiplies the lattice-induced volume element of each facet by one and the same constant. Exchanging the roles of $\Delta$ and $\Delta^\vee$ via this symmetry we have thus shown that for any facet $\sigma$ of $\Delta$, the set 
\begin{equation*}
    \beta_\sigma(\overline{\partial^c\Psi^c(\sigma^\circ}))
\end{equation*}
is convex and additionally, after slight thought, $$\partial^c(\overline{\partial^c\Psi^c(\sigma^\circ})^\circ) \subset \sigma.$$
By the second part of Theorem \ref{thm:SmoothSolutions} we thus obtain smoothness of the solution in $B\setminus\Sigma_\Psi$, and directly checking which part of $B$ for which we have shown smoothness one finds $\Sigma_\Psi=\Sigma_s$ as claimed.
\end{proof}

\begin{lemma}\label{lem:SymmetricCube}
Let $\Delta$ be the standard unit cube 
$$ \Delta = \conv\left\{ \pm e_0 \ldots \pm e_d \right\} $$
and $h=h_0$. Assume $\nu$ is invariant under all symmetries of $\Delta^\vee$ and don't charge $B_{d-1}$. Then \eqref{eq:MAInt} admits a solution. 
If in addition $\nu=\nu_N$ (or, more generally, for any $\sigma\in \mathcal{V}(\Delta^\vee)$, $(\beta_{\sigma})_\# \nu|_{\mathrm{St}(\sigma)}$ is absolutely continuous, and the density is smooth, uniformly non-negative and uniformly bounded) then the solution is smooth outside of the set
$$\Sigma_s := \{n\in B: \beta_j=0, |\beta_k|=|\beta_l|=\max_i |\beta_i|, j\neq k\neq l\}$$
where $\beta_0,\ldots,\beta_d$ are defined uniquely by writing $n=\sum_i \beta_if_i$ (when $d=1$, $\Sigma_s$ is empty and when $d=2$, $\Sigma_s$ is the midpoint of each of the edges). Additionally $\Sigma_s=\Sigma_\Psi$.
\end{lemma}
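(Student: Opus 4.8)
The plan is to mirror the proof of Lemma~\ref{lem:Symmetric}, with the role played there by the symmetric group of the simplex now played by the hyperoctahedral group $G$ of signed permutations of the $d+1$ coordinates: $G$ acts on $M_\mathbb R$ and $N_\mathbb R$ and preserves $\Delta$, $\Delta^\vee_{h_0}=\Delta^\vee$, $h_0$, and (by hypothesis) $\nu$. For existence, let $\Psi$ be the Kantorovich potential of $\nu$; by uniqueness of the maximizer of $J$ it is $G$-invariant, hence so is the graph of $\partial^c\Psi$. As in the simplex case, one checks that this graph lies in $\bigcup_{m\in A\cap M}(\St(m)\times\tau_m)$: if $(m,n)$ is on the graph with $n$ in the relative interior of a facet $\tau_{m'}$ of $\Delta^\vee$ but $m\notin\St(m')$, then the sign flip $w\in G$ in a coordinate separating $m$ from $\St(m')$ produces a second graph point $(wm,wn)$ with $\langle wm-m,\,wn-n\rangle<0$, contradicting the $c$-monotonicity of Section~\ref{sec:cGradient}. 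Since $\nu$ does not charge $B_{d-1}$, the optimal transport plan is concentrated on $A\times B^\circ$, hence on the stable set, so $(\Delta,h_0,\nu)$ is stable and a solution exists by Theorem~\ref{thm:MAInt}.

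Next I would prove interior smoothness by sharpening the previous step using the subgroup of $G$ fixing a given facet $\tau$ of $\Delta^\vee$. As in Lemma~\ref{lem:Symmetric}, $c$-monotonicity against these symmetries confines $\partial^c\Psi(\tau^\circ)$ to a sub-region $\SmSt(m_\tau)\subset\St(m_\tau)$ cut out by comparisons between the signed coordinates of $m$; the regions $\SmSt(m_\tau)$ tile $A$ up to sets of codimension one, and by $G$-symmetry $\mu_M$ and $\nu$ put equal mass on the cells $\SmSt(m_\tau)$, resp.\ $\tau^\circ$. As neither measure charges codimension-one sets, $\overline{\partial^c\Psi(\tau^\circ)}=\SmSt(m_\tau)$; its image $\alpha_\tau(\SmSt(m_\tau))$ is a finite intersection of sublevel sets of convex ($\ell^1$-type) functions, hence convex by Lemma~\ref{lem:convexity in coords}. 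The first part of Theorem~\ref{thm:SmoothSolutions} then gives smoothness of $\Psi$ on $B^\circ$.

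For the extension to $B\setminus\Sigma_\Psi$ I would apply the second part of Theorem~\ref{thm:SmoothSolutions}, which asks, for every facet $\sigma$ of $\Delta$, that $F_\sigma:=\overline{\partial^c\Psi^c(\sigma^\circ)}\subset\St(n_\sigma)$, $\partial^c\Psi(F_\sigma^\circ)\subset\sigma$, and $\beta_\sigma(F_\sigma)$ convex. This is the one genuine departure from Lemma~\ref{lem:Symmetric}: there the simplex is affinely self-dual and one transported the previous paragraph through the self-duality, whereas $\Delta$ and $\Delta^\vee$ are not affinely equivalent here, so I would instead repeat the small-star analysis directly for the dual potential $\Phi=\Psi^c\in\mathcal P(\Delta^\vee_{h_0})$, which is again $G$-invariant. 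Running $c$-monotonicity of $\partial^c\Phi$ against the subgroup of $G$ fixing $\sigma$ confines $F_\sigma$ to an explicit region inside $\St(n_\sigma)$ — yielding $F_\sigma\subset\St(n_\sigma)$ and $\partial^c\Psi(F_\sigma^\circ)\subset\sigma$ — and gives $\beta_\sigma(F_\sigma)$ as another convex $\ell^1$-type region (Lemma~\ref{lem:convexity in coords}); granted $F_\sigma\subset\St(n_\sigma)$, $\Phi$ solves a Monge-Ampère equation on $\sigma^\circ$ with smooth, bounded, positive density exactly as in the proof of Theorem~\ref{thm:SmoothSolutions}. That theorem then yields $U_\sigma=F_\sigma^\circ$ and smoothness of $\Psi-m$ on each $U_\sigma$, i.e.\ a smooth Hessian metric on $B^\circ\cup\bigcup_\sigma U_\sigma$; reading off the cells $U_\sigma$ in the coordinates $n=\sum_i\beta_i f_i$ identifies $B\setminus(B^\circ\cup\bigcup_\sigma U_\sigma)$ with $\Sigma_s$, so $\Sigma_\Psi=\Sigma_s$.

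The main obstacle is this last step. Without self-duality the small-star and mass-balance argument must be carried out a second time for $\Phi$ on the facets of $\Delta$, including the two inclusions $F_\sigma\subset\St(n_\sigma)$ and $\partial^c\Psi(F_\sigma^\circ)\subset\sigma$ needed to invoke Lemma~\ref{lem:GradientsEdge}, after which the combinatorics of the cells $U_\sigma$ must be matched against the claimed set $\Sigma_s$. A secondary subtlety, absent for the simplex, is that the stars $\St(n_\sigma)$ of the vertices $n_\sigma$ of $\Delta^\vee$ overlap substantially, so one must verify carefully that the $\SmSt$-type cells genuinely tile $A$ and carry the expected mass.
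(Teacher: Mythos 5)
Your proposal is correct and takes essentially the same route as the paper: $G$-invariance of the Kantorovich potential plus $c$-monotonicity against sign flips and signed transpositions confines $\partial^c\Psi(\tau^\circ)$ and $\partial^c\Psi^c(\sigma^\circ)$ to the respective small stars, and then mass balance, Lemma~\ref{lem:convexity in coords} and Theorem~\ref{thm:SmoothSolutions} give existence, smoothness and $\Sigma_\Psi=\Sigma_s$. The step you flag as the main obstacle --- redoing the small-star analysis on the dual side since the cube is not self-dual --- is exactly what the paper does, proving $\partial^c\Psi^c(\sigma_{\pm f_j}^\circ)\subset\SmSt(\pm f_j)$ directly by the same symmetry-monotonicity argument.
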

\begin{proof}
Note that 
$$ \Delta^\vee = \conv \left\{\pm f_0,\ldots,\pm f_d\right\}. $$

Let $I\in \{-1,1\}^{d+1}$ and $m_0=I_0e_0+\ldots+I_de_d$ be the corresponding vertex of $\Delta$. We will use $\SmSt(m_0)$ to denote the \emph{small closed star} of $m$
$$ \SmSt(m_0) = \left\{ m=\sum_{i=0}^d \alpha_i e_i \in \St(m_0), I_i\alpha_i\geq 0\forall i\right\}. $$
Similarly, for $j=0,\ldots,d$, we will use $\SmSt(f_j)$ to denote the \emph{small closed star} of $f_i$
$$ \SmSt(f_j) = \left\{ n=\sum_{i=0}^d \beta_i f_i \in \St(f_i), |\beta_j| \geq |\beta_i| \forall i\right\} $$
and $\SmSt(-f_j)$ to denote the \emph{small closed star} of $-f_i$
$$ \SmSt(-f_j) = \left\{ n=\sum_{i=0}^d \beta_i f_i \in \St(-f_i), |\beta_j| \geq |\beta_i| \forall i\right\}. $$
We claim that $\partial^c\Psi(\tau_{m_0}^\circ)\subset  \SmSt(m_0)$ for all vertices $m_0$ of $\Delta$ and $\partial^c\Psi^c(\sigma_{f_j}^\circ)\subset \SmSt(f_j)$ and $\partial^c\Psi^c(\sigma_{-f_j}^\circ)\subset \SmSt(-f_j)$ for all $j=0,\ldots,d$. To see this, assume first that $(m,n)$ contradicts the first claim. By symmetry, we may without loss of generality assume $m_0 = \sum_{i=0}^d e_i$. Hence $n=\sum_{i=0}^d\beta_i f_i$ for $\beta_i>0$ and $m=\sum_{i=0}^d\alpha_i e_i$ where $\alpha_j<0$ for some $j$. Let $F_j$ be the symmetry of $\Delta$ which maps $e_j$ to $-e_j$ and preserves $e_i$ for all $i\not= j$. Similarly, let $G_j$ be the symmetry of $\Delta^\vee$ which maps $f_j$ to $-f_j$ and preserves $f_i$ for all $i\not= j$. By symmetry, $(F_j(m),G_j(n))$ lies on the graph of $\partial^c\Psi$. We get 
\begin{eqnarray*}
    \langle F_j(m)-m,G_j(n)-n \rangle & = & \langle -2\alpha_je_j,-2\beta_jf_j \rangle \\
    & = & 4\alpha_j\beta_j \\
    & < & 0
\end{eqnarray*}
contradicting monotonicity of $\partial^c\Psi$. Assume instead $(m,n)$ contradicts the second claim. By symmetry, we may without loss of generality assume $j=0$. Hence $m=e_0+\sum_{i=1}^d \alpha_ie_i$ for $\alpha_i\in (-1,1)$ and $n=\sum_{i=0}^d \beta_if_i$ where either $|\beta_j|>|\beta_0|$ for some $j$ or $n\in \St(f_0)$ and consequently $\beta_0<0$. In the first case, let $F_{0j}$ and $G_{0j}$ be the symmetries of $\Delta$ and $\Delta^\vee$ which
interchanges the pairs $\{e_0,\sgn(\beta_j)e_j\}$ and $\{e_0,\sgn(\beta_j)e_j\}$, where $\sgn(\beta_j)$ is the sign of $\beta_j$, but leaves all other generators fixed. We get 
\begin{eqnarray*}
    F_{0j}(m)-m & = & \sgn(\beta_j)\alpha_je_0-e_0+e_j-\sgn(\beta_j)\alpha_je_j \\
    & = & (\sgn(\beta_j)\alpha_j-1)(e_0-e_j) 
\end{eqnarray*}
and 
\begin{eqnarray*}
    G_{0j}(n)-n & = & \sgn(\beta_j)\beta_jf_0-\beta_0f_0+\beta_0f_j-\sgn(\beta_j)\beta_jf_j  \\
    & = &  (\sgn(\beta_j)\beta_j-\beta_0)(f_0-f_j),
\end{eqnarray*}
hence    
\begin{eqnarray*}
    \langle F_{0j}(m)-m,G_{0j}(n)-n \rangle & = & (\sgn(\beta_j)\alpha_j-1)(\sgn(\beta_j)\beta_j-\beta_0)\langle e_0-e_j,f_0-f_j\rangle \\
    & < & 0.
\end{eqnarray*}
contradicting monotonicity of $\partial^c\Psi$.
In the second case, we have
\begin{eqnarray*}
    \langle F_0(m)-m,G_0(n)-n \rangle & = & \langle -2e_0,-2\beta_0f_0 \rangle \\
    & = & 4\beta_0 \\
    & < & 0,
\end{eqnarray*}
where, as above, $F_0$ and $G_0$ are the symmetries that map $e_0$ to $-e_0$ and $f_0$ to $-f_0$ while preserving $e_i$ and $f_i$ for all $i\not=0$. The third claim follows in the same way as the second claim. Applying Theorem~\ref{thm:SmoothSolutions} then proves the lemma. 
\end{proof}

In the lemma below, we fix a given presentation of a reflexive polytope $\Delta$ in the $\mathbb{Z}^{d+1}$-lattice. Given a vertex $n$ of $\Delta$ we define the \emph{small star}
\begin{equation}
    \mathrm{SmSt}(n) := \{x\in \mathrm{St}(n):||x-n||\leq||x-n_j|| \forall j\}.
\end{equation}

Here $\{n_j\}_j$ are the vertices of $\Delta$ and $||\cdot||$ is the Euclidean distance on $\mathbb Z^{d+1}\otimes \mathbb R=\mathbb R^{d+1}$. The presentation is fixed so that the Euclidean distance is well defined. However, note that this definition is consistent with the definitions of the small star in Lemma \ref{lem:Symmetric} and \ref{lem:SymmetricCube} with the given presentations after identifying the abstract lattice with $\mathbb{Z}^{d+1}$. In this situation we have the following lemma, completing the proofs of Lemma \ref{lem:Symmetric} and \ref{lem:SymmetricCube}. 

\begin{lemma}\label{lem:convexity in coords}
Let $\Delta$ be a lattice polytope in $\mathbb R^{d+1}$ satisfying the following assumption. For any vertex $v$ of $\Delta$, $v$ is the only vertex contained in the open half-plane $\{x:\langle v,x\rangle >0\}$, where $\langle \cdot,\cdot\rangle$ denotes the standard inner product on $\mathbb R^d$. 
Then the image of $\mathrm{SmSt}(v)$ for a vertex $v$ under the projection $\pi_v:\mathbb R^{d+1}\rightarrow \mathbb R^{d+1}/\mathbb{R}v$ is convex. 
\end{lemma}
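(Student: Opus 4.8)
The plan is to realise $\SmSt(v)$ as the intersection of $\St(v)$ with the Voronoi cell of $v$ among the vertices of $\Delta$, and then to recognise its image under $\pi_v$ as the region where an explicit convex function lies below an explicit concave one.

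First I would reformulate the defining inequalities. For a vertex $n_j\neq v$ of $\Delta$ the condition $\|x-v\|\le\|x-n_j\|$ defines the closed half-space $H_j$ bounded by the perpendicular bisector of the segment $[v,n_j]$, so $\SmSt(v)=\St(v)\cap \mathrm{Vor}(v)$ with $\mathrm{Vor}(v)=\bigcap_j H_j$ the Voronoi cell of $v$. Splitting $\mathbb R^{d+1}=\mathbb Rv\oplus v^{\perp}$ and writing $x=tv+w$ with $w\in v^{\perp}=:W$, the quotient map $\pi_v$ is recorded by $w$. The hypothesis gives $\langle n_j,v\rangle\le 0<\langle v,v\rangle$ for every vertex $n_j\neq v$, and a short computation then shows that in these coordinates each $H_j$ has the form $\{tv+w:\ t\ge \phi_j(w)\}$ for an affine function $\phi_j$ on $W$ — the point being that $v$ lies strictly above every other vertex in the $v$-direction, so being at least as close to $v$ as to $n_j$ is a lower bound on $t$. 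Hence $\mathrm{Vor}(v)=\{tv+w:\ t\ge\Phi(w)\}$ is the epigraph over $W$ of the convex piecewise-affine function $\Phi=\max_j\phi_j$.

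Next I would show that $\St(v)$ coincides with the upper boundary of $\Delta$ in the direction $v$, namely $V_v:=\{m\in\Delta:\ m+\epsilon v\notin\Delta\ \text{for all }\epsilon>0\}$. Since $0\in\Int\Delta$ (as holds automatically in the reflexive situations where the lemma is applied), $c_F:=\max_\Delta\langle\eta_F,\cdot\rangle>0$ for every facet $F$ with outward normal $\eta_F$; if $m$ lies on a facet $F\ni v$ then $\langle\eta_F,m+\epsilon v\rangle=c_F(1+\epsilon)>c_F$, giving $\St(v)\subseteq V_v$. For the reverse inclusion it is enough to prove that $\langle\eta_F,v\rangle\le 0$ for every facet $F$ with $v\notin F$, since then any $m\in V_v$ lies on a facet $F$ with $\langle\eta_F,v\rangle>0$, which forces $v\in F$ and hence $m\in\St(v)$. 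For this I would use that the hypothesis says precisely that $\langle v,\cdot\rangle$ attains its maximum over $\Delta$ only at $v$, i.e.\ that $v$ lies in the interior of the normal cone $N_\Delta(v)=\mathrm{cone}\{\eta_{F'}:F'\ni v\}$; writing $v=\sum_{F'\ni v}a_{F'}\eta_{F'}$ with $a_{F'}\ge 0$ yields $\langle\eta_F,v\rangle=\sum_{F'\ni v}a_{F'}\langle\eta_F,\eta_{F'}\rangle\le 0$, provided distinct facet normals of $\Delta$ pair non-positively — which is exactly the standing hypothesis for the polar polytope of $\Delta$ (whose vertices are the facet normals), and which one checks is inherited from the hypothesis on $\Delta$. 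This exhibits $\St(v)$ as the graph, over the convex set $\pi_v(\Delta)$, of the concave piecewise-affine function $\overline g$ giving the upper boundary of $\Delta$ in the direction $v$; write $\underline g$ for the corresponding convex lower boundary, so that $\pi_v(\Delta)=\{w:\underline g(w)\le\overline g(w)\}$.

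Combining the two descriptions, intersecting the graph of $\overline g$ with the epigraph $\{t\ge\Phi\}$ gives $\SmSt(v)=\{\overline g(w)v+w:\ w\in\pi_v(\Delta),\ \overline g(w)\ge\Phi(w)\}$, hence
\[ \pi_v(\SmSt(v))=\{w\in\pi_v(\Delta):\ \max(\Phi(w),\underline g(w))\le\overline g(w)\}. \]
Since $\max(\Phi,\underline g)$ is convex (a maximum of convex functions), $\overline g$ is concave, and $\pi_v(\Delta)$ is convex, this set is convex, which is the claim.

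The main obstacle is the middle step: the inequality $\langle\eta_F,v\rangle\le 0$ for facets $F\not\ni v$, equivalently $\St(v)=V_v$. This is the only place where the hypothesis is used essentially, and the delicate point is the reduction to the corresponding ``dual'' statement about the facet normals of $\Delta$; it is the counterpart, for the present more general hypothesis, of the projection Lemma~\ref{lem:ProjectionDelta} available for reflexive polytopes.
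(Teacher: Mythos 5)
Your overall strategy (describe $\mathrm{Vor}(v)$ as the epigraph of a convex function over $v^{\perp}$, identify $\St(v)$ with the upper graph $V_v$ of $\Delta$ in the direction $v$, and intersect) is different from the paper's proof, which instead flows each point $p$ of the convex set $C=\Delta\cap\mathrm{Vor}(v)$ vertically, $p\mapsto p+sv$ with $s$ maximal, checks that this flow decreases the distance to $v$ and increases the distance to every other vertex, and concludes $\pi_v(\SmSt(v))=\pi_v(C)$ without ever mentioning facet normals. Your route could in principle work, but as written it has a genuine gap exactly at the step you flag as the crux: the claim that the hypothesis is inherited by the polar polytope, i.e.\ that distinct facet normals of $\Delta$ pair non-positively. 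This is false. The cross-polytope $\conv\{\pm e_0,\pm e_1,\pm e_2\}$ satisfies the hypothesis (its vertices pair to $0$ or $-1$), but its facet normals are the vectors $(\pm1,\pm1,\pm1)$ and, for example, $\langle(1,1,1),(1,1,-1)\rangle=1>0$; even the restricted version your sum needs fails, since the facet with normal $(-1,1,1)$ (not containing $e_0$) pairs positively with the facet normal $(1,1,1)$ (containing $e_0$). Note this is not a marginal case: the cross-polytope is precisely the polytope the paper applies the lemma to in the cube situation, so the argument breaks on the intended example. (A secondary, smaller issue: your proof of $\St(v)\subseteq V_v$ invokes $0\in\Int\Delta$, which is not part of the lemma's hypothesis; the hypothesis only forces $0\in\Delta$, possibly on the boundary.)

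The good news is that the conclusion you need, $\langle\eta_F,v\rangle\le 0$ for every facet $F$ with $v\notin F$, is in fact true under the hypothesis, but it requires a different argument than passing to the polar. For instance: suppose $\langle\eta_F,v\rangle>0$ and set $c=\max_{\Delta}\langle\eta_F,\cdot\rangle\ge\langle\eta_F,v\rangle>0$. Writing each of the (at least $d+1$) vertices $w_i$ of $F$ as $\frac{c}{\|\eta_F\|^2}\eta_F+u_i$ and $v$ as $\frac{\langle\eta_F,v\rangle}{\|\eta_F\|^2}\eta_F+u_0$ with $u_i\in\eta_F^{\perp}$, the pairwise non-positivity of the vertices gives $\langle u_i,u_j\rangle<0$ for all $i\neq j$ (including $i=0$), producing at least $d+2$ vectors with pairwise strictly negative inner products inside the $d$-dimensional space $\eta_F^{\perp}$, which is impossible since at most $d+1$ such vectors can exist. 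With that (or some equivalent substitute) in place, your epigraph/graph argument in the final step goes through; without it, the proposal does not establish the lemma.
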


\begin{proof}
    Let $v_0,...,v_N$ be the vertices of $\Delta$ and without loss of generality let $v=v_0$. Denote by $N$ the number of vertices of $\Delta$. Define $$C:=\{x\in \Delta:|x-v_0|\leq |x-v_j|\forall j\}$$
    where $|\cdot|$ is the standard norm on $\mathbb R^d$. 
    This set is clearly convex and thus so is its image under the projection. We aim to define a map from $C$ to $\mathrm{SmSt}(v_0)$ which becomes trivial after the projection. For any $p\in C$, define $F(p):=p+sv_0$, where $s$ is the maximal number such that $F(p)\in \Delta$. 
    
    Claim 1: $F$ decreases the distance to $v_0$. To see this, write $p=b_0v_0+\sum_{i=1}^{d}b_ie_i$ where $e_i$ is such that $(v_0,e_1,...,e_{d})$ is an orthonormal basis. Then $|p-v_0|^2=|1-b_0|^2+\sum_{i=1}^d |b_i|^2$ while $|F(p)-v_0|^2=|1-b_0-s|^2+\sum_{i=1}^d|b_i|^2$. Then simply note that $1-b_0\geq1-b_0-s\geq 0$. This uses the fact that the open half-space ${x:\langle v,x-v\rangle >0}$ does not contain any of the vertices of $\Delta$ which follows from the assumptions.
    
    Claim 2: $F$ increases the distance to all other vertices. To see this, pick some vertex $v_k$ different from $v_0$ and write $v_k=c_0n_0+\sum_{i=1}^dc_ie_i$. Then writing $p$ as before in the chosen basis we have $|p-v_k|^2=|b_0-c_0|^2+\sum_{i=1}^d |b_0-c_0|^2$ while we have $|F(p)-v_k|^2 = |b_0+s-c_0|^2+\sum_{i=1}^d|b_0-c_0|^2$. The claim follows after noting that $b_0+s_0-c_0\geq b_0-c_0\geq0$. 
    
We conclude that $F(p)\in C$, still, additionally, $F(p)$ lies in $\partial\Delta$ so that in fact $F(p)\in \mathrm{SmSt}(v_0)$.
Since $\pi_v \circ F=\pi_v$ we conclude that $\pi_v(C)\subset \pi_v(\mathrm{SmSt}(v_0))$. Since $C\subset \mathrm{SmSt}(v_0)$ trivially we find that $\pi_v(\mathrm{SmSt}(v_0))=\pi_v(C)$ is convex.
    
\end{proof}

\begin{remark}
    Existence of a weak solution when $\Delta$ is the unit cube and $\nu=\nu_N$ also follows from the result in \cite{LiToric}. 
\end{remark}

\subsection{Structural unstability and non-existence}\label{sec:Examples non-Existence}
We now turn to examples where Definition~\ref{def:Stability} provides an obstruction to existence of solutions. The main idea is illustrated in the following example.  
\begin{lemma}\label{lem:PointMass}
Assume $\Delta$ be the standard unit simplex (as in Lemma~\ref{lem:Symmetric}) and $h=h_0$. Let $n$ be a point in the interior of a facet $\tau$ of $\Delta^\vee_h$ and $\nu=\delta_n$ be a point mass at $n$. Then \eqref{eq:MAInt} does not admit a solution. 
\end{lemma}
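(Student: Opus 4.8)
The plan is to reduce everything to Theorem~\ref{thm:MAInt}: since that theorem gives existence of a solution to \eqref{eq:MAInt} if and only if $(\Delta,h_0,\delta_n)$ is stable, it suffices to show that $(\Delta,h_0,\delta_n)$ is \emph{not} stable. The first step is the remark that when $\nu=\delta_n$ is a single point mass there is only one transport plan from $\mu_M$ to $\delta_n$: the condition $(q_B)_\#\gamma=\delta_n$ forces $\gamma$ to be concentrated on $A\times\{n\}$, and then $(q_A)_\#\gamma=\mu_M$ forces $\gamma=\mu_M\otimes\delta_n$. This product plan is therefore automatically the optimal one, so stability is equivalent to $\supp(\mu_M\otimes\delta_n)=A\times\{n\}$ being contained in the (closed) set \eqref{eq:WeaklyStableSet}, i.e. to the inclusion $A\subseteq\bigcup_{m}\St(m)$, the union running over those $m\in A\cap M$ with $n\in\tau_m$.

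The heart of the argument is then to show that this union is harmless, namely that $\{\,m\in(\Delta\cap M)\setminus\{0\}:n\in\tau_m\,\}$ consists of the single vertex $m_\tau$ of $\Delta$ dual to $\tau$. For such an $m$, the set $\tau_m=\{n'\in\Delta^\vee:\langle m,n'\rangle=1\}$ is an exposed face of $\Delta^\vee$ and it is a \emph{proper} face, because $0\in\Int\Delta^\vee$ while $\langle m,0\rangle=0\neq1$; hence $\dim\tau_m\leq d$. Since $n$ lies in the relative interior of the facet $\tau$, the minimal face of $\Delta^\vee$ containing $n$ is $\tau$, so $\tau\subseteq\tau_m$, and a dimension count forces $\tau_m=\tau$. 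Finally $\langle m,\cdot\rangle\equiv1$ on $\affspan(\tau)$, an affine hyperplane avoiding the origin, so $m$ is uniquely determined, and since $m_\tau$ has this property, $m=m_\tau$. (Alternatively, with the explicit coordinates of Lemma~\ref{lem:Symmetric}: writing $\tau=\tau_{m_{i_0}}$ and $n=\sum_{j\neq i_0}\beta_j n_j$ with all $\beta_j>0$, the constraints $\langle m,n\rangle=1$ and $\langle m,n_j\rangle\leq1$ for all $j$ force $\langle m,n_j\rangle=1$ for every $j\neq i_0$, and since $\{n_j\}_{j\neq i_0}$ is a lattice basis of $N$ this pins down $m=m_{i_0}$.)

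It remains only to observe that $A\not\subseteq\St(m_\tau)$: as $\Delta$ is a simplex and $m_\tau$ is a vertex, the facet of $\Delta$ opposite to $m_\tau$ is not contained in $\St(m_\tau)$; for instance its barycenter lies in $A$ but in no facet of $\Delta$ passing through $m_\tau$. Consequently the unique transport plan $\mu_M\otimes\delta_n$ is not supported on \eqref{eq:WeaklyStableSet}, so $(\Delta,h_0,\delta_n)$ fails to be stable, and Theorem~\ref{thm:MAInt} then shows that \eqref{eq:MAInt} has no solution in $\mathcal P(\Delta)$. I expect the only genuinely delicate point to be the middle step — excluding lattice points $m\neq m_\tau$ with $n\in\tau_m$; the rest is bookkeeping with marginals and the face lattice of $\Delta^\vee$.
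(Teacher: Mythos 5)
Your proof is correct, but it follows the alternative route that the paper explicitly mentions (in the paragraph right after its proof) and deliberately does not take. You identify the unique transport plan: the marginal constraints force $\gamma=\mu_M\otimes\delta_n$, which is then trivially optimal, so stability reduces to the set-theoretic inclusion of $A\times\{n\}$ in the closed set \eqref{eq:WeaklyStableSet}; you then show carefully that the only $m\in A\cap M$ with $n\in\tau_m$ is $m_\tau$, and that $A\not\subseteq\St(m_\tau)$ because the opposite facet of the simplex escapes the star. The paper instead never identifies the plan: it argues by a mass-balance comparison, namely that any transport plan must place mass $\mu_M(A)=1$ on $A\times\tau^\circ$ (since $\gamma(A\times\{n\})=1$), while the portion of $A\times\tau^\circ$ inside \eqref{eq:WeaklyStableSet} is contained in $\St(m_\tau)\times\tau^\circ$ and hence can carry at most $\mu_M(\St(m_\tau))<1$. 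Both arguments hinge on the same geometric fact — that a point of $\tau^\circ$ lies in $\tau_m$ only for $m=m_\tau$ — which you prove in detail (via the face lattice of $\Delta^\vee$, or the explicit lattice-basis computation) and the paper uses implicitly. What your route buys is elementary transparency in the point-mass case, exploiting that the coupling is unique; what the paper's route buys is generality: the same mass-comparison, with $\delta_n$ replaced by $\nu_N$ and ``$>$'' in place of ``point mass versus star volume,'' is exactly the structural instability criterion of Definition~\ref{def: strict strictural semistability}'s companion, Definition~\ref{def:Structural Instability}, and Lemma~\ref{lemma: structuraly unstability implies unstability}, which drive the later non-existence examples; your argument, relying on uniqueness of the plan, does not generalize in that direction. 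One small point to keep in mind if you retain your formulation: concluding instability from ``$A\not\subseteq\St(m_\tau)$'' uses that $\supp(\mu_M)=A$ (so that $\supp(\mu_M\otimes\delta_n)=A\times\{n\}$), which holds because $\mu_M$ is a Lebesgue-type measure on the facets, and equivalently that the complement $A\setminus\St(m_\tau)$ — the relative interior of the facet opposite $m_\tau$ — has positive $\mu_M$-measure, which is the quantitative fact the paper's proof records directly.
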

\begin{proof}
We will prove that $(\Delta,h_0,\nu)$ is not stable. Assume $\gamma$ is a transport plan. It follows that
$$ \gamma(A\times \tau^\circ) \geq \gamma(A\times \{n\}) = \mu(A) = 1. $$
On the other hand, the intersection of $A\times \tau^\circ$ and \eqref{eq:WeaklyStableSet} is contained in $\St(m_\tau)\times \tau^\circ$, hence
\begin{eqnarray*} \gamma((A\times \tau^\circ)\cap \eqref{eq:WeaklyStableSet}) & \leq & \gamma(\St(m_\tau)\times \tau^\circ) \\
& \leq & \gamma(\St(m_\tau)\times B) \\
& = & \mu_M(\St(m_\tau)) \\
& < & 1. 
\end{eqnarray*}
It follows that $\gamma$ is not supported on \eqref{eq:WeaklyStableSet}, hence $(\Delta,h_0,\nu)$ is not stable.
\end{proof}

An alternative proof of Lemma~\ref{lem:PointMass} can be given by showing that the only admissible plan is $\gamma = \mu\times \delta_n$, which is not supported on \eqref{eq:WeaklyStableSet}. However, we choose the proof above since it highlights the more general obstruction that is manifest when $\nu$ assigns too much (or not enough) mass to some facet. In these cases no transport plan (regardless of optimality) is supported on \eqref{eq:WeaklyStableSet}. This idea will be the main tool to prove non-existence of solutions to \eqref{eq:MAInt}. In the rest of this section, we will focus on the case $\nu=\nu_N$. 
\begin{definition}\label{def:Structural Instability}
Let $\Delta$ be a reflexive polytope and $h:\Delta\cap M \rightarrow \mathbb Z$ a height function. We call the data $(\Delta, h)$ structurally unstable if at least one of the following holds: 
\begin{itemize}
\item There is some facet $\tau$ of $\Delta^\vee_h$ such that $\nu_N(\tau) > \mu_M(\mathrm{St}(m_\tau))$
\item There is some facet $\sigma$ of $\Delta$ such that $\mu_M(\sigma) > \nu_N(\cup_{m\in \sigma\cap M} \tau_m)$ 
\end{itemize}
We will write that a reflexive polytope is structurally unstable if $(\Delta,h_0)$ is structurally unstable, where $h_0$ is the trivial height function.
\end{definition}

Structural instability is stronger than instability.
\begin{lemma}
Let $\nu=\nu_N$. If 
$(\Delta,h)$
is structurally unstable, then \eqref{eq:MAInt} does not admit a solution.
\label{lemma: structuraly unstability implies unstability}
\end{lemma}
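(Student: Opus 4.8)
The plan is to show that structural instability of $(\Delta,h)$ implies that \emph{no} transport plan at all (optimal or not) can be supported on the set \eqref{eq:WeaklyStableSet}, whence by Theorem~\ref{thm:MAInt} there is no solution to \eqref{eq:MAInt}. This is a mass-counting argument in the spirit of the proof of Lemma~\ref{lem:PointMass}.

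First I would treat the first bullet of Definition~\ref{def:Structural Instability}. Suppose $\tau$ is a facet of $\Delta^\vee_h$ with $\nu_N(\tau)>\mu_M(\St(m_\tau))$, and let $\gamma\in\Pi(\nu_N,\mu_M)$ be arbitrary. Since $\gamma$ has marginal $\nu_N$ on $B$, we have $\gamma(A\times\tau)\geq\nu_N(\tau^\circ)=\nu_N(\tau)$ (using that $\nu_N$ does not charge the lower skeleton). On the other hand, observe that the intersection of $A\times\tau^\circ$ with the set \eqref{eq:WeaklyStableSet} $=\cup_{m\in A\cap M}(\St(m)\times\tau_m)$ is contained in $\St(m_\tau)\times\tau^\circ$: indeed, if $n\in\tau^\circ$ lies in $\tau_m$ for some lattice point $m$, then $m_\tau\in\St(m)$ (the face $\tau$ is contained in $\tau_m$, so $m$ lies on the facet $\sigma$ dual to $\tau$, i.e. $m\in\St(m_\tau)$), and hence $\St(m)\subset\St(m_\tau)$ fails in general but $m\in\St(m_\tau)$ suffices to conclude $\St(m)\times\tau_m \subset \St(m_\tau)\times\tau$ only after noting $m\in\St(m_\tau)\Rightarrow\St(m)\subseteq\St(m_\tau)$ is false; instead one uses that the first coordinate must lie in $\St(m)$ where $m$ is such that $\tau^\circ\cap\tau_m\neq\emptyset$, forcing $m=m_\tau$ when $\tau$ is a facet, so the first coordinate lies in $\St(m_\tau)$. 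Thus if $\gamma$ were supported on \eqref{eq:WeaklyStableSet} we would get
\begin{equation*}
\nu_N(\tau)\ \leq\ \gamma(A\times\tau^\circ)\ =\ \gamma\bigl((A\times\tau^\circ)\cap\eqref{eq:WeaklyStableSet}\bigr)\ \leq\ \gamma(\St(m_\tau)\times B)\ =\ \mu_M(\St(m_\tau)),
\end{equation*}
contradicting $\nu_N(\tau)>\mu_M(\St(m_\tau))$. Hence no $\gamma$ supported on \eqref{eq:WeaklyStableSet} exists, so $(\Delta,h,\nu_N)$ is not stable and \eqref{eq:MAInt} has no solution by Theorem~\ref{thm:MAInt}.

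For the second bullet, the argument is the mirror image: if $\sigma$ is a facet of $\Delta$ with $\mu_M(\sigma)>\nu_N(\cup_{m\in\sigma\cap M}\tau_m)$, then for any $\gamma\in\Pi(\nu_N,\mu_M)$ we have $\gamma(\sigma^\circ\times B)\geq\mu_M(\sigma^\circ)=\mu_M(\sigma)$, while the intersection of $\sigma^\circ\times B$ with \eqref{eq:WeaklyStableSet} is contained in $\sigma^\circ\times(\cup_{m\in\sigma\cap M}\tau_m)$ (if $m\in\sigma^\circ$ then the only lattice points $m'$ with $m\in\St(m')$ are those lying on $\sigma$, i.e. $m'\in\sigma\cap M$, forcing the second coordinate into $\cup_{m'\in\sigma\cap M}\tau_{m'}$). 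So a plan supported on \eqref{eq:WeaklyStableSet} would give $\mu_M(\sigma)\leq\gamma(\sigma^\circ\times(\cup_{m\in\sigma\cap M}\tau_m))\leq\nu_N(\cup_{m\in\sigma\cap M}\tau_m)$, a contradiction. I would then invoke Theorem~\ref{thm:MAInt} once more.

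The only genuinely delicate point — and the step I expect to need the most care — is the combinatorial claim that the intersection of $A\times\tau^\circ$ (resp. $\sigma^\circ\times B$) with \eqref{eq:WeaklyStableSet} sits inside $\St(m_\tau)\times\tau^\circ$ (resp. $\sigma^\circ\times\cup_{m\in\sigma\cap M}\tau_m$). This rests on the face-duality dictionary: a point in the relative interior of a facet $\tau$ of $\Delta^\vee_h$ belongs to $\tau_m$ precisely when $m$ lies on the dual facet $\sigma$ of $\Delta$, i.e. $m\in\St(m_\tau)$; and symmetrically a point in $\sigma^\circ$ belongs to $\St(m)$ only for $m\in\sigma\cap M$. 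Once this dictionary is laid out cleanly (it is immediate from the definitions of $\tau_m$, $\St(m)$ and $\Delta^\vee_h$), the mass count closes both cases. Everything else is routine and parallels Lemma~\ref{lem:PointMass} verbatim.
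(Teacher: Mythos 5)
Your proof is correct and takes essentially the same route as the paper: both arguments show that structural instability prevents \emph{any} transport plan (optimal or not) from being supported on \eqref{eq:WeaklyStableSet}, by comparing $\nu_N(\tau)$ with the mass available over $\St(m_\tau)$ (resp. $\mu_M(\sigma)$ with the mass of $\cup_{m\in\sigma\cap M}\tau_m$) and then invoking Theorem~\ref{thm:MAInt}. One caution: the combinatorial input your argument actually uses, and which is the right one, is that $n\in\tau^\circ\cap\tau_m$ forces $m=m_\tau$ (the relative interior of a facet exposes only its primitive normal, since $h>0$ and every nonzero lattice point of a reflexive polytope is a primitive boundary point), whereas the restatement in your closing paragraph, that $n\in\tau^\circ$ lies in $\tau_m$ precisely when $m\in\St(m_\tau)$, is incorrect and would not by itself yield the containment in $\St(m_\tau)\times\tau^\circ$.
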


\begin{proof}
It suffices to prove that if $(\Delta,h)$ is structurally unstable, then it is not stable. 
To that end, let $\gamma$ be any transport plan between $\nu_N$ and $\mu_M$ supported on 
\begin{equation*} 
\cup_{m\in A\cap M} \left(\St(m) \times \tau_m\right).
\end{equation*}
Let $\tau$ be any facet of $\Delta^\vee_h$ and consider the measure $\eta(\cdot) = \gamma(\cdot\times \tau)$ on $A$. By assumption $\eta$ is supported on $\mathrm{St}(m_\tau)$. The total mass of $\eta$ is $\nu_N(\tau)$ and thus $\eta(\mathrm{St}(m_\tau)) = \nu_N(\tau)$. But we also have $\eta \leq \mu_M$ in the sense of measures and thus $\eta(\mathrm{St}(m_\tau)) \leq \mu_M(\mathrm{St}(m_\tau))$ leading to 
\begin{equation*}
    \nu_N(\tau) \leq \mu_M(\mathrm{St}(m_\tau))
\end{equation*}
for any facet $\tau$ of $\Delta^\vee_h$. Next note that 
\begin{equation*}
    \cup_{m\in A\cap M} \left(\St(m) \times \tau_m\right) = \cup_{n\in \mathcal B_0} \sigma_n \times \cup_{m \in \sigma_n \cap M} \tau_m.
\end{equation*}
Using this, analogously to the argument above one can prove that for any facet $\sigma$ of $\Delta$
\begin{equation*}
    \mu_M(\sigma) \leq \nu_N(\cup_{m\in \sigma\cap M} \tau_m).
\end{equation*}
\end{proof}

\begin{example}\label{ex:UnitSimplex}
Let $\Delta$ be the moment polytope of $\mathbb{P}^2$ and hence 
$$\Delta = \mathrm{conv}\{-e_0+2e_1,2e_0-e_1,-e_0-e_1\}.$$ 
Let $h_0$ be the trivial height function and define $h:\Delta\cap M\rightarrow \mathbb{Z}$ by
$$ h(n) = \max \{h_0(n), \langle (-1,4),n \rangle, \langle (1,5),n \rangle\},$$
i.e. $h(0,0)=0$ and
\begin{eqnarray*}
 & & h(-e_0+2e_1)=9  \\
 & & h(-e_0+e_1)=5, \,   h(e_1)=5  \\
 & & h(-e_0)=1, \, h(e_0)=1  \\
 & & h(-e_0-e_1)=1, \, h(-e_1)=1, \, h(e_0-e_1)=1, \, h(2e_0-e_1)=1
\end{eqnarray*}
Then $h$ extends to a piecewise affine convex function on $N_\mathbb R$ whose non-differentiable locus define a triangulation of $\Delta$. One finds that $$ \Delta_h^\vee = \mathrm{conv}\{f_0+5f_1,f_0+f_1,-f_1,-f_0,-f_0+4f_1\}.$$ The situation is depicted in Figure~\ref{fig: unstable with height 1d}.
\label{ex: unstable 1d nontrivial height}
\end{example}


\begin{figure*}[h!]
    \subfloat[The primary polytope $\Delta$]{%
        \includegraphics[width=0.5\textwidth]{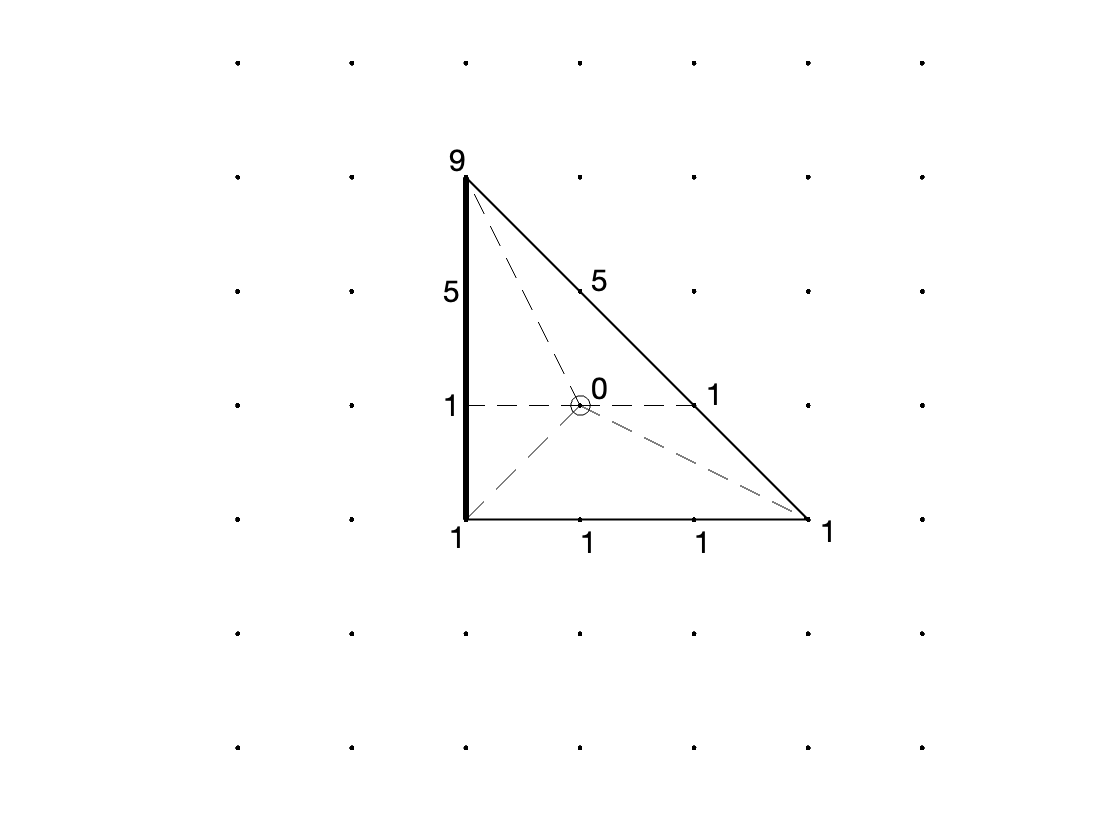}}\hfill
    \subfloat[The dual polytope $\Delta^\vee_h$]{%
        \includegraphics[width=0.5\textwidth]{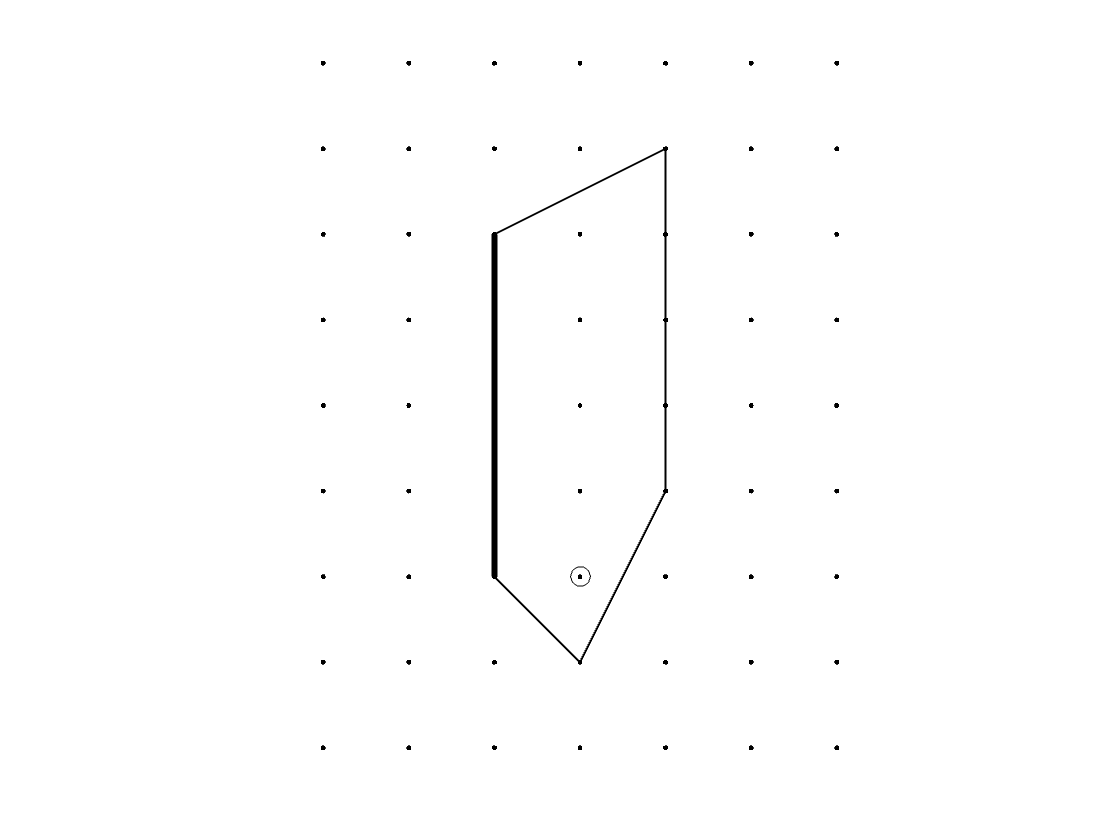}}
  
    \caption{The polytope $\Delta$ from Example \ref{ex: unstable 1d nontrivial height} and its dual $\Delta^\vee_h$ relative to $h$ are shown in the subfigures (A) and (B) respectively. In (A), the numbers indicate the values of $h$ at the nearby lattice points. The dotted lines show the induced triangulation. The fatted line segment correspond to $\mathrm{St}(-e_0)$. In (B), the fatted line segment correspond to the face $\tau_{-e_0}$. The little circle depicts the origin in both figures. } 
  
    \label{fig: unstable with height 1d}

\end{figure*}

\begin{lemma}\label{lem:d1 non-trivial height}
Let $\Delta$ and $h$ be as in Example~\ref{ex:UnitSimplex} and $\nu = \nu_N$. Then \eqref{eq:MAInt} does not admit a solution. 
\end{lemma}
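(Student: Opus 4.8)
The plan is to deduce the statement from the general obstruction already established, reducing everything to a short combinatorial computation. By Lemma~\ref{lemma: structuraly unstability implies unstability} it suffices to prove that $(\Delta,h)$ is structurally unstable in the sense of Definition~\ref{def:Structural Instability}. Since $d=1$, the facets of $\Delta$ and of $\Delta^\vee_h$ are edges, and $\mu_M$, $\nu_N$ are the lattice-length measures on $\partial\Delta$ and $\partial\Delta^\vee_h$ normalized to total mass $1$. Concretely, the three edges of $\Delta$ all have lattice length $3$, so its lattice perimeter is $9$ and $\mu_M$ equals $\tfrac19$ times lattice length; the five edges of $\Delta^\vee_h$ (in cyclic order, the edges joining $-f_0,-f_1,f_0+f_1,f_0+5f_1,-f_0+4f_1$) have lattice lengths $1,1,4,1,4$, so the lattice perimeter of $\Delta^\vee_h$ is $11$ and $\nu_N$ equals $\tfrac1{11}$ times lattice length.

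Next I would single out the facet $\tau_{-e_0}$ of $\Delta^\vee_h$, i.e.\ $\{n\in\Delta^\vee_h:\langle -e_0,n\rangle = h(-e_0)=1\}$, whose primitive outward normal is $m_\tau=-e_0$. From the description $\Delta^\vee_h=\conv\{f_0+5f_1,f_0+f_1,-f_1,-f_0,-f_0+4f_1\}$ this facet is the edge from $-f_0$ to $-f_0+4f_1$, of lattice length $4$, so $\nu_N(\tau_{-e_0})=\tfrac4{11}$. On the other side, $-e_0$ lies in the relative interior of the edge of $\Delta$ joining $-e_0-e_1$ and $-e_0+2e_1$ and on no other facet (and it is not a vertex of $\Delta$), so $\St(-e_0)$ is exactly this closed edge, of lattice length $3$, whence $\mu_M(\St(-e_0))=\tfrac39=\tfrac13$. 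Since $\tfrac4{11}=\tfrac{12}{33}>\tfrac{11}{33}=\tfrac13$, we have $\nu_N(\tau_{-e_0})>\mu_M(\St(m_\tau))$, which is the first condition of Definition~\ref{def:Structural Instability}. Hence $(\Delta,h)$ is structurally unstable and, by Lemma~\ref{lemma: structuraly unstability implies unstability}, \eqref{eq:MAInt} has no solution. (By the same computation the facet $\tau_{e_0}$, the edge from $f_0+f_1$ to $f_0+5f_1$, also witnesses the instability.)

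No step here is genuinely hard; the points that require care are (i) matching the defining inequalities $\langle m,n\rangle\le h(m)$ to the correct edges of $\Delta^\vee_h$, using the vertex list supplied in Example~\ref{ex:UnitSimplex}; (ii) remembering that $\mu_M$ and $\nu_N$ are lattice-length measures normalized by the respective lattice perimeters $9$ and $11$, not Euclidean lengths; and (iii) checking that $\St(-e_0)$ is a single edge, which holds because $-e_0$ is an interior lattice point of that edge and so lies on no lower-dimensional face of $\Delta$.
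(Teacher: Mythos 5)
Your proposal is correct and follows exactly the paper's argument: it verifies structural instability at the facet $\tau_{-e_0}$ by computing $\nu_N(\tau_{-e_0})=\tfrac{4}{11}>\tfrac13=\mu_M(\St(-e_0))$ and then invokes Lemma~\ref{lemma: structuraly unstability implies unstability}. The only difference is that you spell out the lattice-length bookkeeping (perimeters $9$ and $11$) that the paper leaves implicit, which is a fine addition but not a different approach.
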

\begin{proof} 
We check the structural stability condition for the facet $\tau_{-e_0}$ of $\Delta^\vee_h$ dual to $-e_0\in \Delta$. The associated $\mathrm{St}(-e_0)$ is just the single facet of $\Delta$ containing $-e_0$, indicatd by one of the fatted black lines in figure~\ref{fig: unstable with height 1d}. Computing the relevant volumes we end up with
\begin{equation*}
    \nu_N(\tau_{-e_0})=\frac{4}{11} > \frac{1}{3}=\mu_M(\mathrm{St}(-e_0))
\end{equation*}

and thus $(\Delta, h)$ is structurally unstable.

\end{proof}

\begin{remark}
In general, equation~\eqref{eq:MAInt} can be relaxed by considering general polarizations of $Y$. Example~\ref{ex: unstable 1d nontrivial height} and Lemma~\ref{lem:d1 non-trivial height} are especially interesting since they represent a case when considering general polarizations of $Y$ give no additional freedom. 
\end{remark}

Next are two structurally unstable examples with trivial height functions.  
\begin{example}
Let 
$$\Delta = \mathrm{conv}\{e_0+e_1,e_0,-e_1,-e_0+e_1\}$$ 
and $h=h_0$, hence 
$$\Delta^\vee_h = \Delta^\vee = \mathrm{conv}\{f_1,f_0,f_0-f_1,-2f_0-f_1\}.$$
\label{ex: unstable trivial height d1}
\end{example}
\begin{lemma}
Let $\Delta$ and $h$ be as in Example \ref{ex: unstable trivial height d1} and $\nu=\nu_N$. Then equation \eqref{eq:MAInt} does not admit a solution. 
\end{lemma}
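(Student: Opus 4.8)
The plan is to bypass all optimal-transport and PDE input and instead show directly that $(\Delta,h_0)$ is \emph{structurally unstable} in the sense of Definition~\ref{def:Structural Instability}; by Lemma~\ref{lemma: structuraly unstability implies unstability} this already implies that \eqref{eq:MAInt} has no solution for $\nu=\nu_N$. Since $h=h_0$ (so $h(m)=1$ for every $m\in A\cap M$) and $\nu=\nu_N$, structural instability is just a single explicit inequality between normalized lattice volumes of faces, exactly as in the proof of Lemma~\ref{lem:d1 non-trivial height}; the whole argument therefore reduces to exhibiting one witnessing facet and checking one strict inequality.

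First I would record the combinatorics. Identifying $e_0,e_1$ and $f_0,f_1$ with the standard basis of $\mathbb{R}^2$, $\Delta$ is the quadrilateral with vertices $e_0+e_1,e_0,-e_1,-e_0+e_1$, and intersecting the four half-planes $\langle v_i,\cdot\rangle\le 1$ over the vertices $v_i$ of $\Delta$ confirms that $\Delta^\vee$ is the quadrilateral with vertices $f_1,\,f_0,\,f_0-f_1,\,-2f_0-f_1$ stated in Example~\ref{ex: unstable trivial height d1}. Next I compute lattice lengths of edges: the boundary of $\Delta$ consists of edges of lattice lengths $1,1,1,2$ (the length-$2$ edge being $\mathrm{conv}\{-e_0+e_1,\,e_0+e_1\}$, whose primitive direction is $e_0$), so with the normalization of Section~\ref{sec:Examples} the measure $\mu_M$ has face masses $\tfrac15,\tfrac15,\tfrac15,\tfrac25$; the boundary of $\Delta^\vee$ consists of edges of lattice lengths $1,1,3,2$ (the length-$3$ edge being $\tau:=\mathrm{conv}\{f_0-f_1,\,-2f_0-f_1\}$, whose primitive direction is $f_0$), so $\nu_N$ has face masses $\tfrac17,\tfrac17,\tfrac37,\tfrac27$.

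The witness is $\tau=\mathrm{conv}\{f_0-f_1,\,-2f_0-f_1\}$. Solving $\langle m,n\rangle=1$ for $n$ on this edge (which lies in $\{n_2=-1\}$) identifies its primitive outward normal as $m_\tau=-e_1$, which is a vertex of $\Delta$; hence $\tau=\tau_{-e_1}$ and $\nu_N(\tau_{-e_1})=\tfrac37$. On the other hand $\mathrm{St}(-e_1)$ is the union of the two edges of $\Delta$ meeting at the vertex $-e_1$, namely $\mathrm{conv}\{e_0,-e_1\}$ and $\mathrm{conv}\{-e_1,-e_0+e_1\}$, each of lattice length $1$, so $\mu_M(\mathrm{St}(-e_1))=\tfrac25$. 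Since $\tfrac37>\tfrac25$, the first bullet of Definition~\ref{def:Structural Instability} holds, so $(\Delta,h_0)$ is structurally unstable, hence not stable by Lemma~\ref{lemma: structuraly unstability implies unstability}, and Theorem~\ref{thm:MAInt} then yields that \eqref{eq:MAInt} has no solution.

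The only thing requiring care is the bookkeeping: correctly reading off lattice lengths (the two non-primitive edge vectors noted above are the sole pitfalls) and correctly matching each facet of $\Delta^\vee$ to the vertex of $\Delta$ that is its primitive outward normal. There is no genuine obstacle, since a single witnessing facet suffices; I note in passing that the second bullet of Definition~\ref{def:Structural Instability} in fact fails for every facet of $\Delta$ here — the nearest miss being $\sigma=\mathrm{conv}\{-e_0+e_1,e_0+e_1\}$, where $\mu_M(\sigma)=\tfrac25<\tfrac37=\nu_N(\mathrm{St}(n_\sigma))$ — but this need not be checked for the proof.
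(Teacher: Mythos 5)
Your proposal is correct and follows exactly the paper's argument: the paper likewise verifies structural instability at the vertex $-e_1$ of $\Delta$, computing $\nu_N(\tau_{-e_1})=3/7>2/5=\mu_M(\St(-e_1))$ and invoking Lemma~\ref{lemma: structuraly unstability implies unstability}. Your lattice-length bookkeeping for both $\Delta$ and $\Delta^\vee$ matches the paper's numbers, so there is nothing to add.
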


\begin{proof}
Consider the vertex $v=-e_1$ of $\Delta$ and observe that

\begin{equation*}
    \nu_N(\tau_v) = 3/7 > 2/5 = \mu_M(\mathrm{St}(v)).
\end{equation*}
\end{proof}

\begin{example}
Let $\Delta$ be the 3-dimensional reflexive polytope
$$\Delta = \mathrm{conv}\{e_0,e_1,e_2,-2e_0-e_1-e_2,-e_0+e_1\}$$ 
(ID 16 in the database of reflexive polytopes in dimension $3$ by Kreuzer-Skarke \cite{KS}) and $h=h_0$, hence $$\Delta^\vee = \mathrm{conv}\{-f_0+f_2,f_1-2f_2,f_1+f_2,f_0+f_1+f_2,f_0-4f_1+f_2,f_0+f_1-4f_2\}. $$

\label{ex: structurally unstable in d2}

\end{example}

\begin{lemma}
Let $\Delta$ and $h$ be as in Example \ref{ex: structurally unstable in d2} and $\nu=\nu_N$. Then \eqref{eq:MAInt} does not admit a solution. 
\end{lemma}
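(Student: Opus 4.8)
The plan is to prove, exactly as in the two preceding lemmas, that $(\Delta,h_0)$ is \emph{structurally unstable} in the sense of Definition~\ref{def:Structural Instability}; non-existence of a solution to \eqref{eq:MAInt} with $\nu=\nu_N$ then follows from Lemma~\ref{lemma: structuraly unstability implies unstability} (which rests on Theorem~\ref{thm:MAInt}). Concretely, I would single out the vertex $e_0$ of $\Delta$ and show that its dual facet $\tau_{e_0}$ of $\Delta^\vee$ carries strictly more than its fair share of $\nu_N$-mass, i.e. $\nu_N(\tau_{e_0})>\mu_M(\St(e_0))$ once both measures are normalised to total mass $1$.

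The first step is purely combinatorial. Each vertex $w$ of $\Delta^\vee$ cuts out the facet $\{m\in\Delta:\langle m,w\rangle=1\}$ of $\Delta$, and reading these off from the six listed vertices one finds that $\Delta$ has six triangular facets; the three containing $e_0$ (those with $w$ having $f_0$-coefficient $1$) are
\[ F_4=\conv\{e_0,e_1,e_2\},\qquad F_5=\conv\{e_0,e_2,-2e_0-e_1-e_2\},\qquad F_6=\conv\{e_0,e_1,-2e_0-e_1-e_2\}, \]
so that $\St(e_0)=F_4\cup F_5\cup F_6$. Dually, $\tau_{e_0}=\{n\in\Delta^\vee:\langle e_0,n\rangle=1\}$ is the triangle with vertices $f_0+f_1+f_2$, $f_0-4f_1+f_2$, $f_0+f_1-4f_2$, with primitive normal $m_{\tau_{e_0}}=e_0$.

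The second step is to compute the lattice-normalised volumes. Each of $F_4,F_5,F_6$ is unimodular for the lattice $n_\sigma^\perp\cap M$ in its affine hull (for $F_5$, say, the edge vectors $e_2-e_0$ and $(-2e_0-e_1-e_2)-e_0$ form a $\mathbb Z$-basis of $(f_0-4f_1+f_2)^\perp\cap M$), so $\mu_M(\St(e_0))=3\cdot\tfrac12=\tfrac32$, and running through the remaining three facets gives $\mu_M(A)=4$. Passing to the chart $\beta$ attached to the basis $\{f_1,f_2\}$ of $e_0^\perp\cap N$ turns $\tau_{e_0}$ into the triangle $\conv\{(1,1),(-4,1),(1,-4)\}$, of area $\tfrac{25}{2}$, and the analogous computation for the other five facets of $\Delta^\vee$ yields $\nu_N(B)=29$. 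Normalising to total mass $1$,
\[ \nu_N(\tau_{e_0})=\frac{25}{58}>\frac38=\mu_M(\St(e_0)), \]
which is precisely the first alternative in Definition~\ref{def:Structural Instability}; hence $(\Delta,h_0)$ is structurally unstable and Lemma~\ref{lemma: structuraly unstability implies unstability} gives the claim.

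The only real work — and the only place arithmetic slips can occur — is the bookkeeping in the second step: correctly enumerating the facets of $\Delta$ and $\Delta^\vee$ and computing each lattice-normalised area as a $2\times 2$ determinant against an integral basis of $n_\sigma^\perp\cap M$ or $m_\tau^\perp\cap N$. There is no conceptual obstacle, and the obstruction is comfortably strict ($25\cdot 8=200>174=3\cdot 58$). Alternatively, one may argue directly as in the proof of Lemma~\ref{lemma: structuraly unstability implies unstability}: for $n\in\tau_{e_0}^\circ$ face-duality forces $n\in\tau_m$ only for $m=e_0$, so any plan $\gamma$ supported on \eqref{eq:WeaklyStableSet} satisfies $\nu_N(\tau_{e_0})=\gamma(A\times\tau_{e_0}^\circ)\le\gamma(\St(e_0)\times B)=\mu_M(\St(e_0))$, contradicting the displayed inequality; but invoking the lemma is cleaner.
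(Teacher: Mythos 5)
Your proposal is correct and follows the paper's own route exactly: both establish structural instability at the facet $\tau_{e_0}$ by comparing lattice-normalised volumes, arriving at the same inequality $\nu_N(\tau_{e_0})=\tfrac{25}{58}>\tfrac38=\mu_M(\St(e_0))$, and then invoke Lemma~\ref{lemma: structuraly unstability implies unstability}. Your facet enumeration and volume bookkeeping (3 unimodular triangles in $\St(e_0)$ out of 8 in $A$; 25 out of 58 in $B$) check out.
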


\begin{proof}

Computing the volumes of the facets can be done by finding unimodular triangulations, i.e. triangulations made up entirely of simplices whose vertices make up a $\mathbb Z$-basis of the lattice. Then $\mu_M(\sigma)$ for a facet $\sigma$ is simply the number of triangles in the triangulation of $\sigma$, divided by the total number of triangles in a unimodular triangulation of $A$.  The volume of a facet of $\Delta^\vee$ can be computed in a similar manner and $\Delta^\vee$ with a triangulation of $\tau_{e_0}$ is shown in Figure \ref{fig: unstable d2}). 

Checking the structural stability condition for the facet $\tau_{e_0}$ of $\Delta^\vee$ 
we get

\begin{equation*}
\mu_M(\mathrm{St}(e_0)) = \frac{3}{8} < \frac{25}{58} = \nu_N(\tau_{e_0}).
\end{equation*}

\end{proof}

\begin{figure}[H]
    \centering
    \includegraphics[width = \textwidth]{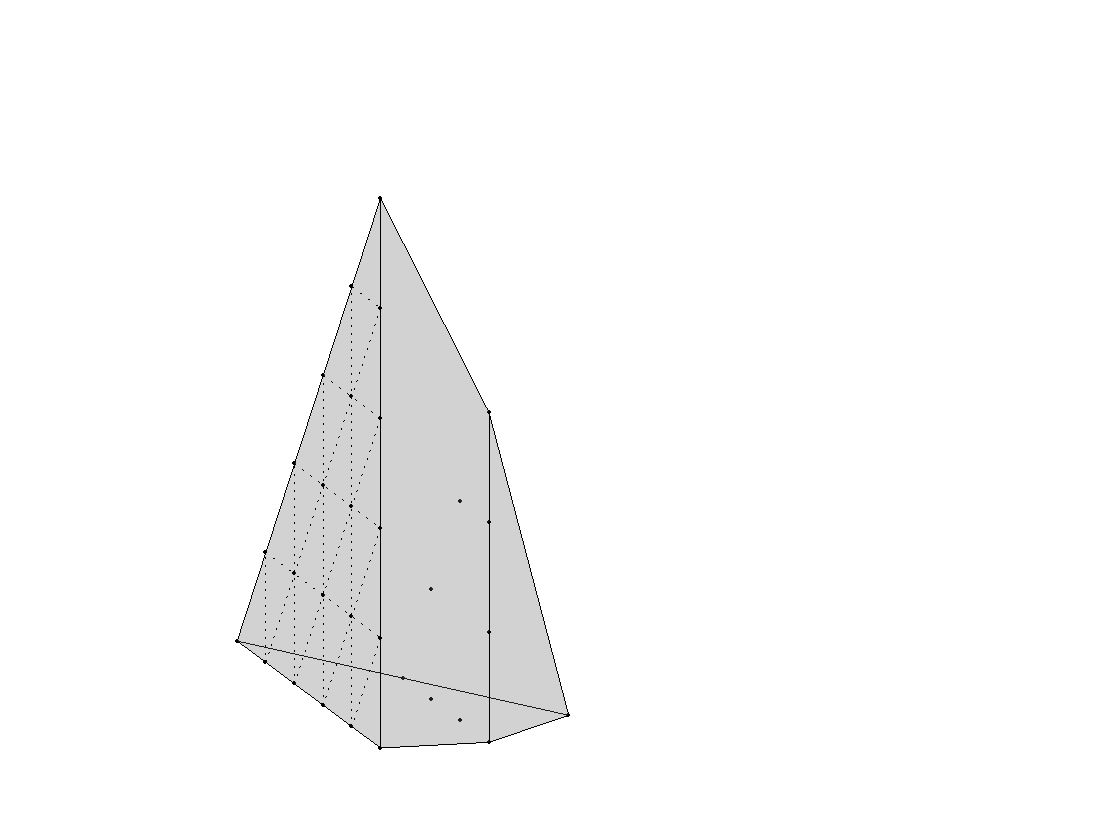}
    \caption{The polytope $\Delta^\vee$ from Example \ref{ex: structurally unstable in d2}. The 
    facet to the left in the picture, $\tau_{e_0}$, is triangulated with unimodular simplices. 
    Note that the number of simplices in the triangulation of $\tau_{e_0}$ is 25.}
    \label{fig: unstable d2}
\end{figure}

\subsection{Structural strict semistability and anomalous singularities}\label{sec:Examples Anomalous}

We now explore a limiting variant of structural unstability and show that it has consequences for the singularities of the proposed tropical affine structure. 

\begin{definition}\label{def: strict strictural semistability}
Let $\Delta$ be a reflexive polytope and $h:\Delta\cap M\rightarrow \mathbb Z$ a height function. We will call the data $(\Delta,h)$ \emph{structurally strictly semistable} if it is not structurally unstable but one of the following hold:
\begin{itemize}
\item There is some facet $\tau$ of $\Delta^\vee_h$ such that $\nu_N(\tau) = \mu_M(\mathrm{St}(m_\tau))$
\item There is some facet $\sigma$ of $\Delta$ such that $\mu_M(\sigma) = \nu_N(\cup_{m\in \sigma\cap M} \tau_m)$ 
\end{itemize}
We will write that a reflexive polytope $\Delta$ is structurally strictly semistable if $(\Delta,h_0)$ is structurally strictly semistable.
\end{definition}

\begin{lemma}\label{lem:ssss implies not connected}
Assume $(\Delta,h)$ is structurally strictly semistable and \eqref{eq:MAInt} admits a solution $\Psi$. Then $B\setminus \Sigma_\Psi$ is not connected.   
\end{lemma}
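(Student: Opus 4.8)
The plan is to exhibit a non-empty proper subset of $B\setminus\Sigma_\Psi$ that is both open and closed. First I would assemble the structure. Since \eqref{eq:MAInt} has a solution, Theorem~\ref{thm:MAInt} gives that $(\Delta,h,\nu_N)$ is stable and that $\Psi$ is, up to an additive constant, the Kantorovich potential of $\nu_N$; moreover, as in the converse direction of the proof of Theorem~\ref{thm:MAInt}, $\gamma:=(\mathrm{id},T_\Psi)_\#\mu_M$ is an optimal transport plan supported on \eqref{eq:WeaklyStableSet}, with $T_\Psi$ as in Lemma~\ref{lem:TMap}. Using the symmetry of the set \eqref{eq:WeaklyStableSet} under interchanging $(\Delta,\mu_M,\Psi)$ with $(\Delta^\vee_h,\nu_N,\Psi^c)$ — the same symmetry used in the proof of Lemma~\ref{lemma: structuraly unstability implies unstability} — I may assume the first alternative of Definition~\ref{def: strict strictural semistability} holds, so there is a facet $\tau$ of $\Delta^\vee_h$ with $\nu_N(\tau)=\mu_M(\St(m_\tau))$. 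The saturation step of that proof then applies verbatim: $\eta(\cdot):=\gamma(\cdot\times\tau)$ is dominated by $\mu_M$, is carried by $\St(m_\tau)$, and has total mass $\nu_N(\tau)=\mu_M(\St(m_\tau))$, forcing $\eta=\mu_M|_{\St(m_\tau)}$. As $\gamma=(\mathrm{id},T_\Psi)_\#\mu_M$ and $\mu_M,\nu_N$ do not charge $A_{d-1}$ and $B_{d-1}$, this says that $T_\Psi$ restricts to an almost everywhere measure-preserving bijection from the regular part of $\St(m_\tau)$, with $\mu_M$, onto $\tau^\circ$, with $\nu_N$.

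The heart of the argument is to upgrade this almost everywhere matching to a boundary-respecting homeomorphism. Fix $(\sigma,\tau)$-compatible charts $(\alpha_\tau,\beta_\sigma)$ with $m_\tau\in\sigma$. By Lemma~\ref{lem:ProjectionDelta} and Lemma~\ref{lem:ProjectionDeltaDual}, $\alpha_\tau$ maps $\St(m_\tau)$ homeomorphically onto the $d$-dimensional convex polytope $\alpha_\tau(\Delta)$ and $\beta_\sigma$ maps $\tau$ homeomorphically onto the $d$-dimensional convex polytope $\beta_\sigma(\tau)$. By Lemma~\ref{lem:ConvexPotential} (with the roles of $\Delta$ and $\Delta^\vee_h$ interchanged), $\Psi^c$ becomes, after subtracting a linear function, a convex function $\phi$ on $X:=\Int\alpha_\tau(\St(m_\tau))$, whose gradient — by the $\Phi$-version of Lemma~\ref{lem:GradientsInt} — is, up to a translation, $T_\Psi$ read through these charts; by the previous paragraph and Brenier's theorem, $\nabla\phi$ is the optimal (quadratic cost) map pushing the chart image of $\mu_M|_{\St(m_\tau)}$ — a density bounded above and below away from zero, piecewise constant with jumps along the images of the $(d-1)$-faces — onto the chart image of $\nu_N|_\tau$, a density of the same kind, on the \emph{convex} target $Y:=\Int\beta_\sigma(\tau)$. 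By the regularity theory for the second boundary value problem of the real Monge--Ampère equation (Caffarelli; cf.\ \cite{Caf97}), $\phi$ is strictly convex on $X$, lies in $C^{1,\alpha}(\overline X)$, and $\nabla\phi$ extends to a homeomorphism $\overline X\to\overline Y$ carrying $\partial X$ onto $\partial Y$. Reading this back, $\partial^c\Psi^c$ is single-valued and continuous on $\St(m_\tau)$ and maps the topological boundary of $\St(m_\tau)$ in $A$, which lies in $A_{d-1}$, onto the relative boundary $\partial\tau$. This is the step I expect to be the main obstacle: it genuinely needs Caffarelli's boundary regularity, so it matters that the projection lemmas make the star into a bona fide convex polytope, hence the target of the optimal map convex, and one must still check that the chart images of $\mu_M$ and $\nu_N$ satisfy the required two-sided density bounds — being only piecewise constant is enough for this.

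To conclude, let $n\in\partial\tau$; the previous step produces $m\in A_{d-1}$ with $m\in\partial^c\Psi(n)$. Since $U_\sigma=B\setminus\partial^c\Psi^c(A\setminus\sigma^\circ)$ and, for $c$-convex $\Psi$, $m\in\partial^c\Psi(n)$ is equivalent to $n\in\partial^c\Psi^c(m)$, a point $n$ lies in $U_\sigma$ precisely when $\partial^c\Psi(n)\subset\sigma^\circ$; as $\partial^c\Psi(n)$ meets $A_{d-1}$ it lies in no $\sigma^\circ$, so $n\notin\bigcup_\sigma U_\sigma$ and $n\in\Sigma_\Psi$. Therefore $\partial\tau\subset\Sigma_\Psi$, so $\tau^\circ=\tau\setminus\partial\tau$ is closed in $B\setminus\Sigma_\Psi$ (its closure there equals $\tau\cap(B\setminus\Sigma_\Psi)=\tau^\circ$); it is also open in $B\setminus\Sigma_\Psi$, being open in $B$ and disjoint from $B_{d-1}\supset\Sigma_\Psi$, and it is non-empty and proper since the relative interior of any other facet of $\Delta^\vee_h$ also lies in $B\setminus\Sigma_\Psi$. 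Hence $B\setminus\Sigma_\Psi$ is disconnected. In the remaining case — the second alternative of Definition~\ref{def: strict strictural semistability} — the same argument with $(\Delta,\mu_M,\Psi)$ and $(\Delta^\vee_h,\nu_N,\Psi^c)$ interchanged shows that the topological boundary in $B$ of $\bigcup_{m\in\sigma\cap M}\tau_m$ lies in $\Sigma_\Psi$, and its interior in $B$ is the disconnecting clopen set.
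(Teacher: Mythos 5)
Your set-up and the saturation step are sound and match the paper: existence of a solution gives, via Theorem~\ref{thm:MAInt}, an optimal plan $\gamma=(\mathrm{id},T_\Psi)_\#\mu_M$ supported on \eqref{eq:WeaklyStableSet}, and the mass equality $\nu_N(\tau)=\mu_M(\St(m_\tau))$ forces $\gamma(\cdot\times\tau)=\mu_M|_{\St(m_\tau)}$, hence $(T_\Psi)_\#\mu_M|_{\St(m_\tau)}=\nu_N|_\tau$. The gap is exactly the step you flag as the main obstacle. The regularity you invoke is not available: \cite{Caf97} is Caffarelli's \emph{interior} theorem (strict convexity and $C^{1,\alpha}_{\mathrm{loc}}$ of the Brenier potential when the target is convex and both densities are bounded between positive constants); the statement that $\nabla\phi$ extends to a homeomorphism $\overline X\to\overline Y$ with $\nabla\phi(\partial X)=\partial Y$ is Caffarelli's \emph{boundary} regularity theorem, which requires both domains to be smooth ($C^2$, or at least $C^{1,1}$) and uniformly convex. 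Your $X=\Int\alpha_\tau(\St(m_\tau))$ and $Y=\Int\beta_\sigma(\tau)$ are polytopes, so neither hypothesis holds (the densities are indeed harmless; the domain geometry is the problem). A soft substitute, e.g.\ that the multivalued subdifferential of $\phi$ covers $\overline Y$, only gives that each $n\in\partial\tau$ lies in $\partial\phi(x)$ for \emph{some} $x\in\overline X$, with nothing forcing $x\in\partial X$; so you cannot conclude this way that $\partial^c\Psi(n)$ meets $A_{d-1}$, and that is precisely the input your final clopen argument needs.

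The paper reaches the same conclusion with no regularity at all, staying at the level of supports. From the saturation one also gets $\gamma\bigl((A\setminus\St(m_\tau))\times\tau^\circ\bigr)=0$, whence $\mathrm{Supp}\,\gamma(\St(m_\tau)\times\cdot)=\tau$ and $\mathrm{Supp}\,\gamma((A\setminus\St(m_\tau))\times\cdot)=B\setminus\tau^\circ$; since $\mathrm{Supp}\,\gamma$ lies in the closed graph of $\partial^c\Psi$, this yields $\tau^\circ\subset\partial^c\Psi^c(\St(m_\tau))$ and $B\setminus\tau\subset\partial^c\Psi^c(A\setminus\St(m_\tau)^\circ)$. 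By the definition of the charts this gives $U_\sigma\subset\tau$ for every facet $\sigma\subset\St(m_\tau)$ and $U_{\sigma'}\cap\tau^\circ=\emptyset$ for every other facet $\sigma'$; since each $U$ is open in $B$ and every $B$-neighbourhood of a point of $\partial\tau$ meets both $\tau^\circ$ and $B\setminus\tau$, no chart contains a point of $\partial\tau$, i.e.\ $\partial\tau\subset\Sigma_\Psi$ — which is all your last paragraph requires (and your reduction of the other alternative of Definition~\ref{def: strict strictural semistability} to an analogous argument is fine). So the measure-theoretic half of your proof already suffices once it is run on supports rather than on an everywhere-defined boundary map; the optimal-transport boundary-regularity detour is both unjustified for polytopal domains and unnecessary.
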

\begin{proof}
There are two cases. Assume first that there is some facet $\sigma$ of $\Delta$ such that $\mu_M(\sigma) = \nu_N(\cup_{m\in \sigma\cap M} \tau_m)$. By Theorem \ref{thm:MAInt}, since equation \eqref{eq:MAInt} admits a solution, there exists an optimal transport plan $\gamma$ supported on
    \begin{equation*}
        \cup_{m\in A\cap M} \left(\St(m) \times \tau_m\right)=\cup_{n\in \mathcal B_0} \sigma_n \times \cup_{m \in \sigma_n \cap M} \tau_m. 
    \end{equation*}
Define the measure $\gamma_\sigma(\cdot) =\gamma(\sigma\times \cdot)$ on $B$. The total mass of $\gamma_\sigma$ is $\mu_M(\sigma)$ and $\mathrm{Supp}(\gamma_\sigma) \subset \cup_{m \in \sigma \cap M} \tau_m$. Thus
\begin{equation*}
    \mu_M(\sigma)=\gamma_\sigma(\cup_{m \in \sigma \cap M} \tau_m) \leq \gamma(A\times \cup_{m \in \sigma \cap M} \tau_m) = \nu_N(\cup_{m\in \sigma\cap M} \tau_m).
\end{equation*}
But the left and right hand sides are equal and thus we conclude that $\mathrm{Supp}(\gamma_\sigma) = \cup_{m \in \sigma \cap M} \tau_m$. 
Consequently, for any $n\in B\setminus (\cup_{m \in \sigma \cap M} \tau_m)^\circ$, there is $m\in A\setminus \sigma^\circ$ such than $(n,m)\in \mathrm{Supp}(\gamma) \subset \partial^c \Psi^c$. Thus
\begin{align*}
     &B\setminus (\cup_{m \in \sigma \cap M} \tau_m)^\circ \subset \partial^c\Psi^c(A\setminus \sigma^\circ).
\end{align*}
Recalling the definition of $U_\sigma$ from Definition \ref{def:Usigma} we find that $U_\sigma \subset (\cup_{m \in \sigma \cap M} \tau_m)^\circ$

With a similar argument as above, we can conclude that \begin{equation*}
    \mathrm{Supp}(\gamma_{A\setminus \sigma^\circ})=B\setminus (\cup_{m \in \sigma \cap M} \tau_m)^\circ.
\end{equation*} 
From this it follows for any facet $\sigma'\neq \sigma$ of $\Delta$, $\mathrm{Supp}(\gamma_{\sigma'}) \subset B\setminus (\cup_{m \in \sigma \cap M} \tau_m)^\circ$ and thus $(\cup_{m \in \sigma \cap M} \tau_m) \subset \mathrm{Supp}(\gamma_{A\setminus \sigma'})$. We conclude similarly to above that
\begin{equation*}
    (\cup_{m \in \sigma \cap M} \tau_m)\subset \partial^c\Psi^c(A\setminus \sigma'^\circ),
\end{equation*}
hence that $U_{\sigma'} \subset B\setminus (\cup_{m \in \sigma \cap M} \tau_m)$ for any other facet $\sigma'$. 

Since all the charts $U_\sigma$ and $U_{\sigma'}$ are open and disjoint we conclude that the boundary of $\cup_{m \in \sigma \cap M} \tau_m$ lies in the singular set
\begin{equation*}
    \Sigma = B_{n-1}\setminus (\cup_{\sigma'\in \mathcal A_d}  U_{\sigma'}).
\end{equation*}
In conclusion, $B\setminus \Sigma$ is not connected. 

The other case, when there exists a facet $\tau$ of $\Delta^\vee_h$ such that $\nu_N(\tau) = \mu_M(\mathrm{St}(m_\tau))$ follows in a similar manner. More precisely, a similar argument as above gives 
$$ B\setminus \tau \subset \partial^c\Psi^c(A\setminus \St(m_\tau)^\circ), $$
hence $U_\sigma\subset \tau$ for all facets $\sigma$ contained in $\St(m_\tau)$, and 
$$ \tau^\circ \subset \partial^c\Psi^c(\St(m_\tau)), $$
hence $U_{\sigma'}\subset B\setminus \tau^\circ$ for all facets $\sigma'$ not contained in $\St(m_\tau)$. It follows that the boundary of $\tau$ lies in $\Sigma$, hence $B\setminus \Sigma$ is not connected. 
\end{proof}

As we will see, there are plenty of structurally strictly semistable polytopes among the reflexive polytopes in dimension 2 and 3. But in general, it might be difficult to check that they are stable and hence admit a solution to \eqref{eq:MAInt}. However \cite{LiToric} introduced a condition that is simple to check which imply existence of a solution to the Monge-Ampère equation \eqref{eq:MAInt}.

\begin{definition}\label{def:Li Admissable}
We will say that a reflexive polytope $\Delta$ is admissable in the sense of Li if there is no pair of vertices $v\in \Delta$ and $w \in \Delta^\vee$ such that $\langle v, w \rangle = 0$. 
\end{definition}

\begin{example}
Let $$\Delta = \conv\{e_0,e_1,e_2,-3e_0-e_1-e_2\}$$ (ID 2 in the classification \cite{KS} of three-dimensional reflexive polytopes ) and hence $$\Delta^\vee = \conv\{-f_0+f_1+f_2,f_0-5f_1+f_2,f_0+f_1-5f_2,+f_0+f_1+f_2\}.$$
\label{ex: admissable and semistable}
\end{example}
\begin{lemma}
Let $\Delta$ be as in Example \ref{ex: admissable and semistable}. Then $\Delta$ with height $h_0$ admits a solution to the Monge-Ampère equation \eqref{eq:MAInt} but  $B\setminus \Sigma$ is disconnected.
\label{lem: admissable and semistable}
\end{lemma}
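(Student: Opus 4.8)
The plan is to verify the two claims in Lemma~\ref{lem: admissable and semistable} separately: first that $\Delta$ with $h_0$ admits a solution to \eqref{eq:MAInt}, and second that $B\setminus\Sigma_\Psi$ is disconnected. For the first claim, I would invoke the admissibility criterion of \cite{LiToric}: by Definition~\ref{def:Li Admissable}, it suffices to check that no vertex $v$ of $\Delta$ is orthogonal to any vertex $w$ of $\Delta^\vee$. This is a finite computation over the $4\times 4$ vertex pairs listed in Example~\ref{ex: admissable and semistable}; one tabulates $\langle v,w\rangle$ for $v\in\{e_0,e_1,e_2,-3e_0-e_1-e_2\}$ and $w\in\{-f_0+f_1+f_2,\,f_0-5f_1+f_2,\,f_0+f_1-5f_2,\,f_0+f_1+f_2\}$ and confirms none vanish. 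By the result of \cite{LiToric} (admissibility implies stability, hence by Theorem~\ref{thm:MAInt} solvability), \eqref{eq:MAInt} admits a solution $\Psi$.

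For the second claim, the natural route is to show $(\Delta,h_0)$ is structurally strictly semistable in the sense of Definition~\ref{def: strict strictural semistability} and then apply Lemma~\ref{lem:ssss implies not connected}. Concretely, I would pick the facet $\sigma = \conv\{e_0,e_1,e_2\}$ of $\Delta$ (the facet dual to the vertex $f_0+f_1+f_2$ of $\Delta^\vee$) and compute $\mu_M(\sigma)$ and $\nu_N(\cup_{m\in\sigma\cap M}\tau_m)$, expecting equality. The volume $\mu_M(\sigma)$ is computed via a unimodular triangulation as in the proof of the lemma following Example~\ref{ex: structurally unstable in d2}: count the unimodular simplices in a triangulation of $\sigma$ and divide by the total number in a unimodular triangulation of $A$. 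Similarly $\nu_N(\cup_{m\in\sigma\cap M}\tau_m)$ is the sum of the lattice volumes of the facets $\tau_m$ of $\Delta^\vee$ whose primitive outward normal $m$ lies in $\sigma$, normalized by the total. One also has to verify the inequality in the other direction and for all other facets (both of $\Delta$ and of $\Delta^\vee$), i.e. that $(\Delta,h_0)$ is \emph{not} structurally unstable, so that it genuinely falls under Definition~\ref{def: strict strictural semistability} rather than Definition~\ref{def:Structural Instability}. Given structural strict semistability and the solution $\Psi$ from the first part, Lemma~\ref{lem:ssss implies not connected} immediately yields that $B\setminus\Sigma_\Psi$ is disconnected.

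The main obstacle I anticipate is purely computational bookkeeping: correctly identifying which facets $\tau_m$ of $\Delta^\vee$ correspond to lattice points $m$ of $\sigma$, and carrying out the unimodular-triangulation volume computations without arithmetic slips, since the polytope has several facets and the normalization constant $c=\mu_M(A)/\nu_N(B)$ must be handled consistently on both sides. A secondary subtlety is making sure that the chosen facet really attains equality and not strict inequality — if the first candidate facet does not work, one would need to search among the other facets of $\Delta$ and of $\Delta^\vee$ for the one realizing the borderline case. It is conceivable that a cleaner argument exists that exploits a symmetry of $\Delta$ (it has an evident $S_3$-symmetry permuting $e_0,e_1,e_2$), which could be used both to reduce the volume computations and to pin down the location of $\Sigma_\Psi$ more explicitly, but the structurally-strictly-semistable route through Lemma~\ref{lem:ssss implies not connected} is the most economical path to the stated conclusion.
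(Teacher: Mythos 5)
Your proposal follows essentially the same route as the paper: Li-admissibility (Definition~\ref{def:Li Admissable}) plus the theorem of \cite{LiToric} for existence, then structural strict semistability fed into Lemma~\ref{lem:ssss implies not connected} for disconnectedness of $B\setminus\Sigma_\Psi$. One concrete correction, though: the facet you single out for the borderline check, $\sigma=\conv\{e_0,e_1,e_2\}$, does \emph{not} realize equality. Its normalized lattice volume is $\mu_M(\sigma)=1/6$, whereas $\nu_N(\tau_{e_0}\cup\tau_{e_1}\cup\tau_{e_2})=(36+12+12)/72=5/6$, so this is a strict inequality. The equality occurs instead at the facet $\tau_{e_0}$ of $\Delta^\vee$ (the first bullet of Definition~\ref{def: strict strictural semistability}), where $\nu_N(\tau_{e_0})=36/72=1/2=\mu_M(\St(e_0))=3/6$; equivalently, in your formulation one must take the facet $\conv\{e_1,e_2,-3e_0-e_1-e_2\}$ of $\Delta$, whose lattice points are $e_1,e_2,-e_0,-3e_0-e_1-e_2$ and for which $\mu_M=3/6$ matches $\nu_N(\tau_{e_1}\cup\tau_{e_2}\cup\tau_{-3e_0-e_1-e_2})=36/72$. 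Since you explicitly allowed for searching among other facets if your first candidate failed, this is a computational misidentification rather than a gap in the argument; also note that ``not structurally unstable'' is automatic here, since existence of a solution already excludes structural instability by Lemma~\ref{lemma: structuraly unstability implies unstability}.
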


\begin{proof}
Going through the possible pairs $(v,w)$ of vertices $v\in \Delta$ and $w\in \Delta^\vee$ one finds that $\Delta$ is admissible in the sense of Li. Thus by Theorem~2.31 in \cite{LiToric}, \eqref{eq:MAInt} admits a solution. Computing the volumes is analogous to Example \ref{ex: structurally unstable in d2} and we find
\begin{equation*}
    \nu_N(\tau_{e_0}) = 36/72 = 3/6 = \mu_M(\mathrm{St}(e_0)).
\end{equation*}
Thus $\Delta$ is structurally strictly semistable and by Lemma~\ref{lem:ssss implies not connected}, $B\setminus \Sigma$ is disconnected.
\end{proof}

\begin{remark}
Out of the 4319 3-dimensional polytopes that are reflexive, 145 are structurally strictly semistable and admissible in the sense of Li, hence express the same anomalous singular sets as Example~\ref{ex: admissable and semistable}.
\end{remark}
\begin{remark}\label{rem:Removable Singularity}
As explained in the introduction, it is natural to ask if the 
surprisingly large
set $\Sigma_\Psi$ in Example~\ref{ex: admissable and semistable} can be taken to be any smaller, while preserving the existence of a solution to the Monge-Ampère equation in Definition~\ref{def:PDEproblem}. Recall that we do not know if the set $B\setminus \Sigma$ is the largest possible domain of a tropical affine structure on which one can extend the solution to the Monge-Ampère equation. For the polytope in Example~\ref{ex: admissable and semistable}, under natural assumptions on the 
solution $\Psi$,
it seems plausible that one can extend the regular set of the tropical structure so that the singularities are of codimension 2. However, the singular set will contain three of the vertices of $\Delta^\vee$, contrary to the expectation 
that the singularities are located in the interior of the edges (cf the case of the standard simplex \cite{LiFermat,HJMM}). We will now briefly explain this. 

First of all, tracing through the proof of Lemma~\ref{lem:ssss implies not connected}, one finds that $\tau_{e_0}^\circ \subset \partial^c\Psi^c(\mathrm{St}(e_0)^\circ)$ and $$ B\setminus\tau_{e_0} \subset \partial^c\Psi^c(A\setminus \mathrm{St}(e_0))=\partial^c\Psi^c(\sigma_{-f_0+f_1+f_2}^\circ).$$ 
Let $U$ be a neighborhood of one of the vertices of $\tau_{e_0}$. It follows that $U$ contains $n'$ such that $\partial^c\Psi(n')$ intersects $\mathrm{St}(e_0)^\circ$ and $n''$ such that $\partial^c\Psi(n')$ intersects $\sigma_{-f_0+f_1+f_2}^\circ$. It follows as in the proof of Lemma~\ref{lem:SingularOutsideCharts} that 
$ (\Psi-m_0)\circ\beta_\sigma^{-1} $
is not differentiable at either $n'$ or $n''$ for any $\sigma$ and $m_0\in \sigma$, hence the tropical affine structure and the solution to the Monge-Ampère equation can not be extended over any of the vertices of $\tau_{e_0}$. 

On the other hand, assuming $\partial^c\Psi$ is a homeomorphism, it is plausible that each edge of $\tau_{e_0}$ contains one point which is mapped to a vertex of $\Delta^\vee$ and that this point divides $\tau_{e_0}$ into two parts, each mapped into one facet of $\Delta^\vee$. Assume one of these parts is mapped into $\sigma$. Then, using Lemma~\ref{lem:GradientsEdge} as in the proof of Theorem~\ref{thm:MA}, $U_\sigma$ can be extended over this part. Consequently, if the assumptions made in the beginning of this paragraph holds then we arrive at a tropical affine structure with one singular point located at each vertex of $\tau$ and one singular point located in the interior of each edge of $\tau_{e_0}$. 
\end{remark}

\subsection{A classification of reflexive polygons and polyhedra}\label{sec:Computer Aided}
The reflexive polytopes are completely classified in dimension up to $4$ \cite{KS}. This puts them within reach of numerical computations and the condition of structural instability in addition to Li-admissibility can be checked numerically. This was done for reflexive polytopes of dimension 2 and 3 by the first author using Sage, with the results displayed in Table~\ref{tab:d1} and Table~\ref{tab:d2}. In particular, more than one third of the reflexive polytopes in dimension 3 are structurally unstable and does not admit a solution to \eqref{eq:MAInt}. As expressed in the table below, approximately one tenth of the reflexive polytopes in dimension 3 are structurally strictly semistable. Half of these are admissible in the sense of Li, hence admit solutions with anomalous singular set as in Example~\ref{ex: admissable and semistable} and Lemma~\ref{lem: admissable and semistable}. 

\begin{table}[H]
\caption{\label{tab:d1}Table specifying the total number of 2-dimensional reflexive polytopes and how many of these satisfy the conditions discussed in this section. Structurally unstable polytopes do not admit a solution to \eqref{eq:MAInt}. Polytopes that are admissable in the sense of Li admit a solution to \eqref{eq:MAInt}. Structurally strictly semistable polytopes which are admissible in the sense of Li admits a solution $\Psi$ such that $B\setminus \Sigma_\Psi$ is not connected, as in Example~\ref{ex: admissable and semistable} and Lemma~\ref{lem: admissable and semistable}.}
\begin{center}
\begin{tabular}{ |c|c| } 
 \hline
 \textbf{d = 1} &   \\ 
 \hline
reflexive polytopes of dimension d+1 & 16 \\ 
\hline
 structurally strictly semistable & 5  \\
 \hline
 structurally unstable & 2  \\
 \hline
 admissible in the sense of Li & 7 \\ 
 \hline
 structurally strictly semistable and admissable in the sense of Li & 3 \\
 \hline
\end{tabular}
\end{center}
\end{table}

\begin{table}[H]
\caption{\label{tab:d2}Table specifying the total number of 3-dimensional reflexive polytopes and how many of these satisfy the conditions discussed in this section. Recall that structurally unstable polytopes do not admit solutions to \eqref{eq:MAInt}. Polytopes that are admissable in the sense of Li admit a solution to \eqref{eq:MAInt}. Structurally strictly semistable polytopes which are admissible in the sense of Li admits a solution $\Psi$ such that $B\setminus \Sigma_\Psi$ is not connected, as in Example~\ref{ex: admissable and semistable} and Lemma~\ref{lem: admissable and semistable}. The classification of reflexive polytopes in dimension~3 together with various tools for analyzing lattice polytopes were provided by the PALP package in Sage, while volume computations where made using the Normaliz backend. See the git repository at 'https://doi.org/10.5281/zenodo.7615747' for details. }
\begin{center}
\begin{tabular}{ |c|c| } 
 \hline
 \textbf{d = 2} &  \\ 
 \hline
reflexive polytopes of dimension d+1 & 4319 \\ 
\hline
 structurally strictly semistable & 461 \\
 \hline
 structurally unstable & 1542 \\
 \hline
 admissible in  the sense of Li & 238\\
 \hline
 structurally strictly semistable and admissable in the sense of Li & 145\\
 \hline
\end{tabular}
\end{center}
\end{table}

\begin{remark}
Among 2-dimensional polytopes, two Delzant polytopes are admissible in the sense of Li, namely $\mathbb P^2$ and $\mathbb P^1\times \mathbb P^1$. In dimension three, four Delzant polytopes (out of the total 18) are admissible in the sense of Li, namely $\mathbb P^3$, $\mathbb P^2\times \mathbb P^1$, $(\mathbb P^1)^3$ and $\mathrm{Bl}_0\mathbb P^3$, the last one being the blow-up of $\mathbb P^3$ in one toric fixed point. None of the Delzant polytopes in dimension two and three are structurally unstable nor structurally strictly semistable. The two structurally unstable reflexive polytopes in dimension two are given by the $(1,2)$-weighted blowup of $\mathbb P^2$ (Example~\ref{ex: unstable trivial height d1}) and its dual. 
\end{remark}

\appendix

\section{SYZ-fibration}
This appendix is devoted to proving:
\begin{theorem}
    Assume $(\Delta,h_0)$ is stable and that $\nu_N(\cup U_\sigma)=\nu_N(\Delta^\vee)$. Let $K_{faces}$ be a compact subset of $B^\circ$ such that $\Psi|_{\tau^\circ}$ is smooth in a neighbourhood of $K_{faces}$ and $\tilde U_{faces}=R_{\geq 0}K_{faces}$ be the cone generated by $K_{faces}$. For each facet $\sigma$ of $\Delta$ 
    let $K_\sigma$ be a compact subset of $U_\sigma$ 
    such that $(\Psi-m)|_{\St(n_\sigma)}$ is smooth in a neighbourhood of $K_\sigma$ for some (and hence all) $m\in \sigma$. Let 
    $$ \tilde U_{stars} = \cup_\tau( [0,1]\cdot K_\sigma + \bR_{\geq 0}n_\sigma). $$
    Then for small $t$, $X_t$ admits a special Lagrangian torus fibration on 
    $$ \Log_t^{-1}\left(\tilde U_{faces} \cup \tilde U_{stars}\right). $$
\end{theorem}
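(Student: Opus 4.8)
The plan is to follow the strategy of \cite{LiFermat,LiSYZ}, but to feed in the solution $\Psi$ produced by Theorem~\ref{thm:MA} as the candidate limiting object instead of constructing a limit by hand. Concretely, fix the reference metric $\omega_Y$ in the anticanonical class and let $\omega_t = \omega_Y|_{X_t} + dd^c\phi_t$ be the Ricci-flat K\"ahler metric on $X_t$ in the class $[\omega_Y|_{X_t}]$, normalized to unit Calabi-Yau volume, with holomorphic volume form $\Omega_t$. By toric geometry $\Psi\in\mathcal P(\Delta)$ corresponds to a torus-invariant continuous semipositive metric on $-K_Y$ with local potential $\psi$; pulling $\psi$ back through (an appropriate rescaling of) $\Log_t$ and restricting to $X_t$ gives a model potential $\psi_t$. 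Since the assertion is local over the base and both $\tilde U_{faces}$ and $\tilde U_{stars}$ are exhausted by cones over small balls, it suffices to prove $C^\infty_{loc}$ convergence $\omega_t\to\omega_{sf}$ over $\Log_t^{-1}(V)$ for $V$ a small ball in $K_{faces}$ or in $K_\sigma$, where $\omega_{sf}$ is the semi-flat metric built from the Legendre transform of the smooth convex function $\Psi\circ\beta_\sigma^{-1}$ (over $K_{faces}$) or $(\Psi-m)\circ\beta_\sigma^{-1}$ (over $K_\sigma$); the normalization $\nu_N(\cup_\sigma U_\sigma)=\nu_N(\Delta^\vee)$ ensures that no mass of $\nu_N$ escapes to the uncontrolled part of $B_{d-1}$.

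The \emph{$C^0$ estimate} is the heart of the argument and is where Theorem~\ref{thm:MA} enters. Using the uniform Skoda estimate of \cite{LiSkoda} one obtains a uniform $L^p$-bound, $p>1$, on the Monge-Amp\`ere densities of $\omega_t$ measured against the integral Lebesgue-type measure on the base; feeding this into the quantitative $L^1$-stability estimate of \cite{LiSYZ} (a family version of Kolodziej's stability theorem) yields $\|\phi_t-\psi_t\|_{L^\infty(\Log_t^{-1}(V))}\to 0$. The point is that $\Psi$ solves the real Monge-Amp\`ere equation with right-hand side $\nu_N$ on $B^\circ$ and on each $U_\sigma$ --- in the weak sense of Remark~\ref{rem:WeakSolution} and Definition~\ref{def:PDEproblem} --- and this is exactly the asymptotic density of $\omega_t^{\dim X}$ under $\Log_t$; hence $\psi_t$ is the correct candidate limit and the stability estimate pins $\phi_t$ to it. Once $C^0$ convergence is in hand, the hypothesis that $\Psi|_{\tau^\circ}$ (resp.\ $(\Psi-m)|_{\St(n_\sigma)}$) is smooth near $K_{faces}$ (resp.\ $K_\sigma$) --- which for $\Delta_{simplex}$ and $\Delta_{cube}$ is supplied by Theorem~\ref{thm:SmoothSolutions}, Lemma~\ref{lem:Symmetric} and Lemma~\ref{lem:SymmetricCube} --- together with the complex Monge-Amp\`ere equation and uniform ellipticity bootstraps via Evans--Krylov and Schauder estimates (exactly as in \cite{LiFermat}) to $C^\infty_{loc}$ convergence $(\omega_t,\Omega_t)\to(\omega_{sf},\Omega_{sf})$.

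To \emph{produce the fibration}, recall that $\omega_{sf}$ on the relevant region carries an explicit special Lagrangian $\mathbb T^d$-fibration over a piece of $B^\circ$ (the semi-flat torus bundle picture of \cite{LiSYZ,HJMM}) or over a piece of the open star of $n_\sigma$ (the picture of \cite{LiFermat}): its fibers are the tori on which $\omega_{sf}$ and $\mathrm{Im}(e^{-i\theta}\Omega_{sf})$ vanish. Writing a nearby Lagrangian graph over such a model fiber, the special Lagrangian condition becomes a first-order elliptic equation which is non-degenerate on the model; the $C^\infty_{loc}$-closeness of $(\omega_t,\Omega_t)$ to $(\omega_{sf},\Omega_{sf})$ from the previous step therefore lets the implicit function theorem perturb each model fiber, over a slightly smaller base, to a genuine special Lagrangian torus for all small $t$. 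Exhausting $K_{faces}$ and each $K_\sigma$ by finitely many such balls and taking $t$ small enough in terms of these compacta assembles the fibration on all of $\Log_t^{-1}(\tilde U_{faces}\cup\tilde U_{stars})$.

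The main obstacle is the analysis over $\tilde U_{stars}$, i.e.\ over the open stars of the vertices $n_\sigma$ rather than only over the open facets. The arguments of \cite{LiSYZ,HJMM} are tailored to the open facets, where the torus bundle is untwisted and the semi-flat model is clean; near $n_\sigma$ the torus bundle degenerates and one must adapt the local analysis of \cite{LiFermat}, which is precisely why the setup of \cite{LiFermat} (rather than only \cite{LiSYZ}) is invoked. The delicate points are: (i) pushing the Skoda and stability estimates right up to the portion of $B_{d-1}$ covered by the $U_\sigma$, which uses $\nu_N(\cup_\sigma U_\sigma)=\nu_N(\Delta^\vee)$ to exclude mass loss; and (ii) checking that $\psi_t$ --- with $\Psi$ only known to solve the Monge-Amp\`ere equation on $U_\sigma$ in the weak sense of Definition~\ref{def:PDEproblem}, and $U_\sigma$ possibly reaching into lower-dimensional faces --- is genuinely the $C^0$-limit there and that the resulting semi-flat model is regular enough for the perturbation argument to run.
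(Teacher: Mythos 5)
Your overall architecture matches the paper's appendix closely: pull back $\Psi$ through $\Log_t$ as a semi-flat model on starlike charts over the $U_\sigma$ and facelike charts over pieces of $B^\circ$, use the uniform Skoda estimate of \cite{LiSkoda} together with the $L^1$-stability theorem of \cite{LiSYZ} to force the Calabi--Yau potential $C^0$-close to the model (the paper does this via Theorem~\ref{thm:CloseToSup} plus a mean value inequality on the holomorphic charts, using the Lipschitz bound on $\Psi\circ\Log_s$ and the smallness of the total variation of the difference of Monge--Amp\`ere measures, which is where the hypothesis $\nu_N(\cup_\sigma U_\sigma)=\nu_N(\Delta^\vee)$ enters), and then upgrade to higher order and perturb the model fibration. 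You also correctly flag the two delicate points (mass not escaping to $B_{d-1}$, and the behaviour of the model over the stars), which the paper handles with Lemma~\ref{lem:Psi-mInvariance} showing the model potential is $(S^1)^d$-invariant on the starlike charts and with $p_\sigma\circ\Log_s$ on the facelike charts.

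The one genuine gap is your higher-order step. You claim that $C^0$-closeness ``together with the complex Monge--Amp\`ere equation and uniform ellipticity bootstraps via Evans--Krylov and Schauder estimates (exactly as in \cite{LiFermat})''; but uniform ellipticity, i.e.\ a two-sided bound on the complex Hessian of the Calabi--Yau potential, is precisely what is \emph{not} available a priori here, and Evans--Krylov cannot start without it. The mechanism in \cite{LiFermat}, \cite{LiToric} and in the paper is different: since the semi-flat model is a smooth solution of the same Monge--Amp\`ere equation (by the smoothness hypotheses on $\Psi$ near $K_{faces}$ and $K_\sigma$), Savin's small perturbation theorem \cite{Savin} converts $L^\infty$-smallness of the difference directly into interior $C^{2,\alpha}$ (hence, by bootstrapping, $C^\infty$) smallness, with no ellipticity assumption; the special Lagrangian fibration is then produced by Zhang's theorem \cite{Zhang} (your implicit-function-theorem perturbation of the model fibers is essentially the content of that result, so that part is fine in spirit). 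As written, your regularity step would fail; replacing the Evans--Krylov appeal with Savin's theorem closes the gap and recovers the paper's argument.
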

\begin{remark}
    The assumption that $\nu_N(\cup U_\sigma)=\nu_N(\Delta^\vee)$ is probably artificial, but included to simplify the exposition. 
\end{remark}

\subsection{Log Map}
Assume $h=h_0$, and hence 
$$ X = \{(x,t)\in Y\times \mathbb C^*: f_0(x)+t\sum f_m(x)=0. $$
Let $T_\bC\subset Y$ be the complex $(d+1)$-torus. Throughout the appendix, we will let $s=\log|t|\gg 0$. There is a map $\Log_s:T_\bC\rightarrow N_\bR$ defined by mapping $x\in T_\bC$ to the unique $\Log_s(x)=n\in N_\bR$ such that 
$$ \langle m,\Log_s(x)\rangle = \frac{1}{s}\log|f_m(x)| $$
for all $m\in M$. If we fix generators $m_0,\ldots,m_d$ of $M$ these determines coordinates $(\langle m_0,\cdot \rangle,\ldots,\langle m_d,\cdot \rangle)$ on $N_\bR$ and 
$$ (z_0,\ldots,z_d) = (f_{m_0}, \ldots, f_{m_d}) $$
on $T_\bC$ and $\Log_s$ takes the form
$$ \Log_s(z_0,\ldots,z_d) = \frac{1}{s}(\log|z_0|,\ldots,\log|z_d|). $$
The images of $X_t$ under $\Log_s$ converge uniformly to a tropical hypersurface in $N_\mathbb R$. More precisely, let $\mathcal A$ be the non-affine locus of the Legendre transform of $h_0$, i.e.
$$ \mathcal A = \{n: \max_{m\in M\cap \Delta} \langle m,n \rangle -h_0(m) = \langle m_i,n \rangle -h_0(m_i) \text{ for two distinct } m_1,m_2\in M\cap \Delta\}.  $$
Its complement $N_\bR\setminus \mathcal A$ has exactly one bounded component and its boundary is 
$$  \{n\in \mathcal A: \max_{m\in M\cap \Delta} \langle m,n \rangle -h_0(m) = \langle 0,n \rangle - h_0(0) = 0 \rangle\} = \partial\Delta^\vee. $$
Each face $F$ in $\partial\Delta^\vee$ lies in the boundary of an unbounded component of $\mathcal A$ contained in the subspace spanned by the vertices of $F$. 
\begin{lemma}(\cite[Proposition~3.2]{LiFermat}).\label{lem:Amaeba}
     \begin{eqnarray*}
         \dist_{N_\bR}(x,\mathcal A) & \leq & \frac{C}{s} \text{ for all } x\in \Log_s(X_t) \\
         \dist_{N_\bR}(\Log_s(X_t),x) & \leq & \frac{C}{s} \text{ for all } x\in \mathcal A. 
     \end{eqnarray*}
\end{lemma}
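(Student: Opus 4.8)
This is \cite[Proposition~3.2]{LiFermat}; the argument one would carry out is the classical estimate that the amoeba of a family $\{P_t=0\}$ lies within Hausdorff distance $O(1/|\log|t||)$ of the tropicalization of $P$. First I would restrict to $X_t\cap T_\bC$ and pass to the coordinates $(z_0,\ldots,z_d)$ above, in which (for $h=h_0$) the equation becomes a Laurent polynomial $P_t(z)=\sum_{m\in\Delta\cap M}t^{h_0(m)}z^m$ up to units on the coefficients, the units contributing only an $O(1/s)$ error below. Writing $n=\Log_s(z)$, the normalizations of $\Log_s$ and of $s$ are arranged so that $\tfrac1s\log|t^{h_0(m)}z^m|=\ell_m(n)+O(1/s)$ uniformly in $m$ and $n$, where $\ell_m(n):=\langle m,n\rangle-h_0(m)$; hence the size of the $m$-th monomial at $n$ is governed by $\ell_m$, and $\mathcal A$ is precisely the corner locus of the convex piecewise-linear function $\phi:=\max_{m\in\Delta\cap M}\ell_m$, which is the Legendre transform of $h_0$.

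For the inclusion $\Log_s(X_t)\subset\{x:\dist_{N_\bR}(x,\mathcal A)\le C/s\}$ I would argue by domination. Set $g(n):=\phi(n)-\max\{\ell_m(n):\ell_m(n)<\phi(n)\}\ge0$, the gap between the largest and second-largest of the $\ell_m(n)$; by definition $\{g=0\}=\mathcal A$, and since $g$ is a nonnegative piecewise-linear function vanishing exactly on $\mathcal A$, a standard polyhedral compactness argument gives $c=c(\Delta,h)>0$ with $g(n)\ge c\,\dist_{N_\bR}(n,\mathcal A)$ for all $n$. If now $z\in X_t\cap T_\bC$, $n=\Log_s(z)$, and $\dist_{N_\bR}(n,\mathcal A)>C/s$ with $C$ large enough (depending on $\Delta,h$ and $N:=\#(\Delta\cap M)$), then the unique maximal monomial has modulus $\asymp e^{s\phi(n)}$ while the remaining $N-1$ monomials sum to at most $\asymp (N-1)e^{s(\phi(n)-g(n))}$, which is strictly smaller; so $P_t(z)\ne0$, a contradiction.

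For the reverse inclusion I would run a Rouch\'e-type argument. Fix $x\in\mathcal A$, so $\phi(x)$ is attained by two distinct monomials $m_1,m_2$; after a preliminary move of $x$ inside $\mathcal A$ (which is connected and of pure codimension one, by the structure recalled just before the lemma) one may assume $x$ lies in the top-dimensional stratum, so that $m_1,m_2$ are the only maximizers and $\ell_m(x)\le\phi(x)-\epsilon_0$ for $m\ne m_1,m_2$. Pick a one-complex-dimensional torus slice $w\mapsto z(w)$ through a point whose $\Log_s$-image is $O(1/s)$-close to $x$, along which $z^{m_1}/z^{m_2}$ runs over a circle while the remaining log-coordinates are essentially frozen; for a suitable radius the two leading monomials have comparable moduli $\asymp e^{s\phi(x)}$ and wind once, whereas the sum of the other monomials is $O\big((N-2)e^{s(\phi(x)-\epsilon_0)}\big)=o(e^{s\phi(x)})$ for $s$ large. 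By Rouch\'e, $P_t$ has a zero on the slice, whose $\Log_s$-image is then within $O(1/s)$ of $x$. Taking $C$ to be the maximum of the constants produced in the two parts yields the stated estimates with a single constant depending only on $\Delta$ and $h$.

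The main obstacle is the reverse inclusion at the lower-dimensional strata of $\mathcal A$ (points where three or more of the $\ell_m$ coincide): the naive two-variable slice need not carry a zero of $P_t$ there. I would handle this exactly as indicated above, by first perturbing $x$ into the dense top stratum of $\mathcal A$ before running Rouch\'e, using that $\mathcal A$ is a connected polyhedral complex of pure codimension one so that such a perturbation costs only $O(1/s)$ in distance; alternatively one replaces Rouch\'e by a topological-degree computation on a small polydisc transverse to the relevant stratum. The first inclusion, being a soft domination estimate, presents no real difficulty.
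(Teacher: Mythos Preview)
Your proposal is correct and follows the same approach as the paper, which simply refers to \cite[Proposition~3.2]{LiFermat} and sketches only the domination idea for the first inclusion; your treatment of both inclusions via domination and Rouch\'e is the standard amoeba/tropicalization argument and is in fact more detailed than what the paper records. One small remark on the lower-stratum step: your perturbation of $x$ by $O(1/s)$ does work, but only because the resulting gap $\epsilon_0\sim c/s$ still gives $e^{-s\epsilon_0}=e^{-c}$ small enough for Rouch\'e with a fixed radius---it may be worth making this quantitative dependence explicit rather than leaving it as ``costs only $O(1/s)$'', since a reader might worry that $\epsilon_0\to0$ kills the estimate.
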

\begin{proof}
    The lemma is proved in the same way as when $\Delta$ is the unit simplex (see \cite[Proposition~3.2]{LiFermat}). A crucial point is that if the polynomial $f_0(x)+t\sum f_m(x)$ vanishes, then its two largest terms has to be of comparable size, which for small $t$ implies $\Log_s(x)$ is close to $\mathcal A$. 
\end{proof}

\subsection{Holomorphic Coordinate Charts}
For a facet $\sigma$ of $\Delta$ and $\delta\gg 0$, let 
$$ V^\sigma_{t,\delta} = \{ x\in X_t: t|f_m(x)|<e^{-\delta}|f_0(x)| \text{ for all } m\in M\cap \Delta \setminus (\sigma\cup\{0\})\}. $$
Pick $m_0,\ldots,m_d$ generating $M$ such that $\{m_1,\ldots,m_d\}\subset n_\sigma^\perp$ and let $z_0,\ldots,z_d$ be the corresponding coordinates
$z_i(x) = f_{m_i}(x)$. Differentiating $F_t:= f_0(x)+t\sum f_m(x)$ with respect to $z_0$ gives
$$ \frac{\partial F_t}{\partial z_0} = c_0f_{-m_0} + t\sum_{\{m\in V(\Delta)\setminus \sigma\}} c_m f_{m-m_0} $$
for constants $c_0$ and $c_m, m\in N\cap \Delta \setminus (\sigma\cup\{0\})$.
This is non-zero since $\left|\frac{f_{0}}{f_{m_0}}\right|>>t\left|\frac{f_{m}}{f_{m_0}}\right|$ for each $m\in N\cap \Delta \setminus (\sigma\cup\{0\})$. Consequently, the implicit function theorem furnishes coordinates 
$$ \tilde \beta^{-1}:(z_1,\ldots,z_d) \mapsto (z_0(z_1,\ldots,z_m),z_1,\ldots,z_m) $$
on $V^\sigma_{t,\delta}$. 

Moreover, 
$x\in V^\sigma_{t,\delta}$ if and only if 
$$ -s+\langle m, sn \rangle < -\delta,  $$
or equivalently
$$ \langle m, n \rangle \leq 1-\delta/s, $$
for all $m\in N\cap \Delta \setminus (\sigma\cup\{0\})$. As $s\rightarrow +\infty$, this set covers larger and larger parts of the open star of $n_\sigma$. 

As in \cite{LiFermat} we will work with two different coordinate charts: Fix a small parameter $\rho>0$ and for $n\in N_\bR$, let $B_\rho(N)$ be the ball of radius $\rho$ (with respect to some norm on $N_\bR$) centered at $n$. 
\begin{itemize}
    \item For each facet $\sigma$ of $\Delta$, the \emph{starlike chart}
    $$ \tilde U^\sigma_{t,\delta,\rho} = \{ x\in V^\sigma_{t,\delta}: B_\rho(\Log_s(x))\in [0,1]\cdot K_\sigma + \bR_{\geq 0}n_\sigma) \}. $$
    \item For each facet $\sigma$ of $\Delta$ and vertex $m$ of $\sigma$, the facelike chart
    $$\tilde U^{\sigma,m}_{t,\delta} = \{ x\in V^\sigma_{t,\delta}: B_\rho(\Log_s(x))\in \mathbb R_{\geq 0} (\tau_m^\circ\cap K_{faces}) \}. $$
\end{itemize}

The holomorphic volume form on $V^\sigma_{t,\delta}\subset X_t$ is 
\begin{equation} \label{eq:HolVolumeForm} \Omega_t = \pm \frac{d\log z_0\wedge\ldots\wedge d\log z_d}{dF_t} \approx d\log z_1 \wedge \ldots \wedge d\log z_d. \end{equation}
Since $\nu_N(\cup U_\sigma) = \nu_N(\partial\Delta^\vee)$, we conclude that for small $t$ and $\rho$, the starlike charts account for almost all of the Calabi-Yau volume of $X_t$.

\subsection{Local torus action}
In addition to the action of $T_\bC$ on $Y$, there is a local $(\mathbb C^*)^d$-action on $\tilde V^\sigma_{t,\delta}\subset X_t$, defined in coordinates by
$$ (\lambda_1,\ldots,\lambda_d)\cdot (z_0(z_1,\ldots,z_d), z_1, \ldots, z_d) = (z_0(\lambda_1z_1,\ldots,\lambda_dz_d), \lambda_1z_1, \ldots, \lambda_d z_d). $$
The images under $\Log_s$ of $(S^1)^d$-orbits of this action are contained in sets of the form $n+\mathbb Rn_\sigma$ for $n\in N_\bR$. 

\subsection{Model Metric}\label{sec:ModelMetric}
Let $\nu=\nu_N$ and $\Psi$ be the solution to the Monge-Ampère equation in Definition~\ref{def:PDEproblem}. By standard theory, $(\Psi-m_0)\circ \Log_s$ defines a continuous semi-positive metric on $-K_Y$. Note also that $(\Psi-m_0)\circ \Log_s$ is Lipschitz continuous in logarithmic coordinates on $T_\bC$. 
\begin{lemma}
    Let $\sigma$ be a facet of $\Delta$ and $m$ a vertex of $\sigma$. Then, for small $t$
    $$ ((\Psi-m_0)\circ \Log_s)|_{\tilde U^\sigma_{t,\delta,\rho}} $$
    is $(S^1)^d$-invariant and its complex Monge-Ampère measure is 
    $$ d\log z_1\wedge d\log\bar z_1\wedge \ldots \wedge d\log z_d\wedge d\log \bar z_d \approx \Omega_t\wedge\overline{\Omega_t} $$
\end{lemma}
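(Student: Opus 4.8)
The plan is to reduce both assertions to the real Monge--Ampère equation satisfied by $\Psi$ on $U_\sigma$ in the coordinate $\beta_\sigma$, using that $\beta_\sigma\circ\Log_s$ records exactly the moduli $|z_1|,\dots,|z_d|$. Concretely, take the generators $m_1,\dots,m_d$ of $n_\sigma^\perp\cap M$ and set $z_i=f_{m_i}$, with $m_0:=m$ (a vertex of $\sigma$); then $m_0,m_1,\dots,m_d$ is a $\mathbb Z$-basis of $M$ by Lemma~\ref{lem:Wedge}, in the dual coordinates $\Log_s(x)=\tfrac1s(\log|z_0|,\log|z_1|,\dots,\log|z_d|)$, and $\beta_\sigma(\Log_s(x))=\tfrac1s(\log|z_1|,\dots,\log|z_d|)$. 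Since $m_1,\dots,m_d$ span $n_\sigma^\perp$, the local $(S^1)^d$-action defined above fixes $\pi_\sigma(\Log_s(x))$ and moves $\Log_s(x)$ only along a line $n+\mathbb R n_\sigma$; moreover $|z_0(\lambda_1z_1,\dots,\lambda_dz_d)|$ stays within a bounded multiplicative factor of $|z_0|$ as $\lambda$ runs over the compact group $(S^1)^d$, so the $\Log_s$-image of a single orbit is a segment of such a line of length $O(1/s)$ lying within $O(1/s)$ of $\St(n_\sigma)$ by Lemma~\ref{lem:Amaeba}. Hence both claims follow once we know $\Psi-m$ is independent of the $n_\sigma$-coordinate on a neighbourhood of $\Log_s(\tilde U^\sigma_{t,\delta,\rho})$.

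To get that independence, recall from Definition~\ref{def:Usigma} (equivalently, the opening lines of the proof of Lemma~\ref{lem:GradientsEdge}) that $\partial^c\Psi(U_\sigma)\subset\sigma^\circ$. Writing $\Psi(n)=\sup_{m'\in\Delta}(\langle m',n\rangle-\Psi^c(m'))$, this says the supremum at $n\in U_\sigma$ is attained in $\sigma$; together with the reflexivity fact that $\langle m',n_\sigma\rangle$ is an integer $\le 1$, equal to $1$ exactly for $m'\in\sigma$ (so $\langle m',n_\sigma\rangle\le 0$ for $m'\in\Delta\setminus\sigma$), a one-line estimate gives $\Psi(n+\rho n_\sigma)=\Psi(n)+\rho$ for all $\rho\ge 0$, hence $(\Psi-m)(n+\rho n_\sigma)=(\Psi-m)(n)$. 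Thus $\Psi-m$ is $n_\sigma$-invariant on the full-dimensional slab $U_\sigma+\mathbb R_{\ge0}n_\sigma$ (full-dimensional since $\langle m,n_\sigma\rangle\neq 0$). Using upper hemicontinuity of $\partial^c\Psi$ and $K_\sigma\Subset U_\sigma$, one extends this locally to an open neighbourhood $W$ of $K_\sigma+\mathbb R_{\ge0}n_\sigma$ in $N_{\mathbb R}$ on which $\partial^c\Psi\subset\sigma$; for $s$ large $\Log_s(\tilde U^\sigma_{t,\delta,\rho})\subset W$ and the $\Log_s$-image of each orbit is a short $n_\sigma$-segment in $W$, so $(\Psi-m)\circ\Log_s$ is constant on each orbit, i.e.\ $(S^1)^d$-invariant. (One may also invoke the standing hypothesis that $(\Psi-m)|_{\St(n_\sigma)}$ is smooth near $K_\sigma$, which makes $g:=(\Psi-m)\circ\beta_\sigma^{-1}$ smooth and the computation below classical.) I expect this paragraph to be the main obstacle: converting the structural inclusion $\partial^c\Psi(U_\sigma)\subset\sigma^\circ$ into genuine $n_\sigma$-independence on a full $N_{\mathbb R}$-neighbourhood of the relevant piece of the amoeba, and absorbing the $O(1/s)$ errors so that the conclusion is uniform for small $t$.

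For the Monge--Ampère measure, the previous step shows that on the chart, in the coordinates $(z_1,\dots,z_d)$ of the holomorphic parametrization $\tilde\beta^{-1}$, one has $(\Psi-m)\circ\Log_s=g\bigl(\tfrac1s\log|z_1|,\dots,\tfrac1s\log|z_d|\bigr)$ — in particular the implicit function $z_0(z_1,\dots,z_d)$ drops out. A direct computation gives that the complex Hessian of $z\mapsto g(\tfrac1s\log|z_1|,\dots,\tfrac1s\log|z_d|)$ has $(j,k)$-entry $\tfrac1{4s^2}(z_j\bar z_k)^{-1}\partial_j\partial_k g$, so (being plurisubharmonic, as $\Psi-m$ is convex, and smooth by hypothesis) its complex Monge--Ampère measure is $\dfrac{c_d}{(4s^2)^d}\det(\mathrm{Hess}\,g)\bigwedge_j i\,d\log z_j\wedge d\log\bar z_j$ for a positive dimensional constant $c_d$. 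Now $g$ is the coordinate expression of $(\Psi-m)|_{U_\sigma}$ in the tropical-affine chart $\beta_\sigma$, and by Definition~\ref{def:PDEproblem} with $\nu=\nu_N$ it solves $\mathrm{MA}\bigl((\Psi-m)|_{U_\sigma}\bigr)=\nu_N|_{U_\sigma}$; since $\beta_\sigma$ is compatible with the tropical affine structure, $\nu_N$ reads as Lebesgue measure in the $\beta_\sigma$-coordinate, so $\det(\mathrm{Hess}\,g)\equiv 1$. Hence the complex Monge--Ampère measure equals $\dfrac{c_d}{(4s^2)^d}\bigwedge_j i\,d\log z_j\wedge d\log\bar z_j$, which by \eqref{eq:HolVolumeForm} is the stated constant multiple of $\Omega_t\wedge\overline{\Omega_t}$; the ($s$- and normalization-dependent) constant is irrelevant for the application. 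Finally, the choice of vertex $m\in\sigma$ is immaterial: two choices differ by $\langle w,\cdot\rangle$ with $w\in n_\sigma^\perp$, and $\langle w,\Log_s(\cdot)\rangle=\tfrac1s\sum_i w_i\log|z_i|$ is pluriharmonic, so it changes neither plurisubharmonicity nor the complex Monge--Ampère measure.
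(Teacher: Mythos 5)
Your proposal is correct and follows essentially the same route as the paper: you show that on (a neighbourhood of) the $\Log_s$-image of the starlike chart the supremum defining $\Psi$ is attained in $\sigma$, deduce $\Psi(n+\lambda n_\sigma)=\Psi(n)+\lambda$ and hence $n_\sigma$-invariance of $\Psi-m$ along the orbit directions of the local torus action, and then pass from the real Monge--Ampère equation $\MA((\Psi-m)|_{U_\sigma})=\nu_N$ in the $\beta_\sigma$-chart to the complex Monge--Ampère measure, which the paper invokes as the classical torus-invariant correspondence and you verify by direct computation. The only blemish is your parenthetical claim that $\langle m',n_\sigma\rangle\le 0$ for all $m'\in\Delta\setminus\sigma$ (true only for lattice points), but your one-line estimate never needs it — $\langle m',n_\sigma\rangle\le 1$ together with attainment of the supremum in $\sigma$ suffices, exactly as in the paper's Lemma on $\Psi$--$m$ invariance.
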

\begin{proof}
    We claim that for small $t$, $\tilde U^\sigma_{t,\delta}$ is contained in 
    \begin{equation} \label{eq:SigmaSet} \{n\in N_\bR: \Psi(n) = \langle m,n \rangle - \Psi^c(m) \text{ for some } m\in \sigma \}.\end{equation}
    To see this, note that if 
    $$ \Psi(n) = \sup_{m'\in \partial\Delta} \langle m',n \rangle - \Psi^c(m') =  \langle m,n \rangle - \Psi^c(m) $$
    for some $m\in \sigma$, then
    \begin{eqnarray*}
        \Psi(n+\lambda n_\sigma) & = & \sup_{m'\in \partial\Delta} \langle m',n+\lambda n_\sigma \rangle - \Psi^c(m') \\
        & = & \langle m,n + \lambda n_\sigma\rangle - \Psi^c(m)
    \end{eqnarray*} 
    for any $\lambda\geq 0$ since $\langle m,n_\sigma \rangle = 1 \geq \langle m',n_\sigma \rangle$ for all $m'\in \partial\Delta$. 

    The first part of the lemma then follows from Lemma~\ref{lem:Psi-mInvariance} below together with the fact that $K_\sigma$ is contained in the open set
    $$ \{n\in N_\bR: \Psi(n) > \langle m,n \rangle -\Psi(m) \text{ for all } m\notin \sigma^\circ\} $$
    which is contained in \eqref{eq:SigmaSet}. 
    
    The second part of the lemma follows from the first part and the classical correspondence between the real and complex Monge-Ampère operator for $(S^1)^d$-invariant metrics under a complex torus action. 
\end{proof}
In particular, since $\cup_\sigma U_{\sigma}$ covers all the mass of $\nu_N$ by assumption, it follows from \eqref{eq:HolVolumeForm} that the starlike charts $\cup_\sigma \tilde U^{\sigma}_{t,\delta,\rho}$ covers almost all the Monge-Ampère mass $\tilde\nu_\Psi$ of $(\Psi-m_0)\circ \Log_s$. 
In particular, it follows that the total variation of $\Omega_t\wedge \bar \Omega_t-\tilde\nu_\Psi$ is arbitrarily small for small $t$ and $\rho$
\begin{lemma}\label{lem:Psi-mInvariance}
    Let $n\in \partial^c\Psi^{-1}(\sigma)$ and $\lambda\in \bR$ and assume $n+\lambda n_\sigma\in \partial^c\Psi^{-1}(\sigma)$. Then 
    $$ \Psi(n+\lambda n_\sigma) = \Psi(n) + \lambda. $$
\end{lemma}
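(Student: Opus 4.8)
The plan is to exploit the defining property of $\partial^c\Psi^{-1}$ together with the fact that $\langle m,n_\sigma\rangle = 1$ for every $m\in\sigma$, which is exactly what makes the $n_\sigma$-direction the extremal direction for the supremum defining $\Psi$. Recall that $\Psi = (\Psi^c)^c$ on $B$ (and its canonical extension to $N_\mathbb R$), so $\Psi(n') = \sup_{m\in A}\langle m,n'\rangle - \Psi^c(m)$ for all $n'$. The hypothesis $n\in\partial^c\Psi^{-1}(\sigma)$ means there is some $m_1\in\sigma$ with $m_1\in\partial^c\Psi(n)$, i.e.\ $\Psi(n) = \langle m_1,n\rangle - \Psi^c(m_1)$ and this $m_1$ attains the supremum; similarly there is $m_2\in\sigma$ attaining the supremum at $n+\lambda n_\sigma$, so $\Psi(n+\lambda n_\sigma) = \langle m_2, n+\lambda n_\sigma\rangle - \Psi^c(m_2)$.

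The first step is to show $\Psi(n+\lambda n_\sigma) \le \Psi(n) + \lambda$. Using $m_1$ as a (not necessarily optimal) competitor at the point $n+\lambda n_\sigma$ and $\langle m_1,n_\sigma\rangle = 1$ (since $m_1\in\sigma$ and $n_\sigma$ is the primitive outward normal of $\sigma$, so $\langle m,n_\sigma\rangle = 1$ for all $m\in\sigma$), we get
\begin{align*}
\Psi(n+\lambda n_\sigma) &\ge \langle m_1, n+\lambda n_\sigma\rangle - \Psi^c(m_1) = \langle m_1,n\rangle - \Psi^c(m_1) + \lambda\langle m_1,n_\sigma\rangle = \Psi(n) + \lambda.
\end{align*}
Wait --- this gives the inequality $\Psi(n+\lambda n_\sigma)\ge \Psi(n)+\lambda$. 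For the reverse inequality, run the symmetric argument using $m_2$ as a competitor at $n = (n+\lambda n_\sigma) + (-\lambda)n_\sigma$: since $m_2\in\sigma$ we again have $\langle m_2,n_\sigma\rangle = 1$, so
\begin{align*}
\Psi(n) &\ge \langle m_2, n\rangle - \Psi^c(m_2) = \langle m_2, n+\lambda n_\sigma\rangle - \Psi^c(m_2) - \lambda = \Psi(n+\lambda n_\sigma) - \lambda,
\end{align*}
that is $\Psi(n+\lambda n_\sigma) \le \Psi(n) + \lambda$. Combining the two gives equality. Note this argument is symmetric in the sign of $\lambda$ and uses only that both endpoints have a $c$-gradient vector lying in $\sigma$, so it covers the statement as written; the hypothesis $\lambda\in\mathbb R$ (rather than $\lambda\ge 0$) is handled automatically.

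I do not expect a genuine obstacle here: the only substantive ingredient is the identity $\langle m, n_\sigma\rangle = 1$ for $m\in\sigma$, which is immediate from reflexivity and the normalization of $n_\sigma$ (this is the same fact already used repeatedly, e.g.\ in Lemma~\ref{lem:BetterGradient} and in Section~\ref{sec:ModelMetric}), and the elementary manipulation of the Legendre-type supremum defining $\Psi$. The one point to state carefully is that $\Psi$, regarded as a function on $N_\mathbb R$, equals $\sup_{m\in A}\langle m,\cdot\rangle - \Psi^c(m)$ and that the supremum is attained at a point of $\sigma$ at both $n$ and $n+\lambda n_\sigma$ --- which is exactly the meaning of $n, n+\lambda n_\sigma \in \partial^c\Psi^{-1}(\sigma)$ --- so that both one-sided competitor estimates above are available.
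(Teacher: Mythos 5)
Your proof is correct and is essentially the paper's argument: the paper writes $\Psi(n+\lambda n_\sigma)=\sup_{m\in\sigma}\langle m,n+\lambda n_\sigma\rangle-\Psi^c(m)$ using the hypothesis that the supremum is attained in $\sigma$ at both points, then shifts by $\lambda$ via $\langle m,n_\sigma\rangle=1$, which is exactly what your two competitor inequalities accomplish. The split into the two one-sided bounds is only a repackaging of the same chain of equalities, so there is nothing further to add.
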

\begin{proof}
    Since $\langle m,n_\sigma \rangle =1$ for any $m\in  \sigma$, we have 
    \begin{eqnarray*}
        \Psi(n+\lambda n_\sigma) & = & \sup_{m\in \partial\Delta} \langle m,n +\lambda n_\sigma \rangle - \Psi^c(m) \\
        & = & \sup_{m\in \sigma} \langle m,n +\lambda n_\sigma \rangle - \Psi^c(m) \\
        & = & \sup_{m\in \sigma} \langle m,n \rangle - \Psi^c(m) + \lambda  \\
        & = & \Psi(n) + \lambda.
    \end{eqnarray*} 
\end{proof}

Finally, on the facelike charts $U_{t,\delta,\rho}^{\sigma,m}$, the map 
$ p_\sigma \circ\Log_s:U_{t,\delta,\rho}^{\sigma,m}\rightarrow \tau_m^\circ $
(c.f. Section~\ref{sec:Pairing}) can be used to pull back a local potential 
$\Psi\circ p_\sigma \circ\Log_s$ which by Lemma~\ref{lem:Amaeba} and the Lipschits bound on $\Psi$ differ from $\Psi \circ\Log_s$ by a $C^0$-perturbation.

\subsection{Local $C^0$-estimate}
As in \cite{LiSYZ}, the argument relies on the uniform Skoda estimate for maximally degenerate polarized families of Calabi-Yau manifolds in \cite{LiSkoda} and the following $L^1$-stability estimate:
\begin{theorem}(\cite[Theorem~2.6]{LiSYZ})\label{thm:L1Stability}
    Let $(Z,\omega)$ be a compact Kähler manifold and $u \in \PSH(Z,\omega)\cap C^0(Z)$. Assume 
    \begin{itemize}
        \item There is a Skoda estimate, i.e. constants $\alpha$ and $A$ such that
        \begin{equation} \int_Z e^{-\alpha v}\omega^d \leq A \text{ for all } v\in \PSH(Y,\omega), \sup v=0.  \end{equation}
        \item The subzero set $\{u\leq 0\}$ has a mass lower bound
        $$ \int_{\{u\leq 0\}} \omega^d \geq \lambda. $$
        \item The total variation of $\omega^d-(\omega+i\partial\bar\partial u)^d$ is bounded by $s^{2n+3}<1$ for some constant $s'$. 
        \item $u$ is smooth away from a closed subset $S$ with zero $\omega^d$-measure.
        \item $||u||_{C^0} \leq A'$. 
    \end{itemize}
    Then for small $s'$, there is a uniform estimate
    $$ \sup u \leq Cs' $$
    where $C$ only depends on $\lambda, d, \alpha, A, A'$.
\end{theorem}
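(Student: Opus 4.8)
The plan is to prove this by the Ko\l{}odziej-type pluripotential stability argument, in the spirit of Eyssidieux--Guedj--Zeriahi, adapted to our degenerating family; I will carry it out in three stages.

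First I would set up and reduce. If $\sup_Z u\le 0$ there is nothing to prove, so assume $M:=\sup_Z u>0$; recall $\|u\|_{C^0}\le A'$. The Skoda hypothesis is, by the classical comparison between the Monge--Amp\`ere capacity $\mathrm{Cap}_\omega$ and volume (Alexander--Taylor / Guedj--Zeriahi), equivalent to a capacity domination $\omega^d(E)\le G(\mathrm{Cap}_\omega(E))$ for every Borel $E\subset Z$, with $G$ increasing and continuous, $G(0)=0$, depending only on $d,\alpha,A$. In particular the region $E_0:=\{u\le 0\}$ has $\mathrm{Cap}_\omega(E_0)\ge G^{-1}(\lambda)=:c_0>0$: this quantitative thickness of the set where $u$ is already $\le 0$ is what will ultimately force $M$ itself, rather than $M$ up to a floating constant, to be small.

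Next I would prove the capacity estimate. Write $\omega_u^d=\omega^d+\eta$ with $\|\eta\|_{TV}\le (s')^{2n+3}$; here the smoothness of $u$ off the $\omega^d$-null closed set $S$ is what makes $\omega_u^d$ a well-defined Borel measure and legitimizes the integration-by-parts and mixed Monge--Amp\`ere (Chern--Levine--Nirenberg) inequalities below. For $t\in(0,M)$ I would compare $u$ with a suitable obstacle built from $M-t$ and the $\omega$-relative extremal function of $E_0$ (bounded below by a constant depending only on $c_0$), and apply the comparison principle to bound $\mathrm{Cap}_\omega(\{u>M-t\})$ from above in terms of $\int_{\{u>M-t\}}\omega_u^d\le \omega^d(\{u>M-t\})+(s')^{2n+3}$ and $c_0$. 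Feeding the resulting inequality into the standard De Giorgi--type iteration for the decreasing function $t\mapsto \mathrm{Cap}_\omega(\{u>M-t\})^{1/d}$ shows that this capacity vanishes once $t$ exceeds a threshold $t_*$ controlled by a fixed power of $(s')^{2n+3}$ together with $(\lambda,d,\alpha,A,A')$. Since a nonempty open set is nonpluripolar, hence has positive capacity, and $\{u>M-t\}$ is open, vanishing capacity forces $\{u>M-t\}=\varnothing$, i.e. $M\le t_*$, and tracking the polynomial losses in the iteration gives $t_*\le Cs'$ for $s'$ small. This is the asserted bound $\sup_Z u\le Cs'$.

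The hard part will be arranging the comparison estimate so that the ``bad'' component $\eta$ of $\omega_u^d$ enters only through its total mass $\|\eta\|_{TV}$ and never through a higher $L^p$ norm --- the hypothesis controls $\eta$ only in total variation, and $\omega_u^d$ itself is \emph{not} assumed capacity-dominated. This is exactly why one must measure the superlevel set $\{u>M-t\}$ against $\omega^d$ (which is capacity-dominated) with an additive error $(s')^{2n+3}$, rather than directly against $\omega_u^d$; and it is why the large exponent $2n+3$ in the hypothesis is not wasteful: it is comfortably enough to absorb the fixed power lost in the De Giorgi iteration and still output the clean linear estimate $\sup u\le Cs'$. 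The remaining bookkeeping --- uniformity of all constants in $(\lambda,d,\alpha,A,A')$ only, and not on $u$ --- is routine once the capacity domination from Skoda and the $C^0$-bound $A'$ are in hand.
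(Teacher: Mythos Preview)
This theorem is not proved in the paper: it is quoted as \cite[Theorem~2.6]{LiSYZ} and used as a black box in the appendix. There is therefore no proof in the paper to compare your proposal against.

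That said, your outline is the correct strategy and is essentially the one in \cite{LiSYZ}: the Skoda estimate yields a volume--capacity comparison, the mass lower bound on $\{u\le 0\}$ gives a capacity lower bound for that set, and then a Ko\l{}odziej-style comparison/iteration on the superlevel sets $\{u>M-t\}$, with the total-variation error $(s')^{2n+3}$ entering additively, drives $\mathrm{Cap}_\omega(\{u>M-t\})$ to zero in a controlled number of steps. Your identification of the key point --- that one must measure against $\omega^d$ (which is capacity-dominated) rather than $\omega_u^d$, absorbing the difference via the total-variation bound --- is exactly right, and the exponent $2n+3$ is there precisely to survive the power loss in the iteration.
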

Let $\omega_t=\omega_{FS}|_{X_t}/s$ where $\omega_{FS}$ is the Fubini study metric of some projective embedding induced by a multiple of $-K_Y$. Let $\omega_{CY,t}$ be the Calabi-Yau metric in the class of $\omega_{t}$ and $\phi_{CY,t}$ its potential with respect to $\omega_t$, i.e. $\omega_{CY,t} = \omega_t + i\partial\bar \partial \phi_{CY,t}$.  
Now, $X$ defines a polarized algebraic Calabi-Yau degeneration family in the sense of \cite{LiSkoda}, hence by \cite[Theorem~1.3]{LiSkoda}$, (X_t,\omega_t)$ admits a Skoda inequality with respect to $\omega_{CY,t}^d$ 
$$ \int_{X_t} e^{-\alpha v}\omega_{CY,t}^d \leq A \text{ for all } v\in \PSH(X_t,\omega_t), \sup v=0. $$
for constants $\alpha, A$ independent of $t$. As a consequence (see \cite[Theorem~1.4]{LiSkoda}), $\phi_{CY,t}$ satisfies an $L^\infty$-bound which is uniform in $t$. 
Adjusting the constant $A$ in a way that only depends on $||\phi_{CY,t}||_{L^\infty}$, we get a new Skoda inequality
$$ \int_{X_t} e^{-\alpha v}\omega_{CY,t}^d \leq A \text{ for all } v\in \PSH(X_t,\omega_{CY,t}), \sup v=0. $$
Finally, let $\psi_t$ satisfy $\omega_t+i\partial\bar\partial \psi_t = i\partial\bar\partial\left(\Psi\circ \Log_s\right)|_{X_t}$. Note that $||\psi_t||_{C^0(X_t)}\leq ||\psi||_{C^0(Y)}<+\infty$ and, by the discussion in Section~\ref{sec:ModelMetric}, the total variation of $(\omega_t+i\partial\bar\partial\phi_{CY,t})^d-(\omega_t+i\partial\bar\partial\psi_{t})^d$ is arbitrary small for small $t$ and $\rho$. The $L^1$-stability estimate above then have the following consequence:
\begin{theorem}\label{thm:CloseToSup}
$\psi_t-\phi_{t}^{CY}$ is close to its maximum with large probability, i.e. for all $\epsilon>0,\lambda>0$
$\tilde\nu_t(\{\psi_t-\phi_{t}^{CY}\leq -\epsilon\})<\lambda$ for small $t$. 
\end{theorem}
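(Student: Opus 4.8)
The plan is to derive Theorem~\ref{thm:CloseToSup} from the $L^1$-stability estimate Theorem~\ref{thm:L1Stability}, applied on $(X_t,\omega_{CY,t})$ to (a shift of) the comparison potential
\[
u_t := \psi_t-\phi^{CY}_t ,
\]
and to argue by contradiction. The first point is that $u_t$ is exactly the right kind of object: from the defining equations of $\psi_t$ and $\phi^{CY}_t$,
\[
\omega_{CY,t}+i\partial\bar\partial u_t=\omega_t+i\partial\bar\partial\psi_t=i\partial\bar\partial\big((\Psi-m_0)\circ\Log_s\big)\big|_{X_t}=:\omega_\Psi\ge 0 ,
\]
so $u_t\in\PSH(X_t,\omega_{CY,t})\cap C^0(X_t)$ (continuity because $\psi_t$ is Lipschitz in logarithmic coordinates and $\phi^{CY}_t$ is smooth), and $(\omega_{CY,t}+i\partial\bar\partial u_t)^d=\omega_\Psi^d$. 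After the harmless normalization $\sup_{X_t}u_t=0$ (this still keeps $\|u_t\|_{C^0}\le A'$ for a $t$-independent $A'$, by the uniform bounds on $\psi_t$ and $\phi^{CY}_t$ recalled just before Theorem~\ref{thm:L1Stability}), the statement to be proved becomes: for all $\epsilon,\lambda>0$ one has $\tilde\nu_t(\{u_t\le-\epsilon\})<\lambda$ for small $t$.

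The second step is to observe that, apart from the lower bound on $\int_{\{u\le 0\}}\omega^d$, every hypothesis of Theorem~\ref{thm:L1Stability} holds uniformly in $t$. The Skoda inequality for $(X_t,\omega_{CY,t})$ with $t$-independent $\alpha,A$ is precisely the one produced (after adjusting $A$) from \cite{LiSkoda} in the discussion preceding Theorem~\ref{thm:L1Stability}; the $C^0$ bound is the normalization above; and the total-variation bound is furnished by Section~\ref{sec:ModelMetric}, which gives that the total variation of $\omega_{CY,t}^d-\omega_\Psi^d$ is $\le\delta_t$ with $\delta_t\to 0$ as $t\to 0$ (for fixed small $\rho$), since $\omega_{CY,t}^d$ agrees, up to the same small total variation, with $\tilde\nu_t$ (the normalized Calabi--Yau volume, proportional to $\Omega_t\wedge\overline{\Omega_t}$, which differs from the Monge--Ampère mass $\tilde\nu_\Psi$ of the model potential only on the complement of the starlike charts, carrying vanishingly little mass).

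The remaining hypothesis --- that $u_t$ be smooth away from a closed $\omega_{CY,t}^d$-null set --- is where the real work lies, and I expect it to be the main obstacle. Since $\phi^{CY}_t$ is smooth and $\omega_{CY,t}^d$ is a genuine smooth volume form, this reduces to controlling the non-smooth locus of the model potential $\psi_t=\psi|_{X_t}$, which is contained in $\Log_s^{-1}(\mathcal S)$ for $\mathcal S\subset N_\bR$ the non-smooth locus of the convex function $\Psi$; one must show $\mathcal S$ is locally contained in a set of real codimension $\ge 2$ (hence Lebesgue-, and thus $\omega_{CY,t}^d$-, negligible). For this I would combine the regularity statements Theorem~\ref{thm:SmoothSolutions}, Lemma~\ref{lem:Symmetric} and Lemma~\ref{lem:SymmetricCube} (smoothness of $\Psi$ on $B\setminus\Sigma_\Psi$ with $\Sigma_\Psi$ of codimension two, and of $\Psi-m$ on the charts $U_\sigma$) with the description of the canonical extension of $\Psi$ to $N_\bR$ in Section~\ref{sec:ModelMetric} --- off $B$, $\Psi$ is affine in the $n_\sigma$-direction on each cone $\bR_{\ge 0}n_\sigma+U_\sigma$, so its non-smooth locus there is swept out by $\Sigma_\Psi$ together with the chart boundaries --- and the standing assumptions of the appendix theorem, in particular $\nu_N(\cup U_\sigma)=\nu_N(\Delta^\vee)$. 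This is precisely the point at which the regularity theory for $\Psi$ (and hence, in practice, the restriction to the unit simplex and the unit cube) is indispensable: without it one only controls $\psi_t$ on the union of the charts, whose complement in $X_t$ has small but not necessarily zero $\omega_{CY,t}^d$-measure.

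Granting all of this, the contradiction argument is short. Fix $\epsilon,\lambda>0$ and suppose, along a sequence $t_j\to 0$, that $\tilde\nu_{t_j}(\{u_{t_j}\le-\epsilon\})\ge\lambda$. Apply Theorem~\ref{thm:L1Stability} to $\tilde u_j:=u_{t_j}+\epsilon\in\PSH(X_{t_j},\omega_{CY,t_j})\cap C^0$: here $\sup\tilde u_j=\epsilon$, the set $\{\tilde u_j\le 0\}=\{u_{t_j}\le-\epsilon\}$ has $\omega_{CY,t_j}^d$-mass at least $\lambda-\delta_{t_j}\ge\lambda/2$ once $j$ is large, $(\omega_{CY,t_j}+i\partial\bar\partial\tilde u_j)^d=\omega_\Psi^d$ has total variation $\le\delta_{t_j}$ from $\omega_{CY,t_j}^d$, and the Skoda, smoothness and $C^0$ hypotheses hold as above. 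Taking $s'_j:=\delta_{t_j}^{1/(2d+3)}\to 0$, Theorem~\ref{thm:L1Stability} yields $\epsilon=\sup\tilde u_j\le C s'_j\to 0$, with $C$ depending only on $\lambda,d,\alpha,A,A'$ --- a contradiction for large $j$. Hence $\tilde\nu_t(\{u_t\le-\epsilon\})<\lambda$ for all small $t$, i.e.\ $\psi_t-\phi^{CY}_t$ is within $\epsilon$ of its maximum off a set of $\tilde\nu_t$-measure less than $\lambda$, which is the assertion.
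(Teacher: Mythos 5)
Your argument is correct and is essentially the paper's own proof: the paper likewise applies the $L^1$-stability estimate (Theorem~\ref{thm:L1Stability}) to the shifted difference $\psi_t-\phi_t^{CY}+\epsilon$, using the uniform Skoda and $C^0$ bounds and the smallness of the total variation of $\omega_{CY,t}^d-(\omega_t+i\partial\bar\partial\psi_t)^d$ from Section~\ref{sec:ModelMetric} to force $\epsilon\leq Cs'\rightarrow 0$, a contradiction. The paper's proof is only a few lines and does not spell out the hypothesis checks (in particular it is silent on the smooth-away-from-a-null-set hypothesis you flag), so your write-up is a more detailed rendering of the same contradiction argument rather than a different route.
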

\begin{proof}
    Assume $c>0$ satisfies 
    $$ \mu_t(\{\psi_t-\phi_{t}^{CY}<-c\})\geq \lambda. $$
    Then by the $L^1$-stability estimate Theorem~\ref{thm:L1Stability} 
    $$ c=\sup \psi_t-\phi_t^{CY}+c. $$
    is bounded from above by something which vanishes as $t\rightarrow 0$, this proves the lemma. 
\end{proof}
On any holomorphic chart, Theorem~\ref{thm:CloseToSup} implies local $C_0$-convergence (on slightly smaller charts) by a simple application of the mean value inequality. 
\begin{corollary}
Let $K$ be a compact subset of a starlike chart $\tilde U^\sigma_{t,\delta,2\rho}$ or a facelike chart $\tilde U^{\sigma,m}_{t,\delta,2\rho}$, then 
$$ \sup_{\Log_t^{-1}(K)} |\psi_t-\phi_t^{CY}|\rightarrow 0 $$
as $t\rightarrow 0$.
\end{corollary}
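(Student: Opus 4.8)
The plan is to derive the corollary from Theorem~\ref{thm:CloseToSup} by a purely local sub-mean-value argument, exploiting the smoothness of the holomorphic coordinate charts constructed earlier and the plurisubharmonicity of $\psi_t-\phi_t^{CY}$. First I would fix a compact $K$ contained in a starlike chart $\tilde U^\sigma_{t,\delta,2\rho}$ (the facelike case is identical) and choose $\rho>0$ small so that the $\rho$-neighbourhood of $K$ (in the chart coordinates $(z_1,\ldots,z_d)$ furnished by the implicit function theorem) still lies inside $\tilde U^\sigma_{t,\delta,2\rho}$; this is possible precisely because the chart is defined with parameter $2\rho$ while $K$ is only required to sit in the $\rho$-shrunk version, so there is a definite amount of room. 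On this slightly larger chart, $u_t:=\psi_t-\phi_t^{CY}$ is $\omega_{CY,t}$-plurisubharmonic, in particular plurisubharmonic with respect to the flat Kähler form in the holomorphic coordinates (after adding a smooth bounded local potential for $\omega_{CY,t}$, whose oscillation over the fixed-size polydisc is controlled uniformly in $t$ by the Lipschitz/$C^0$ bounds recorded in Section~\ref{sec:ModelMetric}).

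The key estimate is the sub-mean-value inequality: for any point $x$ in $\Log_t^{-1}(K)$, writing $P_r(x)$ for a coordinate polydisc of radius $r\sim\rho$ around $x$ inside the larger chart,
\begin{equation*}
u_t(x)\le \fint_{P_r(x)} u_t\, d\mathrm{Leb} + C\rho^2,
\end{equation*}
where the $C\rho^2$ accounts for the difference between $u_t$ and a genuine local subharmonic function (the potential correction above), and $C$ is uniform in $t$. Since $u_t\le 0$ up to an additive constant that tends to $0$ (this is the content of $\sup u_t\le Cs'\to 0$ from Theorem~\ref{thm:CloseToSup}), and since $\tilde\nu_t(\{u_t\le-\epsilon\})<\lambda$ for small $t$ with $\tilde\nu_t$ comparable on the chart to Lebesgue measure in the coordinates (by the Monge-Ampère computation in Section~\ref{sec:ModelMetric}, where $\tilde\nu_\Psi\approx d\log z_1\wedge d\log\bar z_1\wedge\cdots$), the average $\fint_{P_r(x)} u_t$ is bounded below by $-\epsilon - (\text{small})$ once $\lambda$ is taken small relative to the measure of the smallest polydisc $P_r(x)$ that fits. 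Combining the sub-mean-value bound (giving $\sup_{\Log_t^{-1}(K)} u_t$ is small from above, which we already know directly) with a lower bound obtained by applying the same reasoning to $-u_t$ is not available since $-u_t$ need not be subharmonic; instead the lower bound on $u_t$ comes from: $u_t(x)$ large negative would force, via upper semicontinuity and the sub-mean-value property applied on a polydisc around a nearby point where $u_t$ is close to its sup, a contradiction with the $L^1$-smallness $\int_{P}|u_t|$, which itself follows from $\tilde\nu_t(\{u_t\le-\epsilon\})<\lambda$ together with the uniform $L^\infty$-bound $\|u_t\|_{C^0}\le A'$.

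Concretely, the order of steps is: (i) reduce to one chart and choose the shrinking parameters; (ii) record that $u_t$ is, up to a uniformly-bounded smooth local potential, plurisubharmonic in the coordinates, hence satisfies the sub-mean-value inequality over coordinate polydiscs of a fixed size $r$; (iii) use Theorem~\ref{thm:CloseToSup} to get $\int_{P_r}|u_t|\,d\tilde\nu_t\to 0$ on each such polydisc (split the integral over $\{u_t>-\epsilon\}$ and $\{u_t\le-\epsilon\}$, bounding the first by $\epsilon\,\tilde\nu_t(P_r)$ and the second by $A'\lambda$); (iv) translate this into an $L^1$-bound with respect to Lebesgue measure using the comparability of $\tilde\nu_t$ and coordinate Lebesgue measure on the chart; (v) conclude via the two-sided mean-value inequalities that $\sup_{\Log_t^{-1}(K)}|u_t|\to 0$. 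The main obstacle is step (ii)–(iv): one must ensure the comparability constant between $\tilde\nu_t$ and coordinate Lebesgue measure, and the oscillation bound for the local potential of $\omega_{CY,t}$, are \emph{uniform in $t$} on the fixed-size polydiscs; this is exactly where the Lipschitz estimate on $\Psi\circ\Log_s$ and the explicit form $\Omega_t\approx d\log z_1\wedge\cdots\wedge d\log z_d$ from \eqref{eq:HolVolumeForm} and Section~\ref{sec:ModelMetric} are needed, and care is required because the chart itself depends on $t$ through $\delta/s$, so one works with a fixed compact $K$ that eventually lies in all the charts for $t$ small.
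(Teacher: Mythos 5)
Your overall strategy (mean-value inequality on coordinate balls inside the chart, combined with Theorem~\ref{thm:CloseToSup}, the uniform $C^0$ bounds, the comparability of the semi-flat Monge--Amp\`ere measure with Lebesgue measure in the chart coordinates, and the room between the $2\rho$- and $\rho$-charts) is the same as the paper's, but the execution has a gap at the one step that actually matters. What you need over $\Log_t^{-1}(K)$ is the pointwise \emph{lower} bound on $u_t=\psi_t-\phi_t^{CY}$; the upper bound is, as you note, already available globally from the $L^1$-stability estimate. Your sub-mean-value inequality for $u_t$, coming from its $\omega_{CY,t}$-plurisubharmonicity, only bounds $u_t(x)$ from above by its average, i.e.\ the direction you do not need, and the mechanism you propose for the lower bound --- ``$u_t(x)$ very negative would contradict the $L^1$-smallness via the sub-mean-value property at a nearby near-sup point'' --- does not work: a (quasi-)plurisubharmonic function with small $L^1$ norm and bounded sup can still be arbitrarily negative at a single point (think of $\log|z|$ on a polydisc), and knowing that the average of $u_t$ over a polydisc containing $x$ is close to $0$ says nothing about $u_t(x)$ itself. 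Nothing in your argument supplies a modulus of continuity from below for $u_t$. Relatedly, your $C\rho^2$ error term is unjustified: a local potential for $\omega_{CY,t}$ is $g_t=g_t^{FS}+\phi_t^{CY}$, whose oscillation on a fixed-size polydisc is merely bounded (via the uniform $C^0$ bound on $\phi_t^{CY}$ from \cite{LiSkoda}), not $O(\rho^2)$ uniformly in $t$; such an oscillation bound would already be a uniform continuity statement for $\phi_t^{CY}$ of the kind one is trying to prove, and it is not implied by the Lipschitz bound on $\Psi\circ\Log_s$.

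The paper's proof avoids both problems by treating the two terms asymmetrically. In the chart one can write $u_t=\Psi\circ\Log_s-g_t$ (up to a pluriharmonic term absorbed into $g_t$), where $g_t$ is a genuinely plurisubharmonic local potential for $\omega_{CY,t}$. The sub-mean-value inequality is applied to $g_t$ --- equivalently, to $\phi_t^{CY}$ after absorbing the Fubini--Study potential --- which enters the lower bound with the favorable sign, giving $-g_t(x)\ge -\frac{1}{|B_r(x)|}\int_{B_r(x)}g_t$; the explicit semi-flat piece is handled instead by its uniform Lipschitz bound $L$ in the rescaled logarithmic coordinates, giving $\Psi\circ\Log_s(x)\ge \frac{1}{|B_r(x)|}\int_{B_r(x)}\Psi\circ\Log_s-Lr$ for any $r\le\rho$ (so that $B_r(x)$ stays in the chart). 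Adding these yields $u_t(x)\ge \frac{1}{|B_r(x)|}\int_{B_r(x)}u_t-Lr$, and then Theorem~\ref{thm:CloseToSup} together with your steps (iii)--(iv) (splitting the average over $\{u_t>-\epsilon\}$ and $\{u_t\le-\epsilon\}$ and using the measure comparison) gives $u_t(x)\ge -\epsilon-\frac{\lambda}{|B_r(x)|}\bigl(\|\psi_t\|_{C^0}+\|\phi_t^{CY}\|_{C^0}\bigr)-Lr$, from which the claim follows by sending $t\to 0$ first and then $\epsilon,\lambda,r\to 0$. The missing idea in your write-up is precisely this use of the Lipschitz regularity of the known semi-flat potential to obtain the lower-bound direction; without it, the mean-value argument only reproves the bound you already had.
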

\begin{proof}
    For $x\in U^\sigma_{t,\delta,2\rho}$, let $B_{\rho}(x)$ be the ball of radius $\rho$ centered at $x$ in logarithmic coordinates on $V^\sigma_{t,\delta}$ and note that $B_{\rho}(x)\subset U^\sigma_{t,\delta,\rho}$.
    Let $0<\epsilon<<1$ and $0<\lambda<<|B_\delta|$ and apply Theorem~\ref{thm:CloseToSup}. Note that since $\Psi\circ \Log_t$ satisfies a Lipschitz bound $L$, we get 
    $$ \psi(x) \geq -\delta L+\frac{1}{|B_\delta(x)|}\int_{B_\delta} \psi. $$
    By the mean value inequality and Theorem~\ref{thm:CloseToSup}, for small $t$
    \begin{eqnarray*}
        \psi(x)-\phi_t^{CY} & \geq & -\delta L + \frac{1}{|B_\delta(x)|}\int_{B_\delta(x)} \left(\psi(x)-\phi_t^{CY}\right) \\
        & \geq & -\delta L - \frac{\lambda}{|B_\delta(x)|}\left(||\psi||_{C^0}+||\phi_t^{CY}||_{C^0}\right) - (1-\lambda)\epsilon.
    \end{eqnarray*}
    The statement for facelike charts $U^{\sigma,m}_{t,\delta,2\rho}$ is proved in the same way. 
\end{proof}

\subsection{Higher Order Convergence and SYZ fibration}
From the $C^0$-estimates above we conclude that the Calabi-Yau metrics are $C^0$-approximated by $\Psi\circ \Log_s$ on starlike charts $U^{\sigma}_{t,\delta,2\rho}$ and $\Psi\circ p_\sigma \circ \Log_s$ on facelike charts $U^{\sigma,m}_{t,\delta,2\rho}$. Both of these are semi-flat and have Monge-Ampère measures which approximate the Calabi-Yau volume forms. The higher order estimates and existence of a special Lagrangian torus fibration then follow as in \cite[Section~5.2 and Section~5.4]{LiToric} from Savin's Small Perturbation Theorem \cite{Savin} and Zhang's result on special Lagrangian torus fibrations \cite{Zhang}.

\end{document}